\numberwithin{equation}{section}
\newtheorem{theorem}{Theorem}[section]
\newtheorem{corollary}[theorem]{Corollary}
\newtheorem{lemma}[theorem]{Lemma}
\newtheorem{proposition}[theorem]{Proposition}
\newtheorem{definition}[theorem]{Definition}
\DeclareMathOperator{\Alg}{Alg}
\DeclareMathOperator{\ord}{ord}
\DeclareMathOperator{\alg}{alg}
\DeclareMathOperator{\codim}{codim}
\DeclareMathOperator{\Deck}{Deck}
\newcommand{\N}{\mathbb{N}}
\newcommand{\Z}{\mathbb{Z}}
\newcommand{\R}{\mathbb{R}}
\newcommand{\C}{\mathbb{C}}
\newcommand{\field}[1] {\mathbb{#1}}
\newcommand{\K}{\field{K}}
\def\a{\alpha}
\def\b{\beta}
\def\e{\varepsilon}
\def\D{\Delta}
\def\d{\delta}
\def\g{\gamma}
\def\G{\Gamma}
\def\l{\lambda}
\def\o{\omega}
\def\O{\Omega}
\def\p{\partial}
\def\r{\rho}
\def\s{\sigma}
\def\v{\varphi}
\def\ua{\uparrow}
\def\da{\downarrow}
\newcommand{\mc}{\mathcal}
\newcommand{\mf}{\mathfrak}
\DeclareMathOperator{\Int}{int}
\begin{document}
	\title{Bifurcation Theory for Fredholm Operators}
	\author{Juli\'an L\'opez-G\'omez, Juan Carlos Sampedro} \thanks{The authors have been supported by the Research Grant PID2021--123343NB-I00 of the Spanish Ministry of Science and Innovation and by the Institute of Interdisciplinar Mathematics of Complutense University.}
	\address{Institute of Interdisciplinary Mathematics \\
Department of Mathematical Analysis and Applied Mathematics \\
		Complutense University of Madrid \\
		28040-Madrid \\
		Spain.}
	\email{julian@mat.ucm.es, juancsam@ucm.es}

\begin{abstract}
This paper consists of four parts. It begins by using the authors's generalized Schauder formula, \cite{JJ}, and the algebraic multiplicity, $\chi$, of Esquinas and L\'{o}pez-G\'{o}mez \cite{ELG,Es,LG01} to package and sharpening all existing results in local and global bifurcation theory for Fredholm operators through the recent author's axiomatization of the Fitzpatrick--Pejsachowicz--Rabier degree, \cite{JJ2}. This facilitates reformulating and refining  all existing results in a compact and unifying way. Then, the local structure of the solution set of analytic nonlinearities $\mathfrak{F}(\l,u)=0$ at a simple degenerate eigenvalue is ascertained by means of some concepts and devices of Algebraic Geometry and Galois Theory, which establishes a bisociation between Bifurcation Theory and Algebraic Geometry. Finally, the unilateral theorems of \cite{LG01,LG02}, as well as the refinement of Xi and Wang \cite{XW}, are substantially generalized. This paper also analyzes two important examples to illustrate and discuss the relevance  of the abstract theory. The second one studies the regular positive solutions of a multidimensional quasilinear boundary value problem of mixed type related to the mean curvature operator.
\end{abstract}

\keywords{Generalized Schauder formula, Fitzpatrick--Pejsachowicz--Rabier degree, generalized algebraic multiplicity, Global bifurcation theory, Simple degenerate eigenvalues, Unilateral bifurcation, $1$-D boundary value problems, Quasilinear problems.}
\subjclass[2010]{34B15,35J93,47H11,58C40}

\maketitle
\section{Introduction}

\noindent This paper is a natural continuation of \cite{JJ,JJ2,JJ3}, where the authors axiomatized
the degree for Fredholm operators of  Fitzpatrick, Pejsachowicz and Rabier \cite{FPRa,FPRb,PR} through the generalized algebraic multiplicity, $\chi$, of Esquinas and L\'{o}pez-G\'{o}mez \cite{Es,ELG,LG01}, which had been axiomatized by Mora-Corral \cite{MC}. The monographs \cite{LG01} and \cite{LGMC} present a rather
complete synthesis of the crucial role played by $\chi$ in Bifurcation Theory. The axiomatization of the degree for Fredholm maps carried out by the authors in \cite{JJ2} follows similar patterns as the uniqueness theorems of F\"{u}hrer \cite{Fu}  and Amann and Weiss \cite{AW} for the degrees of Brouwer \cite{Br}  and  Leray--Schauder \cite{LS}, respectively.
\par
The first part of this paper invokes these, rather recent, developments to reformulate all existing local and global bifurcation results for nonlinear Fredholm maps at the light of the algebraic multiplicity $\chi$ through the generalized Schauder formula delivered by the authors in \cite{JJ}. In particular, it tidies up and sharpens considerably the local bifurcations theorems of Fitzpatrick, Pejsachowicz and Rabier \cite{FPRb}, Pejsachowicz and Rabier \cite{PR}, as well as \cite[Th. 6.2.1]{LG01} and the global theorem of L\'{o}pez-G\'{o}mez and Mora-Corral \cite{LGMC}, which was originally proven by
using the  degree of Benevieri and Furi \cite{BF1,BF2,BF3}; we deliver another
proof of \cite[Th. 5.4]{LGMC} by means of the Fitzpatrick--Pejsachowicz--Rabier degree. Actually, this paper grew from the germinal idea of bringing together, by the first time, all these results, scattered in a series of independent monographs, at the light of our most recent developments concerning $\chi$ and the Fitzpatrick--Pejsachowicz--Rabier degree.
\par
Roughly speaking,  our global theorem (collected in Theorems \ref{T6.3.5,2} and \ref{C6.3.6}) establishes that any compact component $\mathfrak{C}$ of the set of non-trivial solutions of a nonlinear equation $\mathfrak{F}(\l,u)=0$ must bifurcate from the trivial solution $(\l,u)=(\l,0)$ respecting the topological  property that the sum of the parities of the compact components
of the generalized spectrum where $\mathfrak{C}$ bifurcates from  $(\l,0)$ must be zero. In the proof of the global alternative of Rabinowitz \cite{Ra}, Nirenberg \cite[p. 87]{Ni} emphasized this property in the context of  the Leray--Schauder degree for nonlinear maps $\mf{F}(\l,u)$ such that $\mf{F}(\l,0)=0$ for all $\l\in\R$ and $\mf{L}(\l)\equiv D_u\mf{F}(\l,0)=\l I-K$ for some compact operator $K$. Theses findings were later sharpened by Ize \cite{Iz} and Magnus \cite{Ma}, though in a rather limited way, because of the absence of a versatile theory of algebraic multiplicities for arbitrary compact perturbations of the identity map, $\mf{L}(\l)=I-\mc{K}(\l)$ with $\mc{K}(\l)$ compact.  Examples of compact components bifurcating from the trivial solution in the context of Reaction-Diffusion systems are well known to arise in a huge number of applications. The interested reader is sent to Chapter 2 of \cite{LG01}, and to Fencl and L\'{o}pez-G\'{o}mez \cite{FLG} for some recent intriguing applications.
\par
Naturally, as a byproduct of the global bifurcation theorem, the \emph{global alternative of Rabinowitz}, \cite{Ra}, holds. It establishes that any component bifurcating from a nonlinear eigenvalue must be either unbounded, or it bifurcates from $(\l,0)$ at two different values of the parameter $\l$ with opposite parities. Possibly by the simplicity of this formulation and the sophisticated topological technicalities
necessary to state rigourously and prove the global theorem, even the simplest version of Nirenberg \cite{Ni} has fallen into oblivion for almost 50 years. As a result, experts use to invoke exclusively the global alternative of Rabinowitz in the context of global bifurcation theory, which is really harmful from the point of view of the applications, as the global alternative cannot provide with any useful information on the value of the degree of the $\l$-slices of the component, which in many applications coincides with the exact number of solutions of $\mf{F}=0$ (see, e.g., L\'{o}pez-G\'{o}mez and C. Mora-Corral \cite{LGMC05}, \cite{LGM}). Actually, some authors did not
focus attention into the deeper insights of the underlying theory, as, e.g., Shi and Wang \cite{XW}, whose observation that the local theorem of Crandall and Rabinowitz \cite{CR} is global was  actually a straightforward direct consequence of Theorem 5.4 of L\'{o}pez-G\'{o}mez and Mora-Corral \cite{LGMC}.
\par
Once tidied up, polished and sharpened the main bifurcation theorems at the light of the  degree of Fitzpatrick, Pejsachowicz and Rabier, the second part of this paper studies the bifurcation from simple degenerate eigenvalues for analytic $\mathfrak{F}$'s, where  $N[D_{u}\mathfrak{F}(\lambda_{0},0)]=\text{span}[\varphi_{0}]$ for some $\varphi_{0}\in U\backslash\{0\}$, which goes back to the pioneering work of Dancer \cite{Da,Da73, Da732} and Kielh\"ofer \cite{Ki}. By Theorem 4.4.3 of L\'{o}pez-G\'{o}mez and Mora-Corral \cite{LGM}, the main result of \cite{Ki} can be stated by simply saying that in a neighborhood of $(\lambda_{0},0)$,  $\mathfrak{F}^{-1}(0)$ consists of
at most $2\chi+2$ branches of analytic functions intersecting at $(\lambda_{0},0)$. In this article, we use a rather different approach by using a number of technical tools in Algebraic and Analytic Geometry to ascertain the fine local structure of $\mathfrak{F}^{-1}(0)$ at $(\lambda_{0},0)$. Precisely, after using a
Lyapunov--Schmidt decomposition to reduce the original problem to another one with a finite-dimensional character, the analysis of the local structure of the associated solution set will be carried over through a careful study of the roots of the Weierstrass polynomial associated to the reduced bifurcation equation. This allows us to use some techniques of algebraic monodromy theory to study them by means of the theory of Riemann surfaces. Thus, essentially, we translate the underlying analytic problem into another one of algebraic nature that can be treated computationally. In the complex case, using these technical devices, one can characterize the precise local structure of the zero set in terms of a sequence of Riemann surfaces that can be analyzed through an algebraic field extension of finite degree. At a later stage,  we will use Galois Theory to ascertain whether, or not, the zero set can be expressed through a composition of radicals and meromorphic functions. In the real case, this problem has a rather computational nature, since one has to determine how many complexified Riemann surfaces are  \emph{real-to-real}, i.e., such that its intersection with the real plane lies into the real plane. Finally, we are able to determine the exact number of real branches that emanate from $(\lambda_{0},0)$ through the celebrated Sturm criteria, that is used in Numerical Analysis to ascertain the exact number of real zeroes of polynomials. This analysis complements the one given by Dancer and Kielh\"ofer. In a further step we focus attention on the global behavior of the zero set $\mf{F}^{-1}(0)$ of these type of nonlinearities. In this setting, we generalize the Dancer \cite[Th. 4]{Da732} and Buffoni--Toland \cite[Th. 9.1.1]{BT} global alternatives up to cover the degenerate case $\chi\geq 2$ 
where the theorem of Crandall and Rabinowitz \cite{CR} cannot be applied. This is a significantly important advance for the study of degenerate problems as illustrated in the example of Sections \ref{SDODP} and \ref{SNS}. Moreover,
inspired by a novel idea of Dancer \cite[Th. 3]{Da732}, we will prove that, under these assumptions, $\mf{F}^{-1}(0)$ is an \textit{analytic graph}. As a very special example, by the local theorem of Crandall and Rabinowitz \cite{CR} and the global alternative of Rabinowitz \cite{Ra}, it is folklore that,  for every $a\in \mc{C}[0,\pi]$ and any integers $n\geq 1$ and $p\geq 2$, the set of solutions of the semilinear  boundary value problem
\begin{equation}
	\label{1.1}
	\left\{\begin{array}{l}
		-u''=\lambda u +a(x) u^p \quad \hbox{in}\;\, (0,\pi), \\
		u(0)=u(\pi)=0,
	\end{array}
	\right.
\end{equation}
admits a component, $\mathscr{C}_n$, with $(\l,u)=(n^2,0)\in \mathscr{C}_n$, which is unbounded in
$\R\times \mc{C}[0,\pi]$. Moreover, by the maximum principle, since the number of nodes of
the solutions along $\mathscr{C}_n$ is constant, it turns out that
$
\mathscr{C}_n\cap \mathscr{C}_m=\emptyset$, $ n\neq m
$.
Our main result in this part  shows that actually each of the components $\mathscr{C}_n$, $n\geq 1$, consists of  a discrete set of
analytic arcs of curve plus a discrete set of branching points.
\par
The third part of this paper generalizes substantially the unilateral bifurcation theorems of \cite{LG02} by substituting the norm in the underlying Banach space $U$ by some continuous functional $\psi(u)$. As a byproduct, our unilateral theorem does not require the differentiability of the norm of $U$, as in Shi and Wang \cite{XW}, but simply the compactness of the  imbedding $U\hookrightarrow V$, which is a rather common property in most of the existing applications. Another important feature of our refinement is that it can be applied to deal with the general case when $\chi\geq 1$. So, it also covers the case of bifurcation from simple degenerate eigenvalues.
\par
The main advantage of developing global bifurcation theory in a Fredholm scenario is that one can deal very easily with quasilinear elliptic equations and systems, even when they cannot be transformed into a semilinear boundary value problem though some  change of variables, tricky or not. As in this scenario one can directly deal with Fredholm operators, expressing the differential equation as an integral equation
is unnecessary. This paper ends by giving a non-trivial application to a quasilinear boundary value problem of mixed type involving the multidimensional mean curvature operator.
\par
This paper has been organized as follows. Section \ref{SGAM} collects the main features of the algebraic multiplicity $\chi$ used in this paper. Section \ref{STDFO} reviews the main ingredients of the degree of
 Fitzpatrick, Pejsachowicz and Rabier \cite{FPRa,FPRb,PR}, and invokes them to give a general version of the Lereay--Schauder continuation theorem for nonlinear Fredholm operators. Sections \ref{SLBT} and \ref{SGBT} deliver the main local bifurcation theorem and the main global bifurcation theorem of this paper, respectively.
Section \ref{SLBA}  consists of a sharp analysis, at the light of Analytic Geometry, of the problem of bifurcation in the special case of analytic $\mf{F}$'s and
$\dim N[D_{u}\mathfrak{F}(\lambda_0,0)]=1$. In particular, it delivers our refinements of the main findings of Dancer \cite{Da,Da73, Da732} and  Kielh\"{o}fer \cite{Ki}. Section \ref{SAGA} generalizes the Dancer \cite{Da732} and Buffoni--Toland \cite{BT} global alternatives up to cover the degenerate case $\chi\geq 2$. Section \ref{SDODP} invokes the previous theory to analyze the local and global structure of the solution set of
\begin{equation}
\label{1.2}
\left\{\begin{array}{l}
-u''=\lambda u' +u+(\lambda-u^2)u^2 \quad \hbox{in}\;\, (0,\pi), \\
u(0)=u(\pi)=0.
\end{array}
\right.
\end{equation}
In particular, it is shown that the set of positive solutions of \eqref{1.2} contains a closed loop
bifurcating from $(\l,u)=(0,0)$. This example seems to be the first of this type constructed, analytically, in the literature in a situation where $\chi=2$. The existence of such loops in problems with weights is well documented in the literature (see, e.g., Fencl and L\'{o}pez-G\'{o}mez \cite{FLG} and the references there in). Actually, introducing an additional parameter in the model setting one can get, numerically, these closed loops, as in L\'{o}pez-G\'{o}mez and Molina-Meyer \cite{LGMM}, but our result for \eqref{1.2} is the first existing analytical result. Section \ref{SUBGSE} delivers the refinement of the unilateral theorems of \cite{LG02} and \cite{XW}, and Section \ref{SNS} uses these refinements to analyze the global structure of the negative solutions of \eqref{1.2}; so, completing the analysis of Section \ref{SDODP}. Finally, Section \ref{SQPMT} applies the new unilateral theorem to analyze the global structure of the set of positive solutions of the quasilinear problem
\begin{equation}
\label{1.3}
\left\{\begin{array}{ll}
-\text{div}\left(\frac{\nabla u}{\sqrt{1+|\nabla u|^{2}}}\right)=\lambda a(x) u +g(x,u)u & \text{ in } \Omega, \\[1ex]
\mathcal{B}u=0 & \text{ on } \partial\Omega,
\end{array}
\right.
\end{equation}
where $\mc{B}$ is a general boundary operator of mixed type.
\par Along this paper, given a pair $(U,V)$ of $\K$-Banach spaces, $\K\in\{\R,\C\}$, the space of linear bounded operators $T:U\to V$ is denoted by $\mc{L}(U,V)$. Naturally, we set $\mc{L}(U):=\mc{L}(U,U)$.  We denote by $GL(U,V)$ the space of topological isomorphisms and $GL(U):=GL(U,U)$.
Given $T\in \mc{L}(U,V)$, we denote by $N[T]$ and $R[T]$, the kernel and the range of $T$, respectively.
Finally, $\mc{K}(U)$ stands for the set of linear compact endomorphisms of $U$, and
$GL_c(U)=GL(U)\cap \mc{K}(U)$ is the compact linear group of $U$.

\section{Generalized algebraic multiplicity}\label{SGAM}

\noindent Throughout this section, $\mathbb{K}\in\{\mathbb{R},\mathbb{C}\}$, $\Omega$ is a subdomain of $\mathbb{K}$, and, for any given finite dimensional curve $\mathfrak{L}\in\mathcal{C}(\Omega,\mathcal{L}(\mathbb{K}^{N}))$,
a point $\l\in\Omega$ is said to be a \textit{generalized eigenvalue} of $\mathfrak{L}$ if $\mathfrak{L}(\l)\notin GL(\mathbb{K}^{N})$, i.e., $\mathrm{det\,}\mf{L}(\l)=0$. Then, the
\textit{generalized spectrum} of $\mathfrak{L}\in\mathcal{C}(\Omega,\mathcal{L}(\mathbb{K}^{N}))$, denoted by 
$\Sigma(\mathfrak{L})$, consists of the set of $\lambda\in\Omega$ such that $\mathfrak{L}(\lambda)\notin GL(\mathbb{K}^{N})\}$. For analytic curves $\mathfrak{L}\in\mc{C}^{\omega}(\O,\mathcal{L}(\mathbb{K}^{N}))$, since $\mathrm{det\,}\mf{L}(\l)$ is analytic in $\l\in\O$, either $\Sigma(\mathfrak{L})=\Omega$,
or $\Sigma(\mathfrak{L})$ is discrete. Thus, $\Sigma(\mf{L})$ consists of isolated generalized
eigenvalues if $\mf{L}(\mu)\in GL(\mathbb{K}^N)$ for some $\mu\in\O$. In such case, the \textit{algebraic multiplicity} of the curve $\mathfrak{L}\in\mc{C}^{\omega}(\Omega,\mathcal{L}(\mathbb{K}^{N}))$ at $\lambda_{0}$ is defined through
\begin{equation}
	\label{2.1}
	\mathfrak{m}_{\alg}[\mathfrak{L},\lambda_{0}]:=\ord_{\l=\lambda_{0}}\det\mathfrak{L}(\lambda).
\end{equation}
Although the multiplicity is defined for all $\l_0\in\R$, it equals zero if
$\l_0\in\R\setminus\Sigma(\mf{L})$. This concept extends the classical notion of algebraic multiplicity in linear algebra. Indeed,
if $\mathfrak{L}(\lambda)=\lambda I_{N}-T$ for some linear operator $T\in\mathcal{L}(\mathbb{K}^{N})$, then $\mathfrak{L}\in\mc{C}^{\omega}(\mathbb{K},\mathcal{L}(\mathbb{K}^{N}))$ and it is easily seen that $\mathfrak{m}_{\alg}[\mathfrak{L},\lambda_{0}]$ is well defined for all $\l_0\in\Sigma(\mf{L})$ and that
\eqref{2.1} holds. Note that, since $GL(\K^N)$ is open, $I_N-\l^{-1} T\in GL(\K^N)$ for sufficiently large $\l$. Thus, $\l I_N-T\in GL(\K^N)$ and $\Sigma(\mf{L})$ is discrete.
\par
This concept admits a natural (non-trivial) extension to an infinite-dimensional setting. To formalize it, we need to introduce some of notation. In this paper, for any given pair of $\mathbb{K}$-Banach spaces, say $U$ and $V$, we denote by $\Phi_0(U,V)$  the set of linear Fredholm operators of index zero between $U$ and $V$. Then, a \emph{Fredholm (continuous) path,} or curve, is any map $\mathfrak{L}\in \mathcal{C}(\Omega,\Phi_{0}(U,V))$.  Naturally, for any given $\mathfrak{L}\in \mathcal{C}(\Omega,\Phi_{0}(U,V))$, it is said that $\lambda\in\Omega$ is a \emph{generalized eigenvalue} of $\mathfrak{L}$ if $\mathfrak{L}(\lambda)\notin GL(U,V)$, and the \emph{generalized spectrum} of $\mathfrak{L}$, $\Sigma(\mathfrak{L})$,  is defined through   	
\begin{equation*}
	\Sigma(\mathfrak{L}):=\{\lambda\in\Omega: \mathfrak{L}(\lambda)\notin GL(U,V)\}.
\end{equation*}
The following concept, going back to \cite{LG01}, plays a pivotal role in the sequel.

\begin{definition}
	\label{de2.1}
	Let $\mathfrak{L}\in \mathcal{C}(\Omega, \Phi_{0}(U,V))$ and $\kappa\in\mathbb{N}$. A generalized eigenvalue $\lambda_{0}\in\Sigma(\mathfrak{L})$ is said to be $\kappa$-algebraic if there exists $\varepsilon>0$ such that
	\begin{enumerate}
		\item[{\rm (a)}] $\mathfrak{L}(\lambda)\in GL(U,V)$ if $0<|\lambda-\lambda_0|<\varepsilon$;
		\item[{\rm (b)}] there exists $C>0$ such that
		\begin{equation}
			\label{2.2}
			\|\mathfrak{L}^{-1}(\lambda)\|<\frac{C}{|\lambda-\lambda_{0}|^{\kappa}}\quad\hbox{if}\;\;
			0<|\lambda-\lambda_0|<\varepsilon;
		\end{equation}
		\item[{\rm (c)}] $\kappa$ is the minimal integer for which \eqref{2.2} holds.
	\end{enumerate}
\end{definition}
Throughout this paper, the set of $\kappa$-algebraic eigenvalues of $\mathfrak{L}$ is  denoted by $\Alg_\kappa(\mathfrak{L})$, and the set of \emph{algebraic eigenvalues} by $\Alg(\mathfrak{L}):=\cup_{\kappa\in\mathbb{N}}\Alg_\kappa(\mathfrak{L})$. 
As in the special case when $U=V=\K^N$, according to Theorems 4.4.1 and 4.4.4 of \cite{LG01}, when $\mathfrak{L}(\lambda)$ is analytic in $\Omega$, i.e., $\mathfrak{L}\in\mc{C}^{\omega}(\Omega, \Phi_{0}(U,V))$,  then, either $\Sigma(\mathfrak{L})=\Omega$,
or $\Sigma(\mathfrak{L})$ is discrete and $\Sigma(\mathfrak{L})\subset \Alg(\mathfrak{L})$.
Subsequently, we denote by $\mathcal{A}_{\lambda_{0}}(\Omega,\Phi_{0}(U,V))$ the set  of curves $\mathfrak{L}\in\mathcal{C}^{r}(\Omega,\Phi_{0}(U,V))$ such that $\lambda_{0}\in\Alg_{\kappa}(\mathfrak{L})$ with $1\leq \kappa \leq r$ for some $r\in\mathbb{N}$.
Next, we will construct an infinite dimensional analogue of the classical algebraic multiplicity $\mathfrak{m}_{\alg}[\mathfrak{L},\l_{0}]$ for the class  $\mathcal{A}_{\lambda_{0}}(\Omega,\Phi_{0}(U,V))$. It can be carried out through the theory of Esquinas and L\'{o}pez-G\'{o}mez
\cite{ELG},  where the following pivotal concept, generalizing the transversality condition of
Crandall and Rabinowitz \cite{CR},  was introduced. Throughout this paper, we set
$\mathfrak{L}_{j}:=\frac{1}{j!}\mathfrak{L}^{(j)}(\lambda_{0})$, $1\leq j\leq r$, should these derivatives exist.

\begin{definition}
	\label{de2.3}
	Let $\mathfrak{L}\in \mathcal{C}^{r}(\O,\Phi_{0}(U,V))$ and $1\leq \kappa \leq r$. Then, a given $\lambda_{0}\in \Sigma(\mathfrak{L})$ is said to be a $\kappa$-transversal eigenvalue of $\mathfrak{L}$ if
	\begin{equation*}
		\bigoplus_{j=1}^{\kappa}\mathfrak{L}_{j}\left(\bigcap_{i=0}^{j-1}N[\mathfrak{L}_{i}]\right)
		\oplus R[\mathfrak{L}_{0}]=V\;\; \hbox{with}\;\; \mathfrak{L}_{\kappa}\left(\bigcap_{i=0}^{\kappa-1}N[\mathfrak{L}_{i}]\right)\neq \{0\}.
	\end{equation*}
\end{definition}

For these eigenvalues, the algebraic multiplicity was introduced in \cite{ELG} by
\begin{equation}
	\label{ii.3}
	\chi[\mathfrak{L}, \lambda_{0}] :=\sum_{j=1}^{\kappa}j\cdot \dim \mathfrak{L}_{j}\left(\bigcap_{i=0}^{j-1}N[\mathfrak{L}_{i}]\right).
\end{equation}
In particular, when $N[\mf{L}_0]=\mathrm{span}[\v_0]$ for some $\v_0\in U$ such that $\mf{L}_1\v_0\notin R[\mf{L}_0]$, then
\begin{equation}
	\label{ii.4}
	\mf{L}_1(N[\mf{L}_0])\oplus R[\mf{L}_0]=V
\end{equation}
and hence, $\l_0$ is a 1-transversal eigenvalue of $\mf{L}(\l)$ with $\chi[\mf{L},\l_0]=1$. The transversality condition \eqref{ii.4} goes back to Crandall and Rabinowitz \cite{CR}. More generally, under condition \eqref{ii.4}, $\chi[\mf{L},\l_0]=\dim N[\mf{L}_0]$. According to Theorems 4.3.2 and 5.3.3 of \cite{LG01}, for every $\mathfrak{L}\in \mathcal{C}^{r}(\O, \Phi_{0}(U,V))$, $\kappa\in\{1,2,...,r\}$ and $\lambda_{0}\in \Alg_{\kappa}(\mathfrak{L})$, there exists a polynomial $\Phi: \O\to \mathcal{L}(U)$ with $\Phi(\lambda_{0})=I_{U}$ such that $\lambda_{0}$ is a $\kappa$-transversal eigenvalue of the path
\begin{equation}
	\label{ii.5}
	\mathfrak{L}^{\Phi}:=\mathfrak{L}\circ\Phi\in \mathcal{C}^{r}(\O, \Phi_{0}(U,V)),
\end{equation}
and $\chi[\mathfrak{L}^{\Phi},\lambda_{0}]$ is independent of the curve of \emph{trasversalizing local isomorphisms} $\Phi$ chosen to transversalize $\mathfrak{L}$ at $\lambda_0$ through \eqref{ii.5}. Therefore, the following concept of multiplicity
is consistent
\begin{equation}
	\label{ii.6}
	\chi[\mf{L},\l_0]:= \chi[\mathfrak{L}^{\Phi},\lambda_{0}],
\end{equation}
and it can be easily extended by setting
$\chi[\mathfrak{L},\lambda_0] =0$ if $\lambda_0\notin\Sigma(\mathfrak{L})$ and
$\chi[\mathfrak{L},\lambda_0] =+\infty$ if $\lambda_0\in \Sigma(\mathfrak{L})
\setminus \Alg(\mathfrak{L})$ and $r=+\infty$. Thus, $\chi[\mathfrak{L},\lambda]$ is well defined for all  $\lambda\in \O$ of any smooth path $\mathfrak{L}\in \mathcal{C}^{\infty}(\O,\Phi_{0}(U,V))$; in particular, for any analytical curve  $\mathfrak{L}\in\mc{C}^{\omega}(\O,\Phi_{0}(U,V))$.
\par The next uniqueness result, going back to Mora-Corral \cite{MC}, axiomatizes these concepts of algebraic multiplicity. Some refinements of them were delivered in \cite[Ch. 6]{LGMC}. In order to spate the result we need some preliminary definitions. Throughout this article, we denote by $\mathscr{C}^{\infty}_{\l_0}(U)$, the set of operator families $\mf{L}:\O_{\l_0}\to \Phi_{0}(U)$ of class $\mc{C}^{\infty}$ that are defined in a neighbourhood $\O_{\l_0}$ of $\l_0$, and introduce the space of \textit{germs} of smooth curves over $\l_0$ by $\mc{C}^{\infty}_{\l_0}(U):=\mathscr{C}^{\infty}_{\l_0}(U)/\sim$, 
where  we identify two families $\mf{L}_{1},\mf{L}_{2}\in\mathscr{C}^{\infty}_{\l_{0}}(U)$, $\mf{L}_{1}\sim\mf{L}_{2}$, if there exists a neighbourhood $\O_{\l_{0}}\subset\mathscr{D}(\mf{L}_{1})\cap\mathscr{D}(\mf{L}_{2})$ of $\l_{0}$ such that
$\mf{L}_{1}(\l)=\mf{L}_{2}(\l)$ for each $\l\in\O_{\l_{0}}$. Here, $\mathscr{D}(\mf{L})$ stands for the domain of the curve $\mf{L}$. Subsequently, given $\mathfrak{L}, \mathfrak{M} \in \mathcal{C}(\Omega, \Phi_{0}(U))$, we denote by $\mf{L}\mf{M}\in\mathcal{C}(\O,\Phi_{0}(U))$, the curve defined through $[\mf{L}\mf{M}](\l):=\mf{L}(\l)\circ\mf{M}(\l)$ for each $\l\in\O$.

\begin{theorem}
	\label{th24}
	Let $U$ be a non-trivial $\mathbb{K}$-Banach space and $\lambda_{0}\in\mathbb{K}$. Then, the algebraic multiplicity $\chi$ is the unique map 	$\chi[\cdot, \lambda_{0}]: \mc{C}^{\infty}_{\l_0}(U)\longrightarrow [0,\infty]$ such that
	\begin{enumerate}
		\item[{\rm (PF)}] For every pair $\mathfrak{L}, \mathfrak{P} \in \mc{C}^{\infty}_{\l_0}(U)$,
		$\chi[\mathfrak{L}\mathfrak{P}, \lambda_{0}]=\chi[\mathfrak{L},\lambda_{0}]+\chi[\mathfrak{P},\lambda_{0}]$. 
		\item[{\rm (NP)}] There exists a rank one projection $\Pi \in \mathcal{L}(U)$ such that
		\begin{equation*}
			\chi[(\lambda-\lambda_{0})\Pi +I_{U}-\Pi,\lambda_{0}]=1.
		\end{equation*}
	\end{enumerate}
\end{theorem}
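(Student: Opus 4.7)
The plan is to follow the axiomatization strategy pioneered by Mora-Corral \cite{MC}, splitting the argument into existence and uniqueness. Existence amounts to verifying that the multiplicity $\chi$ defined by \eqref{ii.6} satisfies (PF) and (NP). Property (NP) is a direct computation: for $\mathfrak{L}(\lambda)\equiv(\lambda-\lambda_0)\Pi+I_U-\Pi$ one has $\mathfrak{L}_0=I_U-\Pi$ and $\mathfrak{L}_1=\Pi$, so $N[\mathfrak{L}_0]=R[\Pi]$, $\mathfrak{L}_1(N[\mathfrak{L}_0])\oplus R[\mathfrak{L}_0]=R[\Pi]\oplus N[\Pi]=U$, and \eqref{ii.3} yields $\chi=1\cdot\dim R[\Pi]=1$. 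Property (PF) is the content of \cite[Ch.\,5]{LG01}, where $\chi$ is interpreted as an order of vanishing of an associated determinant-like invariant.

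For uniqueness, let $\tilde{\chi}:\mc{C}^{\infty}_{\lambda_0}(U)\to[0,\infty]$ be any map satisfying (PF) and (NP). Applying (PF) to $I_U=I_U\cdot I_U$ immediately yields $\tilde{\chi}[I_U,\lambda_0]=0$. I would then show that $\tilde{\chi}[\mathfrak{L},\lambda_0]=0$ for every germ with $\mathfrak{L}(\lambda_0)\in GL(U)$: by openness of $GL(U)$ and smoothness of the inversion map, a local inverse $\mathfrak{L}^{-1}$ exists as a germ in $\mc{C}^{\infty}_{\lambda_0}(U)$, so $0=\tilde{\chi}[I_U,\lambda_0]=\tilde{\chi}[\mathfrak{L},\lambda_0]+\tilde{\chi}[\mathfrak{L}^{-1},\lambda_0]$, and both non-negative summands must vanish. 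Combined with the transversalization \eqref{ii.5}, where the polynomial $\Phi$ satisfies $\Phi(\lambda_0)=I_U$, this reduces the computation to $\kappa$-transversal curves, since (PF) gives $\tilde{\chi}[\mathfrak{L},\lambda_0]=\tilde{\chi}[\mathfrak{L}^\Phi,\lambda_0]$.

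For a $\kappa$-transversal curve, invoking the structural decomposition of \cite[Ch.\,5]{LG01} I would construct smooth families of isomorphisms $A(\lambda), B(\lambda)$ with $A(\lambda_0)=B(\lambda_0)=I_U$ such that $A\,\mathfrak{L}^\Phi B$ equals the diagonal canonical form
\begin{equation*}
\mathfrak{D}(\lambda):=\sum_{j=1}^{\kappa}(\lambda-\lambda_0)^{j}\Pi_{j}+I_U-\sum_{j=1}^{\kappa}\Pi_{j},
\end{equation*}
where the $\Pi_j$ are mutually orthogonal finite-rank projections with $\dim R[\Pi_j]=\dim\mathfrak{L}^\Phi_j(\cap_{i<j}N[\mathfrak{L}^\Phi_i])$. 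Exploiting orthogonality $\Pi_i\Pi_j=0$ for $i\neq j$ and the identity $[(\lambda-\lambda_0)\pi+I_U-\pi]^{j}=(\lambda-\lambda_0)^j\pi+I_U-\pi$ for any projection $\pi$, one obtains the factorization
\begin{equation*}
\mathfrak{D}(\lambda)=\prod_{j=1}^{\kappa}\prod_{k=1}^{\dim R[\Pi_j]}\bigl[(\lambda-\lambda_0)\pi_{j,k}+I_U-\pi_{j,k}\bigr]^{j},
\end{equation*}
into rank-one blocks $\pi_{j,k}$ obtained by decomposing each $\Pi_j$ along a basis of $R[\Pi_j]$. Each rank-one factor is $\mc{C}^{\infty}$-equivalent to the normalization curve in (NP) via multiplication by smooth curves invertible at $\lambda_0$, so the previous vanishing result together with (PF) forces $\tilde{\chi}=1$ on every rank-one block. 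Applying (PF) once more then yields
\begin{equation*}
\tilde{\chi}[\mathfrak{L},\lambda_0]=\sum_{j=1}^{\kappa}j\cdot\dim R[\Pi_j]=\chi[\mathfrak{L},\lambda_0].
\end{equation*}

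The main obstacle will be implementing the smooth block canonical-form reduction for $\kappa$-transversal curves: this demands a Schur-type algorithm that peels off one Jordan-like block at a time while keeping track of the nested chain $\bigcap_{i<j}N[\mathfrak{L}^\Phi_i]$ and choosing smoothly varying complements in both $U$ and $V$. A secondary delicate point is the treatment of $\lambda_0\in\Sigma(\mathfrak{L})\setminus\Alg(\mathfrak{L})$, where $\chi=+\infty$; there one must show that for every $n\in\N$ the germ $\mathfrak{L}$ admits a factorization $\mathfrak{L}=\mathfrak{P}_n\mathfrak{Q}_n$ in $\mc{C}^{\infty}_{\lambda_0}(U)$ whose first factor contains at least $n$ rank-one blocks in its canonical form, so that (PF) forces $\tilde{\chi}[\mathfrak{L},\lambda_0]\geq n$ and hence $\tilde{\chi}[\mathfrak{L},\lambda_0]=+\infty$.
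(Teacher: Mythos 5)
The paper does not prove Theorem~\ref{th24}; it only cites Mora-Corral \cite{MC} and \cite[Ch.~6]{LGMC}, so there is no ``paper's own proof'' to compare against. Your sketch explicitly follows the Mora-Corral strategy, and the overall skeleton is sound: verify (NP) directly, delegate (PF) to the determinant-order interpretation of $\chi$, kill $\tilde\chi$ on germs invertible at $\lambda_0$, pass to a transversalization $\mathfrak{L}^{\Phi}$, reduce a transversal germ to a polynomial diagonal form $\mathfrak{D}$, peel $\mathfrak{D}$ into rank-one blocks, and identify each block with the (NP) normalizer by conjugation. Your verification of (NP) is correct, the orthogonal-projection factorization identity and its telescoping are correct, and the reduction to rank-one blocks by conjugating an arbitrary rank-one projection to the one fixed in (NP) works (any two rank-one projections on a Banach space of dimension $\geq 2$ are $GL(U)$-conjugate, with a small argument needed to match the codimension-one kernels).

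Three gaps, two of which you flag yourself. First, ``$\tilde\chi[I_U,\lambda_0]=0$ immediately from (PF)'' is not quite immediate: $x=2x$ on $[0,\infty]$ also admits $x=+\infty$, and you must invoke (NP) to exclude $\tilde\chi\equiv+\infty$; minor, but it should be said. Second, and this is the real load-bearing step, you attribute the smooth local Smith-type factorization $A\,\mathfrak{L}^{\Phi}B=\mathfrak{D}$, with $A,B$ $\mathcal{C}^{\infty}$ families of isomorphisms equal to $I_U$ at $\lambda_0$ and $\mathfrak{D}$ a polynomial diagonal, to \cite[Ch.~5]{LG01}. What \cite[Ch.~5]{LG01} actually supplies (and what the present paper quotes as Theorems 4.3.2 and 5.3.3) is the existence of the transversalizing polynomial $\Phi$ and the well-definedness of $\chi[\mathfrak{L}^{\Phi},\lambda_0]$, not the full two-sided canonical factorization in the $\mathcal{C}^{\infty}$ class; the latter is the heart of \cite{MC} and of the finite-dimensional-reduction machinery in \cite[Ch.~7--8]{LGMC}, and it needs to be established (or at least precisely cited) rather than presumed. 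Third, for $\lambda_0\in\Sigma(\mathfrak{L})\setminus\Alg(\mathfrak{L})$ the Smith form does not exist, so the monotonicity argument you gesture at (factor off arbitrarily many rank-one blocks) requires its own justification; this is exactly where the distinction between $\Alg(\mathfrak{L})$ and $\Sigma(\mathfrak{L})\setminus\Alg(\mathfrak{L})$ bites, and it cannot be absorbed into the transversal case. In summary the proposal is the right approach and would become a proof once the canonical-form lemma and the non-algebraic case are supplied, but as written those are deferrals rather than arguments.
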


The axiom (PF) is the  \emph{product formula} and (NP) is a \emph{normalization property}
for establishing the uniqueness of $\chi$. From these two axioms one can derive the remaining properties of  $\chi$; among them, that it equals the classical algebraic multiplicity when
$\mathfrak{L}(\lambda)= \lambda I_{U} - K$ for some compact operator $K$. Indeed, for every $\mathfrak{L}\in \mathcal{C}^{\infty}(\Omega,\Phi_{0}(U))$ and $\l_{0}\in\O$, the following properties are satisfied (see \cite{LGMC} for any further details):
\begin{itemize}
	\item $\chi[\mathfrak{L},\lambda_{0}]\in\mathbb{N}\uplus\{+\infty\}$;
	\item $\chi[\mathfrak{L},\lambda_{0}]=0$ if and only if $\mathfrak{L}(\lambda_0)
	\in GL(U)$;
	\item $\chi[\mathfrak{L},\lambda_{0}]<\infty$ if and only if $\lambda_0 \in\Alg(\mathfrak{L})$.
	\item If $U =\mathbb{K}^N$, then, in any basis, $		\chi[\mathfrak{L},\lambda_{0}]= \mathrm{ord}_{\lambda_{0}}\det \mathfrak{L}(\lambda)$.
	\item For every $K\in \mathcal{K}(U)$ and $\lambda_0\in \s(K)$,
	\begin{equation*}
		\label{1.1.90}
		\chi [\lambda I_U-K,\lambda_{0}]=\dim N[(\l_0 I_{U}-K)^{\nu(\l_0)}],
	\end{equation*}
	where $\nu(\l_0)$ is the \emph{algebraic ascent} of $\l_0$, i.e., the minimal integer, $\nu\geq 1$, such that
	\[
	N[(\l_0 I_{U}-K)^{\nu}]=N[(\l_0 I_{U}-K)^{\nu+1}].
	\]
\end{itemize}

\section{Topological degree for Fredholm operators}\label{STDFO}

\noindent A crucial feature that facilitates the construction of the Leray--Schauder degree is the fact that, for any real Banach space $U$, the space $GL_{c}(U)$ consists of two path-connected components, which fails to be true in the general context of Fredholm operators of index zero as a consequence of the Kuiper theorem \cite{K}.
Consequently, it is
not possible to introduce an orientation in $GL(U,V)$ for general real Banach spaces $U, V$, since in general, $GL(U,V)$ is path-connected. In 1991,  assuming that $(U,V)$ is a pair of real Banach spaces and $\mathfrak{L} :[a, b] \longrightarrow \Phi_0(U,V)$ is a continuous path of linear Fredholm operators of index zero with invertible endpoints, Fitzpatrick and Pejsachowicz \cite{FP1,FP2} introduced an homotopy invariant of $\mathfrak{L}$, the parity of $\mathfrak{L}$ on $[a,b]$, denoted in this paper  by $\sigma(\mathfrak{L},[a,b])$, which became a key technical device to overcome the difficulty of the orientation.
\par
In this section we begin by reviewing, very shortly, the concept of parity of an admissible curve of Fredholm operators and state some of its fundamental properties. Subsequently, it is said that a curve $\mf{L}\in\mc{C}([a,b],\Phi_{0}(U,V))$ is \textit{admissible} if $\mf{L}(a),\mf{L}(b)\in GL(U,V)$, and we denote by $\mathscr{C}([a,b],\Phi_{0}(U,V))$ the class of admissible curves. To define the parity via the Leray--Schauder degree it is necessary to introduce the concept of \textit{parametrix}. For any given $\mf{L}\in\mathscr{C}([a,b], \Phi_{0}(U,V))$, a parametrix of $\mf{L}$ is a family $\mf{P}\in\mc{C}([a,b], GL(V,U))$ such that
$$
  \mf{P}(\l)\mf{L}(\l)-I_{U}\in\mc{K}(U) \quad \hbox{for each}\;\; \l\in[a,b].
$$
The existence of a parametrix for every $\mf{L}\in\mathscr{C}([a,b], \Phi_{0}(U,V))$ is guaranteed by Theorem 2.1 of Fitzpatrick and Pejsachowicz \cite{FP2}. Then, for every $\mf{L}\in \mathscr{C}([a,b], \Phi_{0}(U,V))$, the \textit{parity} of the curve $\mf{L}$ is defined through
\begin{equation*}
		\sigma(\mf{L}, [a,b]):=\deg(\mf{P}(a)\mf{L}(a))\cdot \deg(\mf{P}(b)\mf{L}(b)),
\end{equation*}
where $\mf{P}\in \mc{C}([a,b], GL(V,U))$ is a parametrix of $\mf{L}$ and, for every $T\in GL_{c}(U)$, we are denoting $\deg(T):=\deg(T, B_{\varepsilon}(0))$, for sufficiently small $\varepsilon>0$, where $\deg$ is the Leray--Schauder degree.  This notion is consistent as it does not depend of the chosen parametrix.
\par
Throughout this paper, a homotopy $H:[0,1]\times [a,b]\to \Phi_{0}(U,V)$ is called \textit{admissible} if
$$
  H(\{a,b\}\times [0,1])\subset GL(U,V),
$$
and the class of admissible homotopies is denoted by $\mathscr{H}\equiv\mathscr{H}([a,b]\times [0,1],\Phi_{0}(U,V))$. Then, two admissible curves $\mf{L}_{1},\mf{L}_{2}\in\mathscr{C}([a,b],\Phi_{0}(U,V))$ are said to be $\mc{A}$-\textit{homotopic} if there exists an admissible homotopy $H\in\mathscr{H}$ such that $H(a,\cdot)=\mf{L}_1$ and $H(b,\cdot)=\mf{L}_2$.  The following result collects some properties of the parity after Fitzpatrick and Pejsachowicz \cite{FP2} that will used through this paper.
\begin{theorem}
\label{th3.1}
For every $\mf{L}\in\mathscr{C}([a,b],\Phi_{0}(U,V))$, the following properties hold:
\begin{itemize}
	\item \textbf{Stability:} If $\mf{L}(\lambda)\in GL(U,V)$ for all $\lambda \in [a,b]$, then $\sigma(\mf{L}, [a,b])=1$.

\item \textbf{Homotopy invariance:} If $\mf{L}_{1}, \mf{L}_{2} \in \mathscr{C}([a,b], \Phi_{0}(U,V))$ are $\mc{A}$-homotopic, then
\begin{equation*}
	\sigma(\mf{L}_{1},[a,b])=\sigma(\mf{L}_{2},[a,b]).
\end{equation*}
\item \textbf{Product formula:} For any tern of real Banach spaces,  $(U,V,W)$, and every   $\mf{L}_1\in \mathscr{C}([a,b],\Phi_{0}(U,V))$ and $\mf{L}_{2} \in \mathscr{C}([a,b],\Phi_{0}(V,W))$,
\begin{equation*}
	\sigma(\mf{L}_{2}\mf{L}_{1}, [a,b])=\sigma(\mf{L}_{2}, [a,b])\cdot\sigma(\mf{L}_{1},[a,b]).
\end{equation*}
\item \textbf{Additivity:} For any partition of the interval $[a,b]$, $[a,b]=\cup_{i=1}^{N}\  [m_{i-1},m_{i}]$, and every $\mf{L} \in \mathscr{C}([a,b], \Phi_{0}(U,V))$ admissible on $[m_{i-1},m_{i}]$ for each $1\leq i\leq N$,
\begin{equation*}
	\sigma(\mf{L}, [a,b])=\prod_{i=1}^{N}\sigma(\mf{L},[m_{i-1},m_{i}]).
\end{equation*}
\end{itemize}
\end{theorem}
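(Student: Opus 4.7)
The plan is to derive all four properties from the definition of $\sigma(\mf{L},[a,b])=\deg(\mf{P}(a)\mf{L}(a))\cdot\deg(\mf{P}(b)\mf{L}(b))$, combined with three standard facts about the Leray--Schauder degree on $GL_c(U)$: (i) it takes values in $\{-1,+1\}$, (ii) it is multiplicative, $\deg(T_1T_2)=\deg(T_1)\deg(T_2)$ for $T_1,T_2\in GL_c(U)$, and (iii) it is conjugation invariant, $\deg(STS^{-1})=\deg(T)$ whenever $T\in GL_c(U)$ and $S\in GL(U,V)$, since $STS^{-1}\in GL_c(V)$ is again a compact perturbation of the identity.

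For \textbf{Stability}, since $\mf{L}(\l)\in GL(U,V)$ on the whole interval, $\mf{P}:=\mf{L}^{-1}$ is itself a parametrix (with $\mf{P}\mf{L}-I_U\equiv 0\in\mc{K}(U)$), so $\sigma(\mf{L},[a,b])=\deg(I_U)\cdot\deg(I_U)=1$. For \textbf{Homotopy invariance}, I would upgrade the parametrix construction of Fitzpatrick--Pejsachowicz to the two-parameter family $H:[a,b]\times[0,1]\to\Phi_0(U,V)$, producing a continuous $\mf{P}:[a,b]\times[0,1]\to GL(V,U)$ with $\mf{P}(\l,t)H(\l,t)-I_U\in\mc{K}(U)$; this is possible because parametrices exist locally and their differences are compact, so one can patch via a partition of unity. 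Admissibility of $H$ ensures $\mf{P}(a,t)H(a,t),\mf{P}(b,t)H(b,t)\in GL_c(U)$ for every $t$, so the map $t\mapsto\sigma(H(\cdot,t),[a,b])$ is a continuous $\{-1,+1\}$-valued function, hence constant.

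For the \textbf{Product formula}, let $\mf{P}_1\in\mc{C}([a,b],GL(V,U))$ and $\mf{P}_2\in\mc{C}([a,b],GL(W,V))$ be parametrices of $\mf{L}_1$ and $\mf{L}_2$, respectively. Writing $\mf{P}_i\mf{L}_i=I+K_i$ with $K_i$ compact, one computes
\begin{equation*}
(\mf{P}_1\mf{P}_2)(\mf{L}_2\mf{L}_1)=\mf{P}_1(I_V+K_2)\mf{L}_1=I_U+K_1+\mf{P}_1 K_2\mf{L}_1,
\end{equation*}
so $\mf{P}_1\mf{P}_2$ is a parametrix of $\mf{L}_2\mf{L}_1$. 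At each endpoint $\l\in\{a,b\}$, rewrite
\begin{equation*}
\mf{P}_1(\l)\mf{P}_2(\l)\mf{L}_2(\l)\mf{L}_1(\l)=\bigl(\mf{P}_1(\l)\mf{L}_1(\l)\bigr)\cdot\bigl(\mf{L}_1(\l)^{-1}\mf{P}_2(\l)\mf{L}_2(\l)\mf{L}_1(\l)\bigr),
\end{equation*}
both factors belonging to $GL_c(U)$; applying multiplicativity and then conjugation invariance via $S=\mf{L}_1(\l)^{-1}$ gives $\deg(\mf{P}_1\mf{P}_2\mf{L}_2\mf{L}_1)(\l)=\deg(\mf{P}_1\mf{L}_1)(\l)\cdot\deg(\mf{P}_2\mf{L}_2)(\l)$, and taking the product over $\l\in\{a,b\}$ yields $\sigma(\mf{L}_2\mf{L}_1,[a,b])=\sigma(\mf{L}_2,[a,b])\sigma(\mf{L}_1,[a,b])$.

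For \textbf{Additivity}, observe that any parametrix $\mf{P}$ on $[a,b]$ restricts to a parametrix on each $[m_{i-1},m_i]$, and admissibility on each subinterval guarantees $\mf{P}(m_i)\mf{L}(m_i)\in GL_c(U)$. Then
\begin{equation*}
\prod_{i=1}^{N}\sigma(\mf{L},[m_{i-1},m_i])=\deg(\mf{P}(a)\mf{L}(a))\deg(\mf{P}(b)\mf{L}(b))\prod_{i=1}^{N-1}\deg(\mf{P}(m_i)\mf{L}(m_i))^{2},
\end{equation*}
and the last product equals $1$ by property (i), yielding $\sigma(\mf{L},[a,b])$. The main technical obstacle is the proof of Homotopy invariance: one must show that a single parametrix can be chosen continuously in both $\l$ and $t$, which requires reviewing the local--to--global construction of parametrices and is where almost all of the work lies; the remaining three properties are then essentially bookkeeping on top of Leray--Schauder degree theory.
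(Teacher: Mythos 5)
The paper does not actually prove Theorem~\ref{th3.1}; it is stated as a compendium after Fitzpatrick and Pejsachowicz \cite{FP2}, so your proof is a from-scratch derivation rather than a reconstruction of an argument present in the text. That said, your treatments of stability, the product formula, and additivity are correct and cleanly executed: taking $\mf{P}=\mf{L}^{-1}$ for stability, the factoring $\mf{P}_1\mf{P}_2\mf{L}_2\mf{L}_1=(\mf{P}_1\mf{L}_1)\cdot(\mf{L}_1^{-1}\mf{P}_2\mf{L}_2\mf{L}_1)$ at the endpoints for the product formula, and the telescoping cancellation via $\deg(\cdot)^2=1$ for additivity are all sound, and they do reduce the theorem to the three listed facts about the Leray--Schauder degree on $GL_c$.

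The gap is in homotopy invariance, specifically in the justification that a single parametrix can be chosen continuously over $[a,b]\times[0,1]$. Your claim that ``parametrices exist locally and their differences are compact, so one can patch via a partition of unity'' does not work as stated. The difference of two local parametrices \emph{is} compact, so a convex combination $\mf{Q}=\sum_i\rho_i\mf{P}_i$ again satisfies $\mf{Q}\mf{L}-I_U\in\mc{K}(U)$; but $GL(V,U)$ is not convex, and the only thing this argument guarantees about $\mf{Q}$ is that it is Fredholm of index zero (being a compact perturbation of $\mf{P}_1\in GL(V,U)$), not that it is invertible. A convex combination of invertible operators can degenerate, so the patched $\mf{Q}$ may fail to be a parametrix. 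The true reason a global parametrix over $[a,b]\times[0,1]$ exists is of a different flavour: at each parameter value the set of parametrices is homeomorphic (via normalization by a fixed parametrix) to $GL_c(V)$, which has exactly two path components, so the space of parametrices forms a two-sheeted covering of the parameter space, and this covering is trivial over the simply connected square. This is essentially the content of the Fitzpatrick--Pejsachowicz parametrix existence theorem; without it, or without some equivalent topological argument, the homotopy-invariance step is unsupported. Once that existence result is invoked honestly, the rest of your argument (continuity of $t\mapsto\deg(\mf{P}(a,t)H(a,t))\deg(\mf{P}(b,t)H(b,t))$ into $\{-1,+1\}$, hence constancy) goes through.
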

\noindent Subsequently, for every $r\in\mathbb{N}\uplus\{\infty,\omega\}$,
we set
\[
\mathscr{C}^r([a,b],\Phi_{0}(U,V)) := \mathcal{C}^{r}([a,b],\Phi_{0}(U,V))\cap \mathscr{C}([a,b],\Phi_{0}(U,V)).
\]
\noindent The next result, proven by the authors in \cite{JJ}, shows how the parity of any admissible Fredholm path
$\mathfrak{L}\in\mathscr{C}([a,b], \Phi_{0}(U,V))$ can be computed though the algebraic multiplicity $\chi$.
\begin{theorem}
\label{th3.2}
	Any continuous admissible path $\mathfrak{L}\in\mathscr{C}([a,b],\Phi_{0}(U,V))$ is
	$\mathcal{A}$-homotopic to some analytic path  $\mathfrak{L}_{\omega}\in\mathscr{C}^{\omega}([a,b],\Phi_{0}(U,V))$. Moreover, for any of these paths,
\begin{equation*}
		\sigma(\mathfrak{L},[a,b])=(-1)^{\sum_{i=1}^{n}\chi[\mathfrak{L}_{\omega},\lambda_{i}]},
\end{equation*}
where $\Sigma(\mathfrak{L}_{\omega})=\{\lambda_{1},\lambda_{2},...,\lambda_{n}\}$.
\end{theorem}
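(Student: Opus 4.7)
The plan proceeds in three stages: approximation of $\mathfrak{L}$ by an analytic path, localization of the parity to each generalized eigenvalue, and an explicit parity computation at a single algebraic eigenvalue.

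For the approximation assertion, I would combine Weierstrass density with the openness of $\Phi_{0}(U,V)$ in $\mathcal{L}(U,V)$. Since $[a,b]$ is compact and $\mathcal{L}(U,V)$ is a Banach space, continuous paths are uniformly approximable by polynomial (hence analytic) paths $\mathfrak{L}_{\omega}$; continuity of the Fredholm index together with openness of $GL(U,V)$ ensure that, for $\|\mathfrak{L}-\mathfrak{L}_{\omega}\|_{\infty}$ sufficiently small, $\mathfrak{L}_{\omega}([a,b])\subset\Phi_{0}(U,V)$ and $\mathfrak{L}_{\omega}(a),\mathfrak{L}_{\omega}(b)\in GL(U,V)$. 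The convex homotopy $H(s,\lambda):=(1-s)\mathfrak{L}(\lambda)+s\mathfrak{L}_{\omega}(\lambda)$ then lies in $\mathscr{H}$ and furnishes the desired $\mathcal{A}$-homotopy.

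Once $\mathfrak{L}_{\omega}$ is fixed, homotopy invariance in Theorem \ref{th3.1} yields $\sigma(\mathfrak{L},[a,b])=\sigma(\mathfrak{L}_{\omega},[a,b])$. Since $\mathfrak{L}_{\omega}$ is admissible and analytic, the discussion in Section \ref{SGAM} shows that $\Sigma(\mathfrak{L}_{\omega})\cap[a,b]=\{\lambda_{1},\dots,\lambda_{n}\}$ is finite, with each $\lambda_{i}$ algebraic. Choosing a partition $a=m_{0}<m_{1}<\cdots<m_{n}=b$ isolating each $\lambda_{i}$ inside $(m_{i-1},m_{i})$ and invoking additivity of the parity, the claim reduces to the local identity
\[
\sigma(\mathfrak{L}_{\omega},[m_{i-1},m_{i}])=(-1)^{\chi[\mathfrak{L}_{\omega},\lambda_{i}]}
\]
on each subinterval. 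To establish it I would transversalize at $\lambda_{i}$: by Section \ref{SGAM} there is a polynomial curve $\Phi_{i}$ with $\Phi_{i}(\lambda_{i})=I_{U}$ such that $\lambda_{i}$ is a $\kappa_{i}$-transversal eigenvalue of $\mathfrak{L}_{\omega}^{\Phi_{i}}:=\mathfrak{L}_{\omega}\Phi_{i}$ and $\chi[\mathfrak{L}_{\omega}^{\Phi_{i}},\lambda_{i}]=\chi[\mathfrak{L}_{\omega},\lambda_{i}]$. Shrinking $[m_{i-1},m_{i}]$ so that $\Phi_{i}\in GL(U)$ throughout, stability gives $\sigma(\Phi_{i},[m_{i-1},m_{i}])=1$, and the product formula for parity reduces matters to $\sigma(\mathfrak{L}_{\omega}^{\Phi_{i}},[m_{i-1},m_{i}])$. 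Hence one may assume that $\lambda_{i}$ is $\kappa_{i}$-transversal. The transversal decomposition of $V$, together with an iterated Lyapunov--Schmidt reduction, then allows $\mathcal{A}$-homotoping $\mathfrak{L}_{\omega}^{\Phi_{i}}$ to a block-diagonal curve whose $j$-th block acts, up to composition with an invertible curve, as multiplication by $(\lambda-\lambda_{i})^{j}$ on a subspace of dimension $d_{j}=\dim\mathfrak{L}_{j}\bigl(\bigcap_{k=0}^{j-1}N[\mathfrak{L}_{k}]\bigr)$. The parity of such a block is the finite-dimensional sign $(-1)^{j d_{j}}$, and the product formula yields $\sigma(\mathfrak{L}_{\omega}^{\Phi_{i}},[m_{i-1},m_{i}])=(-1)^{\sum_{j}j d_{j}}=(-1)^{\chi[\mathfrak{L}_{\omega},\lambda_{i}]}$.

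The principal obstacle is the last step: constructing the explicit block normal form of a $\kappa$-transversal eigenvalue. The subspaces $\mathfrak{L}_{j}\bigl(\bigcap_{k<j}N[\mathfrak{L}_{k}]\bigr)$ couple derivatives of different orders, so an iterated Lyapunov--Schmidt reduction is needed to simultaneously eliminate lower-order cross terms and arrive at the claimed normal form. Once this is in place the parity computation is routine, since for the block $(\lambda-\lambda_{i})^{j}I_{d_{j}}$ the determinant equals $(\lambda-\lambda_{i})^{j d_{j}}$, whose sign changes exactly $j d_{j}$ times as $\lambda$ traverses $\lambda_{i}$.
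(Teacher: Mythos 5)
Your three-stage architecture is sound, and the first two stages are essentially complete: Banach-space-valued Weierstrass/Bernstein approximation of $\mathfrak{L}$ by a polynomial path, the compactness-plus-openness argument ensuring $\mathfrak{L}_\omega([a,b])\subset\Phi_0(U,V)$ with invertible endpoints, the convex homotopy $H(s,\lambda)=(1-s)\mathfrak{L}(\lambda)+s\mathfrak{L}_\omega(\lambda)$, homotopy invariance, discreteness of $\Sigma(\mathfrak{L}_\omega)\cap[a,b]$, additivity, and the transversalization step (using $\chi[\mathfrak{L}_\omega^{\Phi_i},\lambda_i]=\chi[\mathfrak{L}_\omega,\lambda_i]$ together with $\sigma(\Phi_i,[m_{i-1},m_i])=1$) are all applied correctly. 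Note the paper does not contain its own proof of this theorem (it cites \cite{JJ}), so the comparison can only be against the soundness of your argument.

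The gap you flag is real, and it is not a small one. The claim that $\mathfrak{L}_\omega^{\Phi_i}$ can be $\mathcal{A}$-homotoped near $\lambda_i$ to a block-diagonal curve with $j$-th block $(\lambda-\lambda_i)^j$ on a $d_j$-dimensional subspace is a local Smith-type normal form for analytic Fredholm pencils; invoking an ``iterated Lyapunov--Schmidt reduction'' names the idea but does not carry out the inductive elimination of cross terms, which requires constructing curves (not merely values at $\lambda_i$) of isomorphisms of $U$ and $V$ respecting the filtration by the $N_j:=\bigcap_{k\le j}N[\mathfrak{L}_k]$. A tighter route avoids the transversal block decomposition entirely. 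Perform a single Schur-complement (Lyapunov--Schmidt) reduction near $\lambda_i$: write $\mathfrak{L}_\omega(\lambda)=E(\lambda)\,\diag(\mathfrak{G}(\lambda),D(\lambda))\,F(\lambda)$ with $E,F$ analytic curves of isomorphisms, $D(\lambda)$ invertible for $\lambda\sim\lambda_i$, and $\mathfrak{G}$ a finite-dimensional analytic curve. The stability and product formulas for the parity give, after shrinking the interval, $\sigma(\mathfrak{L}_\omega,[m_{i-1},m_i])=\sigma(\mathfrak{G},[m_{i-1},m_i])=\sign\det\mathfrak{G}(m_{i-1})\cdot\sign\det\mathfrak{G}(m_i)=(-1)^{\ord_{\lambda_i}\det\mathfrak{G}}$, while the identity $\ord_{\lambda_i}\det\mathfrak{G}=\chi[\mathfrak{L}_\omega,\lambda_i]$ is precisely \cite[Th.\,1.2]{JJ3}, whose one-dimensional instance appears in this paper as Lemma~\ref{Lemma12.1.2}. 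That identity is the genuine content; establishing it (for example via the axiomatization in Theorem~\ref{th24}, checking (PF) and (NP) for $\ord\det$ of the reduced pencil) is what replaces the normal-form construction, and it is considerably less delicate than building the block decomposition by hand.
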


As the main trouble to introduce a topological degree for Fredholm operators of index zero is the absence of orientation in $GL(U,V)\subset\Phi_{0}(U,V)$, the notion introduced in the next definition, going back to Fitzpatrick, Pejsachowicz and Rabier \cite{FPRa}, restricts the admissible maps for which the degree is defined to the ones where is possible to introduce a notion of \emph{orientability}. Let $X$ be a path-connected topological space and $h:X\to\Phi_{0}(U,V)$ a continuous function. A point $x\in X$ is said to be \emph{regular with respect to $h$} if $h(x)\in GL(U,V)$. Subsequently, the set of regular points with respect to $h$ will be denoted by $\mathcal{R}_{h}$.

\begin{definition}
\label{de3.3}
Let $X$ be a path-connected topological space, and consider a pair $(U,V)$ of real Banach spaces. A continuous map $h:X\to\Phi_{0}(U,V)$ is said to be orientable if there exists a function $\varepsilon: \mathcal{R}_{h}\to\mathbb{Z}_{2}$, called orientation, such that, for every continuous curve $\gamma\in\mathcal{C}([a,b],X)$ with $\gamma(a), \gamma(b)\in \mathcal{R}_{h}$,
	\begin{equation}
		\label{4.4.200}
		\sigma(h\circ \gamma,[a,b])=\varepsilon(\gamma(a))\cdot\varepsilon(\gamma(b)).
	\end{equation}
When $X$ is not path-connected, a map $h:X\to\Phi_{0}(U,V)$ is said to be orientable if it is orientable on each path-connected component of $X$.
\end{definition}

\par
Now, we will collect some important features going back to the seminal paper of Fitzpatrick, Pejsachowicz and Rabier \cite{FPRa}. If $h:X\to\Phi_{0}(U,V)$ is an orientable map with orientation $\varepsilon:\mc{R}_{h}\to\Z_{2}$, then $\varepsilon$ is constant on each path connected component of $\mc{R}_{h}$. Moreover, for every continuous map $h:X\to\Phi_{0}(U,V)$, the following three assertions are equivalent:
	\begin{enumerate}
		\item $h $ is orientable.
		\item For each  $\gamma\in \mc{C}([a,b],X)$ such that $\gamma(a),\gamma(b)\in\mc{R}_{h}$, the parity $\sigma(h\circ\gamma,[a,b])$ only depends on $\{\gamma(a),\gamma(b)\}$.
		\item $\sigma(h\circ\gamma,[a,b])=1$ for every closed path $\gamma\in \mc{C}([a,b],X)$  such that $\gamma(a)=\gamma(b)\in \mathcal{R}_h$.
	\end{enumerate}
When $h:X\to\Phi_{0}(U,V)$ is orientable and $\mc{R}_{h}\neq \emptyset$, then there are, exactly, two different orientations for $h$. Namely, for any given $p\in\mc{R}_{h}$, these orientations are defined by
\begin{equation}
	\label{2O}
	\varepsilon^{\pm}:\mc{R}_{h} \longrightarrow \mathbb{Z}_{2}, \quad \varepsilon^{\pm}(q):=\pm\sigma(h\circ \gamma_{pq},[a,b]),
\end{equation}
where $\gamma_{pq}\in \mc{C}([a,b],X)$ is an arbitrary path such that $\g(a)=p$ and $\g(b)=q$, and the sign $\pm$ determines the orientation of $p$, in the sense that $\varepsilon^{+}(p)=1$ and $\varepsilon^{-}(p)=-1$. Thus, \eqref{2O} can be expressed as
\begin{equation}
	\label{2O2}
	\varepsilon^{\pm}(q)=\varepsilon^{\pm}(p)\cdot \sigma(h\circ \gamma_{pq},[a,b]), \quad q\in\mc{R}_{h}.
\end{equation}

\subsection{Topological Degree for Fredholm Operators}
The main goal of this section is to introduce the degree for Fredholm operators that we are going to use in this paper. The best way to do it is through its axiomatization theorem. To state this fundamental result we need to introduce some previous notations and terminologies.
Let $(U,V)$ be a pair of real Banach spaces. For any open subset, $\mc{O}\subset U$ and integers $n\geq 0$, $r\geq 1$, an operator $f:\mc{O} \to V$ is said to be $\mathcal{C}^{r}$-\emph{Fredholm of index n} if $f\in \mathcal{C}^{r}(\mc{O},V)$ and $Df\in \mathcal{C}^{r-1}(\mc{O},\Phi_{n}(U,V))$. The set of all these operators is denoted in this paper by $\mathscr{F}^{r}_{n}(\mc{O},V)$. An operator $f\in\mathscr{F}^{r}_{0}(\mc{O},V)$ is said to be \emph{orientable} if  $Df:\mc{O}\to\Phi_{0}(U,V)$ is an orientable map, as discussed in Definition \ref{de3.3}. Moreover, for any open and bounded set $\Omega$ such that $\bar{\O}\subset\mc{O}\subset U$, and any operator $f:\mc{O} \to V$ satisfying 	
\begin{enumerate}
	\item $f\in \mathscr{F}^{1}_{0}(\mc{O},V)$ is \emph{orientable} with orientation $\varepsilon:\mathcal{R}_{Df}\to\mathbb{Z}_{2}$,
	\item $f$ is \emph{proper} on $\bar{\O}$,
	\item $0\notin f(\partial \Omega)$,
\end{enumerate}
it is said that $(f,\Omega,\varepsilon)$ is a \emph{Fredholm $\mc{O}$-admissible triple}. Subsequently, the class of Fredholm $\mc{O}$-admissible triples is denoted by $\mathscr{A}(\mc{O})$.
\par
Given a $\mc{C}^{r}$-Fredholm map $f:\mc{O}\subset U\to V$, a point $u\in \mc{O}$ is said to be a \textit{regular point} of $f$ if $Df(u)\in\mc{L}(U,V)$ is surjective, i.e., $R[Df(u)]=V$. Thanks to the open mapping theorem, if $f\in\mathscr{F}^{r}_{0}(\mc{O},V)$, $u\in \mc{O}$ is a regular point of $f$ if and only if $Df(u)\in GL(U,V)$. Naturally, $\mc{R}_{Df}$ stands for the set of regular points of $f$. On the other hand, for any given open or closed subset of $\mc{O}$, $\mathscr{O}\subset \mc{O}$, a point $v\in V$ is said to be  a \textit{regular value} of $f:\mathscr{O}\to V$ if $f^{-1}(v)\cap \mathscr{O}$ is empty or it consists on regular points, i.e.,  $Df(u)\in \mc{L}(U,V)$ is surjective for each $u\in f^{-1}(v)\cap \mathscr{O}$. In this paper, the set of regular values of $f:\mathscr{O}\to V$ is denoted by $\mc{RV}_{f}(\mathscr{O})$. By definition, the regular points and regular values of $f:\mathscr{O}\to V$ are related via the set identity
$$
  \mc{RV}_{f}(\mathscr{O})=V\backslash f(\mathscr{O}\backslash \mc{R}_{Df}).
$$
Obviously, $ \mc{RV}_{f}(\mc{O})\subset \mc{RV}_{f}(\mathscr{O}_{2})\subset\mc{RV}_{f}(\mathscr{O}_{1})$ for any open or closed subsets $\mathscr{O}_{1},\mathscr{O}_{2}\subset \mc{O}$ such that $\mathscr{O}_{1}\subset \mathscr{O}_{2}$.
Given  an open and bounded subset of $\mc{O}$, say $\O$, for the construction of the degree, we are mainly interested in regular values of the restriction map $f:\bar{\O}\to V$, i.e., in the set, $\mc{RV}_{f}(\bar{\O})$. Note that if $(f,\O,\varepsilon)\in\mathscr{A}(\mc{O})$ and $v\in \mc{RV}_{f}(\bar{\O})$, then $f^{-1}(v)\cap \bar{\O}$ is finite, possibly empty.
Given $(f,\Omega,\varepsilon)\in \mathscr{A}(\mc{O})$, it is said that $(f,\Omega,\varepsilon)$ is a \textit{$\mc{O}$-regular triple} if $0\in\mc{RV}_{f}(\O)$, i.e., if
$Df(x)\in GL(U,V)$ for all $x \in f^{-1}(0)\cap \O$. The set of regular triples is denoted by $\mathscr{R}(\mc{O})$.
\par
It is said that a map $H\in \mathcal{C}^{r}([0,1]\times \mc{O},V)$ is a $\mathcal{C}^{r}$-\textit{Fredholm homotopy} if $H\in\mathscr{F}^{r}_{1}([0,1]\times \mc{O}, V)$, i.e., if
\begin{equation*}
	D_{u}H(t,u)\in \Phi_{0}(U,V) \text{ for all } (t,u)\in [0,1]\times \mc{O}.
\end{equation*}
A $\mc{C}^{r}$-Fredholm homotopy $H\in\mathscr{F}^{r}_{1}([0,1]\times \mc{O},V)$ is called \textit{orientable} if $D_{u}H:[0,1]\times\mc{O}\to\Phi_{0}(U,V)$ is an orientable map. In such case, we denote by  $\varepsilon_{t}$ the restriction
\begin{equation}
	\label{Orient2}
	\varepsilon_{t}: \mathcal{R}_{DH_{t}}\longrightarrow\mathbb{Z}_{2}, \quad \varepsilon_{t}(x):=\varepsilon(t,x),
\end{equation}
for every $t\in[0,1]$. Given $H\in\mathscr{F}^{r}_{1}([0,1]\times\mc{O},V)$, the following statements hold:

\begin{enumerate}
	\item If $H$ is orientable with orientation $\varepsilon$,  then for every $t\in[0,1]$, the $t$-section $H_{t}\in \mathscr{F}_{0}^{r}(\mc{O}, V)$ is orientable with the orientation $\varepsilon_{t}$ defined in \eqref{Orient2}.
	
	\item If for some $t_{0}\in[0,1]$, the section $H_{t_0}\in\mathscr{F}_{0}^{r}(\mc{O},V)$ is non-degenerate and orientable, then $H$ is orientable. Furthermore, any orientation $\varepsilon_{t_{0}}$ of $H_{t_{0}}$ can be extended as an orientation $\varepsilon$ of $H$.
\end{enumerate}

We are ready to introduce  the class of $\mc{O}$-admissible homotopies. For any open and bounded subset $\O\subset U$ such that $\bar{\O}\subset \mc{O}$, it is said that $(H,\O,\varepsilon)$ is a \textit{Fredholm $\mc{O}$-admissible homotopy} if the following conditions are satisfied:
\begin{enumerate}
	\item $H\in\mathscr{F}^{1}_{1}([0,1]\times \mc{O},V)$ is \textit{orientable} with orientation $\varepsilon:\mc{R}_{D_u H}\to\Z_{2}$,
	\item $H$ is proper on $[0,1]\times \bar{\O}$,
	\item $0\notin H([0,1]\times \partial\O)$,
\end{enumerate}
The class of $\mc{O}$-admissible homotopies is denoted by $\mathscr{H}(\mc{O})$.
\par
Finally, the \textit{admissible class} is given by the set $\mc{A}:=\mathscr{A}/\sim$, where
$$
  \mathscr{A}:=\bigcup\{\mathscr{A}(\mc{O}): \mc{O}\subset U \ \text{open subset}\}
$$
is the class of all $\mc{O}$-admissible triples, and the binary relation $\sim$ relates two triples $(f_i,\O_i,\varepsilon_i)\in\mathscr{A}(\mc{O}_{i})$, $i\in\{1,2\}$, whenever:
\begin{enumerate}
	\item $\O_{1}=\O_{2}\equiv \O$.
	\item $f_{1}(u)=f_{2}(u)$ for each $u\in\bar{\O}$.
	\item $\varepsilon_{1}(u)=\varepsilon_{2}(u)$ for each $u\in\mc{R}_{Df_1}\cap\O=\mc{R}_{Df_2}\cap\O$.
\end{enumerate}
Once introduced these notations, we can estate the next axiomatization of the topological degree for Fredholm operators.

\begin{theorem}[\textbf{Axiomatization of the degree}]
	\label{thiii.1}
	There exists a unique integer valued map $\deg: \mc{A}\to \mathbb{Z}$
	satisfying the next three properties:
	\begin{enumerate}
		\item[{\rm (N)}] \textbf{Normalization:} For every $L\in GL(U,V)$ with orientation $\varepsilon$ and each open and bounded subset $\O\subset U$ such that $0\in\O$, one has that
$$
    \deg(L,\Omega,\varepsilon)=\varepsilon(0).
$$

\item[{\rm (A)}] \textbf{Additivity:} For every $(f,\Omega,\varepsilon)\in\mc{A}$  and any
		pair of disjoint open subsets $\Omega_{1}$ and $\Omega_{2}$ of $\Omega$ with $0\notin f(\Omega\backslash (\Omega_{1}\uplus \Omega_{2}))$,
\begin{equation}
			\label{AddPr}
			\deg(f,\Omega,\varepsilon)=\deg(f,\Omega_{1},\varepsilon)+\deg(f,\Omega_{2},\varepsilon).
\end{equation}

\item[{\rm (H)}] \textbf{Homotopy Invariance:} For every open subset $\mc{O}\subset U$ and each $\mc{O}$-admissible homotopy $(H,\O,\varepsilon)\in\mathscr{H}(\mc{O})$, we have that
\begin{equation}
			\label{HomPr2}
			\deg(H(0,\cdot),\Omega,\varepsilon_{0})=\deg(H(1,\cdot),\Omega,\varepsilon_{1}).
\end{equation}
\end{enumerate}
Moreover, for every open subset $\mc{O}\subset U$ and $(f,\Omega,\varepsilon)\in\mathscr{R}(\mc{O})$, with $\O$ connected and such that $\mathcal{R}_{Df}\neq\emptyset$, one has that, for every $p\in\mathcal{R}_{Df}$,
	\begin{equation}
		\label{5.5.12T}
		\deg(f,\Omega,\varepsilon)=\varepsilon(p)\cdot \sum_{u\in f^{-1}(0)\cap \Omega} (-1)^{\chi[\mathfrak{L}_{\omega,u},[a,b]]}
	\end{equation}
	where $\mathfrak{L}_{\omega,u}\in \mathscr{C}^\omega([a,b],\Phi_{0}(U,V))$ is any analytic curve   $\mc{A}$-homotopic to $Df\circ\gamma$, for some $\gamma\in\mathcal{C}([a,b],\Omega)$ such that
	$\gamma(a)=p$, $\gamma(b)=u$, and
	\begin{equation*}	
		\chi[\mathfrak{L}_{\omega,u},[a,b]]:=\sum_{\lambda\in
			\Sigma(\mathfrak{L}_{\omega,u})\cap[a,b]}\chi[\mathfrak{L}_{\omega,u},\lambda].
	\end{equation*}
\end{theorem}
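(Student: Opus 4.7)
The plan is a two-stage argument: axioms (N), (A), (H) are shown to force the explicit formula \eqref{5.5.12T} on the subclass $\mathscr{R}(\mc{O})$ of regular triples, and then an arbitrary admissible triple is reduced to a regular one by perturbing the target value, yielding both uniqueness and, running the argument in reverse, existence. For uniqueness on a regular triple $(f,\O,\varepsilon)\in \mathscr{R}(\mc{O})$, properness of $f$ on $\bar\O$ together with $0\in\mc{RV}_f(\O)$ makes $f^{-1}(0)\cap\O=\{u_1,\dots,u_k\}$ finite. Choose pairwise disjoint balls $B_i\ni u_i$ with $\bar B_i\subset\O$ such that $Df(B_i)\subset GL(U,V)$ and $0\notin f(\O\setminus\cup_i B_i)$; axiom (A) gives $\deg(f,\O,\varepsilon)=\sum_{i=1}^k \deg(f,B_i,\varepsilon)$. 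On each $B_i$, the straight-line Fredholm homotopy $H_i(t,u):=(1-t)f(u)+tDf(u_i)(u-u_i)$ is $\mc{O}$-admissible once $B_i$ is small (its $u$-derivative stays in $GL(U,V)$ by continuity and hence inherits $\varepsilon$, properness is automatic on compact slices, and $0\notin H_i([0,1]\times\partial B_i)$), so by (H) followed by a translation and (N) one obtains $\deg(f,B_i,\varepsilon)=\varepsilon(u_i)$.

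To identify $\varepsilon(u_i)$ with the right-hand side of \eqref{5.5.12T}, fix a regular reference point $p\in\mc{R}_{Df}$ in the path-connected $\O$ and pick $\gamma_i\in\mc{C}([a,b],\O)$ with $\gamma_i(a)=p$ and $\gamma_i(b)=u_i$. The orientability identity \eqref{2O2} yields $\varepsilon(u_i)=\varepsilon(p)\cdot\sigma(Df\circ\gamma_i,[a,b])$, while Theorem \ref{th3.2} produces an analytic $\mathfrak{L}_{\omega,u_i}\in\mathscr{C}^\omega([a,b],\Phi_0(U,V))$ that is $\mc{A}$-homotopic to $Df\circ\gamma_i$ with
\[
\sigma(Df\circ\gamma_i,[a,b])=\sigma(\mathfrak{L}_{\omega,u_i},[a,b])=(-1)^{\chi[\mathfrak{L}_{\omega,u_i},[a,b]]}.
\]
Summing over $i$ proves \eqref{5.5.12T}. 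Independence of $p$, $\gamma_i$, and $\mathfrak{L}_{\omega,u_i}$ follows directly from orientability of $Df$ together with the homotopy invariance and product formula for the parity recorded in Theorem \ref{th3.1}.

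For a general $(f,\O,\varepsilon)\in\mc{A}$, properness makes $f(\partial\O)$ closed in $V$; since $0\notin f(\partial\O)$, fix $\delta>0$ with $B_\delta(0)\cap f(\partial\O)=\emptyset$. By the Sard--Smale theorem, the set $\mc{RV}_f(\bar\O)$ is residual, hence dense, so there exists $v\in B_\delta(0)\cap\mc{RV}_f(\bar\O)$; the shifted map $f_v:=f-v$ produces $(f_v,\O,\varepsilon)\in\mathscr{R}(\mc{O})$. The affine homotopy $H(t,u)=f(u)-tv$ is $\mc{O}$-admissible (its $u$-derivative equals $Df$, preserving $\varepsilon$, and its image avoids $0$ on $\partial\O$ because $v\in B_\delta(0)$), so (H) forces $\deg(f,\O,\varepsilon)=\deg(f_v,\O,\varepsilon)$, already determined in Steps 1--2. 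Existence is obtained by defining $\deg$ on $\mathscr{R}(\mc{O})$ by \eqref{5.5.12T}, extending to $\mc{A}$ via the shift procedure, and verifying (N), (A), (H) directly: (N) is built in, (A) reduces to breaking the finite sum in \eqref{5.5.12T} into subsums, and (H) follows by approximating an arbitrary $\mc{O}$-admissible homotopy by one whose critical instants are transverse, at which Theorem \ref{th3.2} tracks the exact change of $\chi$.

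The delicate point, and the main obstacle, is showing that the integer assigned by the shift procedure is independent of the regular $v\in B_\delta(0)$ chosen. When two regular values $v,v'$ are joined by a path in $B_\delta(0)\setminus f(\partial\O)$ that crosses critical values of $f|_{\bar\O}$, zeros in the pre-image appear or disappear in pairs, and one must verify that their contributions in \eqref{5.5.12T} cancel. Equivalently, the parity $\sigma(Df\circ\gamma,[a,b])$ along a small closed loop $\gamma$ in $\O$ encircling such a critical fiber must equal $1$, which is precisely the orientability criterion recorded just above Definition \ref{de3.3}. Once this cocycle balance is secured, consistency of the extension and the three axioms follow, completing both halves of the theorem.
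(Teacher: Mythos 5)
The paper does not prove Theorem~\ref{thiii.1} in the text: existence is attributed to \cite{FPRb} and \cite{PR}, and the uniqueness together with formula \eqref{5.5.12T} to \cite{JJ2} and \cite{JJ}; only a brief sketch of the degree's construction on regular triples (formula \eqref{5.5.13}) and its extension by shifting to a nearby regular value follows the statement. Your uniqueness argument on regular triples is sound in structure and matches the spirit of \cite{JJ2}: isolate the finitely many zeros, apply (A), collapse each $B_i$ to the linearization by a homotopy, invoke (N), and convert $\varepsilon(u_i)$ to the parity formula via \eqref{2O2} and Theorem~\ref{th3.2}. The one loose end is the assertion that the straight-line homotopy $H_i(t,u)=(1-t)f(u)+tDf(u_i)(u-u_i)$ is ``automatically'' proper on $[0,1]\times\bar{B}_i$ because the slices are compact; $\bar{B}_i$ is not compact in $U$, so this is not automatic. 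Properness does hold for small $B_i$ -- it follows from the coercivity $\|H_i(t,u)\|\geq c\|u-u_i\|$ obtained via $f(u_i)=0$ and $f(u)-Df(u_i)(u-u_i)=o(\|u-u_i\|)$, combined with Smale's local properness -- but that justification needs to appear rather than being waved away.

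The existence half, however, is only a plan with genuine gaps. Both the independence of the degree from the chosen regular value $v$ (your ``cocycle balance'') and the homotopy axiom (H) itself are substantive theorems that occupy much of \cite{FPRb} and \cite{PR}: they rest on the Quinn--Sard--Smale transversality theorem and a delicate cancellation analysis as zeros appear and disappear in pairs across critical fibers of $f$, with the orientability and parity machinery ensuring that the signs balance. You explicitly flag this as the main obstacle but leave it unproved, and the verification of (A) ``by breaking the finite sum into subsums'' implicitly presupposes exactly this consistency, so the existence sketch is circular as written. These are not routine checks; they are the heart of the construction that the paper deliberately outsources to the cited literature.
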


The existence goes back to Fitzpatrick, Pejsachowicz and Rabier \cite{FPRb} for
$\mathcal{C}^{2}$ mappings, and to Pejsachowicz and Rabier \cite{PR} in the $\mathcal{C}^{1}$ setting. The uniqueness and the generalized Schauder formula \eqref{5.5.12T} were established by the authors in \cite{JJ2} and \cite{JJ}, respectively. Naturally, from the axioms (N), (A) and (H) one can readily get the most basic properties of the degree, as its excision and fundamental properties. For the purposes of this paper,  it is appropriate to sketch, very briefly, the construction of the degree carried over in \cite{FPRb, PR}. Let us start by defining the degree for regular triples. For every $(f,\O,\varepsilon)\in\mathscr{R}(\mc{O})$, by definition, $f\in\mathscr{F}^{1}_{0}(\mc{O},V)$ is a $\mathcal{C}^{1}$-Fredhom map of index zero and it is $\varepsilon$-orientable, i.e., $Df:\mc{O}\to\Phi_{0}(U,V)$ is an orientable map with orientation
$\varepsilon:\mathcal{R}_{Df} \to \mathbb{Z}_{2}$. Since $0\in\mc{RV}_{f}(\bar{\O})$, $f^{-1}(0)\cap \bar{\O}=f^{-1}(0)\cap \O$ is finite, possibly empty. Thus, if $f^{-1}(0)\cap \O\neq \emptyset$, we can define
\begin{equation}
	\label{5.5.13}
	\deg(f,\Omega,\varepsilon):=\sum_{u\in f^{-1}(0)\cap\O} \varepsilon(u),
\end{equation}
while we set $\mathrm{deg}(f,\Omega,\varepsilon):=0$ if $f^{-1}(0)\cap\O=\emptyset$. Note that, whenever $u\in f^{-1}(0)\cap \O$, $u\in\mc{R}_{Df}$ since $0$ is a regular value. Hence,  $\varepsilon(u)$ is well defined. If $0\notin \mc{RV}_{f}(\O)$, then we define
$$
   \deg(f,\O,\varepsilon):=\deg(f-v,\O,\varepsilon),
$$
where $v\in V$ is any regular value of $f:\O\to V$ lying in a sufficiently small neighbourhood of $0$ in $V$. Since $Df=D(f-v)$, the orientation map $\varepsilon$ is the same for both maps $f$ and $f-v$. The existence of the regular value is guaranteed by the Quinn--Sard--Smale theorem, \cite{Sa,Sm,QS}.

\subsection{Perturbation Theorems}

The introduction of an orientation associated to each particular  map $f$ might cause  some troubles in applications when dealing with the homotopy invariance of the degree, because this property relays on the particular global orientation chosen in the axiom (H) of Theorem \ref{thiii.1}. To precise what we mean, let $(H,\O,\varepsilon)\in\mathscr{H}(\mc{O})$ be a $\mc{O}$-admissible homotopy and suppose that  $\gamma\in\mathcal{C}([a,b],[0,1]\times\Omega)$ is a path such that $\gamma(a)=(0,p_0)$ and $\gamma(b)=(1,p_{1})$, where $p_{t}$ is a regular point of $D_{u}H(t,\cdot)$ for each $t\in\{0,1\}$. By \eqref{2O}, since  $\mathcal{R}_{DH_{t}}\neq \emptyset$ for each $t\in\{0,1\}$, there are two different orientations of $H_{t}\equiv H(t,\cdot)$. Thus, for any given regular point $(t,p_{t})$ of $H_{t}$, there exists a unique orientation such that $\varepsilon(t,p_{t})=1$, while  the other satisfies $\varepsilon(t,p_{t})=-1$. Let $\varepsilon_{p_t}$ denote the unique orientation of $H_t$ with $\varepsilon_{p_t}(p_{t})=1$, $t\in\{0,1\}$. Then, by the homotopy invariance of the degree, we have that
\begin{equation}
	\label{6.6.4} \deg(H_{0},\Omega,\varepsilon_{0})=\deg(H_{1},\Omega,\varepsilon_{1}).
\end{equation}
The formula \eqref{6.6.4} establishes the degree invariance under admissible  homotopies by choosing in $H_{t}$ the (global) orientation $\varepsilon_{t}:=\varepsilon(t,\cdot)$, $t\in [0,1]$. Thus, if, for example, we have that $\varepsilon_{0}=\varepsilon_{p_{0}}$ and $\varepsilon_{1}=-\varepsilon_{p_{1}}$, then
\begin{equation*}
	\deg(H_{0},\Omega,\varepsilon_{0})=\deg(H_{1},\Omega,\varepsilon_{1}),
\end{equation*}
though, paradoxically,
\begin{equation*}
	\deg(H_{0},\Omega,\varepsilon_{p_{0}})=-\deg(H_{1},\Omega,\varepsilon_{p_{1}}).
\end{equation*}
Therefore, in dealing with the homotopy invariance of the degree for Fredholm operators,  one should be extremely careful with the eventual changes of sign of the degree caused by the changes of
orientation, even when using its invariance by homotopy. Such a rather subtle problematic, outside the  Leray--Schauder degree, arises in the context of the degree for Fredholm operators by the absence of a global orientation in $GL(U,V)$. Nevertheless, one can easily get rid of this ambiguity by using the parity or, equivalently, the generalized algebraic multiplicity. Indeed, the next result holds.

\begin{lemma}
	\label{leiii.5}
	Let $(H,\O,\varepsilon)\in\mathscr{H}(\mc{O})$ be a $\mc{O}$-admissible homotopy with $\O$ connected and $p_{0}\in\mc{R}_{DH_{0}}$, $p_{1}\in\mc{R}_{DH_{1}}$. Then, for every path $\gamma\in\mc{C}([a,b],[0,1]\times\O)$ such that $\g(a)=(0,p_{0})$ and $\g(b)=(1,p_{1})$,
\begin{equation}
\label{HomPA}
		\deg(H_{0},\Omega,\varepsilon_{p_0})=\sigma(D_{u}H\circ \gamma,[a,b])
		\deg(H_{1},\Omega,\varepsilon_{p_1}).
\end{equation}
Therefore, thanks to Theorem \ref{th3.2},
\begin{equation}
		\label{HomMA}
		\deg(H_{0},\Omega,\varepsilon_{p_0})=(-1)^{\chi[\mathfrak{L}_{\omega},[a,b]]}
		\deg(H_{1},\Omega,\varepsilon_{p_1}),
\end{equation}
	where $\mathfrak{L}_{\omega}\in\mathscr{C}^\omega([a,b],\Phi_{0}(U,V))$ is any analytic map $\mc{A}$-homotopic to $D_{u}H\circ\gamma$.
\end{lemma}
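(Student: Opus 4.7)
The approach is to apply the homotopy invariance axiom (H) to $H$ with the orientations $\varepsilon_t(\cdot):=\varepsilon(t,\cdot)$ induced by restriction of the global orientation of $D_{u}H$, and then carefully compare these with the base-point-normalized orientations $\varepsilon_{p_t}$. The whole proof boils down to tracking two signs: the one relating $\varepsilon_t$ and $\varepsilon_{p_t}$, and the one given by the parity along $\gamma$.

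First, by point (1) in the list after \eqref{Orient2}, each section $H_t$ is orientable with the restricted orientation $\varepsilon_t$ on $\mathcal{R}_{DH_t}$; in particular $\varepsilon_0$ and $\varepsilon_1$ are well defined at $p_0$ and $p_1$. Since $(H,\Omega,\varepsilon)\in\mathscr{H}(\mathcal{O})$, axiom (H) gives
\begin{equation*}
\deg(H_{0},\Omega,\varepsilon_{0})=\deg(H_{1},\Omega,\varepsilon_{1}).
\end{equation*}
On the other hand, because $\mathcal{R}_{DH_t}$ contains $p_t$ and there exist exactly two orientations of $H_t$, the orientation $\varepsilon_t$ must coincide with either $\varepsilon_{p_t}$ or $-\varepsilon_{p_t}$ according to the sign $\varepsilon_t(p_t)\in\{\pm 1\}$, that is $\varepsilon_t=\varepsilon_t(p_t)\cdot\varepsilon_{p_t}$.

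Next, I would invoke that the degree is odd with respect to the orientation: $\deg(f,\Omega,-\varepsilon')=-\deg(f,\Omega,\varepsilon')$. For regular triples this is immediate from the definition \eqref{5.5.13}, and it propagates to the non-regular case via the regular value perturbation. Therefore
\begin{equation*}
\deg(H_{t},\Omega,\varepsilon_{t})=\varepsilon_{t}(p_t)\deg(H_{t},\Omega,\varepsilon_{p_t}),\qquad t\in\{0,1\}.
\end{equation*}
Substituting into the equality provided by (H) and multiplying through by $\varepsilon_0(p_0)\in\{\pm 1\}$ yields
\begin{equation*}
\deg(H_{0},\Omega,\varepsilon_{p_0})=\varepsilon_0(p_0)\varepsilon_1(p_1)\deg(H_{1},\Omega,\varepsilon_{p_1}).
\end{equation*}

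To finish, I would identify the sign $\varepsilon_0(p_0)\varepsilon_1(p_1)$ with the parity $\sigma(D_{u}H\circ\gamma,[a,b])$. This is exactly the defining property \eqref{4.4.200} of the orientation $\varepsilon$ of $D_{u}H$ applied to the continuous path $\gamma$ from $(0,p_0)$ to $(1,p_1)$ in $[0,1]\times\Omega$: since both endpoints lie in $\mathcal{R}_{D_{u}H}$, the path $D_{u}H\circ\gamma$ is admissible, and
\begin{equation*}
\sigma(D_{u}H\circ\gamma,[a,b])=\varepsilon(\gamma(a))\cdot\varepsilon(\gamma(b))=\varepsilon_0(p_0)\cdot\varepsilon_1(p_1).
\end{equation*}
This proves \eqref{HomPA}. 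The formula \eqref{HomMA} then follows at once by applying Theorem \ref{th3.2} to the admissible path $D_{u}H\circ\gamma$ to rewrite the parity as $(-1)^{\chi[\mathfrak{L}_{\omega},[a,b]]}$. The main conceptual obstacle is psychological rather than technical: one has to resist conflating the restricted orientation $\varepsilon_t$ (which is compatible across $t$ because it comes from a single global $\varepsilon$ on $D_{u}H$) with the locally normalized orientation $\varepsilon_{p_t}$ (whose normalizations at $t=0$ and $t=1$ need not be compatible), and carefully quantify the discrepancy by the parity along $\gamma$.
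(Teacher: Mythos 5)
Your proposal is correct and follows essentially the same route as the paper's proof: apply axiom (H) with the restricted orientations $\varepsilon_t$, relate each $\varepsilon_t$ to the base-point normalized $\varepsilon_{p_t}$ via the sign $\varepsilon_t(p_t)$ and the odd-dependence of the degree on the orientation, and identify $\varepsilon_0(p_0)\varepsilon_1(p_1)$ with the parity along $\gamma$ through the defining property \eqref{4.4.200}. The only cosmetic difference is that you single out and justify the identity $\deg(f,\Omega,-\varepsilon')=-\deg(f,\Omega,\varepsilon')$ explicitly, whereas the paper folds it into ``by the definition of the degree''; the substance is identical.
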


\begin{proof}
	Since $p_{0}\in\mc{R}_{DH_{0}}$, by \eqref{2O}, $H_{0}$ admits two orientations, $\varepsilon_{p_{0}}$ and $-\varepsilon_{p_{0}}$. Consequently, either $\varepsilon_{0}=\varepsilon_{p_{0}}$, or $\varepsilon_{0}=-\varepsilon_{p_{0}}$. In either case, 
$$
  \varepsilon_{0}(q)=\varepsilon_{0}(p_{0})\cdot \varepsilon_{p_{0}}(q), \quad q\in\mc{R}_{DH_{0}},
$$
because $\varepsilon_{0}(p_{0})=1$ if $\varepsilon_{0}=\varepsilon_{p_{0}}$ and
$\varepsilon_{0}(p_{0})=-1$ if $\varepsilon_{0}=-\varepsilon_{p_{0}}$. Thus, by the definition
of the degree,
$$
   \deg(H_{0},\O,\varepsilon_{0})=\varepsilon_{0}(p_{0})\deg(H_{0},\O,\varepsilon_{p_{0}}).
$$
Thus, multiplying by $\varepsilon_{0}(p_{0})$ yields to
\begin{equation}
		\label{IdJ1}
		\deg(H_{0},\O,\varepsilon_{p_{0}})=\varepsilon_{0}(p_{0}) \deg(H_{0},\O,\varepsilon_{0}).
\end{equation}
Similarly, inter-exchanging $p_0$ by $p_1$ shows that
\begin{equation}
		\label{IdJ2}
		\deg(H_{1},\O,\varepsilon_{1})= \varepsilon_{1}(p_{1})\deg(H_{1},\O,\varepsilon_{p_{1}}).
\end{equation}
Combining the identities \eqref{IdJ1} and \eqref{IdJ2} with the invariance by homotopy of the degree, we find that
\begin{align*}
		\deg(H_{0},\O,\varepsilon_{p_{0}})&=\varepsilon_{0}(p_{0}) \deg(H_{0},\O,\varepsilon_{0}) \\ & = \varepsilon_{0}(p_{0}) \deg(H_{1},\O,\varepsilon_{1}) =\varepsilon_{0}(p_{0})\varepsilon_{1}(p_{1})\deg(H_{1},\O,\varepsilon_{p_{1}}) .
\end{align*}
On the other hand, thanks to \eqref{4.4.200}, it becomes apparent that, for every $\gamma\in\mc{C}([a,b],[0,1]\times\O)$ such that $\g(a)=(0,p_{0})$ and $\g(b)=(1,p_{1})$,
\begin{equation}
		\label{EqPat}
		\varepsilon_{0}(p_0)\varepsilon_{1}(p_{1})=\sigma(D_{u}H\circ \gamma,[a,b]).
\end{equation}
Therefore, we obtain that
\begin{equation*}
		\deg(H_{0},\Omega,\varepsilon_{p_0})=\sigma(D_{u}H\circ \gamma,[a,b])
		\deg(H_{1},\Omega,\varepsilon_{p_1}),
\end{equation*}
which is \eqref{HomPA}. Finally, owing to Theorem \ref{th3.2},
\begin{equation}
		\label{6.6.5}
		\sigma(D_{u}H\circ\gamma,[a,b])=(-1)^{\chi[\mathfrak{L}_{\omega},[a,b]]},
\end{equation}
	where $\mathfrak{L}_{\omega}\in\mathscr{C}^\omega([a,b],\Phi_{0}(U,V))$ is any analytic map $\mc{A}$-homotopic to $D_{u}H\circ\gamma$. Consequently, inserting \eqref{6.6.5} into \eqref{HomPA} yields to 	 \begin{equation}
		\label{6.6.6}
		\deg(H_{0},\Omega,\varepsilon_{p_0})=(-1)^{\chi[\mathfrak{L}_{\omega},[a,b]]}
		\deg(H_{1},\Omega,\varepsilon_{p_1}).
\end{equation}
This proves \eqref{HomMA} and ends the proof.
\end{proof}

\subsection{Generalized homotopy invariance}

This section collects and proves a generalized homotopy invariance property that is pivotal throughout this paper. Although it goes back to Fitzpatrick, Pejsachowicz and Rabier \cite{FPRa},
our proof polishes substantially the original one and it is adapted to the notations of this paper.
Subsequently, for any given subset $\O$ of $\mathbb{R}\times U$ and every $t\in \mathbb{R}$, we set
$$
\O_t:=\{u\in U\,:\;(t,u)\in\O\}.
$$
For any bounded open and connected subset $\O$ of $[0,1]\times U$, any open subset $\mc{O}$ of $U$ such that $\overline{\O}\subset [0,1]\times \mc{O}$ and any continuous map $H:[0,1]\times \mc{O}\to V$, it is said that $(H,\O,\varepsilon)$ is a \textit{generalized Fredholm $\mc{O}$-admissible homotopy} if the following conditions are satisfied:
\begin{enumerate}
	\item $H\in\mathscr{F}^{1}_{1}([0,1]\times \mc{O},V)$ is \textit{orientable} with orientation $\varepsilon:\mc{R}_{D_u H}\to\Z_{2}$,
	\item $H$ is proper on $\overline{\O}$,
	\item $0\notin H(\partial\O)$,
\end{enumerate}
The class of generalized $\mc{O}$-admissible homotopies is denoted by $\mathscr{G}(\mc{O})$ in this paper.

\begin{theorem}
	\label{T6.1.1}
Let $(H,\O,\varepsilon)\in \mathscr{G}(\mc{O})$ be a generalized $\mc{O}$-admissible homotopy and suppose that  $p_{0}\in\mc{R}_{DH_{0}}$ and $p_{1}\in\mc{R}_{DH_{1}}$. Then, for every path  $\gamma\in\mc{C}([a,b],[0,1]\times \mc{O})$ such that $\g(a)=(0,p_{0})$ and $\g(b)=(1,p_{1})$,
\begin{equation}
		\label{HomPA2}
		\deg(H_{0},\Omega_{0},\varepsilon_{p_0})=\sigma(D_{u}H\circ \gamma,[a,b])
		\deg(H_{1},\Omega_{1},\varepsilon_{p_1}).
\end{equation}
Therefore, by Theorem \ref{th3.2},
	\begin{equation}
		\label{6.6.7} \deg(H_{0},\O_{0},\varepsilon_{p_{0}})=(-1)^{\chi[\mathfrak{L}_{\omega},[a,b]]}\deg(H_{1},\O_{1},\varepsilon_{p_{1}}),
	\end{equation}
	where $\mathfrak{L}_{\omega}\in\mathscr{C}^\omega([a,b],\Phi_{0}(U,V))$ is any analytic map $\mc{A}$-homotopic to $D_{u}H\circ\gamma$.
\end{theorem}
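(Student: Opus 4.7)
My plan is to reduce the generalized homotopy invariance to the product-case Lemma \ref{leiii.5} by slicing the set $\O\subset[0,1]\times U$ into finitely many product ``tubes'' $[s_{i-1},s_i]\times W^{(i)}$ that together trap the zero set $Z:=H^{-1}(0)\cap\overline{\O}$. Since $H$ is proper on $\overline{\O}$ and $\{0\}$ is compact, $Z$ is compact, and the boundary hypothesis $0\notin H(\partial\O)$ forces $Z\subset\O$. For each $t\in[0,1]$, the slice $Z_t$ is a compact subset of the open slice $\O_t\subset U$, so I would pick a bounded open $W_t\subset U$ with $Z_t\subset W_t$ and $\{t\}\times\overline{W_t}\subset\O$. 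Openness of $\O$ combined with compactness of $\overline{W_t}$ provides $\delta_t>0$ with $\bigl((t-\delta_t,t+\delta_t)\cap[0,1]\bigr)\times\overline{W_t}\subset\O$, while an upper-semicontinuity argument applied to the compact set $Z\cap([0,1]\times(U\setminus W_t))$, which misses the slice at $t$, forces $Z_s\subset W_t$ for every $s$ in this interval after possibly shrinking $\delta_t$. Compactness of $[0,1]$ then yields a partition $0=s_0<s_1<\cdots<s_N=1$ and bounded open sets $W^{(1)},\dots,W^{(N)}$ with $[s_{i-1},s_i]\times\overline{W^{(i)}}\subset\O$ and $Z_s\subset W^{(i)}$ for all $s\in[s_{i-1},s_i]$. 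Refining the partition if necessary, one may also assume $W^{(i)}\cap W^{(i+1)}\neq\emptyset$ at every interior node.

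On each product tube $[s_{i-1},s_i]\times W^{(i)}$ (reparametrized to $[0,1]$), the restriction of $H$ is a Fredholm $\mc{O}$-admissible homotopy in the product sense required by Lemma \ref{leiii.5}: orientability is inherited from $\varepsilon$, properness follows from $[s_{i-1},s_i]\times\overline{W^{(i)}}\subset\overline{\O}$, and $0\notin H([s_{i-1},s_i]\times\partial W^{(i)})$ because $Z_s\subset W^{(i)}$ and $W^{(i)}$ is open. After a generic perturbation of the target value $0$ via the Quinn--Sard--Smale theorem, which leaves every degree and parity in the statement unchanged, I may assume that each slice $H_{s_i}$ admits regular points in $W^{(i)}\cap W^{(i+1)}$; select such points $q_i$ for $0<i<N$, set $q_0:=p_0$ and $q_N:=p_1$, and join them by continuous paths $\gamma_i$ from $(s_{i-1},q_{i-1})$ to $(s_i,q_i)$ inside the corresponding tube. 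Lemma \ref{leiii.5} applied on each tube then yields
\begin{equation*}
\deg(H_{s_{i-1}},W^{(i)},\varepsilon_{q_{i-1}})=\sigma(D_uH\circ\gamma_i,[a_i,b_i])\cdot\deg(H_{s_i},W^{(i)},\varepsilon_{q_i}).
\end{equation*}

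To glue the tubes, the additivity axiom (A) furnishes the excision identities $\deg(H_0,\O_0,\varepsilon_{p_0})=\deg(H_0,W^{(1)},\varepsilon_{p_0})$, $\deg(H_1,\O_1,\varepsilon_{p_1})=\deg(H_1,W^{(N)},\varepsilon_{p_1})$, and, at every interior node, $\deg(H_{s_i},W^{(i)},\varepsilon_{q_i})=\deg(H_{s_i},W^{(i+1)},\varepsilon_{q_i})$, since $Z_{s_i}$ lies in both $W^{(i)}$ and $W^{(i+1)}$. Chaining the per-tube identities, all intermediate degrees telescope. The concatenation $\widetilde{\gamma}$ of the $\gamma_i$ is a continuous path in $[0,1]\times\mc{O}$ from $(0,p_0)$ to $(1,p_1)$, and the multiplicative additivity of the parity in Theorem \ref{th3.1} collapses the product of factors to $\sigma(D_uH\circ\widetilde{\gamma},[a,b])$. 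Orientability of $D_uH$ ensures that this parity depends only on the endpoints of the path, so it equals $\sigma(D_uH\circ\gamma,[a,b])$ for the arbitrary $\gamma$ in the statement, establishing \eqref{HomPA2}. Identity \eqref{6.6.7} then follows immediately from Theorem \ref{th3.2}.

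The main obstacle I anticipate is the first step: constructing the product cover while simultaneously keeping each $\overline{W^{(i)}}$ inside the possibly wildly varying slices $\O_s$ and preventing $Z_s$ from escaping $W^{(i)}$ through the lateral boundary. Both controls rely crucially on the properness of $H$ on $\overline{\O}$ together with the boundary hypothesis $0\notin H(\partial\O)$; without them, $Z$ could fail to be compact or could accumulate on $\partial\O$, invalidating the upper-semicontinuity argument. A secondary technicality is the existence of regular points of each intermediate slice inside $W^{(i)}\cap W^{(i+1)}$, which is handled by a generic Quinn--Sard--Smale perturbation. Once the cover and the regular points are in place, the remainder of the argument is a routine bookkeeping based on Lemma \ref{leiii.5}, the excision consequence of axiom (A), and the multiplicativity of the parity.
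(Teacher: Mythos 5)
Your overall strategy---trap the compact zero set $Z=H^{-1}(0)\cap\overline{\O}$ in finitely many product tubes $[s_{i-1},s_i]\times W^{(i)}\subset\O$, apply Lemma~\ref{leiii.5} on each, excise at the nodes, and telescope---is essentially the same architecture the paper uses, where the partition of $[0,1]$ is built from the two local claims (i) and (ii) and the tubes are $[t_i,t_{i+1}]\times\O_{t_i}$.

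The genuine gap is the source of the intermediate regular points $q_i\in\mc{R}_{DH_{s_i}}$. You appeal to the Quinn--Sard--Smale theorem, but that theorem produces regular \emph{values} of $H_{s_i}$, not regular \emph{points} of the linearization. A value $v$ near $0$ can be regular simply because $H_{s_i}^{-1}(v)\cap(W^{(i)}\cap W^{(i+1)})=\emptyset$; in that case no regular point of $DH_{s_i}$ is forthcoming, and indeed the restriction of $D_uH(s_i,\cdot)$ to that region might avoid $GL(U,V)$ entirely. No perturbation of the target value can change this. Without $q_i$, Lemma~\ref{leiii.5} cannot be invoked, since both the lemma hypothesis and the orientation $\varepsilon_{q_i}$ presuppose $q_i\in\mc{R}_{DH_{s_i}}$. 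The paper resolves exactly this difficulty by introducing the absolute degree $\deg(f,\O):=|\deg(f,\O,\varepsilon)|$ and splitting into two cases. If $\deg(H_0,\O_0,\varepsilon_{p_0})=0$, generalized invariance of the absolute degree forces $\deg(H_1,\O_1,\varepsilon_{p_1})=0$ as well, so \eqref{HomPA2} holds vacuously. If $\deg(H_0,\O_0,\varepsilon_{p_0})\neq 0$, then $\deg(H_t,\O_t)\neq 0$ for all $t$, and since the degree is defined to vanish in the absence of regular points, a regular point $p_t\in\mc{R}_{DH_t}$ must exist for every $t$. Your argument needs this case split (or some equivalent device); the QSS perturbation alone does not close the gap.
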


\begin{proof}
The following concept of absolute degree goes back to \cite{FPRb},
\begin{equation*}
		\deg(f,\Omega):=|\deg(f,\Omega,\varepsilon)| \quad \hbox{for every}\;\; (f,\Omega,\varepsilon)\in\mathscr{A}(\mc{O}).
\end{equation*}
This degree is $\mathbb{N}$-valued and satisfies the excision property and the generalized invariance by homotopy. Indeed, suppose that $\mathcal{V}$ is an open subset of $\Omega$ such that $0\notin f(\Omega\backslash\mathcal{V})$. Then, by the additivity property of the topological degree, we have that
\begin{equation*}
		\deg(f,\Omega,\varepsilon)=\deg(f,\mathcal{V},\varepsilon).
\end{equation*}
Thus, taking absolute values yields	$\deg(f,\Omega)=\deg(f,\mathcal{V})$ and hence, the absolute degree satisfies the excision property.	Similarly, since the absolute degree is invariant by orientations, the proof of the generalized invariance by homotopy of the Leray--Schauder degree shows that, for every $(H,\O,\varepsilon)\in \mathscr{G}(\mc{O})$, we have that
\begin{equation*}
		\deg(H_{0},\O_{0})=\deg(H_{t},\O_{t})\quad \hbox{for all}\;\; t \in [0,1].
\end{equation*}
Therefore, the absolute degree also satisfies the generalized invariance by homotopy.
\par
Suppose that $\deg(H_{0},\O_{0},\varepsilon_{p_{0}})=0$. Then, since the absolute degree is invariant by (admissible) homotopies, we have that
\begin{equation*}
		0=\deg(H_{0},\O_{0})=\deg(H_{t},\O_{t}) \quad
		\hbox{for all}\;\;  t\in[0,1].
\end{equation*}
In particular, $\deg(H_{1},\O_{1})=0$ and hence, $\deg(H_{1},\O_{1},\varepsilon_{p_{1}})=0$. Therefore, \eqref{HomPA2} and \eqref{6.6.7} hold.
	\par
Subsequently, we suppose that $\deg(H_{0},\O_{0},\varepsilon_{p_{0}})=d\neq 0$. Then,
\begin{equation*}
		\deg(H_{t},\O_{t})=\deg(H_{0},\O_{0})=|d|\neq 0, \quad t\in[0,1].
\end{equation*}
This entails that, for every $t\in[0,1]$, there exists  a regular point $p_{t}\in\mathcal{R}_{DH_{t}}$, because, by definition, $\deg(f,\Omega,\varepsilon)=0$ if $(f,\Omega,\varepsilon)\in\mathscr{A}(\mc{O})$ does not admit regular points. In particular,  $\mathcal{R}_{DH_{t}}\neq\emptyset$ for all $t\in[0,1]$.
	We claim that, for every $t_{0}\in[0,1]$ and $p_{t_{0}}\in\mc{R}_{DH_{t_{0}}}$,  there exists $\varepsilon>0$ such that:
\begin{enumerate}
		\item[{\rm (i)}] $p_{t_{0}}\in\mathcal{R}_{DH_{t}}$ for all $t\in[t_{0}-\varepsilon,t_{0}+\varepsilon]$, and
		\item[{\rm (ii)}] $H^{-1}_{t}(0)\cap \O_{t}=H^{-1}_{t}(0)\cap \O_{t_{0}}$ for all $t\in[t_{0}-\varepsilon,t_{0}+\varepsilon]$,
\end{enumerate}
making the necessary changes in these statements when $t_{0}=0$ or $t_{0}=1$. Indeed, since
$$
	\mathcal{R}_{D_{u}H}=D_{u}H^{-1}(GL(U,V))
$$
is an open subset of $[0,1]\times \mathcal{O}$ and  $(t_{0},p_{t_{0}})\in\mathcal{R}_{D_{u}H}$, there exists $\varepsilon>0$ such that $(t,p_{t_{0}})\in \mathcal{R}_{D_{u}H}$ for all  $t\in[t_{0}-\varepsilon,t_{0}+\varepsilon]$. Thus, the property (i) follows from the set identity
	$$\mc{R}_{D_{u}H}=\bigcup_{t\in[0,1]}\{t\}\times \mc{R}_{DH_{t}}.$$
	Suppose that Property (ii) fails for all $\e>0$. Then, there exists a sequence $\{(t_{n},u_{n})\}_{n\in\mathbb{N}}\subset H^{-1}(0)\cap \O$ such that $\lim_{n\to\infty}t_{n}=t_{0}$ and $u_{n}\in\O_{t_{n}}\backslash\O_{t_{0}}$ for all $n\geq 1$. Since $H^{-1}(0)\cap \overline{\O}$ is compact, without loss of generality, we can assume that
	$$
	\lim_{n\to\infty}(t_{n},u_{n})=(t_{0},u_{0})\in H^{-1}(0)\cap \overline{\O}.
	$$
	Since $0\notin H(\partial\O)$, necessarily $(t_{0},u_{0})\in H^{-1}(0)\cap \O$. Therefore, $u_{0}\in H_{t_{0}}^{-1}(0)$ and $u_{0}\in \O_{t_{0}}$. In particular $\O_{t_{0}}\neq \emptyset$. But this contradicts the fact that $u_{n}\in\O_{t_{n}}\backslash\O_{t_{0}}$ for all  $n\geq 1$. So, Property (ii) also holds.
	\par
	Combining (i) and (ii) with the compactness of $[0,1]$, for some integer $m\in\mathbb{N}$, setting
	$t_{i}:=\frac{i}{m}$, $0\leq i \leq m$, there exists some $q_{i}\in\mc{O}$ which is a regular point of $DH_{t}$ for all $t\in[t_{i},t_{i+1}]$ and $0\leq i\leq m-1$. Choose $q_{0}=p_0$ and $q_{m-1}=p_1$. Since $H^{-1}_{t}(0)\cap \O_{t}=H^{-1}_{t}(0)\cap \O_{t_{i}}$ for all $t\in[t_{i},t_{i+1}]$, it follows from the excision property that
	\begin{equation*}	 \deg(H_{t},\O_{t},\varepsilon_{q_{i}})=\deg(H_{t},\O_{t_{i}},\varepsilon_{q_{i}}), \quad t\in[t_{i},t_{i+1}].
	\end{equation*}
	In particular,
	\begin{equation}
		\label{6.6.8}	 \deg(H_{t_{i+1}},\O_{t_{i+1}},\varepsilon_{q_{i}})=\deg(H_{t_{i+1}},\O_{t_{i}},\varepsilon_{q_{i}}).
	\end{equation}
	Moreover, since $0\notin H([t_{i},t_{i+1}]\times\partial\O_{t_{i}})$, by Lemma \ref{leiii.5},
	\begin{equation*}
		\deg(H_{t_{i}},\O_{t_{i}},\varepsilon_{q_{i}})=
		\sigma(D_{u}H\circ\gamma,[t_{i},t_{i+1}])\deg(H_{t_{i+1}},\O_{t_{i}},\varepsilon_{q_{i}}),
	\end{equation*}
	where $\gamma\in\mathcal{C}([t_{i},t_{i+1}],[t_{i},t_{i+1}]\times \mc{O})$ is the curve defined by $t\mapsto (t,q_{i})$, $t\in[t_{i},t_{i+1}]$. Since $D_{u}H(t,q_{i})\in GL(U,V)$ for each $t\in[t_{i},t_{i+1}]$, necessarily $\sigma(D_{u}H\circ\gamma,[t_{i},t_{i+1}])=1$, and therefore
	\begin{equation*}
		\deg(H_{t_{i}},\O_{t_{i}},\varepsilon_{q_{i}})=\deg(H_{t_{i+1}},\O_{t_{i}},\varepsilon_{q_{i}}).
	\end{equation*}
	Thus, by \eqref{6.6.8}, we find that, for every $i\in\{0,...,m-1\}$,
	\begin{equation}
		\label{6.6.9}
		\deg(H_{t_{i+1}},\O_{t_{i+1}},\varepsilon_{q_{i}})= \deg(H_{t_{i}},\O_{t_{i}},\varepsilon_{q_{i}}).
	\end{equation}
Once again by Lemma \ref{leiii.5},
	\begin{equation*}
		\deg(H_{t_{i+1}},\O_{t_{i+1}},\varepsilon_{q_{i}})=
		\sigma(D_{u}H\circ\gamma_{i},[0,1])\deg(H_{t_{i+1}},\O_{t_{i+1}},\varepsilon_{q_{i+1}}),
	\end{equation*}
	where $\gamma_{i}\in\mathcal{C}([0,1],\{t_{i+1}\}\times\mc{O})$ links $(t_{i+1},q_{i})$ to  $(t_{i+1},q_{i+1})$. Hence, by \eqref{6.6.9},
	\begin{equation*}
		\deg(H_{t_{i}},\O_{t_{i}},\varepsilon_{q_{i}})=
		\sigma(D_{u}H\circ\gamma_{i},[0,1])\deg(H_{t_{i+1}},\O_{t_{i+1}},\varepsilon_{q_{i+1}}).
	\end{equation*}
	Therefore, we can infer that
	\begin{align*}
		\deg(H_{0},\O_{0},\varepsilon_{p_{0}})=
		\deg(H_{t_{0}},\O_{t_{0}},\varepsilon_{q_{0}}) =
		\prod_{i=0}^{m-2}\sigma(D_{u}H\circ\gamma_{i},[0,1]) \deg(H_{t_{m-1}},\O_{t_{m-1}},\varepsilon_{q_{m-1}}).
	\end{align*}
Consequently, by \eqref{6.6.9}, we obtain that
	\begin{align*}
		\deg(H_{0},\O_{0},\varepsilon_{p_{0}})&=\prod_{i=0}^{m-2}\sigma(D_{u}H\circ\gamma_{i},[0,1]) \deg(H_{t_{m}},\O_{t_{m}},\varepsilon_{q_{m-1}})\\
		& =\prod_{i=0}^{m-2}\sigma(D_{u}H\circ\gamma_{i},[0,1]) \deg(H_{1},\O_{1},\varepsilon_{p_1}).
	\end{align*}
	Finally, let $\Gamma_{i}\in\mc{C}([t_{i},t_{i+1}],[t_{i},t_{i+1}]\times \mc{O})$ be the curves defined by $t\mapsto (t,q_{i})$, $t\in[t_{i},t_{i+1}]$, for each $0\leq i\leq m-1$. Then, after the necessary affine changes, the composite curve $\gamma\in\mc{C}([a,b],[0,1]\times\mc{O})$,
	$$\gamma:=\Gamma_{0}\ast \gamma_{0}\ast \Gamma_{1}\ast\gamma_{1}\ast \cdots \ast \gamma_{m-2}\ast \Gamma_{m-1},$$
	links $(0,p_{0})$ to $(1,p_{1})$. By the properties of the parity,
	\begin{align*}
		\sigma(D_{u}H\circ\gamma,[a,b])& =\prod_{i=0}^{m-2}\sigma(D_{u}H\circ\gamma_{i},[0,1])\prod_{i=0}^{m-1}\sigma(D_{u}H\circ\Gamma_{i},[t_{i},t_{i+1}]) \\
		&= \prod_{i=0}^{m-2}\sigma(D_{u}H\circ\gamma_{i},[0,1]).
	\end{align*}
Therefore, we finally obtain
$$
\deg(H_{0},\O_{0},\varepsilon_{p_{0}})=\sigma(D_{u}H\circ\gamma,[a,b])\deg(H_{1},\O_{1},\varepsilon_{p_1}).
$$
This proves \eqref{HomPA2}. The identity \eqref{6.6.7} follows from Theorem \ref{th3.2}. This ends the proof.
\end{proof}

A simplification of the preceding proof yields a counterpart of Theorem \ref{T6.1.1}
without specifying the regular points.

\begin{theorem}
	\label{T6.1.1.1}
	Let $(H,\O,\varepsilon)\in \mathscr{G}(\mc{O})$ be a generalized $\mc{O}$-admissible homotopy. Then,
	\begin{equation}
		\label{HomPA2.1}
		\deg(H_{0},\Omega_{0},\varepsilon_{0})=\deg(H_{1},\Omega_{1},\varepsilon_{1}).
	\end{equation}
\end{theorem}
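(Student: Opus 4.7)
My plan is to deduce this unparametrized homotopy invariance directly from Theorem \ref{T6.1.1}, by absorbing the parity correction factor into the sign relation \eqref{EqPat} between the global orientations $\varepsilon_t$ and the regular-point orientations $\varepsilon_{p_t}$. I would split the argument into two cases, according as $\deg(H_0,\O_0,\varepsilon_0)$ vanishes or not.

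First, I would dispose of the degenerate case. Inside the proof of Theorem \ref{T6.1.1} it is established that the absolute degree $|\deg(\cdot,\cdot,\cdot)|$ is invariant along every generalized $\mc{O}$-admissible homotopy. Hence, if $\deg(H_0,\O_0,\varepsilon_0)=0$, then
\[
|\deg(H_1,\O_1,\varepsilon_1)|=\deg(H_1,\O_1)=\deg(H_0,\O_0)=|\deg(H_0,\O_0,\varepsilon_0)|=0,
\]
and \eqref{HomPA2.1} holds trivially.

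In the non-vanishing case, the same absolute-degree argument guarantees the existence of regular points $p_0\in\mc{R}_{DH_0}$ and $p_1\in\mc{R}_{DH_1}$, and I may pick a path $\gamma\in\mc{C}([a,b],[0,1]\times\mc{O})$ with $\gamma(a)=(0,p_0)$ and $\gamma(b)=(1,p_1)$. The heart of the proof is then a sign bookkeeping: by the very definition of the degree through regular points, for $i\in\{0,1\}$ one has
\[
\deg(H_i,\O_i,\varepsilon_i)=\varepsilon_i(p_i)\,\deg(H_i,\O_i,\varepsilon_{p_i}),
\]
because $\varepsilon_i$ and $\varepsilon_{p_i}$ differ by the scalar $\varepsilon_i(p_i)\in\{\pm 1\}$ on the path-component containing $p_i$. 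Inserting Theorem \ref{T6.1.1}, which relates the two regular-point degrees through the factor $\sigma(D_uH\circ\gamma,[a,b])$, together with the identity $\varepsilon_0(p_0)\varepsilon_1(p_1)=\sigma(D_uH\circ\gamma,[a,b])$ recorded in \eqref{EqPat}, the parity factor cancels (since $\varepsilon_0(p_0)^2=1$) and one arrives at \eqref{HomPA2.1}.

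I do not anticipate any serious obstacle, as the hard work has been done in Theorem \ref{T6.1.1}; this statement is really a corollary obtained by chasing signs. The only subtlety worth flagging is that the reduction to Theorem \ref{T6.1.1} requires regular points in both end slices, which forces the zero-degree case to be dispatched separately via the absolute-degree invariance. Conceptually, the identity \eqref{HomPA2.1} simply expresses that the use of a \emph{global} orientation $\varepsilon$ along the whole homotopy identifies the two a priori independent endpoint orientations, thereby erasing the parity correction that appears in Theorem \ref{T6.1.1}.
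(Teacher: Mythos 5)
Your argument is mathematically sound and does establish \eqref{HomPA2.1}, but the route you take differs from what the authors have in mind. The paper's instruction is to \emph{simplify the preceding proof}: in Theorem \ref{T6.1.1}, the slicing argument over the subintervals $[t_i,t_{i+1}]$ passes through the orientations $\varepsilon_{q_i}$ based at the intermediate regular points $q_i$, which is why the parity factor $\sigma(D_uH\circ\gamma,[a,b])$ accumulates and why Lemma \ref{leiii.5} must be invoked at each step. If one instead carries the slice orientations $\varepsilon_{t_i}$ (restrictions of the single global orientation $\varepsilon$) through the same excision--homotopy chain, axiom (H) applies directly on each $[t_i,t_{i+1}]\times\O_{t_i}$ and the degrees compose with no correction factor at all; the parity bookkeeping, Lemma \ref{leiii.5}, and the intermediate regular points simply drop out. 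You instead treat Theorem \ref{T6.1.1} as a black box and cancel the parity factor \emph{a posteriori} using $\deg(H_i,\O_i,\varepsilon_i)=\varepsilon_i(p_i)\deg(H_i,\O_i,\varepsilon_{p_i})$ and the relation \eqref{EqPat}. Both are valid; yours is shorter once Theorem \ref{T6.1.1} is available, while the authors' simplification is logically lighter because it does not rely on the full strength of that theorem. Your handling of the degenerate case via the absolute degree is exactly the one the paper itself uses at the outset of the proof of Theorem \ref{T6.1.1}, and the observation that the zero-degree case must be dispatched separately (because otherwise one cannot even produce the regular points $p_0,p_1$) is a correct and necessary point. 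One caveat, which applies equally to the paper's own formulation of Theorem \ref{T6.1.1}: the identity $\deg(H_i,\O_i,\varepsilon_i)=\varepsilon_i(p_i)\deg(H_i,\O_i,\varepsilon_{p_i})$ presupposes that $\varepsilon_i$ and $\varepsilon_{p_i}$ differ by a single global sign, which is automatic only when the underlying domain of orientation is path-connected (this is precisely the hypothesis built into Lemma \ref{leiii.5}). Your proof inherits that implicit assumption rather than introduces a new gap, so I do not regard it as an error, but it is worth being aware that the step is component-wise in general.
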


\subsection{A Leray--Schauder continuation theorem}

We conclude this section by delivering a generalized version of the \emph{Leray--Schauder continuation theorem} for Fredholm operators of index zero. Some precursors in the context of the Leray--Schauder degree were given by Mawhin \cite{Maw}.

\begin{theorem}
	Let $(H,\O,\varepsilon)\in\mathscr{H}(\mc{O})$ be a $\mc{O}$-admissible homotopy with
	$\deg(H_{0},\Omega,\varepsilon_0)\neq 0$. Then, there exists a connected component $\mathscr{C}\subset H^{-1}(0)\cap\O$ that connects $\{0\}\times\Omega$ with $\{1\}\times\Omega$.
\end{theorem}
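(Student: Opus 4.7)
The plan is to run the classical Whyburn--Leray--Schauder separation argument, using the generalized homotopy invariance of Theorem \ref{T6.1.1.1} together with excision to produce the sought contradiction.

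\textbf{First, I set the stage.} Consider the zero set
\[
\mathfrak{S}:=H^{-1}(0)\cap ([0,1]\times \overline{\Omega}),
\]
which is compact because $H$ is proper on $[0,1]\times \overline{\Omega}$. The hypothesis $0\notin H([0,1]\times \partial\Omega)$ forces $\mathfrak{S}\subset [0,1]\times \Omega$, and the slices
\[
K_i:=\mathfrak{S}\cap(\{i\}\times \Omega),\qquad i\in\{0,1\},
\]
are compact; moreover $K_0\neq\emptyset$ since $\deg(H_0,\Omega,\varepsilon_0)\neq 0$. The goal is to show that some connected component of $\mathfrak{S}$ meets both $K_0$ and $K_1$; equivalently, a component of $H^{-1}(0)\cap ([0,1]\times\Omega)$ connects the two end-slices.

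\textbf{Second, argue by contradiction using Whyburn's lemma.} Suppose no component of $\mathfrak{S}$ joins $K_0$ to $K_1$. Whyburn's separation lemma for compact metric spaces then yields a partition $\mathfrak{S}=\mathfrak{S}_0\uplus \mathfrak{S}_1$ into disjoint closed (hence compact) subsets with $K_0\subset \mathfrak{S}_0$ and $K_1\subset \mathfrak{S}_1$. Since $\mathfrak{S}_0$ and $\mathfrak{S}_1$ are disjoint compacts contained in the open set $[0,1]\times \Omega$, I would choose a bounded open $W\subset [0,1]\times U$ (for instance a sufficiently thin $\delta$-tubular neighborhood of $\mathfrak{S}_0$, covered by finitely many balls thanks to the compactness of $\mathfrak{S}_0$) satisfying
\[
\mathfrak{S}_0\subset W,\qquad \overline{W}\subset [0,1]\times \Omega,\qquad \overline{W}\cap \mathfrak{S}_1=\emptyset.
\]
By construction $H^{-1}(0)\cap \overline{W}=\mathfrak{S}_0\subset W$, so $\partial W$ avoids $H^{-1}(0)$.

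\textbf{Third, apply generalized homotopy invariance and excision.} The pair $(H,W,\varepsilon)$, or rather each of the (finitely many) connected components of $W$ meeting $\mathfrak{S}_0$, is a generalized $\mc{O}$-admissible homotopy in the sense of $\mathscr{G}(\mc{O})$. Theorem \ref{T6.1.1.1}, summed componentwise via the additivity axiom~(A), gives
\[
\deg(H_0,W_0,\varepsilon_0)=\deg(H_1,W_1,\varepsilon_1).
\]
On the one hand, every zero of $H_0$ in $\Omega$ lies in $K_0\subset \mathfrak{S}_0\subset W_0$, so excision (an immediate consequence of (A)) yields
\[
\deg(H_0,W_0,\varepsilon_0)=\deg(H_0,\Omega,\varepsilon_0)\neq 0.
\]
On the other hand, $H_1^{-1}(0)\cap W_1\subset W\cap \mathfrak{S}_1=\emptyset$, whence $\deg(H_1,W_1,\varepsilon_1)=0$. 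This contradiction produces the desired component $\mathscr{C}\subset H^{-1}(0)\cap([0,1]\times\Omega)$.

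\textbf{Main obstacle.} The decisive non-trivial ingredient is Whyburn's separation lemma, which requires $\mathfrak{S}$ to be compact and metrizable --- this is precisely what properness buys us. A secondary but purely technical subtlety is the connectedness clause built into the definition of $\mathscr{G}(\mc{O})$: it is bypassed by restricting the argument to each of the finitely many connected components of $W$ intersecting $\mathfrak{S}_0$ and summing via additivity. Everything else --- excision, orientability transfer, and the equality of the two degrees --- is then routine.
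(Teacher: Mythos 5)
Your proof is correct and follows essentially the same strategy as the paper's: Whyburn's separation lemma yields an isolating open neighborhood of the relevant piece of $H^{-1}(0)$, after which generalized homotopy invariance together with excision produces the contradiction. The only cosmetic difference is in the endgame: the paper works with the absolute degree $\deg(f,\Omega)=|\deg(f,\Omega,\varepsilon)|$ (which erases all orientation bookkeeping at once) and derives the contradiction from the eventual emptiness of the slices $\mathcal{U}_t$ of its isolating neighborhood, whereas you use the oriented degree via Theorem \ref{T6.1.1.1}, handle the connectedness requirement in $\mathscr{G}(\mc{O})$ by summing over components, and get the contradiction directly from $H_1^{-1}(0)\cap W_1=\emptyset$.
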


\begin{proof}
	Since $\deg(H_{0},\Omega,\varepsilon_0)\neq 0$, by the existence property,  $H_{0}^{-1}(0)\cap\O\neq \emptyset$, and $\deg(H_{0},\Omega)\neq 0$.  Let $\mathscr{D}$ be the disjoint union of the connected components $\mathscr{C}$ of $H^{-1}(0)\cap\O$ satisfying $\mathscr{C}\cap H_{0}^{-1}(0)\neq \emptyset$, and consider an isolating neighborhood, $\mathcal{U}$, of $\mathscr{D}$, i.e., an open subset of $[0,1]\times\Omega$ such that $\mathscr{D}\subset\mathcal{U}$ and $H^{-1}(0)\cap\partial\mathcal{U}=\emptyset$. The existence of $\mathcal{U}$ follows by Property (9.3) on Chapter I of Whyburn \cite{W}, see also the forthcoming Lemma \ref{Wh.}.
	\par
	If $\mathscr{D}$ intersects $\{1\}\times\Omega$, we are done by simply choosing $\mathscr{C}$ to be one of the connected components of $\mathscr{D}$ such that $\mathscr{C}\cap[\{1\}\times\Omega]\neq\emptyset$. Suppose that $\mathscr{D}$ does not intersect $\{1\}\times\Omega$. Then, necessarily, $\mathcal{P}_{t}(\mathscr{D})=[0,t_{0}]$ with $0\leq t_{0}<1$, where
	$\mathcal{P}_{t}:[0,1]\times \O\to [0,1]$, $(t,u)\mapsto t$, 
	stands for the $t$-projection operator. Since  $H_{0}^{-1}(0)\cap\O\subset\mathcal{U}_{0}$, by the excision property of the absolute degree, it is apparent that $\deg(H_{0},\Omega)=\deg(H_{0},\mathcal{U}_{0})$. Thus, by homotopy invariance,
	$$
	0\neq \deg(H_{0},\Omega)=\deg(H_t,\mathcal{U}_{t})\quad \hbox{for all}\;\; t\in [0,1].
	$$
	Therefore, $\mathcal{U}_t\neq \emptyset$ for all $t\in (t_0,1]$, which is impossible
	if $\mathcal{U}$ is chosen to be sufficiently close to $\mathscr{D}$. This ends the proof.
\end{proof}

\section{Local Bifurcation Theory}\label{SLBT}	

\noindent In this section we deliver our main local bifurcation theorem for Fredholm operators.   Essentially, it is a generalization and a re-elaboration of the local bifurcation results of Fitzpatrick, Pejsachowicz and Rabier \cite{FPRb}, Pejsachowicz and Rabier \cite{PR}, and \cite[Th. 6.2.1]{LG01}, through the concept of algebraic multiplicity of Esquinas and L\'{o}pez-G\'{o}mez \cite{Es,ELG,LG01}. It is worth-mentioning that it is a very substantial generalization of the
pioneering local bifurcation theorems of Krasnoselskij \cite{Kr}, Rabinowitz \cite{Ra} and
Ize \cite{Iz}, collected in \cite{Ni} by Nirenberg.
\par
Throughout this section, given a pair $(U,V)$ of real Banach spaces, two real values $\l_{-}<\l_{+}$ and a neighbourhood $\mc{U}\subset \R\times U$ of $[\l_{-},\l_{+}]\times \{0\}$, we deal with $\mathcal{C}^{1}$ operators $\mathfrak{F}:\mc{U}\subset \mathbb{R}\times U\to V$ satisfying the hypothesis:
\begin{enumerate}
	\item[{\rm (F1)}]  $\mathfrak{F}(\lambda,0)=0$ for all $(\lambda,0)\in\mc{U}$.
	\item[{\rm (F2)}] $D_{u}\mathfrak{F}(\lambda,0)\in\Phi_{0}(U,V)$ for all $(\lambda,0)\in\mc{U}$.
\end{enumerate}
\noindent The set of solutions
$\mc{T}:=\{(\l,0):(\l,0)\in\mc{U}\}$ is called the set of trivial solutions of $\mf{F}(\l,u)=0$, and
$$
  \left[ \mf{F}^{-1}(0)\backslash \mc{T}\right]\cup \{(\l,0):\;\l\in \Sigma(D_{u}\mathfrak{F}(\cdot,0))\}
$$
is referred to as the set of non-trivial solutions of $\mf{F}(\l,u)=0$. We will denote by
$$\mathfrak{L}(\l):=D_{u}\mf{F}(\l,0), \quad (\lambda,0)\in\mc{U},$$
the linearization of $\mf{F}$ on the set of trivial solutions. In particular $\mf{L}(\l)\in\Phi_{0}(U,V)$ is a continuous path of Fredholm operators of index zero. Given $(\lambda_{0},0)\in\mc{U}$, it is said that $(\l_{0} , 0)$ is a bifurcation point of $\mf{F}(\l,u)=0$ from $\mc{T}$ if there exists a sequence $\{(\l_{n},u_{n})\}_{n\in\mathbb{N}}\subset\mf{F}^{-1}(0)$, with $u_{n}\neq 0$ for all $n \geq 1$, such that
$\lim_{n\to\infty} (\l_{n}, u_{n}) = (\l_{0}, 0)$.
\par
Let $\mf{F}\in\mc{C}^{1}(\mc{U},V)$ satisfying conditions {\rm{(F1)--(F3)}} and let $(\l_{0},0)\in \mc{U}$ be a bifurcation point of $\mf{F}(\l,u)=0$ from $\mc{T}$. Then, by the implicit function theorem, $\l_{0}\in\Sigma(\mf{L})$. The main result of this section reads as follows.

\begin{theorem}[\textbf{Local bifurcation}]
	\label{T6.2.1}
	Let $(U, V)$ be a pair of real Banach spaces, $\l_{-}, \l_{+}$ be two real numbers such that $\l_{-}<\l_{+}$, and $\mc{U}\subset \R\times U$ be an open neighborhood of $[\l_{-},\l_{+}]\times \{0\}$. Consider $\mathfrak{F}\in\mathcal{C}^{1}(\mc{U},V)$ such that:
	\begin{enumerate}
		\item[{\rm (a)}] $\mathfrak{F}(\lambda,0)=0$ and $\mf{L}(\l):=D_{u}\mathfrak{F}(\lambda,0)\in\Phi_{0}(U,V)$, $(\lambda,0)\in \mc{U}$.
		\item[{\rm (b)}] $\mathfrak{L}(\lambda_{\pm})\in GL(U,V)$ and
		\begin{equation}
			\label{6.6.10}
			\chi[\mathfrak{L}_{\omega},[\lambda_{-},\lambda_{+}]]\in 2\mathbb{N}+1,
		\end{equation}
		where $\mathfrak{L}_{\omega}\in\mathscr{C}^{\omega}([\lambda_{-},\lambda_{+}],\Phi_{0}(U,V))$ is any  analytic curve $\mc{A}$-homotopic to $\mf{L}(\l)$, $\lambda\in[\lambda_{-},\lambda_{+}]$.
	\end{enumerate}
Then, the following assertions are satisfied:
	\begin{enumerate}
		\item[{\rm (i)}] There exists $\lambda_{0}\in(\lambda_{-},\lambda_{+})$ such that $(\lambda_0,0)$ is a bifurcation point of $\mathfrak{F}(\lambda,u)=0$ from $\mc{T}\equiv\{(\lambda,0):(\l,0)\in\mc{U}\}$.
		\item[{\rm (ii)}] There exists $\eta_{0}>0$  such that,  for every $\eta\in(0,\eta_{0})$, there is a connected component of the set of non-trivial solutions of $\mf{F}(\l,u)=0$,
		$$
		\mathfrak{C}\subset\mathfrak{F}^{-1}(0)\backslash \{(\lambda,0)\in\mc{U}:\lambda\notin\Sigma(\mathfrak{L})\},
		$$
		joining $T:=\{(\lambda,0):\lambda\in\Sigma(\mathfrak{L})\cap (\lambda_{-},\lambda_{+})\}$ to
		the surface $\|x\|=\eta$.
	\end{enumerate}
\end{theorem}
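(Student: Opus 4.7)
My plan is to establish both (i) and (ii) by the same contradiction mechanism: computing the degree of $\mf{F}(\l,\cdot)$ on a suitable open set along the trivial branch from $\l_-$ to $\l_+$ via the generalized homotopy invariance (Lemma \ref{leiii.5} for (i), Theorem \ref{T6.1.1} for (ii)), and then exploiting the normalization axiom (N) at the endpoints to force $1=(-1)^{\chi[\mf{L}_\omega,[\l_-,\l_+]]}$, in contradiction with \eqref{6.6.10}.

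For (i), I argue by contradiction and assume that no point of $(\l_-,\l_+)\times\{0\}$ is a bifurcation point of $\mf{F}(\l,u)=0$. The implicit function theorem at the invertible endpoints $\l_\pm$, the negation of bifurcation at each interior generalized eigenvalue, local properness of $\mc{C}^1$-Fredholm maps, and compactness of $[\l_-,\l_+]$ combine to yield $\eta_0>0$ with
\[
\mf{F}^{-1}(0)\cap\bigl([\l_-,\l_+]\times\overline{B_{\eta_0}(0)}\bigr)=[\l_-,\l_+]\times\{0\}.
\]
Then the Fredholm homotopy $H(\l,u):=\mf{F}(\l,u)$ on $[\l_-,\l_+]\times B_{\eta_0}(0)$ is $\mc{O}$-admissible (orientability being automatic on the contractible cylinder), so Lemma \ref{leiii.5} applied with the path $\gamma(t)=(t,0)$ and regular base points $p_0=p_1=0$ produces
\[
\deg(H_{\l_-},B_{\eta_0}(0),\varepsilon_{p_0})=(-1)^{\chi[\mf{L}_\omega,[\l_-,\l_+]]}\deg(H_{\l_+},B_{\eta_0}(0),\varepsilon_{p_1}).
\]
Since $H_{\l_\pm}^{-1}(0)\cap B_{\eta_0}(0)=\{0\}$ and $0$ is a regular point, formula \eqref{5.5.13} with $\varepsilon_{p_\pm}(0)=1$ makes both sides equal to $1$; this contradicts \eqref{6.6.10} and proves (i).

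For (ii), I fix $\eta\in(0,\eta_0)$ and, reasoning again by contradiction, suppose that no non-trivial component meeting $T$ hits $\{\|u\|=\eta\}$. Let $\mathfrak{Q}$ be the connected component of the compact set $\mf{F}^{-1}(0)\cap([\l_-,\l_+]\times\overline{B_\eta(0)})$ that contains the entire trivial branch; the standing hypothesis forces $\mathfrak{Q}\cap\{\|u\|=\eta\}=\emptyset$. Whyburn's isolating-neighborhood lemma (Property (9.3) of \cite[Ch.~I]{W}) then supplies an open set $\mathscr{V}\subset[\l_-,\l_+]\times B_\eta(0)$ with $\mathfrak{Q}\subset\mathscr{V}$, $\overline{\mathscr{V}}\cap\{\|u\|=\eta\}=\emptyset$, and $\mf{F}^{-1}(0)\cap\partial\mathscr{V}=\emptyset$; intersecting $\mathscr{V}$ with the implicit-function-theorem neighborhoods at $\l_\pm$ further ensures that each slice $\mathscr{V}_{\l_\pm}$ is a small ball around $0$ in which $u=0$ is the only zero of $\mf{F}(\l_\pm,\cdot)$. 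The triple $(\mf{F},\mathscr{V},\varepsilon)$ is then a generalized $\mc{O}$-admissible homotopy in the sense of Theorem \ref{T6.1.1}, which applies verbatim as in (i) with $\gamma(t)=(t,0)$ and $p_0=p_1=0$; the normalization step again yields $1=(-1)^{\chi[\mf{L}_\omega,[\l_-,\l_+]]}$, a contradiction. The main obstacle is precisely this isolating construction: $\mathscr{V}$ must simultaneously contain the full trivial branch (so that the endpoint slice degrees normalize to $+1$), be isolating for the whole zero set of $\mf{F}$ (so that Theorem \ref{T6.1.1} applies), and keep $\mathscr{V}_{\l_\pm}$ inside the IFT neighborhoods at the endpoints. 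Once this topological bookkeeping is set up, the degree computation closes the argument.
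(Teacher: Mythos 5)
Part (i) of your proposal essentially reproduces the paper's argument: localize via the implicit function theorem and local properness, apply Lemma \ref{leiii.5} along the trivial path with $p_0=p_1=0$, and get the contradiction from the endpoint normalization to $+1$. Part (ii), however, has a genuine gap. You assert that the contradiction hypothesis (no non-trivial component of $\mc{S}\cap Q_\eta$ joining $T$ to $\{\|u\|=\eta\}$) forces $\mathfrak{Q}\cap\{\|u\|=\eta\}=\emptyset$, where $\mathfrak{Q}$ is the connected component of the \emph{full} zero set $\mf{F}^{-1}(0)\cap Q_\eta$ containing the trivial branch. That implication is not immediate: the hypothesis concerns components of $\mc{S}\cap Q_\eta$, while $\mathfrak{Q}$ lives in $\mf{F}^{-1}(0)\cap Q_\eta$, which additionally contains the non-spectral trivial points $(\l,0)$, $\l\notin\Sigma(\mf{L})$; deleting a relatively open subset can in principle change the component structure, so being in the same component of the full zero set as the trivial branch does not by itself give a component of $\mc{S}\cap Q_\eta$ meeting $T$. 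To close the step you must, in effect, run the paper's own argument: with $M=\mc{S}\cap Q_\eta$, $A=M\cap\{\|u\|=\eta\}$, $B=T$, apply Whyburn's separation lemma (Lemma \ref{Wh.}) to write $M=M_A\uplus M_B$, then observe that $\mf{F}^{-1}(0)\cap Q_\eta=M_A\uplus\bigl(M_B\cup([\l_-,\l_+]\times\{0\})\bigr)$ is a disjoint union of compacta, so $\mathfrak{Q}$, which contains the trivial branch, lies in the second piece and therefore misses $A$. You have not avoided the Whyburn separation; you have deferred it into an unproven assertion.

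There is a second, smaller gap in the endpoint normalization. Concluding that both endpoint degrees equal $+1$ requires that $u=0$ be the only zero of $\mf{F}(\l_\pm,\cdot)$ in $\mathscr{V}_{\l_\pm}$, hence that $\mathfrak{Q}_{\l_\pm}=\{0\}$. This holds only after you first shrink $\eta_0$ so that $\overline{B}_{\eta_0}$ sits inside the implicit-function-theorem balls at $(\l_\pm,0)$, which also forces every non-trivial solution in $Q_\eta$ to have $\l$-coordinate bounded away from $\l_\pm$. You never fix such an $\eta_0$ for Part (ii); the phrase ``intersecting $\mathscr{V}$ with the IFT neighborhoods'' either cuts away part of $\mathfrak{Q}$ (violating $\mathfrak{Q}\subset\mathscr{V}$) or is vacuous unless that shrinking has already been made. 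The paper's explicit set $\O=\Int Q_\beta\cup M^\delta_B$, together with the bound \eqref{6.6.11} on the $\l$-coordinates of $M^\delta_B$, makes the endpoint slices exactly $B_\beta$ and the normalization automatic; your choice of isolating $\mathfrak{Q}$ directly is salvageable, but it needs these preliminary reductions spelled out.
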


In the proof of this result we need the following result of Whyburn \cite{W}.

\begin{lemma}
	\label{Wh.}
	Let $(M,d)$ be a compact metric space and $A$ and $B$ two disjoint compact subsets of M. Then, either there exists a connected component of $M$ meeting both $A$ and $B$, or $M = M_{A} \uplus M_{B},$
	where $M_{A}$ and $M_{B}$ are disjoint compact subsets of $M$ containing $A$
	and $B$, respectively.
\end{lemma}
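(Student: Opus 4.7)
My plan is to prove the contrapositive: assuming that no clopen separation $M=M_A\uplus M_B$ with $A\subseteq M_A$ and $B\subseteq M_B$ exists, I would construct a connected component of $M$ meeting both $A$ and $B$.

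First, I would consider the family $\mathcal{C}$ of all clopen subsets $U\subseteq M$ with $A\subseteq U$. Since finite intersections of clopen sets are clopen, $\mathcal{C}$ is closed under finite intersections. Set $K:=\bigcap_{U\in\mathcal{C}} U$; this is a closed subset of $M$ containing $A$. I claim $K\cap B\neq\emptyset$. Indeed, if $K\cap B=\emptyset$, then for every $b\in B$ there is some $U_b\in\mathcal{C}$ with $b\notin U_b$. The open sets $\{M\setminus U_b\}_{b\in B}$ cover the compact set $B$, so a finite subcover gives $B\subseteq M\setminus\bigcap_{i=1}^n U_{b_i}$; writing $U:=\bigcap_{i=1}^n U_{b_i}\in\mathcal{C}$, the sets $M_A:=U$ and $M_B:=M\setminus U$ would produce the forbidden clopen separation. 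Hence $K\cap B\neq\emptyset$, and I choose $b_0\in K\cap B$.

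Next, I would invoke the classical \v{S}ura-Bura theorem: in any compact Hausdorff space (in particular, in the compact metric space $M$), the connected component $C_x$ of a point $x$ coincides with its quasi-component $Q_x:=\bigcap\{U:x\in U,\ U\subseteq M\text{ clopen}\}$. Using this, I show that $C_{b_0}\cap A\neq\emptyset$, which completes the proof, since then $C_{b_0}$ is a connected component of $M$ meeting both $B$ (through $b_0$) and $A$. Suppose, for contradiction, $C_{b_0}\cap A=\emptyset$. Then, since $C_{b_0}=Q_{b_0}$, for each $a\in A$ there is a clopen set $V_a\subseteq M$ with $b_0\in V_a$ and $a\notin V_a$. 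The open sets $\{M\setminus V_a\}_{a\in A}$ cover the compact set $A$, so a finite subcover yields $A\subseteq M\setminus\bigcap_{i=1}^m V_{a_i}$; the set $V:=\bigcap_{i=1}^m V_{a_i}$ is clopen, contains $b_0$, and is disjoint from $A$, so its complement $M\setminus V$ is clopen, contains $A$, and does not contain $b_0$. This contradicts $b_0\in K$, proving the claim.

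The only non-elementary ingredient is the \v{S}ura-Bura identity $C_x=Q_x$; I would regard this as the main obstacle if proving Whyburn's lemma from scratch, but it is a well-known theorem whose standard proof rests on the normality of compact Hausdorff spaces plus a finite-intersection argument that forces the quasi-component to be connected. Granted that identity, the argument reduces to two dual applications of the same template: pass from a pointwise family of clopen sets separating a compact set from a point to a single clopen set doing the separation, by extracting a finite subcover and intersecting. This is precisely why the hypothesis that both $A$ and $B$ are compact is used in an essential, symmetric way.
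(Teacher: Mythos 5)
Your argument is correct. Note, however, that the paper does not prove this lemma at all: it is quoted from Property (9.3) of Chapter I of Whyburn's \emph{Topological Analysis} and used as a black box, so there is no in-paper proof to compare against. Your proof is the standard one: the second alternative of the lemma is exactly a clopen partition of $M$ (disjoint compact sets with union $M$ are complementary closed, hence clopen), and assuming no such partition exists you correctly (i) use compactness of $B$ to show that the intersection $K$ of all clopen sets containing $A$ meets $B$ at some $b_0$, and (ii) use compactness of $A$ together with the identity between components and quasi-components in compact Hausdorff spaces (\v{S}ura-Bura) to show that the component of $b_0$ meets $A$; both finite-subcover-and-intersect steps are carried out properly, and the contradiction with $b_0\in K$ is exactly right. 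The only outsourced ingredient is the \v{S}ura-Bura identity, which you flag explicitly; that is legitimate, though it is worth knowing that in the compact \emph{metric} setting one can avoid it entirely by the classical $\varepsilon$-chain argument (if for every $\varepsilon>0$ there is an $\varepsilon$-chain from $A$ to $B$, a diagonal/limit argument produces a continuum joining them; otherwise the set of points $\varepsilon$-chainable from $A$ is clopen and yields the partition), which is essentially Whyburn's own route and is more elementary, at the cost of being specific to metric spaces.
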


\begin{proof}[Proof of Theorem \ref{T6.2.1}]
	Let us start by fixing $\varepsilon>0$ so that
	$$[\l_{-}-\varepsilon,\l_{+}+\varepsilon]\times \{0\}\subset \mc{U}.$$
	Since $D_{u}\mathfrak{F}(\lambda,0)\in\Phi_{0}(U,V)$ for all $\lambda\in[\l_{-}-\varepsilon,\l_{+}+\varepsilon]$, necessarily $$D\mathfrak{F}(\lambda,0)\in\Phi_{1}(\mathbb{R}\times U,V) \quad \text{for all } \lambda\in[\l_{-}-\varepsilon,\l_{+}+\varepsilon].$$
	Thus, by the Fredholm stability theorems (see, e.g., Kato \cite{Ka}), for every $\lambda\in[\lambda_{-}-\varepsilon,\lambda_{+}+\varepsilon]$, there exists
	an open interval $\mathcal{I}(\lambda)\subset[\lambda_{-}-\varepsilon,\lambda_{+}+\varepsilon]$, and $r(\l)>0$ sufficiently small such that
	$$
	D\mathfrak{F}(\lambda,u)\in \Phi_{1}(\mathbb{R}\times U,V) \quad \hbox{for all } (\lambda,u)\in\mathcal{I}(\lambda)\times B_{r(\lambda)}.
	$$
	By the compactness of $[\lambda_{-}-\varepsilon,\lambda_{+}+\varepsilon]\times\{0\}$, there exist an integer $N\geq 1$ and $N$ points  $\lambda_i\in [\lambda_{-}-\varepsilon,\lambda_{+}+\varepsilon]$, $1\leq i\leq N$, such that
	$$
	[\lambda_{-}-\varepsilon,\lambda_{+}+\varepsilon]\times\{0\}\subset \bigcup_{i=1}^{N}\mathcal{I}(\lambda_{i})\times B_{r(\lambda_{i})}.
	$$
	Therefore $D\mathfrak{F}(\lambda,u)\in\Phi_{1}(\mathbb{R}\times U,V)$ for all $\lambda\in [\lambda_{-}-\varepsilon,\lambda_{+}+\varepsilon]$ and $u\in B_{r}$, where
	$$
	r:=\min\{r(\lambda_{1}),r(\lambda_{2}),...,r(\lambda_{N})\}>0.
	$$
	Moreover, since the Fredholm maps are locally proper (see, e.g., Smale \cite{Sm}), we can repeat the previous argument to show that actually $\mathfrak{F}$ is a proper Fredholm map of index one on $[\lambda_{-}-\varepsilon,\lambda_{+}+\varepsilon]\times \overline{B}_r$ for sufficiently small $r>0$. Moreover, shrinking $r>0$ if necessary, we can suppose that $D_{u}\mf{F}(\l_{\pm},u)\in GL(U,V)$ for each $u\in B_{r}$. Consequently, $\mf{F}(\l_{\pm},\cdot)$ is orientable and $\mathcal{R}_{D_{u}\mathfrak{F}(\lambda_{\pm},\cdot)}\neq \emptyset$. Hence $\mathfrak{F}$ is orientable on $[\lambda_{-}-\varepsilon,\lambda_{+}+\varepsilon]\times B_r$. Denoting
$\mc{O}:=B_{r}$ and $\O:=B_{r/2}$, we have proved that $(H,\O,\varepsilon)\in \mathscr{H}(\mc{O})$ is a $\mc{O}$-admissible homotopy.
\par
To prove the part (i) we argue by contradiction. Suppose that $\mathfrak{F}(\lambda,u)=0$ does not admit a bifurcation value in $(\lambda_{-},\lambda_{+})$. Then, shortening $r>0$, if necessary, one can assume that
	$$
	([\lambda_{-},\lambda_{+}]\times \overline{\O})\cap \mathfrak{F}^{-1}(0)=[\lambda_{-},\lambda_{+}]\times\{0\}.
	$$
	In particular, $0\notin \mathfrak{F}([\lambda_{-},\lambda_{+}]\times \partial\O)$. Since $D_{u}\mathfrak{F}(\lambda_{\pm},0)\in GL(U,V)$, $0$ is a regular value of both $D_{u}\mathfrak{F}(\lambda_\pm,0)$. Thus, Lemma \ref{leiii.5} with $p_0=p_1=0$ yields
	$$
	\deg(\mathfrak{F}(\lambda_{-},\cdot),\O,\varepsilon_{0})=
	(-1)^{\chi[\mathfrak{L}_{\omega},[\lambda_{-},\lambda_{+}]]}
	\deg(\mathfrak{F}(\lambda_{+},\cdot),\O,\varepsilon_{0}),
	$$
where $\varepsilon_0$ is the orientation such that $\varepsilon_0(0)=1$, and $\mathfrak{L}_{\omega}\in\mathscr{C}^{\omega}([\lambda_{-},\lambda_{+}],\Phi_{0}(U,V))$ is any analytic curve $\mc{A}$-homotopic to $D_{u}\mathfrak{F}(\lambda,0)$. As, due to the hypothesis (b),  $$\chi[\mathfrak{L}_{\omega},[\lambda_{-},\lambda_{+}]]\in 2\mathbb{N}+1,$$ we find that
	\begin{equation}
		\label{InvDeg}
		\deg(\mathfrak{F}(\lambda_{-},\cdot),\O,\varepsilon_{0})=
		-\deg(\mathfrak{F}(\lambda_{+},\cdot),\O,\varepsilon_{0}).
	\end{equation}
	On the other hand, since $D_{u}\mathfrak{F}(\lambda_{\pm},0)\in GL(U,V)$, by the inverse function theorem, shortening $r>0$ if necessary, we can suppose that $\mf{F}(\l_{\pm},\cdot): \O\to V$ is injective. Consequently, by the definition of the degree for regular values, it is apparent that
	$$
	\deg(\mathfrak{F}(\lambda_{\pm},\cdot),\O,\varepsilon_{0})=\sum_{u\in \mf{F}_{\l_{\pm}}^{-1}(0)\cap\O}\varepsilon_{0}(u)=\varepsilon_{0}(0)=1.
	$$
This contradicts \eqref{InvDeg} and concludes the proof of Part (i).
\par
Now, we will prove the part (ii). Fix $r>0$ so that $\mf{F}$ is a proper Fredholm map of index one on $[\l_{-}-\varepsilon,\l_{+}+\varepsilon]\times \overline{B}_{r}$ as we have done in the proof of the part (i). Subsequently, for every $0<\eta<r$, we consider the closed cylinder $Q_\eta:=[\lambda_{-},\lambda_{+}]\times \overline{B}_\eta$, and the set of non-trivial solutions of $\mathfrak{F}(\lambda,u)=0$,
	\begin{equation*}
		\mathcal{S}\equiv \{(\lambda,u)\in \mathfrak{F}^{-1}(0):u\neq 0\}\uplus \{(\lambda,0)\in\mc{U}:\lambda\in \Sigma(\mathfrak{L})\}.
	\end{equation*}
	It is easily seen that $\mathcal{S}$ and $\Sigma(\mathfrak{L})$ are closed. Thus, as $Q_{\eta}$ is closed and bounded and $\mathcal{S}\subset \mathfrak{F}^{-1}(0)$, the set $M:=\mathcal{S}\cap Q_{\eta}$ is compact, because $\mathfrak{F}$ is proper on $[\l_{-}-\varepsilon,\l_{+}+\varepsilon]\times \overline{B}_{r}$. Now, consider the subsets of $M$
	\begin{equation*}
		A:=\{(\lambda,u)\in M: \|u\|=\eta\} \;\; \text{ and }\;\;
		B:=\{(\lambda,0)\in M: \lambda\in\Sigma(\mathfrak{L})\}.
	\end{equation*}
	By Part (i), we already know that there exists $\lambda_0\in (\lambda_{-},\lambda_{+})$ such that
	$(\lambda_0,0)$ is a bifurcation point of $\mathfrak{F}(\lambda,u)=0$ from $(\lambda,0)$. Thus,
	there exists  $\eta>0$ such that $A\neq\emptyset$. Clearly,  $B\neq \emptyset$ because  $(\lambda_{0},0)\in B$. Therefore, $A$ and $B$ are non-empty disjoint compact subsets of $M$.
	Part (ii) establishes the existence of a continuum $\mathfrak{C}$ linking $A$ to $B$.
	To prove this we argue by contradiction. So, assume that $A$ and $B$ are not in the same
	connected component of $M$. Then, according to Lemma \ref{Wh.}, $M=M_{A}\uplus M_{B}$, where $M_{A}$ and $M_{B}$ are disjoint compact subsets of $M$ containing $A$ and $B$. Since $\mathrm{dist\,}(M_{A},M_{B})>0$, there exists $\delta>0$ such that the open $\delta$-neighborhood
$M^{\delta}_{B}:=M_{B}+B_{\delta}(0,0)$ satisfies $M^{\delta}_{B}\cap M_{A}=\emptyset$. Moreover, since $\lambda_{\pm}\notin \Sigma(\mathfrak{L})$, by the implicit function theorem,  we can choose $\eta>0$ and $\varepsilon>0$ sufficiently small so that $(\lambda,u)\in M$ implies $\lambda_{-}+\varepsilon<\lambda<\lambda_{+}-\varepsilon$. Hence, shortening $\delta>0$, if necessary, we have that
	\begin{equation}
		\label{6.6.11}
		(\lambda,u)\in M^{\delta}_{B} \;\; \Rightarrow \;\; \lambda_{-}+\frac{\varepsilon}{2}<\lambda<\lambda_{+}-\frac{\varepsilon}{2},
	\end{equation}
	and therefore $M^{\delta}_{B}\subset Q_{\eta}$. By construction, $\partial M^{\delta}_{B}\cap M=\emptyset$. Moreover, $Q_{\beta}\cap M_{A}=\emptyset$
	for sufficiently small $\beta\in (0,\eta)$. Thus, for these $\beta$'s, the set
$\O:=\Int Q_{\beta}\cup M^{\delta}_{B}$, where the interior is taken over $[\l_{-},\l_{+}]\times U$, is a bounded and connected open in $[\l_{-},\l_{+}]\times U$ and satisfies
$M_{B}\subset \O$ and $\partial\O\cap M=\emptyset$.
	\begin{figure}[t]
		\begin{center}

			\tikzset{every picture/.style={line width=0.75pt}} 
			
			\begin{tikzpicture}[x=0.75pt,y=0.75pt,yscale=-1,xscale=1]
				
				\draw    (118.59,53.93) -- (119,163) ;
				\draw    (295.59,53.93) -- (296,163) ;
				\draw    (118.59,53.93) -- (295.59,53.93) ;
				\draw [line width=2.25]    (194.88,133.16) .. controls (305.88,37.16) and (274.88,130.16) .. (225,163) ;
				\draw [line width=2.25]    (136,54) .. controls (131,108) and (163,119) .. (192,70) ;
				\draw  [dash pattern={on 4.5pt off 4.5pt}]  (65.49,108.29) -- (295.79,108.47) ;
				\draw  [fill={rgb, 255:red, 0; green, 0; blue, 0 }  ,fill opacity=1 ] (133,54) .. controls (133,55.66) and (134.34,57) .. (136,57) .. controls (137.66,57) and (139,55.66) .. (139,54) .. controls (139,52.34) and (137.66,51) .. (136,51) .. controls (134.34,51) and (133,52.34) .. (133,54) -- cycle ;
				\draw [line width=2.25]    (59.68,162.65) -- (339.37,162.75) ;
				\draw    (66,33) -- (66,170) ;
				\draw  [fill={rgb, 255:red, 0; green, 0; blue, 0 }  ,fill opacity=1 ] (222,163) .. controls (222,164.66) and (223.34,166) .. (225,166) .. controls (226.66,166) and (228,164.66) .. (228,163) .. controls (228,161.34) and (226.66,160) .. (225,160) .. controls (223.34,160) and (222,161.34) .. (222,163) -- cycle ;
				\draw [fill={rgb, 255:red, 74; green, 144; blue, 226 }  ,fill opacity=0.3 ][line width=0.75]  [dash pattern={on 0.84pt off 2.51pt}]  (200.38,108.48) .. controls (286.38,33.48) and (299.52,68.74) .. (281.52,107.74) ;
				\draw  [dash pattern={on 0.84pt off 2.51pt}]  (200.38,108.48) .. controls (181.38,130.48) and (185,154) .. (225,124) ;
				\draw  [dash pattern={on 0.84pt off 2.51pt}]  (225,124) .. controls (265,94) and (242,142) .. (212,161) ;
				\draw  [dash pattern={on 4.5pt off 4.5pt}]  (65.16,53.96) -- (118.59,53.93) ;
				\draw  [dash pattern={on 0.84pt off 2.51pt}]  (281.52,107.74) .. controls (266.33,137.33) and (267.67,127.67) .. (246.33,161.33) ;
				\draw    (271,119) -- (322.03,128.63) ;
				\draw [shift={(324,129)}, rotate = 190.68] [color={rgb, 255:red, 0; green, 0; blue, 0 }  ][line width=0.75]    (10.93,-3.29) .. controls (6.95,-1.4) and (3.31,-0.3) .. (0,0) .. controls (3.31,0.3) and (6.95,1.4) .. (10.93,3.29)   ;
				\draw  [fill={rgb, 255:red, 74; green, 144; blue, 226 }  ,fill opacity=0.28 ][dash pattern={on 0.84pt off 2.51pt}] (119,108.47) -- (296.21,108.47) -- (296.21,163) -- (119,163) -- cycle ;
				
				\draw (129,32.4) node [anchor=north west][inner sep=0.75pt]    {$A$};
				\draw (219,167.4) node [anchor=north west][inner sep=0.75pt]    {$B$};
				\draw (46,100.4) node [anchor=north west][inner sep=0.75pt]    {$\beta $};
				\draw (44,47.4) node [anchor=north west][inner sep=0.75pt]    {$\eta $};
				\draw (143,68.4) node [anchor=north west][inner sep=0.75pt]    {$M_{A}$};
				\draw (257,71.4) node [anchor=north west][inner sep=0.75pt]    {$M_{B}$};
				\draw (328,119.4) node [anchor=north west][inner sep=0.75pt]    {$M_{B}^{\delta }$};
				\draw (334,167.4) node [anchor=north west][inner sep=0.75pt]    {$\lambda $};
				\draw (71,15.4) node [anchor=north west][inner sep=0.75pt]    {$\| \cdot \| $};
				\draw (111,164.4) node [anchor=north west][inner sep=0.75pt]    {$\lambda _{-}$};
				\draw (289,164.4) node [anchor=north west][inner sep=0.75pt]    {$\lambda _{+}$};

			\end{tikzpicture}
		\end{center}
		
		\caption{Scheme of the construction.}
		\label{F5.3}
	\end{figure}
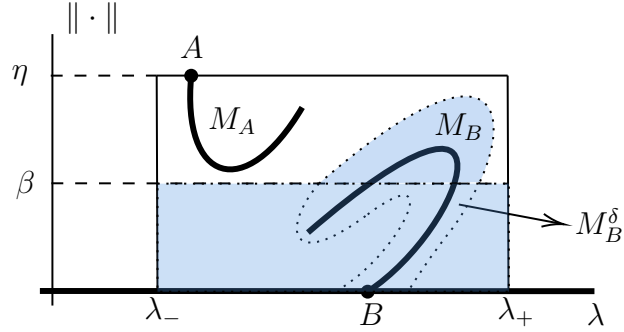
	Setting $\mc{O}:=B_{\eta}$, since $\overline{\O}\subset [\l_{-},\l_{+}]\times \mc{O}$ and $\partial\O\cap M=\emptyset$, it follows that $(\mf{F},\O,\varepsilon)\in\mathscr{G}(\mc{O})$, $\mf{F}:[\l_{-},\l_{+}]\times \mc{O}\to V$, is a generalized $\mc{O}$-admissible homotopy.
	Since $p_{0}=0$ is a regular point of $D\mathfrak{F}_{\lambda_{\pm}}$, by Theorem \ref{T6.1.1}, we find that
\begin{equation}
		\label{6.6.91}
		\deg(\mathfrak{F}(\lambda_{-},\cdot),\O_{\lambda_{-}},\varepsilon_{0})
		=(-1)^{\chi[\mathfrak{L}_{\omega},[\lambda_{-},\lambda_{+}]]}
		\deg(\mathfrak{F}(\lambda_{+},\cdot),\O_{\lambda_{+}},\varepsilon_{0}),
\end{equation}
where $\varepsilon_0$ is the orientation such that $\varepsilon_{0}(0)=1$, and  $\mathfrak{L}_{\omega}\in\mathscr{C}^{\omega}([\lambda_{-},\lambda_{+}],\Phi_{0}(U,V))$ is any analytic curve $\mc{A}$-homotopic to $\mathfrak{L}(\l)$, $\l\in[\l_{-},\l_{+}]$. Furthermore, by  \eqref{6.6.11},
	$\O_{\lambda_{-}}=\O_{\lambda_{+}}=B_{\beta}$, and, by hypothesis,  $\chi[\mathfrak{L}_{\omega},[\lambda_{-},\lambda_{+}]]\in 2\mathbb{N}+1$. Therefore, from \eqref{6.6.91}
	we can infer that
	$$
	\deg(\mathfrak{F}(\lambda_{-},\cdot),B_{\beta},\varepsilon_{0})
	=-\deg(\mathfrak{F}(\lambda_{+},\cdot),B_{\beta},\varepsilon_{0}),
	$$
	which is impossible, as we have already shown at the end of the proof of Part (i). Therefore, $A$ and $B$ must be part of the same connected component of $M$. This ends the proof.
\end{proof}

\section{Global Bifurcation Theory}\label{SGBT}

\noindent In this section, we are going to adapt the global theorem of L\'{o}pez-G\'{o}mez and Mora-Corral \cite{LGMC} to the context of the degree for Fredholm operators of Fitzpatrick,  Pejsachowicz and  Rabier \cite{FPRb,PR}. Throughout this part, we consider a $\mc{C}^{1}$ function $\mathfrak{F}:\mathbb{R}\times U\to V$ such that
\begin{enumerate}
	\item[(F1)] $\mf{F}$ is orientable with orientation $\varepsilon:\mc{R}_{D_{u}H}\to\Z_{2}$.
	\item[(F2)] $\mathfrak{F}(\lambda,0)=0$ for all $\lambda\in\mathbb{R}$.
	\item[(F3)] $D_{u}\mathfrak{F}(\lambda,u)\in\Phi_{0}(U,V)$ for every $\lambda\in\mathbb{R}$ and $u\in U$.
	\item[(F4)] $\mathfrak{F}$ is proper on bounded and closed subsets of $\mathbb{R}\times U$.
\end{enumerate}
The next definition fixes  the concept of admissible family of intervals for $\Sigma(\mathfrak{L})$.
Given two disjoint real intervals, $A$ and $B$, it is said that $A<B$ if $a<b$ for every $a\in A$ and $b\in B$.

\begin{definition}
	Let $\mc{J}$ be a non-empty locally finite family of disjoint non-empty open intervals of $\R$.  It is said that $\mc{J}$ is an admissible family of intervals for $\Sigma(\mathfrak{L})$ if
	\begin{equation*}
		J \cap \Sigma(\mathfrak{L}) = \emptyset \qquad \mbox{for all } \; J\in \mathcal{J}.
	\end{equation*}
	If, in addition, there are $r, s\in\Z\cup \{\pm\infty\}$, $r \leq s$, such that $\mc{J}= \{ J_{n} \}_{n=r}^s$ with 	$J_{n-1} < J_n$ for all $n\in\Z\cap [r+1,s]$, then  $\mc{J}$ is said to be
	an admissible ordered family of intervals for $\Sigma(\mathfrak{L})$.
\end{definition}
\noindent Associated to any admissible ordered family of intervals $\mathcal{J}=\{J_n\}_{n=r}^s$ for $\Sigma(\mathfrak{L})$, we have the associated family of compact intervals
$\mathcal{I}=\{I_n\}_{n=r+1}^s$ defined by
$$
I_n := [\sup J_{n-1}, \inf J_n]\,,\qquad  n\in\Z\cap [r+1,s].
$$
For each $n\in \Z \cap [r+1,s]$, one has that $I_n\neq \emptyset$, because $J_{n-1}<J_n$, though $I_n$ might consist of a single point. Moreover, $I_n < I_{n+1}$ for each $n\in\Z\cap [r+1,s-1]$. By \cite[Le. 4.3]{LGMC05}, the family $\mc{I}$ is also locally finite.
\noindent Now, we introduce the concept of \emph{$\mc{J}$-parity map} associated to an  admissible ordered family
of intervals, $\mc{J}= \{ J_n \}_{n=r}^s$  for $\Sigma(\mathfrak{L})$. Choose a point $\lambda_{n}\in J_{n}$ for every $n\in\Z\cap [r,s]$. Since $\l_{r}\notin \Sigma(\mf{L})$, necessarily $\mf{L}(\l_{r})\in GL(U,V)$ and $p=0$ is a regular point of $\mf{F}(\l_{r},\cdot)$. In fact, $p=0$ is a regular point of $\mf{F}(\l_{n},\cdot)$ for each $n\in\Z\cap [r,s]$. Let us suppose, without lost of generality, that $\varepsilon_{\l_{r}}(0)=\varepsilon(\l_{r},0)=1$.
This choice determines uniquely the total orientation $\varepsilon$. Consider the sequence $\{a_{n}\}_{n=r}^{s}\subset\{-1,1\}$ defined recursively by
\begin{equation*}
	a_{r}=1, \quad a_{n}=a_{n-1} \cdot \sigma(\mf{L},[\lambda_{n-1},\lambda_{n}]) \quad
	\hbox{for} \;\; n\in\mathbb{Z}\cap[r+1,s].
\end{equation*}
\noindent The following two lemmas provide us with two closed formulae for $a_{n}$.
\begin{lemma}
\label{Lemma9.4.1,2}
For every $n\in\mathbb{Z}\cap[r+1,s]$, we have that $a_{n}=\sigma(\mf{L},[\l_{r},\l_{n}])$.
\end{lemma}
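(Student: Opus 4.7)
The plan is to prove this by finite induction on $n\in\mathbb{Z}\cap[r+1,s]$, using the additivity property of the parity from Theorem \ref{th3.1}. Before starting the induction, I note that admissibility is automatic: for each $n\in\mathbb{Z}\cap[r,s]$, we have $\lambda_n\in J_n$ and $J_n\cap\Sigma(\mathfrak{L})=\emptyset$, so $\mathfrak{L}(\lambda_n)\in GL(U,V)$. Consequently, for any $i<j$ in $\mathbb{Z}\cap[r,s]$, the restriction $\mathfrak{L}|_{[\lambda_i,\lambda_j]}$ belongs to $\mathscr{C}([\lambda_i,\lambda_j],\Phi_0(U,V))$ and its parity is well defined.

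The base case $n=r+1$ is immediate from the recursive definition: $a_{r+1}=a_r\cdot\sigma(\mathfrak{L},[\lambda_r,\lambda_{r+1}])=\sigma(\mathfrak{L},[\lambda_r,\lambda_{r+1}])$. For the inductive step, assume $a_{n-1}=\sigma(\mathfrak{L},[\lambda_r,\lambda_{n-1}])$. Applying the additivity property of the parity to the partition $[\lambda_r,\lambda_n]=[\lambda_r,\lambda_{n-1}]\cup[\lambda_{n-1},\lambda_n]$ (which is admissible since $\mathfrak{L}(\lambda_{n-1})\in GL(U,V)$ is a common endpoint where $\mathfrak{L}$ is invertible), one obtains
\begin{equation*}
\sigma(\mathfrak{L},[\lambda_r,\lambda_n])=\sigma(\mathfrak{L},[\lambda_r,\lambda_{n-1}])\cdot\sigma(\mathfrak{L},[\lambda_{n-1},\lambda_n]).
\end{equation*}
Combining this with the recursive definition of $a_n$ and the inductive hypothesis yields
\begin{equation*}
a_n=a_{n-1}\cdot\sigma(\mathfrak{L},[\lambda_{n-1},\lambda_n])=\sigma(\mathfrak{L},[\lambda_r,\lambda_{n-1}])\cdot\sigma(\mathfrak{L},[\lambda_{n-1},\lambda_n])=\sigma(\mathfrak{L},[\lambda_r,\lambda_n]),
\end{equation*}
closing the induction.

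There is essentially no obstacle here: the statement is a bookkeeping consequence of the multiplicativity of the parity under concatenation of admissible paths. The only point that requires verification, and which I would state explicitly at the start of the argument, is that each intermediate endpoint $\lambda_k$ lies outside $\Sigma(\mathfrak{L})$ so that the parity can be split along it; this is exactly the content of the admissibility of the family $\mathcal{J}$.
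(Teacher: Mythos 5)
Your proof is correct and follows essentially the same route as the paper's: the paper unwinds the recursion directly via the additivity of the parity, $a_n = a_r \cdot \prod_{i=r+1}^n \sigma(\mathfrak{L},[\lambda_{i-1},\lambda_i]) = \sigma(\mathfrak{L},[\lambda_r,\lambda_n])$, which is exactly your induction written telescopically. Your explicit remarks that each $\lambda_k \in J_k$ is disjoint from $\Sigma(\mathfrak{L})$ (so $\mathfrak{L}(\lambda_k)\in GL(U,V)$ and each subinterval is admissible for the parity) are a welcome bit of diligence that the paper leaves implicit.
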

\begin{proof}
	Let $n\in\mathbb{Z}\cap[r+1,s]$. By applying inductively the properties of the parity yields 	 \begin{align*}
		a_{n}=a_{n-1}\cdot \sigma(\mf{L},[\l_{n-1},\l_{n}])=a_{r}\cdot \prod_{i=r+1}^{n}\sigma(\mf{L},[\l_{i-1},\l_{i}])=\sigma(\mf{L},[\l_{r},\l_{n}]).
	\end{align*}
	This concludes the proof.
\end{proof}
\begin{lemma}
	\label{Lemma9.4.2,2}
	For every $n\in\N$, we have that $a_{n}=\varepsilon_{\l_{n}}(0)$.
\end{lemma}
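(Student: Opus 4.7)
The plan is to invoke the defining property of the total orientation $\varepsilon$ along the trivial branch and then identify the resulting parity with the product of parities that defines $a_n$. Recall from \eqref{Orient2} that, for any $t$, $\varepsilon_t(x)=\varepsilon(t,x)$ on $\mc{R}_{D\mathfrak{F}_{t}}$; in particular $\varepsilon_{\lambda_n}(0)=\varepsilon(\lambda_n,0)$, which is well defined because $\lambda_n\notin\Sigma(\mf{L})$ forces $\mf{L}(\lambda_n)\in GL(U,V)$ and hence $(\lambda_n,0)\in\mc{R}_{D_u\mf{F}}$. By convention we have normalized $\varepsilon$ so that $\varepsilon_{\lambda_r}(0)=\varepsilon(\lambda_r,0)=1$.

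First, I would fix $n\in\mathbb{Z}\cap[r+1,s]$ (the case $n=r$ being trivial) and consider the affine path $\gamma\in\mc{C}([\lambda_r,\lambda_n],\R\times U)$ defined by $\gamma(\lambda):=(\lambda,0)$. Since $\gamma$ lies entirely in the trivial branch and both endpoints are regular points of $D_u\mf{F}$, the orientability condition \eqref{4.4.200} applied to the continuous map $D_u\mf{F}$ yields
\begin{equation*}
\sigma(D_u\mf{F}\circ\gamma,[\lambda_r,\lambda_n])=\varepsilon(\lambda_r,0)\cdot\varepsilon(\lambda_n,0)=\varepsilon_{\lambda_n}(0).
\end{equation*}
On the other hand, by construction, $D_u\mf{F}\circ\gamma=\mf{L}|_{[\lambda_r,\lambda_n]}$, so
\begin{equation*}
\sigma(D_u\mf{F}\circ\gamma,[\lambda_r,\lambda_n])=\sigma(\mf{L},[\lambda_r,\lambda_n]).
\end{equation*}

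Finally, combining these two identities with the closed formula of Lemma \ref{Lemma9.4.1,2}, I obtain
\begin{equation*}
\varepsilon_{\lambda_n}(0)=\sigma(\mf{L},[\lambda_r,\lambda_n])=a_n,
\end{equation*}
which ends the proof. The only point that requires some care is the verification that $\gamma$ is an admissible path for the orientation (both endpoints must be regular points of $D_u\mf{F}$), but this is guaranteed by $\lambda_r,\lambda_n\notin\Sigma(\mf{L})$, which in turn follows from the fact that these points have been selected inside the intervals $J_r,J_n$ of the admissible family $\mc{J}$.
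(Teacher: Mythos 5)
Your proof is correct and follows essentially the same route as the paper: both combine Lemma \ref{Lemma9.4.1,2} with the defining relation \eqref{4.4.200} of the orientation along the trivial branch. You merely spell out explicitly the path $\gamma(\lambda)=(\lambda,0)$ and the identification $D_u\mf{F}\circ\gamma=\mf{L}$, which the paper leaves implicit under the phrase ``by the definition of the orientation.''
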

\begin{proof}
	By Lemma \ref{Lemma9.4.1,2}, we have that $a_{n}=\sigma(\mf{L},[\l_{r},\l_{n}])$, for all  $n\in\mathbb{Z}\cap[r+1,s]$. Thus, by the definition of the orientation, 	
$$
 \sigma(\mf{L},[\l_{r},\l_{n}])=\varepsilon(\l_{r},0)\cdot\varepsilon(\l_{n},0)=\varepsilon_{\l_{r}}(0)\cdot \varepsilon_{\l_{n}}(0).
$$
Since we have set $\varepsilon_{\l_{r}}(0)=1$, it follows that $a_{n}=\varepsilon_{\l_{n}}(0)$.
\end{proof}

\noindent The next definition introduces the notion of $\mc{J}$-parity map.

\begin{definition}
	The $\mc{J}$-parity map $\mathcal{P}$ associated to the admissible ordered 	family $\mc{J}:=\{ J_n \}_{n=r}^s$  is defined through
	\begin{equation*}
		\mathcal{P}: \;\mathcal{I} \longrightarrow \{-1,0,1\} \,, \qquad
		\mathcal{P}(I_{n})=\frac{a_{n}-a_{n-1}}{2},
	\end{equation*}
	where $\mathcal{I}=\{I_n\}_{n=r+1}^s$.
\end{definition}
\noindent Note that, setting
$$
\Gamma_0:=\{n\in \mathbb{Z}\cap [r+1,s] : a_{n-1}=a_n\}, \quad
\Gamma_1:= \{n\in \mathbb{Z}\cap [r+1,s] : a_{n-1}\neq a_n\},
$$
the $\mc{J}$-parity $\mc{P}$ satisfies the following properties:
\begin{enumerate}
	\item[(a)] $\mathcal{P}(I_{n})=0$ if $n\in \Gamma_0$,
	\item[(b)] $\mathcal{P}(I_{n})=\pm 1$ if $n\in \Gamma_1$,
	\item[(c)] $\mathcal{P}(I_{n})\mathcal{P}(I_{m})=-1$ if $n, m\in \Gamma_1$ with $n<m$ and $(n,m)\cap \Gamma_1=\emptyset$.
\end{enumerate}
The following lemma will be invoked later. It relates the $\mc{J}$-parity map with the  degree.
\begin{lemma}
	\label{L9.4.4,2}
Let $n\in \mathbb{Z}\cap [r+1,s]$. Then, for sufficiently small $\rho>0$ and $\delta>0$,
\begin{equation}
		\label{eqll}
		2\mc{P}(I_{n})=\deg(\mf{F}(\l_{n},\cdot), B_{\rho}, \varepsilon_{\l_{n}})-\deg(\mf{F}(\l_{n-1},\cdot), B_{\rho}, \varepsilon_{\l_{n-1}}).
\end{equation}
\end{lemma}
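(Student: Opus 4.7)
The plan is to reduce each of the two degrees on the right-hand side of \eqref{eqll} to the value of the orientation at the trivial solution by means of the construction of the degree for regular triples, and then invoke Lemma \ref{Lemma9.4.2,2} to identify those values with $a_n$ and $a_{n-1}$. Once this identification is made, the claim follows immediately from the definition $\mc{P}(I_n)=(a_n-a_{n-1})/2$.

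First I would observe that, since $\l_n\in J_n$ and $J_n\cap\Sigma(\mf{L})=\emptyset$, the linearization $\mf{L}(\l_n)=D_u\mf{F}(\l_n,0)\in GL(U,V)$. Hence $0$ is a regular point of $\mf{F}(\l_n,\cdot)$ and, by the inverse function theorem applied to the Fredholm map $\mf{F}(\l_n,\cdot)$, there exists $\rho_n>0$ such that $\mf{F}(\l_n,\cdot)$ is a diffeomorphism from $\overline{B}_{\rho_n}$ onto its image and
\begin{equation*}
  \mf{F}(\l_n,\cdot)^{-1}(0)\cap \overline{B}_{\rho_n}=\{0\}.
\end{equation*}
Setting $\rho:=\min\{\rho_{n-1},\rho_n\}$, the triples $(\mf{F}(\l_n,\cdot),B_\rho,\varepsilon_{\l_n})$ and $(\mf{F}(\l_{n-1},\cdot),B_\rho,\varepsilon_{\l_{n-1}})$ both lie in $\mathscr{R}(U)$, i.e., they are regular triples with $0$ as unique zero in $B_\rho$. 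The parameter $\delta>0$ in the statement may be used to guarantee, via continuity, that this choice of $\rho$ is uniform on the subintervals $[\l_{n-1}-\delta,\l_{n-1}+\delta]$ and $[\l_n-\delta,\l_n+\delta]$, which is inessential here.

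Next, I apply the formula \eqref{5.5.13} for the degree on a regular triple. Since $\mf{F}(\l_n,\cdot)^{-1}(0)\cap B_\rho=\{0\}$ and $0\in\mc{R}_{D_u\mf{F}(\l_n,\cdot)}$, it yields
\begin{equation*}
  \deg(\mf{F}(\l_n,\cdot),B_\rho,\varepsilon_{\l_n})
  =\sum_{u\in\mf{F}(\l_n,\cdot)^{-1}(0)\cap B_\rho}\varepsilon_{\l_n}(u)
  =\varepsilon_{\l_n}(0),
\end{equation*}
and, by Lemma \ref{Lemma9.4.2,2}, $\varepsilon_{\l_n}(0)=a_n$. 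The very same argument at $\l_{n-1}$ delivers $\deg(\mf{F}(\l_{n-1},\cdot),B_\rho,\varepsilon_{\l_{n-1}})=a_{n-1}$.

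Subtracting these two identities, we obtain
\begin{equation*}
  \deg(\mf{F}(\l_n,\cdot),B_\rho,\varepsilon_{\l_n})-\deg(\mf{F}(\l_{n-1},\cdot),B_\rho,\varepsilon_{\l_{n-1}})=a_n-a_{n-1}=2\mc{P}(I_n),
\end{equation*}
which is the desired identity \eqref{eqll}. There is no genuine obstacle in this argument: the result is essentially bookkeeping combining the normalization property \eqref{5.5.13} of the Fitzpatrick--Pejsachowicz--Rabier degree with the closed formula $a_n=\varepsilon_{\l_n}(0)$ from Lemma \ref{Lemma9.4.2,2}. The only subtlety worth emphasizing is that the orientation $\varepsilon_{\l_n}$ at the two endpoints of $I_n$ is precisely the restriction to the $\l$-slices of the \emph{global} orientation fixed on $\mf{F}$ by the normalization $\varepsilon_{\l_r}(0)=1$; without this global choice, the two degrees on the right-hand side of \eqref{eqll} would be defined only up to sign and the difference would be meaningless.
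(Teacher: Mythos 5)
Your proof is correct and takes essentially the same approach as the paper: both reduce each degree to $\varepsilon_{\l_n}(0)$ via the regular-triple formula and then invoke Lemma \ref{Lemma9.4.2,2} to identify that value with $a_n$. Your version is in fact somewhat more economical, since you apply Lemma \ref{Lemma9.4.2,2} directly to both endpoints, whereas the paper first passes through the parity identity $a_n = a_{n-1}\,\sigma(\mf{L},[\l_{n-1},\l_n])$ and the relation $\varepsilon_{\l_{n-1}}(0)\,\varepsilon_{\l_n}(0) = \sigma(\mf{L},[\l_{n-1},\l_n])$ before recombining.
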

\begin{proof}
	Fix some $n\in \mathbb{Z}\cap [r+1,s]$. By definition,
	\begin{equation*}
		2\mc{P}(I_{n})=a_{n}-a_{n-1}=a_{n-1}\left[\sigma(\mf{L},[\lambda_{n-1},\lambda_{n}])-1\right].
	\end{equation*}
Thus, by Lemma \ref{Lemma9.4.2,2},
	\begin{equation}
		\label{eqqLl,2}
		2\mc{P}(I_{n})=\varepsilon_{\l_{n-1}}(0)\left[\sigma(\mf{L},[\lambda_{n-1},\lambda_{n}])-1\right].
	\end{equation}
	Recall that $p=0$ is a regular point of $\mf{F}(\l,\cdot)$ for each $\l\in\{\l_{n-1},\l_{n}\}$. By the definition of the orientation,
	\begin{equation*}
		\varepsilon_{\l_{n-1}}(0)\cdot \varepsilon_{\l_{n}}(0)=\sigma(\mf{L},[\lambda_{n-1},\lambda_{n}]),
	\end{equation*}
	or equivalently,
	\begin{equation}
		\label{eqL9.4.2,2}
		\varepsilon_{\l_{n}}(0)=\varepsilon_{\l_{n-1}}(0)\cdot \sigma(\mf{L},[\lambda_{n-1},\lambda_{n}]).
	\end{equation}
	Since $\mf{L}(\l_{n}),\mf{L}(\l_{n-1})\in GL(U,V)$, for sufficiently small $\rho>0$, by the definition of the degree and \eqref{eqL9.4.2,2}, we infer that
	\begin{align*}
		& \deg(\mf{F}(\l_{n},\cdot), B_{\rho}, \varepsilon_{\l_{n}})= \varepsilon_{\l_{n}}(0)=\varepsilon_{\l_{n-1}}(0)\cdot \sigma(\mf{L},[\lambda_{n-1},\lambda_{n}]), \\
		& \deg(\mf{F}(\l_{n-1},\cdot), B_{\rho}, \varepsilon_{\l_{n-1}})=\varepsilon_{\l_{n-1}}(0).
	\end{align*}
Therefore, we deduce that
	\begin{equation*}
		\begin{split}
			\deg(\mf{F}(\l_{n},\cdot), B_{\rho}, \varepsilon_{\l_{n}})-\deg(\mf{F}(\l_{n-1},\cdot), B_{\rho}, \varepsilon_{\l_{n-1}})=\varepsilon_{\l_{n-1}}(0)\left[\sigma(\mf{L},[\lambda_{n-1},\lambda_{n}])-1\right].
		\end{split}
	\end{equation*}
Combining this identity together with \eqref{eqqLl,2} yields \eqref{eqll} and ends the proof.
\end{proof}
\noindent Associated to any admissible ordered family $\mc{J}=\{J_n\}_{n=r}^s$ for $\Sigma(\mathfrak{L})$, we will consider the corresponding \textit{set of non-trivial solutions} through
\begin{equation*}
	\mc{S}_\mc{J}:= \Big( \mf{F}^{-1}(0) \cap [\R \times (U\setminus \{0\})]\Big)\cup
	\left[ \left(\R \setminus \bigcup_{n=r}^s J_n \right) \times \{0\}\right].
\end{equation*}
Clearly, the set $\mc{S}_\mc{J}\subset \mf{F}^{-1}(0)$ is closed. We will say that $\mathfrak{C}\subset \mc{S}_\mc{J}$ is a \textit{component} of $\mc{S}_\mc{J}$ if it is a non-empty closed and connected subset of $\mc{S}_\mc{J}$ maximal for the inclusion, i.e., if it is a connected component of $ \mc{S}_\mc{J}$. Since $\mf{F}$ is proper on closed and bounded subsets, it is easily seen that every bounded component $\mf{C}$ is compact.
\par
Figure \ref{F6.1}  shows an admissible unbounded set of non-trivial solutions $\mc{S}_\mc{J}$ together with
the intervals, $J_1$, $J_2$, $J_3$ and $J_4$, of the admissible ordered family $\mathcal{J}$; $J_1$ and
$J_4$ are unbounded open intervals, while $J_2$ and $J_3$ are bounded. In this example,
$\mc{S}_\mc{J}$ consists of four components, $\mathfrak{C}_i$, $i\in\{1,2,3,4\}$, though, in general, it might consist of an arbitrarily large number of components. The set $\mathbb{R}\setminus \bigcup_{i=1}^4 J_i$ consists of three compact intervals, $I_1$, $I_2$ and $I_3$, containing $\Sigma(\mf{L})$, which, in this particular example, it consists of two compact intervals and three single isolated points. The set $\Sigma(\mf{L})$ is colored in green. The components $\mathfrak{C}_1$ and $\mathfrak{C}_2$ are separated away
from the real axis, which represents the trivial solution $(\lambda,0)$. This is why these components are usually referred to as \emph{isolas}. $\mathfrak{C}_1$ is unbounded in $\R\times U$, while $\mathfrak{C}_2$ is bounded. The remaining two components bifurcate from $(\lambda,0)$ at a variety of spectral values.  $\mathfrak{C}_3$ bifurcates from $I_1$ and it is unbounded, and $\mathfrak{C}_4$ bifurcates from $I_2$ and  $I_3$, and it is bounded. Thus, $\mathfrak{C}_{3}$ contains $I_{1}$ and $\mathfrak{C}_{4}$ contains $I_{2}$ and $I_{3}$.

\begin{figure}[t]
	\begin{center}
		\includegraphics[scale=0.3]{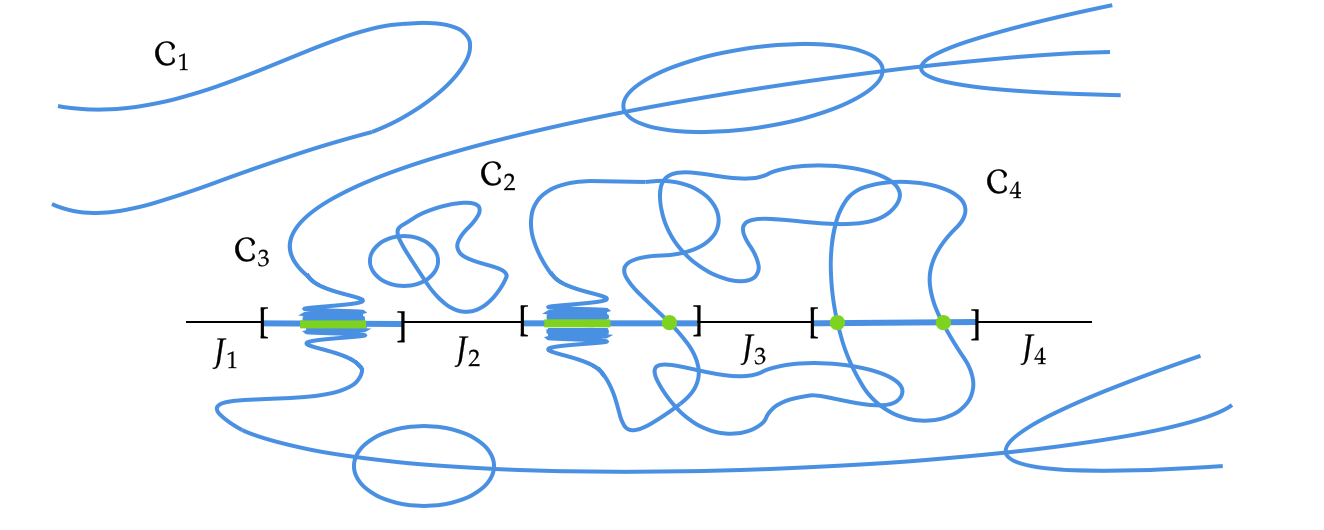}
		\caption{The set of non-trivial solutions $\mc{S}_{\mathcal{J}}$}
		\label{F6.1}
	\end{center}
\end{figure}

Subsequently, for any given component, $\mf{C}$, of $\mc{S}_\mc{J}$, we will denote
\begin{equation*}
	\mc{B}\equiv \mc{B}(\mf{C}):=\left\{ n\in \mathbb{Z}\cap [r+1,s]\;:\; (I_n\times \{0\})\cap\mf{C}\neq \emptyset\right\}.
\end{equation*}
For example, in the special case described by Figure \ref{F6.1}, $\mc{B}(\mf{C}_3)= \{1\}$ and
$\mc{B}(\mf{C}_4)= \{2,3\}$. The next result  shows that $\mc{B}(\mf{C})$ is finite if $\mf{C}$ is compact (see \cite[Le. 2.4]{LGMCC} for its proof).
\begin{lemma}
	\label{le25}  For every bounded component
	$\mf{C}\subset \mc{S}_{\mc{J}}$, $\mc{B}=\mc{B}(\mf{C})$ 	is finite. If, in addition,
	\begin{equation}
		\label{Condi}
		\mf{C} \cap (\R \times \{0\}) \subset \left(\cup_{n=r}^s J_n \cup \cup_{n=r+1}^s I_n \right) \times \{0\},
	\end{equation}
	then there exists $\a>0$ such that
	\begin{equation}\label{24}
		\left[ K + B_{\a}(0,0) \right] \cap \Big[ \big( \R \setminus \cup_{n=r}^s J_n \big) \times \{0\} \Big]    \subset \cup_{n\in \mc{B}} I_n \times \{0\} .
	\end{equation}
\end{lemma}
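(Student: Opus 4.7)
The approach relies on two ingredients already in hand: $\mf{C}$ is compact, since it is bounded, closed in $\mc{S}_\mc{J}$ (which the text notes is closed in $\R\times U$) and contained in $\mf{F}^{-1}(0)$, on which $\mf{F}$ is proper over bounded closed sets by (F4); and the family $\{I_n\}_{n=r+1}^s$ is locally finite, as recalled in the excerpt (citing \cite{LGMC05}, Le.~4.3).

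For the first assertion, the $\l$-projection of $\mf{C}$ is a compact subset of $\R$. Local finiteness of $\{I_n\}$ then implies that only finitely many $I_n$'s meet this projection; since $\mc{B}(\mf{C})$ is contained in that set of indices, $\mc{B}$ is finite.

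For the second assertion, observe that $\mf{C}\subset \mc{S}_\mc{J}$ forces $\mf{C}\cap (\R\times\{0\})\subset (\R\setminus\bigcup_n J_n)\times\{0\}$; combining this with \eqref{Condi} yields
\[
\mf{C}\cap (\R\times\{0\}) \;\subset\; \Big[\big(\R\setminus\textstyle\bigcup_n J_n\big)\cap\big(\bigcup_n J_n\cup \bigcup_n I_n\big)\Big]\times\{0\} \;=\; \bigcup_n I_n\times\{0\},
\]
and the definition of $\mc{B}$ further refines this to $\mf{C}\cap (\R\times\{0\})\subset \bigcup_{n\in\mc{B}} I_n\times\{0\}$. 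Set
\[
F:=\Big[\big(\R\setminus\textstyle\bigcup_n J_n\big)\setminus \bigcup_{n\in\mc{B}} I_n\Big]\times\{0\},
\]
so that $\mf{C}\cap F=\emptyset$ by the previous inclusion. The crucial point is that $F$ is closed in $\R\times U$. Indeed, consecutive $I_n$'s are separated by the open intervals $J_{n-1}$ and $J_n$, so in the relative topology of the closed set $\R\setminus \bigcup_n J_n$ each $I_n$ is clopen; since $\mc{B}$ is finite by Part~1, $\bigcup_{n\in\mc{B}} I_n$ is relatively open in $\R\setminus\bigcup_n J_n$, and hence $F$ is closed. With $\mf{C}$ compact, $F$ closed, and $\mf{C}\cap F=\emptyset$, we get $\dist(\mf{C},F)>0$, and any $\a\in(0,\dist(\mf{C},F))$ yields \eqref{24}.

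The main technical point is the closedness of $F$: removing a closed set from a closed set is not in general a closed operation, and here it works only because the gap structure provided by $\{J_n\}$ renders each $I_n$ clopen in the relative topology of $\R\setminus\bigcup_n J_n$, which is precisely what is needed to safely truncate the family to the finite subset indexed by $\mc{B}$.
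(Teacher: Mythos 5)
The paper does not actually prove this lemma in the text; it cites \cite[Le. 2.4]{LGMCC} and moves on, so there is no internal argument to compare yours against. Your reconstruction is correct and is the natural proof: finiteness of $\mc{B}$ follows from compactness of $\mf{C}$ (bounded, closed in the closed set $\mc{S}_\mc{J}\subset\mf{F}^{-1}(0)$, and $\mf{F}$ proper on bounded closed sets) together with the local finiteness of $\{I_n\}_{n=r+1}^s$; and for the distance estimate, the key observation that $F=\big[(\R\setminus\bigcup_n J_n)\setminus\bigcup_{n\in\mc{B}} I_n\big]\times\{0\}$ is closed because each $I_n$ is clopen in the relative topology of $\R\setminus\bigcup_n J_n$ (being sandwiched between the open intervals $J_{n-1}$ and $J_n$) is exactly the right idea, and then the positive distance from the compact $\mf{C}$ to the disjoint closed $F$ produces the desired $\a$. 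One small remark, which does not affect correctness: the finiteness of $\mc{B}$ is not actually needed to make $\bigcup_{n\in\mc{B}} I_n$ relatively open — an arbitrary union of relatively open sets is relatively open, and each $I_n$ is already relatively open because $J_{n-1}\cup I_n\cup J_n$ is an open interval of $\R$ — so the closedness of $F$ would hold even for infinite $\mc{B}$. Invoking Part~1 is harmless and tidy, but it is not the load-bearing fact in this step.
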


\noindent Throughout the rest of the section, we fix an admissible ordered family
$\mc{J}=\{J_n\}_{n=r}^s$ for $\Sigma(\mathfrak{L})$ and a bounded component $\mf{C}$ of $\mc{S}_\mc{J}$. Thanks to Lemma\,\ref{le25}, $\mc{B}=\mc{B}(\mf{C})$ is finite. Moreover,
$\cup_{n\in\mc{B}} I_i \times \{0\} \subset \mf{C}$, since $\mf{C}$ is a component. Also, every $\l\in \R \setminus \cup_{n=r}^s J_n$ with $(\l,0) \in \mf{C}$ satisfies $\l\in
\cup_{n\in\mc{B}} I_n$, since $\mf{C}$ is bounded. Thus,
\begin{equation}
	\label{32}
	\mf{C} \cap (\R \times \{0\}) =  \cup_{n\in\mc{B}} I_n \times \{0\} \,.
\end{equation}
The following concept plays a pivotal  role in our subsequent analysis.

\begin{definition}
	\label{D6.3.3}
	A bounded open set $\O \subset \R \times U$ is said to be an open isolating neighbourhood of $\mf{C}$ with size $\eta>0$ if the following conditions are satisfied:
	\begin{enumerate}
		\item[{\rm (a)}] $\mf{C}\subset \O \subset \mf{C}+B_\eta(0,0)$,
		
		\item[{\rm (b)}] $\p \O \cap \mc{S}_\mc{J} =\emptyset$,
		
		\item[{\rm (c)}] $\Big[ \big( \R \setminus \cup_{n=r}^s J_n \big) \times \{0\} \Big] \cap \O \subset		\cup_{n\in \mc{B}} I_n \times \{0\}$.
	\end{enumerate}
\end{definition}

\noindent The condition (a) entails that $\p\Omega$ can be taken as close as we wish to $\mf{C}$,  the condition  (b) means that $\p\Omega$ cannot admit any non-trivial solution, and (c) holds as soon as $\eta>0$ is sufficiently small in (a). Note that item (c) implies that
$$
\mc{B}(\mf{C})=\{ n\in\Z\cap[r+1,s] : (I_n \times \{0\}) \cap \O
\neq \emptyset\}.
$$
To prove the main result of this section, we need the following technical lemma concerning the existence of an open isolating neighbourhood for $\mf{C}$ (see \cite[Pr. 3.3]{LGMCC} for its proof).

\begin{lemma}
	\label{L6.3.4}
	Let  $\mc{J}:=\{ J_n\}_{n=r}^s$  be an admissible ordered family for $\Sigma(\mf{L})$, and  suppose that $\mf{C}$ is a bounded component of $\mc{S}_\mc{J}$. Then, for every $\eta>0$, $\mf{C}$
	possesses an open isolating neighborhood $\O$ of size $\eta$. Moreover, for every open isolating neighborhood $\O$ of $\mf{C}$  and sufficiently small $\e>0$, there exists $\r_0 >0$ such that, for every $0 < \r \leq \r_0$ and
	\begin{equation}
		\label{6.6.17}
		\l \in \Lambda(\mf{C},\varepsilon):= \mathbb{R} \setminus \cup_{n \in \mc{B}} \left[ I_n + (-\e,\e) \right],
	\end{equation}
	some of the following alternatives occurs:
	\begin{enumerate}
		\item[\rm (i)] Either $\overline{B}_\r \cap \O_\l=\emptyset$, or
		
		\item[\rm (ii)] $\left\{u\in \overline{B}_{\r}: \mf{F}(\l,u)=0 \right\}= \{0\}$.
	\end{enumerate}
\end{lemma}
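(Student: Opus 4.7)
My strategy for the open isolating neighborhood is a Whyburn-type separation argument applied to a compact tube around $\mf{C}$, while the small-solution dichotomy will follow from a compactness contradiction built on properness of $\mf{F}$ and items (b), (c) of Definition~\ref{D6.3.3}.

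For the first assertion, I start by noting that $\mf{C}$ is compact: it is closed as a connected component of the closed set $\mc{S}_{\mc{J}}$, and bounded by hypothesis, so properness of $\mf{F}$ on closed bounded sets makes $\mf{C}=\mf{C}\cap\mf{F}^{-1}(0)$ compact. Let $\alpha>0$ be provided by \eqref{24} of Lemma~\ref{le25}; I shrink $\eta$ so that $\eta<\alpha$. Then $N:=\mc{S}_{\mc{J}}\cap(\mf{C}+\bar B_{\eta}(0,0))$ is compact and $\mf{C}$ is a connected component of $N$. Applying Lemma~\ref{Wh.} to the disjoint compact subsets $A=\mf{C}$ and $B=N\setminus(\mf{C}+B_{\eta/2}(0,0))$ of $N$ (the case $B=\emptyset$ is trivial) and using that $\mf{C}$ is already a whole component of $N$, one obtains a clopen decomposition $N=O\uplus(N\setminus O)$ with $\mf{C}\subset O\subset\mf{C}+B_{\eta/2}(0,0)$. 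The sets $O$ and $N\setminus O$ are disjoint compact subsets of $\R\times U$, so they lie at positive distance $d>0$, and the open set
\[
\O:=\{(\l,u)\in\R\times U\,:\,\dist((\l,u),O)<\tfrac{1}{2}\min\{d,\eta\}\}
\]
satisfies $\mf{C}\subset\O\subset\mf{C}+B_{\eta}(0,0)$, yielding (a). By construction $\bar\O\cap(N\setminus O)=\emptyset$, hence $\bar\O\cap\mc{S}_{\mc{J}}\subset\bar\O\cap N\subset O\subset\O$, which gives $\p\O\cap\mc{S}_{\mc{J}}=\emptyset$ and so (b). Property (c) follows at once from $\O\subset\mf{C}+B_{\alpha}(0,0)$ together with \eqref{24}.

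For the dichotomy in the second part, I argue by contradiction. If no uniform $\rho_0$ works, there exist sequences $\rho_n\downarrow 0$, $\l_n\in\Lambda(\mf{C},\varepsilon)$, $u_n\in\bar B_{\rho_n}\setminus\{0\}$ with $\mf{F}(\l_n,u_n)=0$, and $v_n\in\bar B_{\rho_n}$ with $(\l_n,v_n)\in\O$. Boundedness of $\O$ and closedness of $\Lambda(\mf{C},\varepsilon)$ allow me to assume (along a subsequence) that $\l_n\to\l^*\in\Lambda(\mf{C},\varepsilon)$, whence $(\l_n,u_n),(\l_n,v_n)\to(\l^*,0)$. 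Closedness of $\mc{S}_{\mc{J}}$ forces $(\l^*,0)\in\mc{S}_{\mc{J}}$, so $\l^*\in I_m$ for some $m$; the defining condition of $\Lambda(\mf{C},\varepsilon)$ together with the mutual disjointness of the $I_n$'s yields $m\notin\mc{B}$. On the other hand $(\l^*,0)\in\bar\O$: if it lies in $\O$ then condition~(c) places $\l^*\in\cup_{n\in\mc{B}}I_n$, contradicting $m\notin\mc{B}$; if it lies on $\p\O$ then condition~(b) is violated.

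The delicate step will be item (c) of Definition~\ref{D6.3.3}, since it is precisely the property that rules out the limit $(\l^*,0)$ from sitting inside $\O$ when $\l^*\in I_m$ with $m\notin\mc{B}$. Securing (c) depends on the choice $\eta<\alpha$ so as to invoke the geometric estimate \eqref{24} of Lemma~\ref{le25}; the remaining pieces amount to a routine compactness and closedness chase.
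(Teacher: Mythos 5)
Your argument is correct and, as far as one can tell, follows the same route as the reference the paper defers to (the paper itself outsources the proof to \cite[Pr.~3.3]{LGMCC}, so there is no in-text proof to compare against): a Whyburn clopen separation of the compact tube $N=\mc{S}_\mc{J}\cap(\mf{C}+\bar B_\eta(0,0))$, followed by a thickening of the $\mf{C}$-side to produce $\O$, and a sequential compactness contradiction against properties (b) and (c) of Definition~\ref{D6.3.3} for the small-solution dichotomy. One small presentational wrinkle in the second part: from $(\l^*,0)\in\mc{S}_\mc{J}$ you conclude that $\l^*\in I_m$ for some $m$ and that $m\notin\mc{B}$, but $(\l^*,0)\in\mc{S}_\mc{J}$ only yields $\l^*\in\R\setminus\cup_n J_n$, which in principle can fall in the unbounded half-lines $(-\infty,\inf J_r]$ or $[\sup J_s,\infty)$ when $r,s$ are finite, so the intermediate assertion $\l^*\in I_m$ is not cleanly justified. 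This does no harm because it is also unnecessary: when $(\l^*,0)\in\O$, property (c) already places $\l^*\in\cup_{n\in\mc{B}}I_n$, which directly contradicts $\l^*\in\Lambda(\mf{C},\varepsilon)=\R\setminus\cup_{n\in\mc{B}}[I_n+(-\varepsilon,\varepsilon)]$, and the boundary case is handled by (b). I would drop the intermediate claim and run the case split directly.
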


\noindent The main result of this section can be stated as follows.

\begin{theorem}
	\label{T6.3.5,2} Let $\mf{F}\in\mc{C}^{1}(\R\times U,V)$ be a map satisfying {\rm (F1)-(F4)},  $\mc{J}:=\{ J_n \}_{n=r}^s$  be an admissible ordered family for $\Sigma(\mf{L})$, and  suppose that $\mf{C}$ is a bounded component of $\mc{S}_\mc{J}$. Then,
	\begin{equation}
		\label{6.6.18}
		\sum_{n\in \mc{B}(\mf{C})} \mathcal{P}(I_n)=0,
	\end{equation}
	where $\mathcal{P}$ is the $\mc{J}$-parity map.
\end{theorem}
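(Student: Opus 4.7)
The plan is to combine the generalized homotopy invariance of Theorem \ref{T6.1.1.1} with the local jump formula of Lemma \ref{L9.4.4,2} on the isolating neighborhood provided by Lemma \ref{L6.3.4}. Since $\mc{B}=\mc{B}(\mf{C})$ is finite by Lemma \ref{le25}, I will enumerate it as $\mc{B}=\{n_1<\cdots<n_k\}$. First I will fix an open isolating neighborhood $\Omega$ of $\mf{C}$ of small size $\eta>0$, with the associated parameters $\varepsilon\in(0,\eta)$ and $\rho>0$ produced by Lemma \ref{L6.3.4}. For each $i$ I will pick bracket points $\mu_i^-\in J_{n_i-1}\cap\Lambda(\mf{C},\varepsilon)$ and $\mu_i^+\in J_{n_i}\cap\Lambda(\mf{C},\varepsilon)$ close enough to $I_{n_i}$ that $(\mu_i^\pm,0)\in\Omega$, supplemented by auxiliary points $\mu_0^+$ far to the left of $\mf{C}$ and $\mu_{k+1}^-$ far to the right, where $\Omega_{\mu_0^+}=\Omega_{\mu_{k+1}^-}=\emptyset$ by boundedness of $\mf{C}$.

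For every such $\mu$ the isolating condition $\partial\Omega\cap\mc{S}_{\mc{J}}=\emptyset$ ensures that $d(\mu):=\deg(\mf{F}(\mu,\cdot),\Omega_\mu,\varepsilon_\mu)$ is well defined, and the alternative of Lemma \ref{L6.3.4} (applicable since $\mu\in\Lambda(\mf{C},\varepsilon)$) rules out zeros of $\mf{F}(\mu,\cdot)$ on $\partial B_\rho$, so additivity yields $d(\mu)=\nu(\mu)+m(\mu)$ with
\begin{equation*}
\nu(\mu):=\deg(\mf{F}(\mu,\cdot),\Omega_\mu\cap B_\rho,\varepsilon_\mu),\qquad m(\mu):=\deg(\mf{F}(\mu,\cdot),\Omega_\mu\setminus\overline{B_\rho},\varepsilon_\mu).
\end{equation*}
Two appeals to Theorem \ref{T6.1.1.1} will then pin these quantities down. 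On the strip $\Omega\cap([\mu_i^-,\mu_i^+]\times U)$, whose boundary lies inside $\partial\Omega$ and hence avoids $\mc{S}_{\mc{J}}$, the invariance yields $d(\mu_i^+)=d(\mu_i^-)$. On the punctured set $\bigl(\Omega\cap([\mu_i^+,\mu_{i+1}^-]\times U)\bigr)\setminus(\R\times\overline{B_\rho})$ — whose original-boundary contribution still avoids $\mc{S}_{\mc{J}}$ (trivial zeros sit in $\|u\|<\rho$ and are excluded, non-trivial zeros are forbidden by the isolating property), and whose new boundary piece $\R\times\partial B_\rho$ is zero-free along the strip because $[\mu_i^+,\mu_{i+1}^-]\subset\Lambda(\mf{C},\varepsilon)$ via Lemma \ref{L6.3.4} — the invariance yields $m(\mu_i^+)=m(\mu_{i+1}^-)$; at the two extreme indices this collapses to $m(\mu_0^+)=m(\mu_1^-)=0$ and $m(\mu_k^+)=m(\mu_{k+1}^-)=0$.

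Combining $d(\mu_i^+)=d(\mu_i^-)$ with the decomposition gives $\nu(\mu_i^+)-\nu(\mu_i^-)=-(m(\mu_i^+)-m(\mu_i^-))$ for each $i$, and telescoping via $m(\mu_i^+)=m(\mu_{i+1}^-)$ collapses $\sum_{i=1}^{k}(m(\mu_i^+)-m(\mu_i^-))=m(\mu_{k+1}^-)-m(\mu_1^-)=0$. On the other hand, $(\mu_i^\pm,0)\in\Omega$ together with the $B_\rho$-alternative of Lemma \ref{L6.3.4} make $\Omega_{\mu_i^\pm}\cap B_\rho=B_\rho$ with only the trivial zero inside, so excision gives $\nu(\mu_i^\pm)=\deg(\mf{F}(\mu_i^\pm,\cdot),B_\rho,\varepsilon_{\mu_i^\pm})$, whence Lemma \ref{L9.4.4,2} evaluates $\nu(\mu_i^+)-\nu(\mu_i^-)=2\mc{P}(I_{n_i})$. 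Summing over $i$ produces $2\sum_{i=1}^{k}\mc{P}(I_{n_i})=0$, which is \eqref{6.6.18}. The main obstacle will be threading the three small parameters $\eta>\varepsilon>0$ and $\rho>0$ so that the bracket points $\mu_i^\pm$ lie simultaneously in $\Lambda(\mf{C},\varepsilon)$ (to exploit the $B_\rho$-alternative) and satisfy $(\mu_i^\pm,0)\in\Omega$ (so that $\nu(\mu_i^\pm)$ collapses to the full-ball degree of Lemma \ref{L9.4.4,2}); a further delicate point will be checking that the punctured cylinder used in the second invariance step has boundary genuinely free of zeros of $\mf{F}$ even when intermediate slices $\Omega_\lambda$ are nonempty, which rests precisely on $\partial\Omega\cap\mc{S}_{\mc{J}}=\emptyset$ and on Lemma \ref{L6.3.4} controlling $\R\times\partial B_\rho$.
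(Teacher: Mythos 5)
Your proof is correct and follows essentially the same route as the paper's: two applications of the generalized homotopy invariance (Theorem \ref{T6.1.1.1}) on the isolating neighborhood of Lemma \ref{L6.3.4} --- one across each interval $I_{n_i}$ and one on the punctured cylinder between consecutive intervals --- combined with additivity of the degree, the local jump formula of Lemma \ref{L9.4.4,2}, and the vanishing of the outer degree at the two extreme slices, where boundedness of $\mf{C}$ makes them empty. The paper merely pins down the bracket points explicitly as $\lambda_n^\pm=\inf I_n-\d/2$, $\sup I_n+\d/2$ with $\e=\d/2$ and tracks the outer degrees $d_k$ directly rather than through the auxiliary splitting $d(\mu)=\nu(\mu)+m(\mu)$, but the logical structure is identical to yours.
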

\begin{proof}
	Since $\mf{C}$ is compact, $\mc{B}(\mf{C})$ is finite and we can write $\mc{B}=\{n_{k}\}_{k=1}^{m}$ where $m$ is the cardinal of the set $\mc{B}(\mf{C})$ and $n_{k}\in \Z\cap [r+1,s]$ for each $1\leq k\leq m$ with
	$$n_{1}<n_{2}<\cdots<n_{k}<\cdots<n_{m}.$$
	Let $\O$ be an open isolating neighborhood of $\mf{C}$ with size $\eta>0$. Pick a sufficiently small $\delta>0$ such that:
	\begin{enumerate}
		\item $\left(\cup_{n\in\mathcal{B}}I_{n}\times \{0\}\right)+B_{\delta}(0)\subset\Omega$.
		\item $\{I_n + (-\d/2 , \d/2)\}_{n\in\mc{B}}$ consists of disjoint intervals.
		\item Setting $\lambda_{n}^- := \inf I_n-\d/2$ and $\lambda_n^+ := \sup I_{n} + \d/2$
		for each integer $n\in\mc{B}$, one has that $\lambda_n^- \in J_{n -1}$ and $\lambda_n^+ \in J_{n}$ for all $n\in\mc{B}$.
	\end{enumerate}
	Moreover, thanks to Lemma \ref{L6.3.4} with $\e=\delta/2$, there exists $\r_0 >0$ such that, for every $0 < \r \leq \r_0$ and $\lambda \in  \Lambda(\mf{C},\delta/2)$, either
	\begin{equation}
		\label{Thusby}
	\overline{B}_\r \cap \O_\l=\emptyset \ \ \text{  or  } \ \ \{u\in \overline{B}_{\r}: 	\mf{F}(\l,u)=0 \} = \{0\}.
\end{equation}
	Since $\lambda^{\pm}_{n}\notin \Sigma(\mathfrak{L})$, necessarily $D_{u}\mathfrak{F}(\lambda^{\pm}_{n},0)\in GL(U,V)$. Thus, $p_{0}=0$ is a regular point of $\mathfrak{F}(\lambda^{\pm}_{n},\cdot)$ for each $n\in\mc{B}$.
	In order to apply the homotopy invariance property of the degree, the following lemma is needed.
	\begin{lemma}
		\label{LL31,2}
		Let $1\leq k\leq m$. Then, for every  $\l\in[\l_{n_{k}}^{-},\l_{n_{k}}^{+}]$,
		\begin{equation}
			\label{6.6.19}
			0\notin \mathfrak{F}(\{\lambda\}\times\partial\Omega_{\lambda}).
		\end{equation}
	\end{lemma}
\begin{proof}
	Let $\l\in [\l_{n_{k}}^{-},\l_{n_{k}}^{+}]$ and $u\in U$ such that $(\l,u)\in \p \O$ and $\mathfrak{F}(\lambda,u)=0$. Since $(\l,u)\in \p \O$, then, by Definition \ref{D6.3.3}, item (b), $(\l,u)\notin \mc{S}$. Thus, since $\mathfrak{F}(\lambda,u)=0$, necessarily $u=0$. Moreover, due to the property (1), $(\l,0)\in \O$ for all  $\lambda\in[\lambda^{-}_{n_{k}},\lambda^{+}_{n_{k}}]$. So, $(\l,0)\notin\p\O$, a contradiction. This ends the proof of the lemma.
\end{proof}
\noindent Consequently, setting for each $1\leq k\leq m$,
$$\O_{n_{k}}:=\O\cap ([\l_{n_{k}}^{-},\l_{n_{k}}^{+}]\times U),$$
the tern $(\mf{F},\O_{n_{k}},\varepsilon)$ is a generalized $U$-admissible homotopy, $(\mf{F},\O_{n_{k}},\varepsilon)\in\mathscr{G}(U)$, and from Theorem \ref{T6.1.1.1},
\begin{equation}
	\label{6.6.20}	
	\deg(\mathfrak{F}(\lambda^{-}_{n_{k}},\cdot),\Omega_{\lambda^{-}_{n_{k}}},\varepsilon_{\l^{-}_{n_{k}}})=
	\deg(\mathfrak{F}(\lambda^{+}_{n_{k}},\cdot),\Omega_{\lambda^{+}_{n_{k}}},\varepsilon_{\l^{+}_{n_{k}}}).
\end{equation}
In order to apply again the homotopy invariance of the degree, the following counterpart of Lemma \ref{LL31,2} is needed.
\begin{lemma}
	\label{LL311,2}
	Let $1\leq k \leq m-1$. Then,
	\begin{equation}
		\label{6.6.23K}
		\mf{F}(\l,u)\neq 0 \quad \hbox{if}\;\; \l\in  [\l_{n_{k}} ^+,\l_{n_{k+1}} ^-] \;\;  \hbox{and}\;\;
		u\in\p (\O_\l \setminus \overline{B}_{\r}).
	\end{equation}
\end{lemma}
\begin{proof}
	Let $(\l,u)$ be such that $ \l\in  [\l_{n_{k}} ^+,\l_{n_{k+1}} ^-] $ and $u\in\p (\O_\l \setminus \overline{B}_{\r})$ for some $1\leq n\leq m$. Then, $u\neq 0$ and $\lambda \in  \Lambda(\mf{C},\delta/2)$. Thus, by \eqref{Thusby}, either
	$$
	\overline{B}_\r \cap \O_\l=\emptyset \quad  \text{ or } \quad  \{u\in \overline{B}_{\r}: \mf{F}(\l,u)=0 \}= \{0\}.
	$$
	Suppose $\overline{B}_\r \cap 	\O_\l=\emptyset$. Then, $\O_{\l} \setminus \overline{B}_{\r}=\O_{\l}$ and, hence, $(\l,u)\in \p \O$. Thus $(\l,u)\notin \mc{S}$ and, therefore, $\mf{F}(\l,u)\neq 0$, since $u\neq 0$. Now, suppose that
	\begin{equation}
		\label{6.6.24,2}
		\{u\in \overline{B}_{\r}: \mf{F}(\l,u)=0 \}= \{0\}.
	\end{equation}
	Then, $u\in \p (\O_{\l}\setminus \overline{B}_{\r}) \subset
	\p \O_{\l}\cup \p B_{\r}$. Moreover, by \eqref{6.6.24,2}, $\mf{F}(\l,u)\neq 0$ if
	$u\in\p B_{\r}$, whereas  $(\l,u)\in\p \O$ if $u\in \p \O_{\l}$. In any circumstances,  $(\l,u)\notin \mc{S}$. Therefore, 	$\mf{F}(\l,u)\neq 0$, since $u\neq 0$, which concludes the proof of \eqref{6.6.23K}.
\end{proof}
\noindent According to \eqref{6.6.23K}, setting for $1\leq k \leq m-1$,
$$ \O^{n_{k}}:=\O\cap([\l_{n_{k}} ^+,\l_{n_{k+1}} ^-]\times U), \quad \Gamma^{n_{k}}:=\O^{n_{k}}\backslash ([\l_{n_{k}} ^+,\l_{n_{k+1}} ^-]\times\overline{B}_{\r}),$$
the tern $(\mf{F},\Gamma^{n_{k}},\varepsilon)$ is a generalized $U$-admissible homotopy, $(\mf{F},\Gamma^{n_{k}},\varepsilon)\in \mathscr{G}(U)·$. Thus, we can apply Theorem \ref{T6.1.1.1} to infer that
\begin{equation*}
	\deg(\mathfrak{F}(\l_{n_{k}}^{+},\cdot),\O_{\l_{n_{k}}^{+}} \setminus \overline{B}_{\r},\varepsilon_{\l^{+}_{n_{k}}}) =
	\deg(\mathfrak{F}(\l^{-}_{n+1},\cdot),\O_{\l^{-}_{n_{k+1}}} \setminus \overline{B}_{\r},\varepsilon_{\l^{-}_{n_{k+1}}}).
\end{equation*}
Let us denote
\begin{equation*}
	\left\{\begin{array}{l}
		\deg \big(\mf{F}(\l,\cdot), \O_{\l} \setminus \overline{B}_{\r}, \varepsilon_{\l} \big) =
		d_{k}\quad \hbox{if}\;\; \l \in \{\l_{n_{k}} ^+,\l_{n_{k+1}}^{-}\}, \quad 1 \leq  k \leq m-1, \\[1ex]
		\deg \big(\mf{F}(\l_{n_{m}}^{+},\cdot), \O_{\l_{n_{m}}^{+}} \setminus \overline{B}_{\rho}, \varepsilon_{\l_{n_{m}}^{+}}  \big) =  d_m,  \\[1ex]
		\deg \big(\mf{F}(\l_{n_{1}}^{-},\cdot), \O_{\l_{n_{1}}^{-}} \setminus \overline{B}_{\r}, \varepsilon_{\l_{n_{1}}^{-}}  \big) = d_0.   \end{array}\right.
\end{equation*}
By the additivity property of the degree, we have that, for every $1\leq k\leq m$,
\begin{align*}
	\deg \big(\mf{F}(\lambda_{n_{k}}^- ,\cdot), \O_{\lambda_{n_{k}}^-}, \varepsilon_{\l_{n_{k}}^{-}} \big) & =  d_{k-1} + \deg\big(\mathfrak{F}(\lambda_{n_{k}}^-,\cdot),B_{\r},\varepsilon_{\l_{n_{k}}^{-}}\big), \\
	\deg \big(\mf{F}(\lambda_{n_{k}}^+ ,\cdot), \O_{\lambda_{n_{k}}^+}, \varepsilon_{\l^{+}_{n_{k}}}\big) & = d_{k} + \deg\big(\mathfrak{F}(\lambda_{n_{k}}^+,\cdot),B_{\r},\varepsilon_{\l_{n_{k}}^{+}}\big).
\end{align*}
Thus, by \eqref{6.6.20} and Lemma \ref{L9.4.4,2}, for sufficiently small $\rho_{0}>0$ and each $\r \in (0,\r_0]$,
\begin{align*}
	d_{k-1} - d_{k} = \deg\big(\mathfrak{F}(\lambda_{n_{k}}^+,\cdot),B_{\r},\varepsilon_{\l^{+}_{n_{k}}}\big) -  \deg\big(\mathfrak{F}(\lambda_{n_{k}}^-,\cdot),B_{\r},\varepsilon_{\l^{-}_{n_{k}}}\big) =2\mathcal{P}(I_{n_{k}}).
\end{align*}
Therefore, adding up these identities yields
\begin{equation}
	\label{6.6.27}
	d_0-d_{m}=\sum_{k=1}^{m} (d_{k- 1}-d_k) = 2 \sum_{k=1}^{m} \mathcal{P}(I_{n_{k}})=2\sum_{n\in\mc{B}(\mf{C})}\mc{P}(I_{n}).
\end{equation}
Finally, since $\mathfrak{C}$ is bounded, there exists $\lambda_{\ast}<\lambda^{-}_{n_{1}}$ and  $\lambda^{\ast}>\lambda^{+}_{n_{m}}$ such that $\Omega_{\lambda_{\ast}}=\Omega_{\lambda^{\ast}}=\emptyset$.
Thus, applying Theorem \ref{T6.1.1.1} with the generalized $U$-admissible homotopies $(\mf{F},\Gamma_{\ast},\varepsilon)$, $(\mf{F},\Gamma^{\ast},\varepsilon)\in\mathscr{G}(U)$, where
$$ \O_{\ast}:=\O\cap([\l_{\ast},\l_{n_{1}} ^{-}]\times U), \quad \Gamma_{\ast}:=\O_{\ast}\backslash ([\l_{\ast},\l_{n_{1}} ^{-}]\times\overline{B}_{\r}),$$
$$ \O^{\ast}:=\O\cap([\l_{n_{m}} ^{+},\l^{\ast}]\times U), \quad \Gamma^{\ast}:=\O^{\ast}\backslash ([\l_{n_{m}} ^{+},\l^{\ast}]\times\overline{B}_{\r}),$$
we find that
\begin{equation*}
	\begin{array}{l}
		0=\deg(\mathfrak{F}(\lambda_{\ast},\cdot),\Omega_{\lambda_{\ast}}\backslash \overline{B}_{\rho},\varepsilon_{\l_{\ast}})=\deg(\mathfrak{F}(\lambda^{-}_{n_{1}},\cdot),\Omega_{\lambda^{-}_{n_{1}}}\backslash \overline{B}_{\rho},\varepsilon_{\l^{-}_{n_{1}}})=d_{0}, \\[1ex]
		0=\deg(\mathfrak{F}(\lambda^{\ast},\cdot),\Omega_{\lambda^{\ast}}\backslash \overline{B}_{\rho},\varepsilon_{\l^{\ast}})=\deg(\mathfrak{F}(\lambda^{+}_{n_{m}},\cdot),\Omega_{\lambda^{+}_{n_{m}}}\backslash \overline{B}_{\rho},\varepsilon_{\l^{+}_{n_{m}}})=d_{m}.
	\end{array}
\end{equation*}
Since $d_{0}=d_{m}=0$, the identity \eqref{6.6.27} ends the proof.
\end{proof}

\noindent As a direct consequence of Theorems \ref{T6.2.1} and \ref{T6.3.5,2}, the following global alternative holds.

\begin{theorem}[\textbf{Global alternative}]
\label{C6.3.6}
Let $\mathfrak{F}\in\mathcal{C}^{1}(\mathbb{R}\times U,V)$ be a map satisfying {\rm (F1)--(F4)} and $\mc{J}:=\{ J_n \}_{n=r}^s$ be an admissible ordered family for $\Sigma(\mf{L})$. Suppose there exists $n\in\Z\cap[r+1,s]$ such that, for sufficiently small $\delta>0$,
\begin{equation}
	\label{6.6.10,2}
	\chi[\mathfrak{L}_{\omega},[\lambda_{-},\lambda_{+}]]\in 2\mathbb{N}+1, \quad \l_{-}=\inf I_{n}-\delta, \ \l_{+}=\sup I_{n} + \delta,
\end{equation}
where $\mathfrak{L}_{\omega}\in\mathscr{C}^{\omega}([\lambda_{-},\lambda_{+}],\Phi_{0}(U,V))$ is any  analytic curve $\mc{A}$-homotopic to $\mf{L}(\l)$, $\lambda\in[\lambda_{-},\lambda_{+}]$. Then, there exists a component $\mf{C}$ of the set of non-trivial solutions $\mc{S}_{\mc{J}}$ such that $\mf{C}\cap (I_{n}\times \{0\})\neq \emptyset$. Moreover, one of the following non-excluding alternatives occur:
\begin{enumerate}
	\item[{\rm (i)}] $\mf{C}$ is unbounded.
	\item[{\rm (ii)}] There exists $m\in\Z\cap[r+1,s]$, $m\neq n$, such that $\mf{C}\cap (I_{m}\times \{0\})\neq \emptyset$.
\end{enumerate}
\end{theorem}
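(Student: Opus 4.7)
The plan is to combine the local bifurcation theorem with the global $\mathcal{J}$-parity identity of Theorem \ref{T6.3.5,2}, reaching a contradiction if both alternatives were to fail.

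First I would apply Theorem \ref{T6.2.1} to $\mathfrak{F}$ on the interval $[\lambda_-,\lambda_+]$: hypothesis \eqref{6.6.10,2} is precisely the odd-parity condition (b) of that theorem, while (a) follows from (F2)--(F3) together with the fact that $\lambda_\pm\notin\Sigma(\mathfrak{L})$ for sufficiently small $\delta$. This yields a bifurcation point $(\lambda_0,0)$ with $\lambda_0\in(\lambda_-,\lambda_+)$. Shrinking $\delta$ so that $\Sigma(\mathfrak{L})\cap(\lambda_-,\lambda_+)\subset I_n$, we obtain $\lambda_0\in I_n$; moreover $(\lambda_0,0)\in \mathcal{S}_\mathcal{J}$, since $\lambda_0\in\Sigma(\mathfrak{L})\subset\mathbb{R}\setminus\bigcup_m J_m$. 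Let $\mathfrak{C}$ denote the connected component of $\mathcal{S}_\mathcal{J}$ containing $(\lambda_0,0)$; then $n\in \mathcal{B}(\mathfrak{C})$, which furnishes the existence statement of the theorem.

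Second, I would argue by contradiction, assuming that both (i) and (ii) fail, so that $\mathfrak{C}$ is bounded and $\mathcal{B}(\mathfrak{C})=\{n\}$. Then Theorem \ref{T6.3.5,2} forces
\[
\mathcal{P}(I_n)=\sum_{k\in \mathcal{B}(\mathfrak{C})}\mathcal{P}(I_k)=0.
\]
To reach a contradiction I would compute $\mathcal{P}(I_n)$ directly from \eqref{6.6.10,2}. Shrinking $\delta$ further if necessary, we may assume $\lambda_-\in J_{n-1}$ and $\lambda_+\in J_n$. Since the parity $\sigma(\mathfrak{L},[\cdot,\cdot])$ is invariant under moving the endpoints within $\mathbb{R}\setminus\Sigma(\mathfrak{L})$ (by the stability and homotopy-invariance properties of Theorem \ref{th3.1}), the coefficients $a_k$ do not depend on the particular representatives $\lambda_k\in J_k$. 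We are therefore free to take $\lambda_{n-1}:=\lambda_-$ and $\lambda_n:=\lambda_+$, so that Theorem \ref{th3.2} together with \eqref{6.6.10,2} yields
\[
\sigma(\mathfrak{L},[\lambda_{n-1},\lambda_n])=(-1)^{\chi[\mathfrak{L}_\omega,[\lambda_-,\lambda_+]]}=-1,
\]
whence $a_n=-a_{n-1}$ and $\mathcal{P}(I_n)=(a_n-a_{n-1})/2=-a_{n-1}\in\{-1,1\}$, contradicting $\mathcal{P}(I_n)=0$.

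The bulk of the argument is essentially bookkeeping; the only genuine subtlety is to ensure that the bifurcation point produced by Theorem \ref{T6.2.1} is effectively a point of the \emph{global} set $\mathcal{S}_\mathcal{J}$ and that the corresponding component satisfies $n\in\mathcal{B}(\mathfrak{C})$, so that the index $n$ really appears in the parity sum of Theorem \ref{T6.3.5,2}. This is transparent from the definition of $\mathcal{S}_\mathcal{J}$ once one observes that $\Sigma(\mathfrak{L})$ is disjoint from $\bigcup_m J_m$, but it is the step that justifies splicing together the purely local information from Theorem \ref{T6.2.1} with the purely global degree-theoretic information from Theorem \ref{T6.3.5,2}.
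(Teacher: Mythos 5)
Your proof is correct and follows essentially the same approach as the paper's: invoke Theorem \ref{T6.2.1} to locate a bifurcation point in $I_n$, then use the parity-sum identity \eqref{6.6.18} of Theorem \ref{T6.3.5,2} together with the computation $\mathcal{P}(I_n)\neq 0$ from \eqref{6.6.10,2} and Theorem \ref{th3.2}. The only cosmetic difference is your contrapositive framing; the paper argues directly (and obtains the nonvanishing of $\mathcal{P}(I_n)$ from the same identity \eqref{eqqLl,2}, after replacing $\lambda_{n-1},\lambda_n$ by $\lambda_-,\lambda_+$ exactly as you do).
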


\begin{proof}
Since the hypothesis of Theorem \ref{T6.2.1} hold, we infer the existence of a component $\mf{C}$ of $\mc{S}_{\mc{J}}$ such that $(\l_{0},0)\in \mf{C}$ for some $\l_{0}\in(\l_{-},\l_{+})$. Since for sufficiently small $\delta>0$,
$$\mf{L}(\l)\in GL(U,V), \quad \l\in [\l_{-},\inf I_{n}]\cup [\sup I_{n},\l_{+}],$$
necessarily, $\l_{0}\in I_{n}$. Hence, $\mf{C}\cap (I_{n}\times \{0\})\neq \emptyset$ and we have proved the first part of the theorem. Let us prove the second part. If $\mf{C}$ is unbounded, then (i) holds. Suppose $\mf{C}$ is bounded. Then, by Theorem \ref{T6.3.5,2},
\begin{equation}
	\label{6.6.18,2}
	\sum_{\ell\in \mc{B}(\mf{C})} \mathcal{P}(I_\ell)=0,
\end{equation}
where $\mathcal{P}$ is the $\mc{J}$-parity map. By \eqref{eqqLl,2}, we obtain
\begin{equation}
	\label{eqqLl,2.1}
	2\mc{P}(I_{n})=\varepsilon_{\l_{-}}(0)\left[\sigma(\mf{L},[\lambda_{-},\lambda_{+}])-1\right].
\end{equation}
On the other hand, by Theorem \ref{th3.2} and \eqref{6.6.10,2}, we find that,
for every analytic curve $\mathfrak{L}_{\omega}\in\mathscr{C}^{\omega}([\lambda_{-},\lambda_{+}],\Phi_{0}(U,V))$ $\mc{A}$-homotopic to $\mf{L}(\l)$, $\lambda\in[\lambda_{-},\lambda_{+}]$,
$$
   \sigma(\mf{L},[\lambda_{-},\lambda_{+}])=(-1)^{\chi[\mathfrak{L}_{\omega},[\lambda_{-},\lambda_{+}]]}=-1.
$$
Consequently, by \eqref{eqqLl,2.1}, we deduce that $\mc{P}(I_{n})=-\varepsilon_{\l_{-}}(0)\neq 0$. Since $n\in\mc{B}(\mf{C})$, according to  \eqref{6.6.18,2}, there exists $m\in\mc{B}(\mf{C})$, $m\neq n$. (ii) holds if $\mf{C}$ is bounded. This ends the proof.
\end{proof}

Under the general assumptions of the local theorem of Crandall and Rabinowitz \cite{CR},
$\l_0$ is an isolated eigenvalue of $\mf{L}(\l)$. Thus, setting $\l_\pm =\l_0\pm \d$ for sufficiently small $\d$ and using \eqref{ii.4}, we are lead to
$$
\chi[\mathfrak{L}_{\omega},[\lambda_{-},\lambda_{+}]]=
\chi[\mathfrak{L},[\lambda_{-},\lambda_{+}]]= \chi[\mathfrak{L},\l_0]=
\dim N[\mf{L}_0]=1.
$$
Therefore, the local theorem of \cite{CR} is actually global.
Although Shi and Wang \cite{XW} observed that
in the setting of \cite{CR} the global alternative of Rabinowitz also holds, Theorem \ref{T6.3.5,2}
is a substantially sharper result, as it establishes, in addition, the validity of \eqref{6.6.18}, which is
a very sophisticated condition of global topological nature. The validity of this condition in the context of the Leray--Schauder degree goes back to Nirenberg \cite{Ni} and Magnus \cite{Ma}. Rather paradoxically, the simplicity of the global alternative of Rabinowitz and the
topological technicalities of the underlying theory hid for almost 50 years the much stronger
condition \eqref{6.6.18} until \cite{LGMC05} was published (see \cite{LG02} for further details).

\section{Sharp local bifurcation analysis from simple degenerate eigenvalues}\label{SLBA}

\noindent This section is devoted to the study of bifurcation from simple degenerate eigenvalues for analytic nonlinearities. We begin by recalling some  basic concepts for analytic maps between Banach spaces. Given $\K\in\{\R,\C\}$, an integer $n\geq 1$,  and $n+1$ $\K$-Banach spaces $U_{1}, U_{2}, \cdots, U_{n}$, $V$, a map
$L: U_{1}\times \cdots \times U_{n} \to V$ is said to be  $\K$-\textit{multilinear} if it is $\K$-linear in each variable $u_i\in U_i$ for all $i=1,...,n$. Naturally, in such case, $L$ is said to be bounded if
$$
   \|L\|:=\sup \{\|L(u_{1},\cdots, u_{n})\|  : \|u_{1}\|, \cdots, \|u_{n}\|\leq 1\}<\infty.
$$
The space of the bounded $\K$-multilinear operators is denoted by $\mc{M}(U_{1},\cdots, U_{n};V)$, and we simply set  $\mc{M}(U_1,\cdots, U_n; V)\equiv \mc{M}^{n}(U,V)$ if $U_{1}=U_{2}=\cdots=U_{n}=U$. An operator
$L\in \mc{M}(U_{1},\cdots,U_{n}; V)$ is called \textit{symmetric} if, for every permutation $\sigma\in \Sigma_{n}$,
$$
   L(u_{1},u_{2},\cdots, u_{n})=L(u_{\sigma(1)},u_{\sigma(2)},\cdots, u_{\sigma(n)}),
$$
where $\Sigma_{n}$ stands for the symmetric group of permutations of $\{1,\cdots,n\}$. In this section, the space of bounded $\K$-multilinear symmetric operators is denoted by $\mc{S}(U_{1},\cdots,U_{n};V)$, and we set $\mc{S}(U,\cdots,U;V)=\mc{S}^n(U;V)$. Accordingly, for every $L\in \mc{S}^{n}(U,V)$, we set
\begin{equation}
	\label{abreviation}
	L(u,u,\cdots,u)\equiv Lu^{n}, \quad u\in U.
\end{equation}
Given a pair of $\K$-Banach spaces $(U,V)$ and a point $u_{0}\in U$, a map $\mf{F}: U \to V$ is said to be $\K$-\textit{analytic} at $u_{0}$ if there exist a neighbourhood $\mc{U}_{u_{0}}$ of $u_{0}$ in $U$ and $r>0$ such that
$$
\mf{F}(u)=\sum_{n=0}^{\infty}L_{n}(u-u_{0})^{n}, \quad u\in \mc{U}_{u_{0}},\qquad
\sup_{n\geq 0} \left( r^{n} \|L_{n}\|\right) <\infty,
$$
where $\mf{F}(u_{0})=L_{0}\in V$ and $L_{n}\in\mc{S}^{n}(U, V)$ for all  $n\in\N$.
A map $\mf{F}: U\to V$ is said to be $\K$-analytic if it is $\K$-analytic at every point $u\in U$. In this section, the space of analytic functions $U\to V$ is denoted by $\mc{C}^{\omega}(U,V)$, and we simply say that $\mf{F}$ is analytic, without specifying where, if there is no ambiguity.

\subsection{Analytic Lyapunov--Schmidt reduction}\label{SectionALS}
In this section, we will perform a general Lyapunov--Schmidt reduction for analytic nonlinearities. Precisely, we consider a field $\K\in\{\R,\C\}$, an analytic map $\mf{F}\in\mc{C}^{\omega}(\K\times U, V)$, and a point $(\l_{0},u_{0})\in \mf{F}^{-1}(0)$ such that $D_{u}\mf{F}(\l_{0},u_{0})\in \Phi_{0}(U,V)$. For any given pair $(P,Q)$  of $D_{u}\mathfrak{F}(\lambda_{0},u_{0})$-projections,
$P:U\to N[D_{u}\mathfrak{F}(\lambda_{0},u_{0})]$, $Q:V\to R[D_{u}\mathfrak{F}(\lambda_{0},u_{0})]$,
we can decompose
\begin{align*}
	U= N[D_{u}\mathfrak{F}(\lambda_{0},u_{0})] \oplus Y \quad (Y\equiv N[P]), \qquad V =Z \oplus R[D_{u}\mathfrak{F}(\lambda_{0},u_{0})]  \quad  (Z\equiv N[Q]).
\end{align*}
and identify $\mathbb{K}\times N[D_{u}\mathfrak{F}(\lambda_{0},u_{0})]$ with $\mathbb{K}^{n+1}$, where
$$n:=\dim N[D_{u}\mathfrak{F}(\lambda_{0},u_{0})]=\codim R[D_{u}\mathfrak{F}(\lambda_{0},u_{0})],$$
via the linear isomorphism
\begin{equation}
	\label{OperadorT}
	T:\mathbb{K}\times N[D_{u}\mathfrak{F}(\lambda_{0},u_{0})] \to \K\times \K^{n}, \quad (\l,x)\mapsto (\l, Lx).
\end{equation}
Similarly, one can identify $Z$ with $\K^{n}$ via another linear isomorphism
$S: Z \longrightarrow \K^{n}$, because $\dim Z=n$. Since every $u\in U$ admits a unique
decomposition as
\begin{equation}
	\label{ProyeccionesGenerales}
	u = u_{0}+x + y, \quad x = P [u-u_{0}], \ \ y = (I_U-P)[u-u_{0}],
\end{equation}
the equation $\mf{F}(\l,u)=0$ is equivalent to the system
\begin{equation}
	 Q \mathfrak{F}(\l,u_{0}+x+y)=	0,  \quad (I_V-Q)\mathfrak{F}(\l,u_{0}+x+y)=  0.
\label{(3.1.3,0)}
\end{equation}
Consider the operator
\begin{equation*}
	\mathfrak{H}\, :\, \K \times N[D_{u}\mathfrak{F}(\lambda_{0},u_{0})]\times Y \to V, \quad \mathfrak{H}(\l,x,y):=Q \mathfrak{F}(\l,u_{0}+x+y).
\end{equation*}
This operator is analytic and it satisfies $\mathfrak{H}(\l_0,0,0)=0$.
Moreover, its linearization
$$
D_y\mathfrak{H}(\l_0,0,0) = Q D_{u}\mathfrak{F}(\lambda_{0},u_{0})|_Y\, :\,
Y \to R[D_{u}\mathfrak{F}(\lambda_{0},u_{0})]
$$
is an isomorphism. Thus, by the implicit function theorem, there exist a neighborhood $\mc{U}$ of $(\l_0,0)$ in $\K\times N[D_{u}\mathfrak{F}(\lambda_{0},u_{0})]$ and an analytic map
$\mc{Y}:\mc{U}\to Y$ such that
\begin{equation}
	\mathfrak{H}(\l,x,{\mc Y}(\l,x))=0 \quad\hbox{for all}\;\; (\l,x)\in\mc{U}.
	\label{(3.1.4,0)}
\end{equation}
In particular $\mc{Y}(\l_{0},0)=0$. Moreover, there exists a neighbourhood
$\mc{O}$ of $(\l,u)=(\l_0,u_{0})$ in
$
\K\times U
$
such that if
$(\l,u)=(\l,u_{0}+x+y)\in \mc{O}$ and $\mathfrak{H}(\l,x,y)=0$, then $y = {\mc Y}(\l,x)$. Finally, substituting ${\mc Y}(\l,x)$ into the second equation of the system \eqref{(3.1.3,0)} yields
\begin{equation}
	(I_V - Q)\mathfrak{F}(\l,u_{0}+x+{\mc Y}(\l,x))=0, \quad (\l,x)\in \mc{U}.
	\label{(3.1.7,0)}
\end{equation}
Therefore, $(\l,x)\in\mc{U}$ is a solution of \eqref{(3.1.7,0)} if, and only if,
$(\l,u)=(\l,u_{0}+x+{\mc Y}(\l,x))\in\mc{O}$ satisfies $\mf{F}(\l,u)=0$. Consequently, considering the open set $\O:=T(\mc{U})$, where $T$ is the isomorphism \eqref{OperadorT}, we have reduced the equation $\mf{F}(\l,u)=0$ in the neighbourhood $\mc{O}$ of $(\l_{0},u_{0})$, to the problem of finding out the zeroes of the finite dimensional analytic map $\mathfrak{G}: \O\subset\mathbb{K}\times \K^{n} \to \mathbb{K}^{n}$ defined by
\begin{equation}
	\label{LSRANA}
	\mf{G}(\lambda,z):=S(I_{V}-Q)\mathfrak{F}(\lambda,u_{0}+L^{-1}z+\mathcal{Y}(\lambda,L^{-1}z)).
\end{equation}
In particular $\mf{G}(\l_{0},0)=0$. Consequently, the following result holds.

\begin{theorem}
	\label{Lyap-Smith}
	Let $\mf{F}\in\mc{C}^{\omega}(\K\times U, V)$ and $(\l_{0},u_{0})\in\mf{F}^{-1}(0)$ with  $D_{u}\mf{F}(\l_{0},u_{0})\in \Phi_{0}(U,V)$. Then, there exists a neighbourhood $\mc{O}$ of $(\l_{0},u_{0})$ in $\K\times U$ such that the maps
\begin{equation}
\label{juca1}
\begin{split}
		\Psi:\mathfrak{F}^{-1}(0)\cap \mc{O}\longrightarrow \mathfrak{G}^{-1}(0), &  \quad (\lambda,u)\mapsto (\lambda,LP(u-u_{0})),\\  	\Psi^{-1}:\mathfrak{G}^{-1}(0)\longrightarrow \mathfrak{F}^{-1}(0)\cap\mc{O}, & \quad (\lambda,z)\mapsto (\lambda,u_{0}+L^{-1}z+\mathcal{Y}(\lambda,L^{-1}z)),
\end{split}
\end{equation}
are mutually inverses, where $\mf{G}:\O\to\K^{n}$ is given by \eqref{LSRANA}. Moreover, for every $(\l,z)\in \mf{G}^{-1}(0)$, the following statements are equivalent:
	\begin{enumerate}
		\item[{\rm (1)}] $D_{z}\mathfrak{G}(\l,z)\neq 0$.
		\item[{\rm (2)}] $D_{u}\mathfrak{F}(\lambda, L^{-1}z+\mathcal{Y}(\lambda,L^{-1}z))\in GL(U,V)$.
	\end{enumerate}
\end{theorem}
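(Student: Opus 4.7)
The substantive work has already been laid out in the discussion preceding the statement, so my proof would essentially package that computation and then add the equivalence (1)$\Leftrightarrow$(2). I would begin by fixing the direct sum decompositions $U=N\oplus Y$ and $V=Z\oplus R$ (with $N:=N[D_{u}\mf{F}(\l_{0},u_{0})]$ and $R:=R[D_{u}\mf{F}(\l_{0},u_{0})]$), the associated projections $P,Q$, and the isomorphisms $L:N\to\K^{n}$ and $S:Z\to\K^{n}$, and shrink $\mc{O}$ so that it is contained in the domain where the implicit function theorem applies to $\mf{H}$ and so that $\Psi^{-1}(\mf{G}^{-1}(0))\subset\mc{O}$.

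First I would check that $\Psi$ and $\Psi^{-1}$ are well-defined and mutually inverse. For $(\l,u)\in\mf{F}^{-1}(0)\cap\mc{O}$, writing $x=P(u-u_{0})$, $y=(I_{U}-P)(u-u_{0})$, the equality $Q\mf{F}(\l,u_{0}+x+y)=0$ combined with the uniqueness clause of the implicit function theorem forces $y=\mc{Y}(\l,x)$; substituting into $(I_{V}-Q)\mf{F}(\l,u_{0}+x+y)=0$ yields $\mf{G}(\l,Lx)=0$, so $\Psi$ lands in $\mf{G}^{-1}(0)$. Conversely, for $(\l,z)\in\mf{G}^{-1}(0)$, set $u:=u_{0}+L^{-1}z+\mc{Y}(\l,L^{-1}z)$; then $Q\mf{F}(\l,u)=\mf{H}(\l,L^{-1}z,\mc{Y}(\l,L^{-1}z))=0$ by \eqref{(3.1.4,0)} and $(I_{V}-Q)\mf{F}(\l,u)=S^{-1}\mf{G}(\l,z)=0$, so $\mf{F}(\l,u)=0$. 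The identities $\Psi\circ\Psi^{-1}=\mathrm{id}$ and $\Psi^{-1}\circ\Psi=\mathrm{id}$ then follow from $PL^{-1}=L^{-1}$ (since $L^{-1}z\in N$), $P\mc{Y}\equiv 0$ (since $\mc{Y}$ takes values in $Y=N[P]$), and the uniqueness of $\mc{Y}$.

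For the equivalence (1)$\Leftrightarrow$(2), I would represent $D_{u}\mf{F}(\l,u)$ as a block matrix
\begin{equation*}
D_{u}\mf{F}(\l,u)=\begin{pmatrix} A_{ZN} & A_{ZY}\\ A_{RN} & A_{RY}\end{pmatrix}
\end{equation*}
along $N\oplus Y\to Z\oplus R$. Shrinking $\mc{O}$ if necessary, $A_{RY}(\l,u)\in GL(Y,R)$ by continuity from the fact that $A_{RY}(\l_{0},u_{0})=QD_{u}\mf{F}(\l_{0},u_{0})|_{Y}$ is an isomorphism. Implicit differentiation of $Q\mf{F}(\l,u_{0}+x+\mc{Y}(\l,x))=0$ in $x$ gives $D_{x}\mc{Y}(\l,x)=-A_{RY}^{-1}A_{RN}$. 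Applying the chain rule to \eqref{LSRANA} and using $(I_{V}-Q)A_{RN}=0=(I_{V}-Q)A_{RY}$ at the block level, one obtains
\begin{equation*}
D_{z}\mf{G}(\l,z)=S\bigl[A_{ZN}(\l,u)-A_{ZY}(\l,u)A_{RY}(\l,u)^{-1}A_{RN}(\l,u)\bigr]L^{-1}.
\end{equation*}
The bracketed factor is exactly the Schur complement of the invertible block $A_{RY}$ in the block matrix of $D_{u}\mf{F}(\l,u)$; since $S$ and $L^{-1}$ are isomorphisms, $D_{z}\mf{G}(\l,z)$ is invertible (in the simple case $n=1$, nonzero) if and only if this Schur complement is invertible, which by the standard Schur complement criterion is equivalent to $D_{u}\mf{F}(\l,u)\in GL(U,V)$. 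The only mild obstacle is the bookkeeping of the projections and of the shrinking of $\mc{O}$ so that $A_{RY}$ stays invertible throughout; everything else is a direct assembly of the Lyapunov--Schmidt reduction already carried out in the paragraphs preceding the statement.
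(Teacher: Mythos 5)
Your proof is correct. The first part (the mutual inverse property of $\Psi$ and $\Psi^{-1}$) is essentially a repackaging of the Lyapunov--Schmidt computation the paper carries out in the paragraphs preceding the statement, which is indeed all that is needed there. The paper then states the equivalence (1)$\Leftrightarrow$(2) with no explicit argument, so your Schur complement computation supplies the missing step; it is the natural and standard way to establish it. Two small points that your proof handles implicitly but are worth being aware of: the paper's item (2) as printed omits the basepoint $u_0$ (it should read $D_{u}\mathfrak{F}(\lambda, u_0+L^{-1}z+\mathcal{Y}(\lambda,L^{-1}z))$, exactly the $u$ you substitute), and item (1) written as ``$D_z\mathfrak{G}(\l,z)\neq 0$'' coincides with invertibility of the $n\times n$ matrix only when $n=1$---a caveat you correctly flag. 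One minor polish: the claim ``$(I_V-Q)A_{RN}=0=(I_V-Q)A_{RY}$'' is a slightly awkward way of saying that $I_V-Q$ kills the $R$-row of the block matrix; rewriting the chain-rule step as $(I_V-Q)D_u\mathfrak{F}=(A_{ZN}\ \ A_{ZY})$ would make the bookkeeping cleaner without changing the substance.
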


\subsection{Sharp local bifurcation analysis from simple degenerate eigenvalues}\label{SSLBA}

\noindent In this section we ascertain the local structure of
the solution sets for analytic nonlinearities at simple degenerate eigenvalues. Throughout it, we
assume that $\mathbb{K}\in\{\mathbb{R},\mathbb{C}\}$, that $U$ and $V$ are $\mathbb{K}$-Banach spaces, and that $\mathfrak{F}\in\mathcal{C}^{\omega}(\mathbb{K}\times U,V)$ satisfies the following assumptions:
\begin{enumerate}
	\item[(F1)] $\mathfrak{F}(\lambda,0)=0$ for every $\lambda\in\mathbb{K}$.
	\item[(F2)] $\mf{L}(\l):=D_{u}\mathfrak{F}(\lambda,0)\in\Phi_{0}(U,V)$ for all $\lambda\in\mathbb{K}$.
	\item[(F3)] $\l_{0}\in\Sigma(\mf{L})$ is an isolated eigenvalue such that
	$$N[\mf{L}_{0}]=\mathrm{span}[\varphi_{0}] \quad \text{ for some } \varphi_0\in U\backslash\{0\}.$$
\end{enumerate}
Subsequently, we denote by $\langle \cdot,\cdot\rangle: U\times U^{\ast}\to\mathbb{K}$ the duality pairing between $U$ and its topological dual space $U^{\ast}$. By the Hahn--Banach theorem, there exists $\varphi_{0}^{\ast}\in U^{\ast}$ such that $\langle \varphi_{0},\varphi_{0}^{\ast}\rangle=1$. Let us consider a pair $\mc{P}=(P,Q)$ of $\mf{L}_{0}$-projections, $P:U\to N[\mathfrak{L}_{0}]$ and $Q:V\to R[\mathfrak{L}_0]$, where $P$ is given by
\begin{equation}
\label{juca2}
   P(u):=\langle u,\varphi_{0}^{\ast}\rangle \varphi_{0} \quad\hbox{for all}\;\; u\in U.
\end{equation}
Then, we have the topological direct sum decompositions
\begin{align*}
	U = N[\mathfrak{L}_{0}] \oplus Y\quad (Y\equiv N[P]), \quad
	V = Z \oplus R[\mf{L}_{0}] \quad (Z\equiv N[Q]).
\end{align*}
In the sequel, we identify $\mathbb{K}\times N[\mathfrak{L}_{0}]$ with $\mathbb{K}^{2}$ via the
linear isomorphism
\begin{equation}
	\label{Identification}
	T:\mathbb{K}\times N[\mathfrak{L}_{0}]\longrightarrow \K^{2}, \quad T(\lambda,z\varphi_{0})=(\lambda,z),
\end{equation}
and $Z$ with $\K$ via another linear isomorphism, $S:Z\to \K$, whose expression is not relevant. Then, according to the results of Section \ref{SectionALS} applied to $\mf{F}(\l,u)=0$ on $(\l_{0},0)$
with the $\mf{L}_{0}$-projections $(P,Q)$, there exist two neighbourhoods, $\mc{U}\subset \K\times N[\mf{L}_{0}]$, $(\l_{0},0)\in \mc{U}$, $\mc{O}\subset \K\times U$, $(\l_0,0)\in \mc{O}$, and an analytic operator $\mc{Y}:\mc{U}\to Y$ such that
the maps \eqref{juca1} with $LP(u-u_0)=  \langle u, \varphi_{0}^{\ast}\rangle$ and $L^{-1}z=z\v_0$, are mutually inverse. Note that now $\mathfrak{G}$ is given by \eqref{LSRANA} with
$L^{-1}z=z\v_0$ and $\O=T(\mc{U})$. Without loss of generality, we can assume that $(\lambda_{0},0)=(0,0)$. Since $\mathfrak{G}$ is analytic and $\mathfrak{G}(\lambda,0)=0$, it can be expressed as
\begin{equation*}
	\mathfrak{G}(\lambda,z)=\sum_{i\geq 0,\; j\geq 1} a_{ij}\lambda^{i} z^{j}, \qquad (\lambda,z)\sim (0,0),
\end{equation*}
for some coefficients $a_{ij}\in\mathbb{K}$, $(i,j)\in\mathbb{Z}_{+}^{2}$, $j\neq 0$. Thus, there exists an analytic function $g:\Omega\to \mathbb{K}$ such that
\begin{equation}
	\label{IddPOLYN}
	\mathfrak{G}(\lambda,z)=z\sum_{i\geq 0,\; j\geq 1} a_{ij}\lambda^{i} z^{j-1}=zg(\lambda,z), \qquad (\lambda,z)\sim (0,0).
\end{equation}
The next lemma shows how the algebraic multiplicity $\chi$ of $\mf{L}(\l)$ is related to $\mf{G}$.
\begin{lemma}
	\label{Lemma12.1.2}
	$\chi[\mathfrak{L},0]=\ord_{\lambda=0}D_{z}\mathfrak{G}(\lambda,0)=\ord_{\lambda=0} g(\lambda,0)$.
\end{lemma}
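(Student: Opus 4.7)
The plan is to dispose of the second equality by direct differentiation and then to establish $\chi[\mathfrak{L}, 0] = \ord_{\lambda = 0} g(\lambda, 0)$ by writing $\mathfrak{L}(\lambda)$ in block form adapted to the splittings $U = N[\mathfrak{L}_0] \oplus Y$, $V = Z \oplus R[\mathfrak{L}_0]$ and reducing, via a Schur complement, to a one-dimensional scalar problem to which the product-formula axiom (PF) applies. The second equality is immediate: differentiating \eqref{IddPOLYN} in $z$ yields $D_z \mathfrak{G}(\lambda, z) = g(\lambda, z) + z\, D_z g(\lambda, z)$, so $D_z \mathfrak{G}(\lambda, 0) = g(\lambda, 0)$ as analytic functions of $\lambda$.

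For the nontrivial equality, I would write $\mathfrak{L}(\lambda)$ as the $2\times 2$ analytic block matrix
\[
\mathfrak{L}(\lambda) = \begin{pmatrix} A(\lambda) & B(\lambda) \\ C(\lambda) & D(\lambda) \end{pmatrix}
\]
with $A(\lambda) := (I_V-Q)\mathfrak{L}(\lambda)|_{N[\mathfrak{L}_0]}$, $B(\lambda) := (I_V-Q)\mathfrak{L}(\lambda)|_Y$, $C(\lambda) := Q\mathfrak{L}(\lambda)|_{N[\mathfrak{L}_0]}$, and $D(\lambda) := Q\mathfrak{L}(\lambda)|_Y$; then $A(0) = B(0) = C(0) = 0$ while $D(0) = \mathfrak{L}_0|_Y : Y \to R[\mathfrak{L}_0]$ is an isomorphism. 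By analytic perturbation $D(\lambda) \in GL(Y, R[\mathfrak{L}_0])$ in a neighbourhood of $0$, and the standard Schur factorization
\[
\mathfrak{L}(\lambda) = E_1(\lambda)\, \diag\!\big(S_c(\lambda),\, D(\lambda)\big)\, E_2(\lambda)
\]
holds, with $E_1(\lambda) \in GL(V)$, $E_2(\lambda) \in GL(U)$ for $\lambda$ near $0$ and Schur complement $S_c(\lambda) := A(\lambda) - B(\lambda) D(\lambda)^{-1} C(\lambda) : N[\mathfrak{L}_0] \to Z$. Invoking (PF) from Theorem \ref{th24} (together with the vanishing of $\chi$ on curves that stay invertible at $\lambda_0$), the contributions of $E_1$, $E_2$, and $D$ to $\chi[\mathfrak{L}, 0]$ all vanish, whence $\chi[\mathfrak{L}, 0] = \chi[S_c, 0]$. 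Since $\dim N[\mathfrak{L}_0] = \dim Z = 1$, fixing $\varphi_0$ and any basis $\psi_0$ of $Z$ makes $S_c(\lambda)$ the scalar multiplication by an analytic function $s(\lambda)$, and the finite-dimensional formula for $\chi$ recalled after Theorem \ref{th24} gives $\chi[S_c, 0] = \ord_{\lambda = 0} s(\lambda)$.

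The last step identifies $g(\lambda, 0)$ with $s(\lambda)$ up to a non-zero constant. Because $\mathfrak{F}(\lambda, 0) = 0$ forces $\mathcal{Y}(\lambda, 0) = 0$ by uniqueness in the implicit function theorem, differentiating $Q\mathfrak{F}(\lambda, z \varphi_0 + \mathcal{Y}(\lambda, z \varphi_0)) = 0$ in $z$ at $z = 0$ yields $C(\lambda)\varphi_0 = -D(\lambda) \eta(\lambda)$, where $\eta(\lambda) := D_z|_{z = 0} \mathcal{Y}(\lambda, z \varphi_0) \in Y$. Differentiating \eqref{LSRANA} in $z$ at $z = 0$ and substituting $\eta(\lambda) = -D(\lambda)^{-1} C(\lambda)\varphi_0$ then gives
\[
g(\lambda, 0) = S (I_V - Q)\, \mathfrak{L}(\lambda)\big[\varphi_0 + \eta(\lambda)\big] = S\big[S_c(\lambda)\varphi_0\big] = s(\lambda)\, S(\psi_0),
\]
so, since $S(\psi_0) \neq 0$, $\ord_{\lambda = 0} g(\lambda, 0) = \ord_{\lambda = 0} s(\lambda) = \chi[\mathfrak{L}, 0]$. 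The main obstacle will be the careful use of (PF): Theorem \ref{th24} is stated for endomorphism curves in $\mathcal{C}^{\infty}_{\lambda_0}(U)$, while the factors $E_1, E_2, \diag(S_c, D)$ act between $U$ and $V$. I would bypass this by post-composing the whole factorization with the fixed isomorphism $\mathfrak{L}(\mu)^{-1}$ for any $\mu$ with $\mathfrak{L}(\mu) \in GL(U, V)$ (which exists since $\lambda_0 = 0$ is isolated in $\Sigma(\mathfrak{L})$), reducing to the endomorphism setting of Theorem \ref{th24} without altering any of the $\chi$'s appearing in the identity.
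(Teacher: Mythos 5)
Your proof takes a genuinely different route from the paper. The paper disposes of this lemma in a single sentence: it observes that $0\in\Alg(\mathfrak{L})$ (because $\mathfrak{L}$ is analytic and $0$ is an isolated generalized eigenvalue), and then cites Theorem 1.2 of \cite{JJ3} for the identity $\chi[\mathfrak{L},0]=\ord_{\lambda=0}D_z\mathfrak{G}(\lambda,0)$. Your argument, by contrast, is self-contained: you dispose of the second equality by differentiating \eqref{IddPOLYN}, and you prove the first by factoring $\mathfrak{L}(\lambda)$ through its Schur complement in the block decomposition adapted to $U=N[\mathfrak{L}_0]\oplus Y$ and $V=Z\oplus R[\mathfrak{L}_0]$. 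The block algebra, the implicit differentiation of $Q\mathfrak{F}(\lambda,z\varphi_0+\mathcal{Y}(\lambda,z\varphi_0))=0$ giving $\eta(\lambda)=-D(\lambda)^{-1}C(\lambda)\varphi_0$, and the identification $g(\lambda,0)=S[S_c(\lambda)\varphi_0]$ are all correct.

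The one step that needs repair is the deduction $\chi[\mathfrak{L},0]=\chi[S_c,0]$. You correctly flag that (PF) in Theorem \ref{th24} concerns endomorphism germs of a single space, whereas the Schur factors act between $U$ and $V$, but your proposed remedy does not fully close the gap. Post-composing by $\mathfrak{L}(\mu)^{-1}$ and inserting $\mathfrak{L}(\mu)\mathfrak{L}(\mu)^{-1}$ between the factors so that each piece becomes an endomorphism of $U$ reduces, via (PF) plus the vanishing of $\chi$ on germs invertible at $0$, to computing $\chi[\mathfrak{L}(\mu)^{-1}\diag(S_c,D),0]$; but this endomorphism is no longer block-diagonal with respect to $N[\mathfrak{L}_0]\oplus Y$, so a further application of (PF) cannot split off the $D$-block. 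You also tacitly use that post-composition by a fixed isomorphism leaves $\chi$ unchanged, which is not one of the two axioms of Theorem \ref{th24}. What closes both gaps cleanly is the product formula for $\chi$ on curves of Fredholm operators between \emph{different} Banach spaces, $\chi[\mathfrak{L}_2\mathfrak{L}_1,\lambda_0]=\chi[\mathfrak{L}_1,\lambda_0]+\chi[\mathfrak{L}_2,\lambda_0]$, which follows from the transversalization construction \eqref{ii.5}--\eqref{ii.6} and is established in \cite[Ch. 7]{LGMC}: it immediately kills the contributions of $E_1$, $E_2$ and $\diag(I_Z,D)$ (all invertible at $\lambda=0$), leaving $\chi[\mathfrak{L},0]=\chi[\diag(S_c,I_Y),0]=\chi[S_c,0]$, the last step being a direct consequence of the transversalization definition. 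With that substitution, your Schur-complement argument becomes a complete and informative alternative to the paper's external citation.
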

\begin{proof}
Since $\mf{F}$ is analytic, $\mf{L}\in\mc{C}^{\omega}(\K,\Phi_{0}(U,V))$. Thus, by hypothesis (F3), $\l_{0}=0$ is an isolated eigenvalue. Hence, by Theorems 4.4.1 and 4.4.4 of \cite{LG01},  $0\in\Alg(\mf{L})$. Therefore, $\chi[\mf{L},0]$ is well defined. On the other hand, by \cite[Th. 1.2]{JJ3}, we have that $\chi[\mathfrak{L},0]=\ord_{\l=0} D_{z}\mf{G}(\l,0)$. The proof is complete.
\end{proof}
\noindent Combining Lemma \ref{Lemma12.1.2} with the Weierstrass preparation theorem (see \cite[Th. 5.3.1]{BT}), there exist a open neighborhood $\mathscr{U}=\mathscr{U}_{\lambda}\times\mathscr{U}_{z}$ of $(0,0)$ in $\Omega$, an analytic function $c:\mathscr{U}\to\mathbb{K}$ such that  $c(0,0)\neq 0$, and
$\chi:=\chi[\mf{L},0]\geq 1$ analytic functions $c_{j}:\mathscr{U}_{z}\to\mathbb{K}$ such that $c_{j}(0)=0$
for all $0\leq j\leq \chi-1$, and
\begin{equation*}
	g(\lambda,z)=c(\lambda,z)\left[\lambda^{\chi}+c_{\chi-1}(z)\lambda^{\chi-1}+\cdots
	+c_{1}(z)\lambda+c_{0}(z) \right].
\end{equation*}
The factoring monic polynomial
\begin{equation}
	\label{6.6.1000}
	p(\l,z):= \lambda^{\chi}+c_{\chi-1}(z)\lambda^{\chi-1}+\cdots
	+c_{1}(z)\lambda+c_{0}(z)
\end{equation}
is often called the \emph{Weierstrass polynomial} of $g(\l,z)$.
These results can be summarized into the following theorem.

\begin{theorem}
	\label{T6.4.1}
	Let $\mf{F}\in\mc{C}^{\omega}(\K\times U, V)$ be a map satisfying {\rm (F1)--(F3)}. Then, there exist a neighbourhood $\mathscr{U}$ of $(0,0)$ in $\mathbb{K}^{2}$, an analytic function $c:\mathscr{U}\to\mathbb{K}$, with $c(0,0)\neq 0$, and $\chi=\chi[\mathfrak{L},0]\geq 1$ analytic functions $c_{j}:\mathscr{U}_{z}\to\mathbb{K}$, $0\leq j\leq \chi-1$, with $c_{j}(0)=0$, such that
\begin{equation*}
\mathfrak{G}(\lambda,z) \equiv zc(\lambda,z)(\lambda^{\chi}+c_{\chi-1}(z)\lambda^{\chi-1}
+\cdots+c_{1}(z)\lambda+c_{0}(z)),\quad (\l,z)\in\mathscr{U},
\end{equation*}
satisfies $\mf{G}(\l,z)=0$ 	if and only if $(\lambda,u)=(\lambda,z\varphi_{0}+\mathcal{Y}(\lambda,z\varphi_{0}))\in \mathfrak{F}^{-1}(0)$.
Therefore, the associated Weierstrass polynomial \eqref{6.6.1000} provides us with the local structure of $\mathfrak{F}^{-1}(0)$ at $(0,0)\in\mathbb{K}\times U$.
\end{theorem}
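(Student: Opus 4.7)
The plan is to combine the analytic Lyapunov--Schmidt reduction of Section~\ref{SectionALS} with the Weierstrass preparation theorem, using the algebraic multiplicity $\chi$ to control the order of $\l$-regularity. Without loss of generality, I assume $(\l_{0},u_{0})=(0,0)$. First, I would apply Theorem~\ref{Lyap-Smith} with the projection $P$ defined in \eqref{juca2} to obtain a neighbourhood $\mc{O}$ of $(0,0)$ in $\K\times U$ and an analytic reduced map $\mf{G}:\O\to\K$ whose zero set is in bijective correspondence with $\mf{F}^{-1}(0)\cap\mc{O}$ via the maps in \eqref{juca1}. Since $\mf{F}(\l,0)=0$ by (F1), one automatically has $\mf{G}(\l,0)=0$, so I may factor $\mf{G}(\l,z)=z\,g(\l,z)$ with $g$ analytic on $\O$, exactly as in the expansion \eqref{IddPOLYN}.

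Next, I would invoke Lemma~\ref{Lemma12.1.2}, which supplies the key identification
\[
\chi[\mf{L},0]=\ord_{\l=0} g(\l,0)=:\chi\geq 1.
\]
Thus $g(\l,0)$ has a zero of exact order $\chi$ at $\l=0$; equivalently, the analytic germ of $g$ at the origin is \emph{$\l$-regular of order $\chi$}, which is precisely the hypothesis required to apply the Weierstrass preparation theorem.

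I would then apply the Weierstrass preparation theorem to $g$ at $(0,0)$ (as stated in \cite[Th.~5.3.1]{BT}), treating $\l$ as the distinguished variable. It delivers a neighbourhood $\mathscr{U}=\mathscr{U}_{\l}\times\mathscr{U}_{z}\subset\O$ of $(0,0)$, a nonvanishing analytic unit $c:\mathscr{U}\to\K$ with $c(0,0)\neq 0$, and analytic coefficients $c_{0},\dots,c_{\chi-1}:\mathscr{U}_{z}\to\K$ satisfying $c_{j}(0)=0$, such that
\[
g(\l,z)=c(\l,z)\bigl(\l^{\chi}+c_{\chi-1}(z)\l^{\chi-1}+\cdots+c_{1}(z)\l+c_{0}(z)\bigr)
\]
on $\mathscr{U}$. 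Multiplying by $z$ yields the advertised factorization of $\mf{G}$, and the equivalence between zeros of $\mf{G}$ and solutions of $\mf{F}=0$ in the neighbourhood $\mathscr{U}$ is then an immediate consequence of Theorem~\ref{Lyap-Smith} applied to the bijection in \eqref{juca1}.

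The main obstacle is precisely the matching between the abstract multiplicity $\chi[\mf{L},0]$ and the $\l$-order of the reduced germ $g(\l,0)$: without this identification one could not certify that the Weierstrass polynomial has exactly the degree claimed in the statement, and the structural information encoded in the factorization would be lost. Fortunately, Lemma~\ref{Lemma12.1.2}, whose proof rests on the axiomatization of $\chi$ carried out in \cite{JJ3}, supplies this identification at once; thereafter the Weierstrass preparation theorem does all the remaining work, so the argument becomes essentially a one-line application of that classical device once the Lyapunov--Schmidt reduction has been carried out.
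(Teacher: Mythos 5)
Your proposal is correct and follows essentially the same route as the paper: Lyapunov--Schmidt reduction via Theorem~\ref{Lyap-Smith}, factoring out the trivial branch to write $\mf{G}(\l,z)=z\,g(\l,z)$, invoking Lemma~\ref{Lemma12.1.2} to identify $\chi[\mf{L},0]$ with $\ord_{\l=0}g(\l,0)$, and then applying the Weierstrass preparation theorem to $g$. The one small implicit step you may wish to flag is that (F1) guarantees $\mc{Y}(\l,0)\equiv 0$ (by uniqueness in the implicit function theorem), which is what makes $\mf{G}(\l,0)\equiv 0$ and hence the factor $z$ come out exactly.
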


\noindent This result reduces the analysis of the local structure of the solutions of the infinite dimensional problem $\mathfrak{F}(\lambda,u)=0$ to the analysis of the zeros of the Weierstrass polynomial $p(\l,z)$. According to Theorem \ref{T6.4.1}, in the simplest case when $\chi=1$, the local structure of $\mf{F}^{-1}(0)$ is determined by
\begin{equation*}
	\mathfrak{G}(\lambda,z)=zc(\lambda,z)(\lambda+c_{0}(z))=0, \quad (\lambda,z)\in\mathscr{U}.
\end{equation*}
Since $c$ is analytic and $c(0,0)\neq 0$, shortening the neighborhood $\mathscr{U}$, if necessary, we can assume that $c(\lambda,z)\neq 0$ for all $(\lambda,z)\in\mathscr{U}$. Thus, $\mathfrak{G}(\lambda,z)=0$ if and only if either $z=0$, or $\lambda=-c_{0}(z)$, $z\in\mathscr{U}_{z}$, which provides us with the analytic counterpart of the main theorem of Crandall and Rabinowitz \cite{CR}. Throughout the rest of this section, we will distinguish between two different cases, according to the nature of $\mathbb{K}$.

\subsection{The complex case $\mathbb{K}=\mathbb{C}$}
Then, given a domain $\Omega$ of $\mathbb{C}$, we will denote by $\mc{O}(\O)$ and $\mathcal{M}(\O)$, the spaces of holomorphic and meromorphic functions defined on $\O$, respectively, and we denote by $\mathcal{M}(\O)[\lambda]$ (resp. $\mathcal{O}(\O)[\lambda]$) the space of polynomials in $\lambda$ with coefficients in the space of meromorphic (resp. holomorphic) functions on $\O$. Let
$\mathscr{U}=\mathscr{U}_{\lambda}\times\mathscr{U}_{z}$ be the open set of $\C^{2}$ whose existence was established after the proof of Lemma \ref{Lemma12.1.2}. Since $\mathcal{M}(\mathscr{U}_{z})$ is a field, $\mathcal{M}(\mathscr{U}_{z})[\lambda]$ is a Unique Factorization Domain (UFD). Thus,  there exists an integer $n\geq 1$ and $n$ monic irreducible polynomials in  $\mathcal{M}(\mathscr{U}_{z})[\lambda]$, say  $p_{i}(\lambda,z)$, with $p_{i}(\lambda,0)=\lambda^{\deg p_{i}}$, $i\in\{1,...,n\}$, such that
\begin{equation*}
	p(\lambda,z)=\prod_{i=1}^{n}p_{i}(\lambda,z),
\end{equation*}
where $p(\l,z)$ is the Weierstrass polynomial \eqref{6.6.1000}. Subsequently, we  set $d_i\equiv \deg p_i$ for all $i\in\{1,...,n\}$. Since $p_{i}(\l,z)\in\mc{M}(\mathscr{U}_{z})[\l]$, we can express
\begin{equation*}
	p_{i}(\l,z)=\l^{d_{i}}+c_{i,d_{i}-1}(z)\l^{d_{i}-1}+\cdots+c_{i,1}(z)\l+c_{i,0}(z), \quad z\in\mathscr{U}_{z},
\end{equation*}
for some meromorphic functions $c_{i,k}:\mathscr{U}_{z}\to \C$ such that $c_{i,k}(0)=0$, $1\leq i\leq n$,
$1\leq k \leq d_{i}-1$. Actually, shrinking $\mathscr{U}$, if necessary, we can suppose that each $c_{i,k}\in\mc{O}(\mathscr{U}_{z})$. Thus, $p_{i}(\l,z)\in\mc{O}(\mathscr{U}_{z})[\l]$.
\par
Suppose that $d_i\geq 2$ for some $i\in\{1,...,n\}$,  and let us denote by $\Delta_{i}(z)$, $z\in\mathscr{U}_{z}$,  the discriminant of the polynomial $p_i(\lambda,z)$. Since $p_i(\lambda,0)=\lambda^{d_i}$ with $d_i\geq 2$, $\l=0$ is, at least,  a double root. Hence, $\Delta_{i}(0)=0$. Moreover, $\Delta_{i}\not\equiv 0$, because $p_{i}(\lambda,z)$ is irreducible in $\mathcal{M}(\mathscr{U}_{z})[\lambda]$. Thus, since the discriminant is analytic, shortening $\mathscr{U}$ if necessary, we can assume that $\Delta_{i}(z)\neq 0$ for all $z\in\mathscr{U}_{z}\backslash\{0\}$.
Consequently, for every $z\in\mathscr{U}_{z}\backslash\{0\}$, the irreducible factor $p_{i}(\lambda,z)$ admits $d_i$ simple  roots. Pick $z\in\mathscr{U}_{z}\backslash\{0\}$. Then, by \cite[Cor. 8.8]{OF}, there exists a neighborhood $\mathscr{V}$ of $z$ in $\mathscr{U}_{z}\backslash\{0\}$, and $d_i$ holomorphic functions, $\varphi_{j}\in\mathcal{O}(\mathscr{V})$, $j\in\{1,...,d_i\}$, such that
\begin{equation*}
	p_{i}(\lambda,z)=\prod_{j=1}^{d_i}(\lambda-\varphi_{j}(z)), \quad z\in\mathscr{V}.
\end{equation*}
Consequently, the set $\mathcal{S}_{i}:=p_i^{-1}(0)\cap[\mathbb{C}\times (\mathscr{U}_z\setminus\{0\})]$ consists, locally, of the analytic curves $(\lambda,z)=(\varphi_j(z),z)$,
$z\in\mathscr{V}$, $j\in\{1,...,d_i\}$, and therefore, $\mathcal{S}_{i}$ can be endowed with the structure of an open Riemann surface, i.e., an open one-dimensional smooth complex manifold. Obviously, this is also true if $d_1=1$. So, it holds for all $i\in\{1,...,n\}$. The next result establishes that
$(0,0)$ is  an accumulation point of $\mathcal{S}_{i}\subset \mathbb{C}^{2}$ for all $i\in\{1,...,n\}$.
According to Theorem \ref{T6.4.1}, in the complex setting, $(0,0)\in\mathbb{C}\times U$ always is a bifurcation point of $\mf{F}(\l,u)=0$, regardless the value of $\chi$.

\begin{lemma}
$(0,0)\in\C^{2}$ is an accumulation point of $\mc{S}_{i}$ for all $i\in\{1,...,n\}$.
\end{lemma}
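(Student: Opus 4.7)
The plan is to exploit the fact that $p_i(\lambda,0) = \lambda^{d_i}$, so as $z\to 0$ the polynomial $p_i(\cdot, z)$ degenerates to a polynomial whose only root is $\lambda = 0$; by continuity of the roots with respect to the coefficients, all roots of $p_i(\cdot, z)$ must cluster at $\lambda = 0$. This immediately yields points of $\mathcal{S}_i$ arbitrarily close to $(0,0)$.

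More concretely, I would first record the standard a priori bound for roots of monic polynomials. Given $z\in \mathscr{U}_z$, set $C(z):=\max_{0\leq k\leq d_i-1}|c_{i,k}(z)|$. If $\lambda\in\mathbb{C}$ satisfies $p_i(\lambda,z)=0$, then
\begin{equation*}
\lambda^{d_i}=-\sum_{k=0}^{d_i-1}c_{i,k}(z)\,\lambda^{k}.
\end{equation*}
If $|\lambda|\geq 1$, dividing by $\lambda^{d_i-1}$ yields $|\lambda|\leq d_i\,C(z)$, while if $|\lambda|\leq 1$ one obtains $|\lambda|^{d_i}\leq d_i\,C(z)$, i.e.\ $|\lambda|\leq \bigl(d_i\,C(z)\bigr)^{1/d_i}$. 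Since every $c_{i,k}$ is holomorphic on $\mathscr{U}_z$ with $c_{i,k}(0)=0$, one has $C(z)\to 0$ as $z\to 0$; in particular $d_i\,C(z)<1$ for $z$ close enough to $0$, which forces every root of $p_i(\cdot,z)$ to satisfy $|\lambda|\leq 1$ and hence $|\lambda|\leq \bigl(d_i\,C(z)\bigr)^{1/d_i}$.

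With this bound in hand, the conclusion is immediate. Choose any sequence $\{z_n\}_{n\geq 1}\subset \mathscr{U}_z\setminus\{0\}$ with $z_n\to 0$. For each $n$ sufficiently large, pick any root $\lambda_n\in\mathbb{C}$ of the monic polynomial $p_i(\cdot,z_n)$ (such a root exists since $\deg_\lambda p_i(\cdot,z_n)=d_i\geq 1$). By the estimate above,
\begin{equation*}
|\lambda_n|\leq \bigl(d_i\,C(z_n)\bigr)^{1/d_i}\xrightarrow[n\to\infty]{}0.
\end{equation*}
Therefore $(\lambda_n,z_n)\in \mathcal{S}_i$ and $(\lambda_n,z_n)\to (0,0)$ in $\mathbb{C}^2$, proving that $(0,0)$ is an accumulation point of $\mathcal{S}_i$.

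There is essentially no serious obstacle: the argument is a short application of the continuous dependence of the roots of a monic polynomial on its coefficients, combined with the normalization $p_i(\lambda,0)=\lambda^{d_i}$ (which in turn comes from the fact that every $c_{i,k}$ vanishes at $z=0$). Alternatively, the same conclusion could be stated via Rouché's theorem applied to the pair $p_i(\lambda,z)$ and $\lambda^{d_i}$ on circles $\{|\lambda|=\varepsilon\}$ for $z$ small, but the explicit root bound makes the dependence on $z$ more transparent and keeps the argument self-contained.
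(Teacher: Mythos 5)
Your proof is correct and, in fact, streamlines the paper's argument. The paper proceeds in three steps: (i) it establishes only a uniform bound $|\lambda|\leq \mathcal{R}$ (depending on $\sup$ norms of the $c_{i,j}$, not vanishing as $z\to 0$) for any root of $p_i(\cdot,z)$; (ii) it invokes analytic continuation on the simply connected half-disk $\mathscr{W}=\{\mathfrak{R}z>0\}\cap\mathscr{U}_z$ to produce a single holomorphic root function $\varphi$, evaluated along a sequence $z_n\to 0$; (iii) it then extracts, via compactness of $\{|\lambda|\leq\mathcal{R}\}$, a subsequence of $\varphi(z_n)$ converging to some $w_0$ and concludes $w_0=0$ from $p_i(w_0,0)=0$. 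You replace all of this with a sharper root estimate $|\lambda|\leq\bigl(d_i\,C(z)\bigr)^{1/d_i}$ valid for $z$ near $0$, which exploits $c_{i,k}(0)=0$ to make the bound itself tend to $0$; combined with the fundamental theorem of algebra (to produce a root $\lambda_n$ for each $z_n$), this gives $\lambda_n\to 0$ directly without any need for analytic continuation or compactness extraction. Your route is more elementary and self-contained; the paper's detour through a holomorphic root branch is not actually needed for this lemma, though it foreshadows machinery used elsewhere in that section. One very minor stylistic point: you should say explicitly that $(\lambda_n,z_n)\in\mathcal{S}_i$ because $z_n\neq 0$, matching the definition $\mathcal{S}_i=p_i^{-1}(0)\cap[\mathbb{C}\times(\mathscr{U}_z\setminus\{0\})]$, but this is obvious from your choice of $z_n$.
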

\begin{proof}
We claim that, for every $(\l,z)\in \C\times\mathscr{U}_{z}$ such that $p_{i}(\l,z)=0$,
\begin{equation}
\label{polyy0}
		|\l|\leq \max\Big\{1, \sum_{j=0}^{d_{i}-1}\|c_{i,j}\|_{L^\infty(\mathscr{U}_{z})} \Big\}\equiv \mathcal{R}.
\end{equation}
Note that $\mathscr{U}_{z}$ can be shortened so that $c_{i,j}\in\mathcal{C}(\mathscr{\bar U}_{z})$. Suppose $p_{i}(\l,z)=0$. Then,
\begin{equation}
		\label{polyy}
		\l^{d_{i}}=-\sum_{j=0}^{d_{i}-1}c_{i,j}(z)\l^{j}.
\end{equation}
Thus, \eqref{polyy0} holds if $|\l|\leq 1$. Suppose $|\l|>1$. Then, \eqref{polyy} implies that
\begin{equation}
		\label{polyy2}
		|\l|^{d_{i}}\leq |\l|^{d_{i}-1} \sum_{j=0}^{d_{i}-1}\|c_{ij}\|_{L^\infty(\mathscr{U}_{z})},
\end{equation}
and dividing \eqref{polyy2} by $|\l|^{d_{i}-1}$ yields \eqref{polyy0}. On the other hand, since the domain $\mathscr{W}:=\{z\in\C : \mf{R}z>0\}\cap\mathscr{U}_z$ is simply connected, by analytic continuation, it follows from  \cite[Cor. 8.8]{OF} that there exists a holomorphic function $\varphi:\mathscr{W}\to \C$ such that $p_{i}(\varphi(z),z)=0$ for all $z\in \mathscr{W}$.
In particular, we have that
\begin{equation}
		\label{Polyy}
		\lim_{z\to 0}p_{i}(\varphi(z),z)=0.
\end{equation}
Moreover, by \eqref{polyy0}, $|\varphi(z)|\leq \mc{R}$ for all $z\in\mathscr{W}$. Let $(z_{n})_{n\in\N}$ be a sequence in $\mathscr{W}$ such that $z_{n}\to 0$ as $n\to\infty$ and set $w_{n}:=\varphi(z_{n})$, $n\geq 1$.
Since $|w_{n}|\leq \mc{R}$ for all $n\in\N$, by compactness, there exists a subsequence, relabeled by $n$, such that $w_{n}\to w_{0}$ as $n\to\infty$ with $|w_{0}|\leq \mc{R}$. By \eqref{Polyy}, $p_{i}(w_{0},0)=0$. Since $p_{i}(\l,0)=\l^{d_{i}}$, necessarily $w_{0}=0$. Therefore, $(0,0)$ is an accumulation point of $\mc{S}_{i}$.
\end{proof}

\par
To gain some insight into the the local structure of the zeros near $(0,0)$, we consider the $z$-projection operator  $\pi_{i}:\mathcal{S}_{i}\longrightarrow\mathscr{U}_{z}\backslash\{0\}$, $(\lambda,z)\mapsto z$. By construction, $\pi_{i}$ is a $d_i$-sheeted holomorphic covering map whose associated  fiber at $z\in\mathscr{U}_{z}\backslash\{0\}$ is the set of simple roots of $p_{i}(\lambda,z)=0$. To study the singularity at $(0,0)$ or, equivalently, to add the multiple root $(0,0)$ of $p_{i}(\lambda,z)$ to the Riemann surface $\mathcal{S}_{i}$, one can extend $\mathcal{S}_{i}$ to a Riemann surface, $\mc{X}_{i}$, with a branched covering map $\Pi_{i}:\mc{X}_{i}\to\mathscr{U}_{z}$ such that $\Pi_{i}|_{\mc{S}_{i}}=\pi$, by means of the following classical theorem of B. Riemann (see, e.g., \cite[Th. 8.4]{OF} for a proof).

\begin{theorem}
Let $\O$ be a domain in $\C$ and $z_{0}\in\O$. Suppose $X$ is a Riemann surface and
$\pi:X \longrightarrow \O\backslash\{z_{0}\}$, is a proper holomorphic covering map. Then, $\pi$ extends to a branched covering of $\O$. In other words, there exists a Riemann surface $\mc{X}$, with $X\subset \mc{X}$, and a proper holomorphic map $\Pi:\mc{X}\to\O$ such that $\mc{X}\backslash \Pi^{-1}(z_{0})=X$ and $\Pi|_{X}=\pi$.
\end{theorem}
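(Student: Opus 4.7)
The plan is to construct $\mc{X}$ by adjoining finitely many new points to $X$, one over $z_{0}$ for each ``end'' of $X$ lying above a small punctured disk around $z_{0}$, and then transport the complex structure and the projection via the standard local model $w\mapsto w^{k}$.

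First, I would use the properness of $\pi$ to reduce to a local situation. Fix a relatively compact open disk $D=D_{r}(z_{0})\subset \O$ and set $D^{\ast}:=D\setminus\{z_{0}\}$. Properness of $\pi$ implies that $\pi^{-1}(\overline{D^{\ast}}\cap(\O\setminus\{z_{0}\}))$ is compact in $X$, so the covering $\pi:\pi^{-1}(D^{\ast})\to D^{\ast}$ has finite degree $n$, and (after shrinking $r$) $\pi^{-1}(D^{\ast})$ has only finitely many connected components $X_{1},\dots,X_{m}$. Each restriction $\pi_{j}:=\pi|_{X_{j}}:X_{j}\to D^{\ast}$ is a finite connected holomorphic covering, of some degree $k_{j}\geq 1$ with $\sum_{j=1}^{m} k_{j}=n$.

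Next, I would invoke the classification of connected coverings of $D^{\ast}$. Since $\pi_{1}(D^{\ast})\cong\Z$, any connected $k_{j}$-sheeted covering of $D^{\ast}$ is isomorphic, as an unramified covering, to the map
\begin{equation*}
\psi_{k_{j}}:\D^{\ast}\longrightarrow D^{\ast},\qquad w\longmapsto z_{0}+r\,w^{k_{j}},
\end{equation*}
where $\D^{\ast}$ is the punctured unit disk. Concretely, there exists a biholomorphism $\varphi_{j}:X_{j}\to\D^{\ast}$ with $\psi_{k_{j}}\circ\varphi_{j}=\pi_{j}$. I would then define the extended surface by adjoining an abstract point $p_{j}$ for each $j\in\{1,\dots,m\}$,
\begin{equation*}
\mc{X}:=X\,\sqcup\,\{p_{1},\dots,p_{m}\},
\end{equation*}
and extend $\pi$ to $\Pi:\mc{X}\to\O$ by setting $\Pi(p_{j}):=z_{0}$ and $\Pi|_{X}:=\pi$.

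It remains to equip $\mc{X}$ with a Hausdorff topology and a compatible complex atlas making $\Pi$ holomorphic and proper. Around each $p_{j}$, I would declare a fundamental system of neighbourhoods to be $\{p_{j}\}\cup\varphi_{j}^{-1}(\D^{\ast}_{\varepsilon})$ for $0<\varepsilon<1$, and take as a chart $\widehat{\varphi}_{j}:\{p_{j}\}\cup\varphi_{j}^{-1}(\D^{\ast})\to\D$ sending $p_{j}\mapsto 0$ and agreeing with $\varphi_{j}$ elsewhere. Away from the adjoined points, the charts are those of $X$. Hausdorffness and second countability are immediate because there are only finitely many $p_{j}$ and $X$ is already a Riemann surface; holomorphicity of the transition maps is automatic, since $\varphi_{j}$ is biholomorphic on $X_{j}$. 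In these coordinates the map $\Pi$ is exactly $w\mapsto z_{0}+r\,w^{k_{j}}$, hence holomorphic with a ramification point of order $k_{j}$ at $p_{j}$. Finally, properness of $\Pi$ follows because $\Pi^{-1}(K)$ for a compact $K\subset\O$ is closed, and its part over $K\setminus D_{r/2}(z_{0})$ is compact by properness of $\pi$, while its part over $\overline{D_{r/2}(z_{0})}$ is $\bigsqcup_{j}\widehat{\varphi}_{j}^{-1}(\overline{\D}_{(1/2)^{1/k_{j}}})$, manifestly compact.

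The main obstacle I anticipate is the classification step: one must verify that every connected component $X_{j}$ is indeed a \emph{finite} covering of $D^{\ast}$ (which requires both properness and shrinking $D$ so that each $X_{j}$ remains connected), and then identify it explicitly with $\psi_{k_{j}}$ up to biholomorphism. The finiteness uses the fact that a proper local homeomorphism onto $D^{\ast}$ has locally constant fibre cardinality bounded by $n$; the identification uses the fact that the universal cover of $D^{\ast}$ is $\H\to D^{\ast}$, $\zeta\mapsto z_{0}+r\,e^{i\zeta}$, together with the uniqueness of intermediate coverings associated to subgroups $k_{j}\Z\subset\Z$. Once this is in place, the construction and verification of the Riemann surface structure on $\mc{X}$ are routine.
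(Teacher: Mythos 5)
Your proposal is correct and follows the classical argument for this theorem; note that the paper itself gives no proof, simply citing Forster \cite[Th.~8.4]{OF}, and your construction --- decompose $\pi^{-1}(D^{\ast})$ into finitely many connected coverings of the punctured disk, identify each with $w\mapsto w^{k_j}$ via the classification of coverings over $\pi_{1}(D^{\ast})\cong\Z$, and adjoin one ramification point per component with the complex structure carried over from the unit disk --- is essentially the proof found there.

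One small imprecision worth fixing: the set $\overline{D^{\ast}}\cap(\O\setminus\{z_{0}\})=\overline{D}\setminus\{z_{0}\}$ is \emph{not} compact (it accumulates at $z_{0}$), so properness of $\pi$ does not directly yield compactness of its preimage. The finiteness of the degree should instead be deduced by observing that the fibre over a single point is compact and discrete, hence finite, and that the sheet number is locally constant on the connected set $\O\setminus\{z_{0}\}$; alternatively, apply properness to the compact circle $\{\,|z-z_{0}|=\rho\,\}\subset D^{\ast}$ for some $0<\rho<r$. Also, the parenthetical ``after shrinking $r$'' is unnecessary: once the degree $n$ is known to be finite, $\pi^{-1}(D^{\ast})$ has at most $n$ connected components automatically, and each $X_{j}\cong\D^{\ast}$ stays connected under further shrinking of $r$. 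With these minor corrections the rest of your argument --- the identification $\varphi_{j}$, the definition of the topology and atlas at the adjoined points, and the verification of properness of $\Pi$ --- goes through exactly as written.
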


For every $i\in\{1,...,n\}$, the extended $\mc{X}_{i}$ is called the Riemann surface of the irreducible factor $p_{i}(\lambda,z)\in\mathcal{M}(\mathscr{U}_{z})[\lambda]$, and $\Pi_{i}:\mc{X}_{i}\to\mathscr{U}_{z}$ is refereed to  as the $d_i$-sheeted branched covering map associated to it. In particular $\mathcal{S}_{i}=\mc{X}_{i}\backslash\Pi_{i}^{-1}(0)$. Therefore, the local structure of $\mathfrak{G}^{-1}(0)$ in a neighborhood of  $(0,0)\in\mathbb{C}^{2}$ is completely determined by the pairs $(\mc{X}_{i},\Pi_{i})$, $i\in\{1,...,n\}$. The internal  structure of the pairs $(\mc{X}_{i},\Pi_{i})$ can further be analysed through symmetries via the concept of \textit{deck transformation}. A deck transformation associated to the pair $(\mc{X}_{i},\Pi_{i})$ is a biholomorphic map $\varphi: \mc{X}_{i}\to \mc{X}_{i}$ making the following diagram commutative:
\begin{equation}
	\xymatrix@C+1em{
		\mc{X}_i \ar^{\varphi}[rr] \ar_{\Pi_{i}}[dr] & & \mc{X}_{i} \ar^{\Pi_{i}}[dl] \\
		&\mathscr{U}_{z}&
	}
\end{equation}
Let us denote by $\Deck(\mc{X}_{i},\Pi_{i})$ the group of the deck transformations associated to  $(\mc{X}_{i},\Pi_{i})$. This group gives information about the nature of the singularity at $(0,0)$. To study the structure of $\Deck(\mc{X}_{i},\Pi_{i})$, we need some preliminaries on field extensions. Consider the field extensions $\mathcal{M}(\mathscr{U}_{z})\subset \mathcal{M}(\mc{X}_{i})$, $i\in\{1,...,n\}$, where we are denoting by $\mathcal{M}(\mc{X}_{i})$ the field of meromorphic functions defined on $\mc{X}_{i}$. As,  due to \cite[Th. 8.12]{OF}, the field isomorphism
\begin{equation}
	\label{ISOmK}
	\mathcal{M}(\mc{X}_{i})\simeq \frac{\mathcal{M}(\mathscr{U}_{z})[\lambda]}{(p_{i}(\lambda,z))},
\end{equation}
holds, the extensions $\mathcal{M}(\mathscr{U}_{z})\subset \mathcal{M}(\mc{X}_{i})$ are determined by the algebraic field extensions
\begin{equation}
	\label{6.6.37}
	\mathcal{M}(\mathscr{U}_{z})\subset \frac{\mathcal{M}(\mathscr{U}_{z})[\lambda]}{(p_{i}(\lambda,z))}.
\end{equation}
Some elementary algebra shows that
\begin{equation*}	 \frac{\mathcal{M}(\mathscr{U}_{z})[\lambda]}{(p_{i}(\lambda,z))}=\left\{\sum_{j=0}^{d_i-1} a_{j}(z)\lambda^{j}: \;a_{j}\in\mathcal{M}(\mathscr{U}_{z}),\; j\in\{1,2,\cdots,d_{i}-1\}\right\}\Big{/}\sim,
\end{equation*}
where $\deg p_{i}=d_{i}$ and $f_1\sim f_2$ if and only if $f_1-f_2\in (p_{i}(\lambda,z))$. Thus, it is apparent that
\begin{equation*}	
   \left[\frac{\mathcal{M}(\mathscr{U}_{z})[\lambda]}{(p_{i}(\lambda,z))}:
	\mathcal{M}(\mathscr{U}_{z})\right]=d_i=\deg p_{i}.
\end{equation*}
As usual in field theory, $[L:K]$ stands for the degree of any field extension $K\subset L$, i.e., the dimension of $L$  viewed as a $K$-vector space. Finally, let us associate to the field extensions $\mathcal{M}(\mathscr{U}_{z})\subset \mathcal{M}(\mc{X}_{i})$  their associated Galois groups
$\mathscr{G}_{i}(\mathcal{M}(\mc{X}_{i}) / \mathcal{M}(\mathscr{U}_{z}))$, $i\in\{1,\cdots,n\}$. The Galois group of a field extension $K\subset L$, $\mathscr{G}(L/K)$, consists of the set of field $K$-automorphisms (or permutations)  $\sigma : L \to L$ leaving invariant $K$. Thanks to  \eqref{ISOmK}, the following group isomorphism holds
\begin{equation}
	\label{ISOmKi}
	\mathscr{G}_{i}(\mathcal{M}(\mc{X}_{i}) / \mathcal{M}(\mathscr{U}_{z})) \simeq
	\mathscr{G}\left(\frac{\mathcal{M}(\mathscr{U}_{z})[\lambda]}{(p_{i}(\lambda,z))}
	\Big{/}\mathcal{M}(\mathscr{U}_{z})\right)=:\mathscr{G}_{i}.
\end{equation}
According to \cite[Th. 8.12]{OF}, the map $\mf{I}: \Deck(\mc{X}_{i},\Pi_{i})\to \mathscr{G}_{i}(\mathcal{M}(\mc{X}_{i}) / \mathcal{M}(\mathscr{U}_{z}))$, $\varphi \mapsto \sigma[\varphi]$,
where,
$$\sigma[\varphi]: \mathscr{G}_{i}(\mathcal{M}(\mc{X}_{i}) / \mathcal{M}(\mathscr{U}_{z})) \to \mathscr{G}_{i}(\mathcal{M}(\mc{X}_{i}) / \mathcal{M}(\mathscr{U}_{z})) , \quad f\mapsto f\circ \varphi^{-1},$$
is a group isomorphism. Therefore, by \eqref{ISOmKi},  $\Deck(\mc{X}_{i},\Pi_{i})$ is isomorphic to $\mathscr{G}_{i}$ for all $i\in \{1,\cdots,n\}$. By the fundamental theorem of the Galois theory, for every $i\in\{1,...,n\}$, the roots $\l=\l(z)$ of the irreducible polynomial $p_{i}(\lambda,z)$ can be expressed as a composition of radicals and meromorphic functions on $\mathscr{U}_{z}$ if and only if the Galois group $\mathscr{G}_{i}$ is solvable.  For instance, the Galois group of the irreducible polynomial
$\l^2+z\l+1\in\mc{M}(\mathscr{U}_{z})[\lambda]$, is solvable because its roots can be expressed as $\l(z)=\frac{1}{2}(-z\pm \sqrt{z^2-1}), \quad z\in\mathscr{U}_{z}$.
Precisely, its Galois group is given by $\mathscr{G}(L/K)=\{1,\sigma\}$, where
$L=\frac{\mc{M}(\mathscr{U}_{z})[\lambda]}{(\l^2+z\l+1)}$, $K=\mc{M}(\mathscr{U}_{z})$, $1:L\to L$ stands for the identity map, and $\sigma:L\to L$ is defined by $\sigma(\l)=-\l$. The previous results can be summarized into the next one.

\begin{theorem}
	\label{T6.4.2}
	Suppose that $\mathfrak{F}\in\mathcal{C}^{\omega}(\mathbb{C}\times U,V)$ satisfies {\rm (F1)--(F3)} and its 	 associated Weierstrass polynomial $p(\lambda,z)$ has irreducible components $p_{i}(\lambda,z)$, $1\leq i \leq n$. Then, $(0,0)\in\mathbb{C}\times U$ is a bifurcation point of $\mf{F}(\l,u)=0$ from $(\l,0)$ at $\l=0$, regardless the value of $\chi$. Moreover, the set $\mathfrak{F}^{-1}(0)$ near $(0,0)$ is in one-to-one correspondence with
$\mathscr{T}_{i}=\bigcup_{i=1}^{n}\mc{S}_{i} \cup \{(\lambda,0):\lambda\in\mathscr{U}_{\lambda}\}$, and the structure of this set is given by the pair $(\mc{X}_{i},\Pi_{i})$, where $\mc{X}_{i}$ is the Riemann surface of the polynomial $p_{i}(\lambda,z)$, $i\in\{1,...,n\}$, and $\Pi_{i}:\mc{X}_{i}\to \mathscr{U}_{z}$ stands for its associated branched covering map. Also, the structure of the singularity at $(0,0)$ is described by the Galois group $\Deck(\mc{X}_{i},\Pi_{i})\simeq \mathscr{G}_{i}$,
$i\in \{1,\cdots,n\}$. Furthermore, for every $i\in\{1,...,n\}$, $\mathscr{T}_{i}$ can be expressed as
$\mathscr{T}_{i}=\{(\l(z),z): z\in\mathscr{U}_{z}\}\cap \mathscr{U}$, where $\l:\mathscr{U}_{z}\to \C$ is an algebraic composition of radicals and meromorphic functions if and only if $\mathscr{G}_{i}$
	is solvable.
\end{theorem}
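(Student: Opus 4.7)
The plan is to assemble the statement from the analytic and algebraic pieces already laid out in the section, without introducing any new machinery. First I would invoke Theorem \ref{T6.4.1} to locally reduce $\mathfrak{F}^{-1}(0)$, near $(0,0)$, to the zero set of $\mathfrak{G}(\lambda,z)=z\,c(\lambda,z)\,p(\lambda,z)$ on a neighbourhood $\mathscr{U}=\mathscr{U}_\lambda\times\mathscr{U}_z$ of $(0,0)$; after shrinking $\mathscr{U}$ so that $c$ is nowhere vanishing, the analytic parametrisation \eqref{juca1} supplied by Theorem \ref{Lyap-Smith} gives the one-to-one correspondence between $\mathfrak{F}^{-1}(0)\cap\mc{O}$ and $\{(\lambda,0):\lambda\in\mathscr{U}_\lambda\}\cup p^{-1}(0)$. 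This immediately displays $\mathscr{T}_i$ as the analytic image of the irreducible decomposition of $p$.

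Next I would exploit the factorisation in the UFD $\mathcal{M}(\mathscr{U}_z)[\lambda]$, namely $p=\prod_{i=1}^n p_i$ with each $p_i$ monic and irreducible, and invoke the accumulation lemma proved just before the statement to conclude that $(0,0)\in\mathbb{C}^{2}$ is an accumulation point of every $\mathcal{S}_i$; this is precisely the bifurcation assertion, valid irrespective of the size of $\chi$. The discriminant argument already recorded in the text then ensures that, over $\mathscr{U}_z\setminus\{0\}$, each $\mathcal{S}_i$ is a $d_i$-sheeted unbranched holomorphic covering under the projection $(\lambda,z)\mapsto z$, and hence inherits the structure of a Riemann surface. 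I would close the singular fibre over $0$ by appealing to the Riemann extension theorem quoted in the section, producing the proper branched covering $\Pi_i:\mathcal{X}_i\to\mathscr{U}_z$ with $\mathcal{X}_i\setminus\Pi_i^{-1}(0)=\mathcal{S}_i$.

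For the algebraic identifications, I would use the assignment $\varphi\mapsto \sigma[\varphi]$, $f\mapsto f\circ\varphi^{-1}$, recorded in the text, which yields the isomorphism $\mathrm{Deck}(\mathcal{X}_i,\Pi_i)\simeq \mathscr{G}(\mathcal{M}(\mathcal{X}_i)/\mathcal{M}(\mathscr{U}_z))$, and then transport this group through the field isomorphism \eqref{ISOmK} onto $\mathscr{G}_i$ as defined in \eqref{ISOmKi}. The final clause of the theorem, about the expressibility of the branches $\lambda=\lambda(z)$ parametrising $\mathscr{T}_i$ as a composition of radicals and meromorphic functions, is then a direct application of the fundamental theorem of Galois theory in Abel--Ruffini form, applied to the degree-$d_i$ algebraic extension \eqref{6.6.37}: solvability by radicals holds if and only if the Galois group $\mathscr{G}_i$ is solvable.

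The main delicate point will be the branched-covering extension in the second paragraph: one needs to verify that the $d_i$-sheeted unbranched covering over the punctured neighbourhood extends across $z=0$ into a full Riemann surface carrying a proper, necessarily ramified, holomorphic projection, and that this extension is compatible with the analytic parametrisation coming from \eqref{juca1} so that the resulting correspondence with $\mathfrak{F}^{-1}(0)$ is bijective. Once this is secured by the quoted Riemann extension theorem and the algebraic dictionary \eqref{ISOmK}--\eqref{ISOmKi} is invoked verbatim, the remainder of the proof is bookkeeping, and the genuinely new geometric conclusion, the unconditional bifurcation assertion, follows from the accumulation lemma.
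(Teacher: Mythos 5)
Your proposal is correct and follows essentially the same route as the paper. The theorem is stated in the paper as a summary of the preceding discussion (``The previous results can be summarized into the next one.''), and your assembly recapitulates exactly that discussion: Weierstrass factorization via Theorem \ref{T6.4.1}, the UFD decomposition of $p$ in $\mathcal{M}(\mathscr{U}_z)[\lambda]$, the discriminant argument producing the $d_i$-sheeted covering, the accumulation lemma to get the unconditional bifurcation claim, the Riemann extension to the branched covering $\Pi_i$, and the $\mathrm{Deck}\simeq\mathscr{G}_i$ identification via \eqref{ISOmK}--\eqref{ISOmKi} feeding into Abel--Ruffini. Your flag on the properness of $\pi_i$ needed before invoking the extension theorem is a fair observation about a step the paper treats implicitly; the uniform bound $|\lambda|\leq\mathcal{R}$ established in the accumulation lemma is precisely what makes $\pi_i$ proper, so it is available and the concern is resolved without new input.
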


\subsection{The real case $\mathbb{K}=\mathbb{R}$} \label{Sect. Real} As in this case $\mathscr{U}_{z}\subset\mathbb{R}$ is an open interval containing $0$, $\mathscr{U}_{z}\backslash\{0\}=\mathscr{U}_{z}^{+}\uplus \mathscr{U}_{z}^{-}$, where,
for some $\a<0<\o$, $\mathscr{U}^{-}_{z}$ and $\mathscr{U}^{+}_{z}$ are the intervals $\mathscr{U}^{-}_{z}=(\a,0)$ and $\mathscr{U}^{+}_{z}=(0,\o)$.  Throughout this section, we denote
\begin{align*}
	\mathscr{B} & :=\{x+iy\in\mathbb{C} :\; x,\, y\in\mathscr{U}_{z}\}\equiv \mathscr{U}_z\times \mathscr{U}_z,\\  \mathscr{B}_{+} & :=\{x+iy\in\mathbb{C} :\; x\in\mathscr{U}_{z}^{+}, \, y\in\mathscr{U}_{z}\}
	\equiv \mathscr{U}_z^+\times \mathscr{U}_z, \\ \mathscr{B}_{-} & :=\{x+iy\in\mathbb{C} :\; x\in\mathscr{U}_{z}^{-},\, y\in\mathscr{U}_{z}\}\equiv \mathscr{U}_z^-\times \mathscr{U}_z.
\end{align*}
In this case, the Weierstrass polynomial  $p(\lambda,z)$ lies in $\mathcal{M}(\mathscr{U}_{z})[\lambda]$, and it can be regarded as a polynomial of $\mathcal{M}(\mathscr{B})[\lambda]$ by complexifying the variable $z$ to $z=x+iy$. Thus, as in the complex case, we can decompose the complex polynomial
$p(\lambda,z)\in\mathcal{M}(\mathscr{B})[\lambda]$ in its irreducible components
$p(\lambda,z)=\prod_{i=1}^{n}p_{i}(\lambda,z)$, $z\in\mathscr{B}$. Since $\mathscr{B}_{\pm}$ is a simply connected open set not containing $0$, by analytic continuation, we find from \cite[Cor. 8.8]{OF}, that, for every $i\in \{1,...,n\}$, there exist an integer $m_i\geq 1$ and $2m_i$ analytic functions,
$\varphi^{i}_{j}:\mathscr{B}_{+}\to\mathbb{C}$, $\phi^{i}_{j}:\mathscr{B}_{-}\to\mathbb{C}$,
$j\in\{1,..., m_{i}\}$, such that
\begin{equation*}
	p_{i}(\lambda,z) =\prod_{j=1}^{m_{i}}(\lambda-\varphi^{i}_{j}(z)) \;\; \hbox{for all}\;\;
	z\in\mathscr{B}_{+}, \quad p_{i}(\lambda,z) =\prod_{j=1}^{m_{i}}(\lambda-\phi^{i}_{j}(z))
	\;\;  \hbox{for all}\;\; z\in\mathscr{B}_{-}.
\end{equation*}
Thus, since $\chi=\deg p = \sum_{i=1}^n m_i$, $p(\l,z)$, $p(\l,z)$ can be factorized as
\begin{equation*}
	p(\lambda,z) =\prod_{k=1}^{\chi}(\lambda-\varphi_{k}(z)) \;\; \hbox{for all}\;\; z\in\mathscr{B}_{+},
\quad  p(\lambda,z)  =\prod_{k=1}^{\chi}(\lambda-\phi_{k}(z)) \;\; \hbox{for all}\;\;  z\in\mathscr{B}_{-},
\end{equation*}
for some analytic functions $\varphi_{k}:\mathscr{B}_{+}\to\mathbb{C}$ and  $\phi_{k}:\mathscr{B}_{-}\to\mathbb{C}$, $1\leq k \leq \chi$. Subsequently, we will consider the complex functions $\varphi_{k}(z)$ and $\phi_{k}(z)$, $1\leq k \leq \chi$,  as functions of a real variable, $z\in \mathbb{R}$. Since the zeroes of the analytic non-zero functions are isolated, shortening the interval $\mathscr{U}_{z}$,  if necessary, for every $k\in\{1,...,\chi\}$ and $h_k\in\{\v_k,\phi_k\}$, some of the next excluding options occurs. Either (a) $h_k(\mathscr{U}^{+}_{z})\subset \mathbb{R}$; or (b) $h_k(\mathscr{U}_{z}^{+})\cap \mathbb{R}=\emptyset$. Indeed, since $\mathrm{Im\,} h_k(z)=0$ if and only if $h_k(z)\in \mathbb{R}$, and $\mathrm{Im\,} h_k$ is analytic, either $\mathrm{Im\,} h_k=0$, or $\mathrm{Im\,} h_k$ cannot vanish for sufficiently small  $\mathscr{U}_{z}^+=(0,\o)$.
\par
Next, we pick a point $(\lambda_{0},z_{0})\in \mathfrak{G}^{-1}(0)$ with $z_0\in \mathscr{U}_z$. If $z_{0}=0$, then $(\lambda_{0},z_{0})=(\l_0,0)$ belongs to the trivial branch. If $z_{0}\neq 0$, then
either $z_0\in \mathscr{U}_z^+$, or $z_0\in\mathscr{U}_z^-$. Suppose, without loss of generality, that $z_{0}\in\mathscr{U}^{+}_{z}$. Then, since $p(\l_0,z_0)=0$, there exists $k\in \{1,...,\chi\}$ such that $\lambda_{0}=\v_k(z_0)$. Therefore, every zero $(\lambda_{0},z_{0})$ of  $\mathfrak{G}$ belongs to an analytic curve. Consequently, at least locally, $\mathfrak{G}^{-1}(0)$ consists of local branches of analytic curves. Moreover, since there are, at most,  $\chi$ analytic curves in each direction, from $(0,0)$ might emanate,  at most,  $2\chi+2$ analytic curves: $\chi$ from each direction $\mathscr{U}^{\pm}_{z}$, plus the two half branches of the trivial curve.
\par
When, in addition, $\chi =\deg p$ is an odd integer, then, for every $z\in\mathscr{U}_{z}\backslash\{0\}$, $p(\l,z)$ has a real root. Thus, there exist $k_{1}, k_{2}\in\{1,2,...,\chi\}$ such that $\varphi_{k_{1}}(\mathscr{U}_{z}^{+})\subset \mathbb{R}$ and $\phi_{k_{2}}(\mathscr{U}_{z}^{-})\subset \mathbb{R}$. Consequently, $\varphi_{k_{1}}:\mathscr{U}_{z}^{+}\to \mathbb{R}$ and $\phi_{k_{2}}:\mathscr{U}_{z}^{-}\to \mathbb{R}$ are real analytic functions. Therefore,
$\mf{G}^{-1}(0)$ contains, at least, the two branches of the trivial curve and two real analytic curves emanating from $(\l,0)$ at $\l=0$. In particular, $(0,0)$ is a bifurcation point of $\mf{F}=0$ from $(\l,0)$, in agrement with Theorem  \ref{T6.2.1}.
\par
To conclude this subsection, we present a new method, based on the Sturm theorem, for determining the exact number of branches of analytic curves that can bifurcate from $(0,0)$. We already know that
there are, at most, $2\chi+2$ curves. Namely, at most $\chi$ from $\mathscr{U}_{z}^{+}$, at most $\chi$ from $\mathscr{U}_{z}^{-}$,  and the remaining two are the half branches of the trivial curve $(\l,0)$, which always exist. We will focus our attention into the case when $z \in \mathscr{U}_z^+$, as the case when
$z\in \mathscr{U}_{z}^{-}$ follows the same general patterns.
\par
Given a polynomial $q\in\mathbb{R}[x]$ of degree $n\geq 1$, we define the Sturm chain of the polynomial $q$ as the sequence of polynomials
\begin{equation*}
	q_{0}(x):=q(x), \quad q_{1}(x):=q'(x), \quad q_{i}(x):=-\hbox{rem\,}(q_{i-2}(x),q_{i-1}(x)),
\end{equation*}
for all $i\geq 2$, where  $\hbox{rem\,}(q_{i-2},q_{i-1})$ stands for the remainder of the Euclidean division of $q_{i-2}$ by $q_{i-1}$. The length of the Sturm chain is, at most, $n$, the degree of $q$.
For every  $\xi\in\mathbb{R}\setminus q^{-1}(0)$, let $V(\xi)$ denote the number of sign changes of the Sturm chain  $(q_{0}(\xi),q_{1}(\xi),...,q_{n}(\xi))$ without taking into account zeroes. Then, the Sturm  theorem states that, whenever $a<b$ are not roots of $q$, the number  $V(a)-V(b)$ counts  the distinct real roots of $q$ in the interval $(a,b)$. By the Lyapunov--Schmidt reduction procedure described at the beginning of this section, we already know that, locally at  $(0,0)$,  the solutions of $\mathfrak{F}(\l,u)=0$ are in one-to-one correspondence with the zeroes $(\l,u)\in \mathscr{U}$ of the reduced nonlinear operator
$\mathfrak{G}(\lambda,z)=zc(\lambda,z)p(\l,z)$, $(\l,z)\in\mathscr{U}$, where $p(\l,z)$ is the associated
Weierstrass polynomial. According to our previous analysis,  we can factorize $p(\l,z)$ in $\mathscr{U}_{z}^{+}$ as $p(\lambda,z)=\prod_{j=1}^{\chi}(\lambda-\varphi_{i}(z))$, $z\in \mathscr{U}_{z}^{+}$, for some analytic functions $\varphi_{i}:\mathscr{U}^{+}_{z}\to\mathbb{C}$, $1\leq i \leq \chi$, and we already know that, for every $1\leq i\leq \chi$, either (i) $\varphi_{i}(\mathscr{U}^{+}_{z})\subset \mathbb{R}$, or (ii) $\varphi_{i}(\mathscr{U}_{z}^{+})\cap \mathbb{R}=\emptyset$. Subsequently, we consider the neighborhood
$\mathscr{U}\equiv\mathscr{U}_{\l}\times\mathscr{U}_{z}$, shortened so that $\left(\{\lambda_{\pm}\}\times \mathscr{U}_{z}\right)\cap 	p^{-1}(0)=\left\{(\lambda_{\pm},0)\right\}$ for some $\l_{-},\l_{+}\in\mathscr{U}_{\l}$, $\l_{-}<\l_{+}$. This can be easily accomplished from  the implicit function theorem, because $\mf{L}(\l)\in GL(U,V)$ for $\l\sim 0$, $\l\neq 0$.  For this choice of $\mathscr{U}$ we have that, for every $z\in\mathscr{U}^{+}_{z}$, the number of real roots of the polynomial $p(\lambda,z)$ in the interval $(\lambda_{-},\lambda_{+})$ is constant; it equals the number of $i$'s with
$i\in\{1,...,\chi\}$ for which $\v_i(z)\in\mathbb{R}$. Next, for every
$(\lambda,z)\in\mathscr{U}_{\lambda}\times\mathscr{U}^{\pm}_{z}$, we consider the following Sturm chain of polynomials in $\l$
\begin{align*}
	p_{0}(\lambda,z) &:=p(\lambda,z), \qquad  p_{1}(\lambda,z):=\partial_{\lambda}p(\lambda,z), \\ p_{i}(\lambda,z) & :=-\text{rem}_{\lambda}(p_{i-2}(\lambda,z),p_{i-1}(\lambda,z)) \quad
	\hbox{for all}\;\;  i\geq 2,
\end{align*}
where the $z$ variable is regarded as a label for the polynomial. For each
$\xi\in[\lambda_{-},\lambda_{+}]$, we denote by $V(\xi,z)$ the number of sign changes of the
chain  $(p_{0}(\xi,z),p_{1}(\xi,z),\ldots,p_{\chi}(\xi,z))$. As the number of real roots of $p(\lambda,z)$ in $(\lambda_{-},\lambda_{+})$ is constant for all $z\in\mathscr{U}^{+}_{z}$, by the Sturm theorem, $V(\lambda_{-},z)-V(\lambda_{+},z)$ is constant for all $z\in\mathscr{U}^{+}_{x}$, and
it equals the number of distinct (real) analytical curves emanating from $(0,0)$ in the direction of $\mathscr{U}^{+}_{z}$. Similarly, $V(\lambda_{-},z)-V(\lambda_{+},z)$ is constant for all $x\in\mathscr{U}^{-}_{z}$, and it equals the number of distinct (real) analytical curves emanating from $(0,0)$ in the direction of $\mathscr{U}^{-}_{z}$. Thus, the next result holds.

\begin{theorem}
	\label{T6.4.3}
	Suppose that $\mathfrak{F}\in\mathcal{C}^{\omega}(\mathbb{R}\times U,V)$ satisfies {\rm (F1)--(F3)}. Then, in a sufficiently small neighborhood of $(0,0)\in\mathbb{R}\times U$, the set $\mathfrak{F}^{-1}(0)$ consists of finitely many branches of analytical curves. Precisely, it has
	\begin{equation*}
		\mathcal{N}_{(0,0)}=[V(\lambda_{-},z_{+})-V(\lambda_{+},z_{+})]+
		[V(\lambda_{-},z_{-})-V(\lambda_{+},z_{-})]+2,
	\end{equation*}
	distinct half branches of analytic curve, regardless the values of $z_{\pm}\in\mathscr{U}^{\pm}_{z}$.  In particular, if $\chi$ is an odd integer, then $\mathcal{N}_{(0,0)}\geq 3$, and  $(0,0)$ is a bifurcation point of $\mf{F}(\l,u)=0$ from $(\l,0)$.
\end{theorem}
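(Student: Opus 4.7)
The plan is to combine the Weierstrass reduction of Theorem \ref{T6.4.1} with the real-analytic factorisation recalled in the paragraphs preceding the statement, and then to turn the counting of real branches into a Sturm-sequence calculation performed on the Weierstrass polynomial $p(\lambda,z)$.

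\textbf{Step 1 (Reduction).} By Theorem \ref{T6.4.1}, after shrinking $\mathscr{U}$ so that the unit $c(\lambda,z)$ is nowhere zero, the set $\mathfrak{F}^{-1}(0)$ near $(0,0)$ corresponds bijectively, through the homeomorphism $\Psi^{-1}$ of Theorem \ref{Lyap-Smith}, to the zero set of $z\cdot p(\lambda,z)$ in $\mathscr{U}_{\lambda}\times \mathscr{U}_{z}$. The factor $z=0$ contributes exactly the two half-branches of the trivial curve, accounting for the $+2$ in the formula. It remains to count the distinct analytic half-branches contained in $p^{-1}(0)$ that emanate from $(0,0)$, separating the $z>0$ from the $z<0$ direction.

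\textbf{Step 2 (Real-to-real dichotomy).} As explained before the theorem, complexifying the variable $z$ and using the simple connectedness of $\mathscr{B}_\pm$ yields analytic functions $\varphi_1,\dots,\varphi_\chi:\mathscr{B}_+\to \mathbb{C}$ and $\phi_1,\dots,\phi_\chi:\mathscr{B}_-\to \mathbb{C}$ such that $p(\lambda,z)=\prod_{k=1}^{\chi}(\lambda-\varphi_k(z))$ on $\mathscr{B}_+$ and analogously on $\mathscr{B}_-$. For each $k$, the real-analytic function $z\mapsto \mathrm{Im}\,\varphi_k(z)$ is either identically zero on $\mathscr{U}_z^+$ (in which case $\varphi_k$ gives a real-analytic half-branch) or nowhere zero after shrinking $\mathscr{U}_z^+$ (in which case $\varphi_k$ contributes no real branch); the same dichotomy holds for $\phi_k$ on $\mathscr{U}_z^-$. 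Since $p(\lambda,0)=\lambda^{\chi}$, all roots $\varphi_k(z)$ and $\phi_k(z)$ tend to $0$ as $z\to 0$; hence after shrinking $\mathscr{U}_\lambda$ around $0$, one can pick $\lambda_-<0<\lambda_+$ inside $\mathscr{U}_\lambda$ with $\mathfrak{L}(\lambda_\pm)\in GL(U,V)$ and such that every complex root of $p(\cdot,z)$ lies strictly inside $(\lambda_-,\lambda_+)$ (viewing the real part) for all $z\in\mathscr{U}_z\setminus\{0\}$, while $p(\lambda_\pm,z)\neq 0$ on $\mathscr{U}_z$.

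\textbf{Step 3 (Sturm count).} For each fixed $z\in\mathscr{U}_z\setminus\{0\}$ the polynomial $p(\cdot,z)\in\mathbb{R}[\lambda]$ is a genuine real polynomial, and $p(\lambda_\pm,z)\neq 0$ by the choice above, so the Sturm chain $p_0,p_1,\dots,p_\chi$ is well defined at $\lambda_\pm$. By Sturm's theorem,
\begin{equation*}
V(\lambda_-,z)-V(\lambda_+,z)=\#\{\text{distinct real roots of } p(\cdot,z) \text{ in } (\lambda_-,\lambda_+)\}.
\end{equation*}
By Step 2 this number equals the number of indices $k\in\{1,\dots,\chi\}$ for which $\varphi_k$ is real-valued (if $z\in \mathscr{U}_z^+$), respectively for which $\phi_k$ is real-valued (if $z\in \mathscr{U}_z^-$); in particular it is independent of the chosen $z_\pm\in\mathscr{U}_z^{\pm}$. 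Moreover this number coincides with the number of distinct real-analytic half-branches of $p^{-1}(0)$ leaving $(0,0)$ into the corresponding direction, because two such branches cannot coincide on any interval by the identity principle for analytic functions. Adding the contributions from the two directions together with the $+2$ of Step 1 produces the stated formula for $\mathcal{N}_{(0,0)}$. Finally, when $\chi$ is odd, $p(\cdot,z)\in\mathbb{R}[\lambda]$ is of odd degree and therefore has at least one real root in $(\lambda_-,\lambda_+)$ for every $z\neq 0$ small, so each of the two directional Sturm differences is at least $1$, yielding $\mathcal{N}_{(0,0)}\geq 3$ and, a fortiori, that $(0,0)$ is a bifurcation point of $\mathfrak{F}(\lambda,u)=0$.

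The main technical obstacle is Step 3: one must justify rigorously that the Sturm count is locally constant on each of $\mathscr{U}_z^{\pm}$. The delicate issue is not the analytic dichotomy itself, but the need to ensure simultaneously that (i) $\lambda_\pm$ are uniformly separated from every complex root $\varphi_k(z)$ in real part as $z\to 0$, (ii) all Sturm iterates $p_i(\lambda_\pm,z)$ are non-zero so that $V(\lambda_\pm,z)$ is well defined, and (iii) no root crosses $\lambda_\pm$ as $z$ varies. All three are achieved by shrinking $\mathscr{U}_z$ once more using analyticity and the fact that $\varphi_k(0)=\phi_k(0)=0$, but the bookkeeping requires care.
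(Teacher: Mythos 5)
Your proof follows the same route as the paper: the Lyapunov--Schmidt/Weierstrass reduction of Theorem~\ref{T6.4.1} to the equation $z\,p(\lambda,z)=0$, the real-to-real dichotomy obtained from the factorisations of $p$ on $\mathscr{B}_\pm$, and a Sturm chain in $\lambda$ to count distinct real roots of $p(\cdot,z)$ in a fixed window $(\lambda_-,\lambda_+)$ with $p(\lambda_\pm,z)\neq0$ for $z\neq0$. One minor overclaim worth noting: the assertion that two real half-branches ``cannot coincide on any interval by the identity principle'' is not quite right, since the Weierstrass polynomial need not be squarefree and a repeated irreducible factor makes two of the $\varphi_k$ coincide identically; this is harmless, though, because Sturm's theorem counts \emph{distinct} real roots, which automatically matches the number of distinct half-branches, and likewise your requirement (ii) that \emph{all} Sturm iterates be nonzero at $\lambda_\pm$ is stronger than needed --- only $p(\lambda_\pm,z)\neq0$ is required, the intermediate zeroes being discarded in the definition of $V$.
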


As Theorem \ref{T6.4.3} counts the exact number of analytic branches of $\mf{F}^{-1}(0)$,
it provides us with a substantial improvement of the pioneering results of
Kielh\"{o}fer \cite{Ki}, where it was established that $\mf{F}^{-1}(0)$ possesses at most
$2\chi+2$ analytic branches emanating from $(0,0)$.

\subsection{Kielh\"ofer's result}

We conclude this section by getting  a result of Kielh\"ofer \cite{Ki} that will be useful for analyzing the example of Section \ref{SDODP}. Throughout this subsection we work with a pair $(U,V)$ of real Banach spaces with continuous inclusion $U\subset V$ and an analytic map $\mf{F}:\R\times U\to V$ satisfying  (F1)--(F3) and, in addition,
\vspace{0.2cm}
\begin{enumerate}
	\item[(F4)] $\l_{0}\in\Sigma(\mf{L})$ is an simple eigenvalue of $\mf{L}_{0}$, i.e., $V=N[\mf{L}_{0}]\oplus R[\mf{L}_{0}]$.
\end{enumerate}
\vspace{0.2cm}
Subsequently,  we suppose, without loss of generality, that $\l_{0}=0$. By  (F4), it follows from the Hahn--Banach theorem that there exists $\varphi_{0}^{\ast}\in V^{\ast}$ such that $\langle \varphi_{0}, \varphi_{0}^{\ast}\rangle=1$ and
$$
  R[\mf{L}_{0}]=\{v\in V: \langle v, \varphi_{0}^{\ast}\rangle=0\},
$$
where $\langle \cdot,\cdot\rangle: V\times V^{\ast} \to \R$ is the duality pairing on $V$. By \cite[Le. 4.4.1]{LGMC}, the zero eigenvalue of $\mf{L}_0$ perturbs into a unique eigenvalue $\mu(\l)$ of
$\mf{L}(\l)$. Precisely, there exist $\delta>0$ and two
(unique) analytic functions $\mu \in \mc{C}^{\o} (-\delta, \delta)$ and $\varphi \in \mc{C}^{\o}
((-\delta,\delta), U)$ such that $\mu(0)=0$, $\varphi(0)=\varphi_0$, $\varphi(\l)-\varphi_0\in R[\mf{L}_{0}]$, and $\mf{L}(\l) \varphi(\l) = \mu(\l) \varphi(\l)$ for all $\l\in (-\delta, \delta)$. Now, consider the pair of projections $(P,Q)$ defined by $P: V\to N[\mf{L}_{0}]$, $P(v):=\langle v, \varphi_{0}^{\ast}\rangle \varphi_{0}$, $Q: V \to R[\mf{L}_{0}]$, $Q:= I_{V}-P$. By the theory of Sections \ref{SectionALS} and \ref{SSLBA}, performing a Lyapunov--Schmidt reduction with pair $(P,Q)$, it is  apparent that
the zeroes of $\mf{F}$ near $(0,0)$, say in the neighbourhood $\mc{O}$, are in one-to-one correspondence with the zeroes of the finite dimensional analytic map $\mathfrak{G}: \O\subset\mathbb{R}\times \R \longrightarrow \mathbb{R}$, defined by
\begin{equation}
	\label{FDAM}
	\mf{G}(\lambda,z):=S(I_{V}-Q)\mathfrak{F}(\lambda,z\varphi_{0}+\mathcal{Y}(\lambda,z\varphi_{0})), \quad (\l,z)\in \O,
\end{equation}
where $S$ is the linear isomorphism given by
$S: N[\mf{L}_{0}]\to\R$, $S(z\varphi_{0})=z$.
By the definition of the projections $(P,Q)$, $\mf{G}(\lambda,z)=\langle\mathfrak{F}(\lambda,z\varphi_{0}+\mathcal{Y}
(\lambda,z\varphi_{0})),\varphi_{0}^{\ast}\rangle$. Moreover, according to the analysis of Section \ref{SSLBA}, there exists an analytic function $g:\O\to\R$ such that $\mf{G}(\l,z)=z g(\l,z)$.
By Lemma \ref{Lemma12.1.2}, $\chi\equiv \chi[\mf{L},0]=\ord_{\l=0}g(\l,0)$. Thus, $g:\O\to\mathbb{R}$ has the expansion
\begin{equation}
	\label{iv.2}
	g(\lambda,z)=\sum_{\nu=0}^{s}C_{\nu}\lambda^{j_{\nu}}z^{\ell_{\nu}}+\sum_{j,k}C_{j,k}\lambda^{j}z^{k},
\end{equation}
where $(\ell_{0},j_{0})=(0,\chi)$, $\chi>j_{1}>\cdots>j_{s}$, $0<\ell_{1}<\cdots<\ell_{s}$, and the summation of the second sum is taken only on the points $(k,j)$ lying above the polygonal line joining $(0,\chi)$, $(\ell_1,j_1)$, $\cdots$, $(\ell_{s},j_{s})$, or on the line $j=j_{s}$. The polygonal line joining the points $(0,\chi)$, $(\ell_{1},j_{1})$, $\cdots$, $(\ell_{s},j_{s})$ is usually called the \textit{Newton's polygon} of $g$. This shows the validity of Lemma 5.4 of \cite{Ki}, which can be stated as follows.

\begin{theorem}
	\label{th4.2}
	Let $\mathfrak{F}:\mathbb{R}\times U\to V$ be an analytic map satisfying {\rm (F1)--(F4)} with $\chi[\mathfrak{L},0]=\chi\geq 1$ and having the expansion  	
	\begin{equation}
		\label{iv.3}	 \mathfrak{F}(\lambda,u)=\mathfrak{L}(\lambda)u+\sum_{\nu=1}^{s}\lambda^{j_{\nu}}L^{j_{\nu}}_{\ell_{\nu}+1}u^{\ell_{\nu}+1}
		+\sum_{j,k}\lambda^{j}L^{j}_{k+1}u^{k+1},
	\end{equation}
	where $\chi>j_{1}>\cdots>j_{s}$,  $0<\ell_{1}<\cdots<\ell_{s}$, and the summation of the second sum is taken on the points $(k,j)$ lying above the polygonal line joining $(0,\chi)$, $(\ell_1,j_1)$, $\cdots$, $(\ell_{s},j_{s})$, or on the line $j=j_{s}$. If, in addition, $H_{\nu}:=\langle L^{j_{\nu}}_{\ell_{\nu}+1}\varphi_{0}^{\ell_{\nu}+1},\varphi_{0}^{\ast}\rangle \neq 0$ for all
$\nu=1,\cdots,s$, then the Newton polygon associated to the reduced map $g:\O\to\mathbb{R}$ defined by  \eqref{iv.2}  is given by the polygonal line joining the points $(0,\chi)$, $(\ell_1,j_1)$, $\cdots$, $(\ell_{s},j_{s})$. Furthermore, the corresponding coefficients are given by $
C_{0}=\frac{\mu^{(\chi)}(0)}{\chi!}$, $C_{\nu}=H_{\nu}$, $\nu=1,\cdots,s$.
\end{theorem}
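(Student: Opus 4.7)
My plan is to insert the expansion \eqref{iv.3} of $\mathfrak{F}$ into the Lyapunov--Schmidt reduced map obtained with the simple--eigenvalue projections $P(v)=\langle v,\varphi_0^{\ast}\rangle\varphi_0$ and $Q=I_V-P$, namely
\[
\mathfrak{G}(\lambda,z)=\langle\mathfrak{F}(\lambda,z\varphi_0+\mathcal{Y}(\lambda,z\varphi_0)),\varphi_0^{\ast}\rangle=z\,g(\lambda,z),
\]
and to identify the Taylor coefficients of $g(\lambda,z)$ in \eqref{iv.2} term by term. Writing $\mathcal{Y}(\lambda,z\varphi_0)=\sum_{m\geq 1}z^m y_m(\lambda)$ with $y_m(\lambda)\in R[\mathfrak{L}_0]$, the implicit function theorem identity $Q\mathfrak{L}(\lambda)(\varphi_0+y_1(\lambda))=0$ combined with $\mathfrak{L}_0\varphi_0=0$ forces $y_1(0)=0$ and hence $y_1(\lambda)=O(\lambda)$, so that $u(\lambda,z):=z\varphi_0+\mathcal{Y}(\lambda,z\varphi_0)=z\varphi_0+O(z\lambda)+O(z^2)$, and by analyticity $u(\lambda,z)^{k+1}=z^{k+1}\varphi_0^{k+1}+\text{higher-type terms in }(z,\lambda)$.

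For each $\nu\in\{1,\ldots,s\}$ I would then read $C_\nu=H_\nu$ off the leading contribution of the monomial $\lambda^{j_\nu}L^{j_\nu}_{\ell_\nu+1}u^{\ell_\nu+1}$, which equals $\lambda^{j_\nu}z^{\ell_\nu+1}H_\nu$ plus strictly higher-type terms. Three remaining checks would guarantee that no other piece of \eqref{iv.3} contributes to the vertex $(\ell_\nu,j_\nu)$: \emph{(i)} the $\mathcal{Y}$-corrections inside any given $\lambda^j L^j_{k+1}u^{k+1}$ raise the index in $z$ or in $\lambda$ by at least one unit, hence move the contribution strictly above the polygonal line (which is strictly decreasing in the $(k,j)$-plane); \emph{(ii)} each monomial of the second sum of \eqref{iv.3} with $(k,j)$ strictly above the polygon yields contributions with indices $(k',j')$ satisfying $k'\geq k$, $j'\geq j$, hence still above; \emph{(iii)} the linear piece $\mathfrak{L}(\lambda)u$ contributes at $z^m$ (with $m\geq 1$) terms $\langle\mathfrak{L}(\lambda)y_m(\lambda),\varphi_0^{\ast}\rangle$, for which the identity $\langle\mathfrak{L}_0 y_m(\lambda),\varphi_0^{\ast}\rangle=0$ (because $\mathfrak{L}_0 y_m(\lambda)\in R[\mathfrak{L}_0]$) forces at least one extra power of $\lambda$, pushing these above the polygon except for the term at $m=1$, which is responsible for the vertex $(0,\chi)$.

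The main obstacle will be the identification of $C_0$ with $\mu^{(\chi)}(0)/\chi!$, since this requires bridging the reduced map and the perturbed simple eigenvalue. Differentiating $\mathfrak{G}=zg$ once in $z$ at $z=0$ gives
\[
g(\lambda,0)=\langle\mathfrak{L}(\lambda)(\varphi_0+y_1(\lambda)),\varphi_0^{\ast}\rangle,\qquad y_1(\lambda)=-[Q\mathfrak{L}(\lambda)|_{R[\mathfrak{L}_0]}]^{-1}Q\mathfrak{L}(\lambda)\varphi_0.
\]
By \cite[Le.~4.4.1]{LGMC} the unique analytic eigenpair $(\mu(\lambda),\varphi(\lambda)=\varphi_0+\tilde y(\lambda))$ with $\tilde y(\lambda)\in R[\mathfrak{L}_0]$ satisfies $(Q\mathfrak{L}(\lambda)|_{R[\mathfrak{L}_0]}-\mu(\lambda)I)\tilde y(\lambda)=-Q\mathfrak{L}(\lambda)\varphi_0$, and the classical simple eigenvalue theory combined with Lemma \ref{Lemma12.1.2} yields $\mu(\lambda)=O(\lambda^\chi)$. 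A Neumann expansion comparing the two linear equations for $y_1(\lambda)$ and $\tilde y(\lambda)$, together with $Q\mathfrak{L}(\lambda)\varphi_0=O(\lambda)$, delivers
\[
y_1(\lambda)-\tilde y(\lambda)=\mu(\lambda)\,[Q\mathfrak{L}(\lambda)|_{R[\mathfrak{L}_0]}]^{-1}(Q\mathfrak{L}(\lambda)-\mu(\lambda)I)^{-1}Q\mathfrak{L}(\lambda)\varphi_0=O(\lambda\,\mu(\lambda))=O(\lambda^{\chi+1}).
\]
Applying $\langle\cdot,\varphi_0^{\ast}\rangle$ to $\mathfrak{L}(\lambda)\varphi(\lambda)=\mu(\lambda)\varphi(\lambda)$ and using $\langle\varphi(\lambda),\varphi_0^{\ast}\rangle=1$ (since $\tilde y(\lambda)\in R[\mathfrak{L}_0]$) yields $\langle\mathfrak{L}(\lambda)(\varphi_0+\tilde y(\lambda)),\varphi_0^{\ast}\rangle=\mu(\lambda)$. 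Combining these two identities I would obtain
\[
g(\lambda,0)=\mu(\lambda)+\langle\mathfrak{L}(\lambda)(y_1(\lambda)-\tilde y(\lambda)),\varphi_0^{\ast}\rangle=\mu(\lambda)+O(\lambda^{\chi+1})=\frac{\mu^{(\chi)}(0)}{\chi!}\lambda^\chi+O(\lambda^{\chi+1}),
\]
which forces $C_0=\mu^{(\chi)}(0)/\chi!$ and completes the proof.
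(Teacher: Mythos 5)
The paper does not actually prove Theorem~\ref{th4.2}: after setting up the Lyapunov--Schmidt reduction, establishing $\mathfrak{G}=zg$ and the order identity $\chi=\ord_{\lambda=0}g(\lambda,0)$ via Lemma~\ref{Lemma12.1.2}, it simply asserts ``this shows the validity of Lemma~5.4 of \cite{Ki}'' and refers the reader to Kielh\"ofer for the detailed identification of the Newton polygon and the coefficients $C_0,C_1,\ldots,C_s$. Your attempt to supply a direct proof therefore fills a gap the paper leaves open, and the overall strategy --- expand $\mathcal{Y}(\lambda,z\varphi_0)=\sum_{m\geq1}z^m y_m(\lambda)$, read off the vertices from the pure terms $\lambda^{j_\nu}z^{\ell_\nu+1}H_\nu$, and pin down $C_0$ by comparing $y_1(\lambda)$ with the perturbed eigenvector $\tilde y(\lambda)$ --- is exactly the right one. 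The identification of $C_0$ in your last paragraph is essentially complete: the subtraction $(Q\mathfrak{L}(\lambda))(y_1-\tilde y)=-\mu(\lambda)\tilde y$ together with $\tilde y=O(\lambda)$ already gives $g(\lambda,0)=\mu(\lambda)(1+O(\lambda))$ without first invoking $\mu=O(\lambda^\chi)$, which makes the logic cleaner (you cite $\mu=O(\lambda^\chi)$ as an input, which is fine if you lean on \cite[Th.~4.3.3]{LGMC}, but it is not needed for the intermediate estimate).

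The genuine gap is in your check~(iii). You claim that for $m\geq2$ the contribution $\langle\mathfrak{L}(\lambda)y_m(\lambda),\varphi_0^\ast\rangle$ to the $z^{m-1}$-coefficient of $g$ lands strictly above the polygon because the annihilation $\langle\mathfrak{L}_0 y_m,\varphi_0^\ast\rangle=0$ ``forces at least one extra power of $\lambda$''. That yields only a $\lambda$-order $\geq 1+\ord_\lambda y_m$, and you never control $\ord_\lambda y_m$. For $1\leq m-1<\ell_1$ the polygon value $p(m-1)$ can be arbitrarily large (it exceeds $j_1$), so a bare ``$+1$'' is not enough; what is actually true, and what your argument needs, is that $\ord_\lambda y_m$ is itself at least the polygon height at $\ell=m-1$ minus one, and this must be established by an induction in $m$ using the hypothesis that the contributing indices $(k,j)$ of the second sum in~\eqref{iv.3} all lie above the polygon. (For instance, if $\chi=3$, $(\ell_1,j_1)=(2,1)$, then $y_2$ satisfies $Q\mathfrak{L}(\lambda)y_2=-\sum_{j>2}\lambda^j Q L^j_2\varphi_0^2+\cdots$, so $y_2=O(\lambda^3)$, and $\langle\mathfrak{L}(\lambda)y_2,\varphi_0^\ast\rangle=O(\lambda^4)$, which does clear $p(1)=2$ --- but only because $\ord_\lambda y_2=3$, not because of the bare ``$+1$''.) Without this induction, the key claim that the linear term $\mathfrak{L}(\lambda)u$ does not pollute the vertices of the polygon is unsupported. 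The remaining checks~(i) and~(ii), by contrast, are sound: they rest only on $\ord_\lambda y_1\geq1$ (which you do establish) and on the fact that the region strictly above a decreasing polygonal line is stable under $(k,j)\mapsto(k',j')$ with $k'\geq k$, $j'\geq j$.
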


\noindent Consequently, under the hypothesis of Theorem \ref{th4.2}, the zero set $\mf{F}^{-1}(0)$ locally at  $(0,0)\in\R\times U$ is in one-to-one correspondence with the zeroes of the finite dimensional analytic map
$\mf{G}=\mf{G}(\l,z)$, given by
$$
\mf{G}(\l,z)=z\Big[\frac{\mu^{(\chi)}(0)}{\chi!}\l^{\chi}+\sum_{\nu=1}^{s}\langle L^{j_{\nu}}_{\ell_{\nu}+1}\varphi_{0}^{\ell_{\nu}+1},\varphi_{0}^{\ast}\rangle \l^{j_{\nu}} z^{\ell_{\nu}}+\text{higher order terms (hot)}\Big].
$$

\section{Analytic global alternative}\label{SAGA}

\noindent The aim of this section is to sharpen the global bifurcation theorems for analytic nonlinearities of Dancer \cite{Da,Da73,Da732} and Buffoni and Tolland \cite{BT}. These results where originally stated  for the special case of $1$-transversal eigenvalues, where the theorem of Crandall and Rabinowitz \cite{CR} applies. Our main goal is to generalize these findings up to cover the degenerate case when $\chi[\mf{L},\l_{0}]\geq 2$. This is imperative in many applications where $\chi[\mf{L},\l_{0}]\neq 1$, as the one given in Section \ref{SDODP}. Throughout this section, given a pair $(U,V)$ of real Banach spaces, we consider a map $\mathfrak{F}\in\mathcal{C}^{\omega}(\mathbb{R}\times U,V)$ satisfying the following properties:
\begin{enumerate}
	\item[(F1)] $\mathfrak{F}(\lambda,0)=0$ for all $\lambda\in\mathbb{R}$.
	\item[(F2)] $D_{u}\mathfrak{F}(\lambda,u)\in\Phi_{0}(U,V)$ for all
	$(\lambda,u)\in\mathbb{R}\times U$.
	\item[(F3)] $\l_{0}\in\Sigma(\mf{L})$ is an isolated eigenvalue such that $N[\mf{L}_{0}]=\mathrm{span}[\varphi_{0}]$ for some $\varphi_0\in U\backslash\{0\}$.
	\item[(F4)] $\mf{F}$ is proper on closed and bounded subsets of $\R\times U$.
\end{enumerate}
Given an analytic nonlinearity $\mf{F}\in\mc{C}^{\o}(\mathbb{R}\times U,V)$ satisfying conditions {\rm (F1)--(F4)}, it is said that $(\lambda,u)\in\mathfrak{F}^{-1}(0)$ is a \emph{regular point} if $D_{u}\mathfrak{F}(\lambda,u)\in GL(U,V)$. In the contrary case when $D_{u}\mathfrak{F}(\lambda,u)\notin GL(U,V)$ is called \emph{singular}. The set of regular points of $\mf{F}$ will be denoted by $\mc{R}(\mf{F})$. According to the exchange stability principle (see, e.g., \cite{CRex} and \cite[Th. 2.4.2]{LG01}), when $\chi[\mf{L},\l_{0}]=1$, in a neighborhood of $(0,0)$ all nontrivial solutions  of an analytic operator are regular points, unless the bifurcation is vertical.  Subsequently, we will give some general sufficient conditions so that, locally at $(0,0)$, the bifurcated solutions are regular points of $\mf{F}^{-1}(0)$ even in the degenerate case when $\chi[\mf{L},\l_{0}]\geq 2$.
\par
By Theorem \ref{T6.4.1}, we already know that, locally in a neighbourhood $\mathscr{U}$ of $(0,0)$, the solutions of $\mathfrak{F}(\lambda,u)=0$ are in one-to-one correspondence with the zeroes of  the reduced map $\mathfrak{G}(\lambda,z)=zc(\lambda,z)p(\l,z)$, $(\l,z)\in \mathscr{U}$, where $p(\l,z)$ is the associated Weierstrass polynomial. Moreover, by Theorem \ref{Lyap-Smith}, there is a local bijection between
the regular points of $\mf{F}^{-1}(0)$ and those of $\mathfrak{G}^{-1}(0)$. Obviously, the regular points of $\mathfrak{G}^{-1}(0)$ are characterized through the condition $D_{z}\mathfrak{G}(\lambda,z)\neq 0$.
By the analysis already done in Section \ref{Sect. Real}, in $\mathscr{U}_{z}^{\pm}$ one can factorize $\mf{G}(\l,z)$ in the form $\mathfrak{G}(\lambda,z)=zc(\lambda,z)\prod_{j=1}^{\chi}(\lambda-\varphi_{i}(z))$,
$z\in \mathscr{U}_{z}^{\pm}$ for some analytic functions $\varphi_{i}:\mathscr{U}^{\pm}_{z}\to\mathbb{C}$, $1\leq i \leq \chi$. Thus, differentiating $\mf{G}(\l,z)$ with respect to $z$, we find that
\begin{align*}
	D_{z}\mathfrak{G}(\lambda,z)= & [c(\lambda,z)+zD_{z}c(\lambda,z)]p(\l,z) -zc(\lambda,z)\sum_{i=1}^{\chi} \frac{d\varphi_{i}}{dz}(z)\prod_{j\neq i}(\lambda-\varphi_{j}(z)).
\end{align*}
Hence, a given local analytic branch $\lambda=\varphi_{k}(z)$, $k\in\{1,...,\chi\}$,  of zeroes of $\mathfrak{G}(\lambda,z)$ consists of singular points if and only if
$ D_{z}\mathfrak{G}(\varphi_{k}(z),z)=0$, or, equivalently, if
\begin{equation*}
	0 = \sum_{i=1}^{\chi}\frac{d\varphi_{i}}{dz}(z)\prod_{j\neq i} (\varphi_{k}(z)-\varphi_{j}(z))=\frac{d\varphi_{k}}{dz}(z)\prod_{j\neq k}(\varphi_{k}(z)-\varphi_{j}(z)).
\end{equation*}
Therefore, the next result holds.

\begin{theorem}
	\label{T6.4.4}
	Let $\lambda=\varphi_{k}(z)$, $z\in\mathscr{U}_{z}^{\pm}$, be a local analytic branch of zeroes of $\mathfrak{G}(\lambda,z)$ bifurcating from $(0,0)$. Then, either this branch consists of
	regular solutions, or it consists of singular solutions. Moreover, it consists of
	singular solutions if and only if some of the following conditions occurs:
	\begin{enumerate}
		\item[{\rm (a)}] $\varphi_{k}:\mathscr{U}^{\pm}_{z}\to\mathbb{C}$ is constant, i.e., there is $\varphi_{0}\in\mathbb{C}$ such that  $\varphi_{k}(z)=\varphi_{0}$ for all $z\in\mathscr{U}^{\pm}_{z}$.
		\item[{\rm (b)}] There exists another $j\in\{1,...,\chi\}$, $j\neq k$,  such that $\varphi_{k}=\varphi_{j}$ in $\mathscr{U}_z^\pm$.
	\end{enumerate}
\end{theorem}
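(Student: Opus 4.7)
The plan is to exploit the explicit formula for $D_{z}\mathfrak{G}(\varphi_{k}(z),z)$ derived just before the statement, combined with the identity theorem for analytic functions on the connected set $\mathscr{U}_z^{\pm}$. First I would observe that, by the Weierstrass decomposition of Theorem~\ref{T6.4.1}, $c(0,0)\neq 0$, so after a harmless shrinking of $\mathscr{U}$ we may assume $c(\lambda,z)\neq 0$ throughout; and by definition $z\neq 0$ on $\mathscr{U}_z^{\pm}$. Substituting $\lambda=\varphi_{k}(z)$ into the formula proved above the statement then gives
\[
 D_{z}\mathfrak{G}(\varphi_{k}(z),z)=-\,z\,c(\varphi_{k}(z),z)\,\frac{d\varphi_{k}}{dz}(z)\prod_{j\neq k}\bigl(\varphi_{k}(z)-\varphi_{j}(z)\bigr),
\]
since every contribution with $i\neq k$ to $D_{z}p(\varphi_{k}(z),z)$ is killed by a factor $(\varphi_{k}-\varphi_{k})=0$. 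Using the bijection between regular points of $\mathfrak{G}^{-1}(0)$ and those of $\mathfrak{F}^{-1}(0)$ supplied by Theorem~\ref{Lyap-Smith}, singularity of the branch at $z$ reduces to the vanishing of the single analytic function
\[
 F(z):=\frac{d\varphi_{k}}{dz}(z)\prod_{j\neq k}\bigl(\varphi_{k}(z)-\varphi_{j}(z)\bigr).
\]

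Next I would establish the ``all regular or all singular'' dichotomy by invoking the identity theorem. As $F$ is analytic and $\mathscr{U}_z^{\pm}$ is connected, either $F\equiv 0$ on $\mathscr{U}_z^{\pm}$ or the zero set $F^{-1}(0)\cap \mathscr{U}_z^{\pm}$ is discrete; in the latter case, shrinking $\mathscr{U}_z^{\pm}$ so as to exclude these isolated zeros (admissible, since the whole analysis is purely local at $(0,0)$) converts the branch $\lambda=\varphi_{k}(z)$ into a branch of regular solutions of $\mathfrak{F}^{-1}(0)$. In the former case, by Theorem~\ref{Lyap-Smith}, every point of the branch is singular.

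Finally, to match $F\equiv 0$ with one of the conditions (a), (b), I would appeal to the fact that the ring of $\mathbb{C}$-valued analytic functions on the connected set $\mathscr{U}_z^{\pm}$ (equivalently, by analytic continuation, the ring of holomorphic functions on the simply connected domain $\mathscr{B}_{\pm}$) is an integral domain. Hence $F\equiv 0$ forces some factor to vanish identically: either $\frac{d\varphi_{k}}{dz}\equiv 0$, so $\varphi_{k}$ is constant on $\mathscr{U}_z^{\pm}$ and (a) holds; or, for some $j\neq k$, $\varphi_{k}\equiv\varphi_{j}$ on $\mathscr{U}_z^{\pm}$ and (b) holds. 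The converse direction, that (a) or (b) implies $F\equiv 0$, is immediate. The only marginally non-routine step is this final appeal to the integral-domain property, which is precisely what rules out intermediate behaviours where $F$ could vanish on a non-trivial subset without one of the explicit alternatives (a), (b) being triggered; the remainder of the argument is bookkeeping on top of the pre-existing computation and the identity theorem.
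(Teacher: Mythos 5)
Your proposal is correct and follows essentially the same route as the paper: you compute $D_{z}\mathfrak{G}(\varphi_{k}(z),z)$, observe that the first Weierstrass factor drops out because $p(\varphi_{k}(z),z)=0$ and that the sum collapses to its $i=k$ term, reduce singularity to the vanishing of $\tfrac{d\varphi_{k}}{dz}\prod_{j\neq k}(\varphi_{k}-\varphi_{j})$, and then invoke the identity theorem together with the integral-domain property of the ring of analytic functions on the connected set $\mathscr{U}_{z}^{\pm}$ to obtain the ``all regular or all singular'' dichotomy and the characterization (a)/(b). The paper leaves the last two steps tacit (the text before the statement ends with ``Therefore, the next result holds''), so your explicit appeal to analyticity and factor-wise vanishing is a welcome sharpening rather than a departure. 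The only point worth flagging, which the paper also glosses over, is that the shrinking argument ruling out isolated singular points needs the isolated zeros of $F$ not to accumulate at $z=0$; this is guaranteed here because the branches $\varphi_{i}$ arise from a Weierstrass polynomial and therefore admit Puiseux expansions at $z=0$, so $F$ is either identically zero or nonvanishing on a sufficiently small punctured interval.
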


As a byproduct of Theorem \ref{T6.4.4}, when $p(\lambda,z)$ is irreducible on $\mathcal{M}(\mathscr{U})[\lambda]$, neither it can admit multiple roots in the splitting field, nor any constant function can be a root of $p(\lambda,z)$. So,  all bifurcating branches from $(0,0)$ consist of regular points. The next result sharpens, very substantially,  Theorem 9.1.1 of \cite{BT} up to cover the  more general case when $\chi\geq 1$, where the main result of Crandall and Rabinowitz \cite{CR} fails.

\begin{theorem}
	\label{th.BT}
	Let $\mathfrak{F}\in\mathcal{C}^{\omega}(\mathbb{R}\times U,V)$ be an analytic map satisfying {\rm (F1)--(F4)}. Suppose that there exists an analytic injective curve $\gamma:(0,1)\to \R\times U$ such that  $\gamma(0,1)\subset \mf{F}^{-1}(0)\backslash \mc{T}$, $\lim_{t\da 0}\gamma(t)=(0,0)$, and $\gamma(0,1)\subset \mc{R}(\mf{F})$. Then, there exists a locally injective continuous path
$\G: [0,\infty)\to \R\times U$, with $\Gamma([0,\infty))\subset \mf{F}^{-1}(0)$,  for which there exists $0<\delta<1$ satisfying $\Gamma(t)=\gamma(t)$ for all $t\in (0,\delta)$. Moreover, $\Gamma$ satisfies one of the following non-excluding alternatives:
	\begin{enumerate}
		\item[{\rm (a)}]   $\lim_{t\ua \infty} \|\Gamma(t)\|_{\mathbb{R}\times U}= +\infty$.
		\item[{\rm (b)}]   $\Gamma$ is a closed loop, i.e., there exists $T>0$ such that $\Gamma(T)=(0,0)$.
	\end{enumerate}
	In particular, when the associated Weierstrass polynomial, $p(\lambda,z)$, is irreducible, every analytical curve of $\mathfrak{F}^{-1}(0)$ emanating from $(0,0)$ can be extended to a locally injective continuous path $\G: [0,+\infty)\to \mathbb{R}\times U$ on $\mathfrak{F}^{-1}(0)$ satisfying some of these alternatives.
\end{theorem}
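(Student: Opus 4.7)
The plan is to construct $\Gamma$ by maximal analytic continuation of $\gamma$ through the set of regular zeros, and then use properness together with the local Weierstrass analysis of Section \ref{SLBA} to resolve the terminal behaviour.

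First, at every regular zero $(\lambda,u)\in\mc{R}(\mf{F})\cap \mf{F}^{-1}(0)$, the analytic implicit function theorem presents $\mf{F}^{-1}(0)$ locally as a one-dimensional analytic submanifold. Hence $\gamma$ extends uniquely to a maximal analytic injective curve $\Gamma_{0}:[0,T^{*})\to\mf{F}^{-1}(0)$ whose image for $t>0$ consists of regular zeros and which agrees with $\gamma$ on some $(0,\delta)$. If $T^{*}=\infty$ and $\|\Gamma_{0}(t)\|\to\infty$, then $\Gamma:=\Gamma_{0}$ yields alternative (a). Otherwise, after parametrising by arc-length, (F4) forces the image of $\Gamma_{0}$ to cluster at some $(\lambda_{*},u_{*})\in \mf{F}^{-1}(0)$ as $t\uparrow T^{*}$; maximality of $T^{*}$ together with the uniqueness of analytic continuation at regular points forces $(\lambda_{*},u_{*})\notin \mc{R}(\mf{F})$.

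When $(\lambda_{*},u_{*})=(0,0)$, alternative (b) is established and the construction halts. When $(\lambda_{*},u_{*})$ is a nontrivial singular zero, I perform a Lyapunov--Schmidt reduction at $(\lambda_{*},u_{*})$ and invoke Theorem \ref{T6.4.1}: the zero set near $(\lambda_{*},u_{*})$ is the union of the local trivial branch and finitely many real analytic arcs indexed by the real-to-real irreducible factors of the associated Weierstrass polynomial $p_{*}$, as analysed in Section \ref{SLBA}. The tail of $\Gamma_{0}$ sits on one of these arcs and I concatenate $\Gamma_{0}$ with an outgoing real analytic arc --- the continuation of the same irreducible component through the singularity when it is smooth there, any other real-to-real component otherwise --- reparametrise by arc-length and iterate. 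Because the singular locus of a one-dimensional analytic variety is zero-dimensional, the iteration cannot accumulate onto a Cantor-type set of singularities in bounded parameter-time, so the concatenated path is defined on all of $[0,\infty)$ and is continuous and locally injective by construction.

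The main obstacle is the choice of outgoing arc at a nontrivial singular limit where $p_{*}$ factors nontrivially: the continuation is not canonical, and one must verify that any consistent choice produces a globally locally injective path satisfying (a) or (b). For the stronger last statement of the theorem, irreducibility of the Weierstrass polynomial $p(\lambda,z)$ at $(0,0)$ combined with Theorem \ref{T6.4.4} guarantees that every analytic branch emanating from $(0,0)$ consists entirely of regular points of $\mf{F}$; hence the curve $\gamma$ may be taken to be any such branch, and at each subsequent nontrivial singular cluster point the Weierstrass dichotomy provides at least one outgoing real analytic continuation, so that the iterative construction of $\Gamma$ always proceeds. A secondary issue is ensuring that the reparametrised path is defined on all of $[0,\infty)$; this follows from the lower bound on the arc-length of each analytic piece supplied by the Lyapunov--Schmidt reduction at every visited singularity, excluding finite-time breakdown of the iteration.
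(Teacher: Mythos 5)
Your construction follows the same route as the paper (both ultimately track Buffoni--Toland \cite[Th. 9.1.1]{BT}): maximal analytic continuation through regular zeros, properness forcing accumulation, and a local Weierstrass analysis at singular vertices. The genuine gap, which you notice but do not close, lies at the heart of the argument: the choice of outgoing arc at each singular vertex. Buffoni and Toland's construction of the \emph{maximal route} (Step 3, pp. 118--119 of \cite{BT}) is precisely a bookkeeping rule for this choice, engineered so that the resulting path never retraces any analytic arc and so that a bounded route is \emph{forced} to close up at the starting vertex $(0,0)$.

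Nothing in your argument excludes the possibility that the concatenated path oscillates indefinitely among finitely many singular vertices inside a bounded region without ever returning to $(0,0)$; in that hypothetical situation $\Gamma$ would be defined on $[0,\infty)$ yet satisfy neither (a) nor (b). Your appeals to the discreteness of the singular locus of a one-dimensional analytic variety and to ``arc-length lower bounds supplied by Lyapunov--Schmidt'' secure, at most, that $\Gamma$ is defined on all of $[0,\infty)$; the Lyapunov--Schmidt reduction does not produce uniform lower bounds on arc lengths, and in any case that is not the missing ingredient. The dichotomy (a)/(b) itself is exactly what the maximal-route rule delivers (combined with the observation that, by (F4), a bounded route lies in a compact subset of $\mathfrak{F}^{-1}(0)$ and its accumulation points are singular). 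This is the step the paper's proof delegates to \cite[pp. 118--119]{BT} and that your sketch leaves open.
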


\begin{proof}
As the proof of this result follows,  almost \emph{mutatis mutandis}, the proof of \cite[Th. 9.1.1]{BT},
we simply sketch it. It relies on the theorem of structure of analytic manifolds \cite[Th. 7.4.7]{BT}.
According to it, either $\mf{F}^{-1}(0)$ consists of singular points, or its set of singular points is discrete. As $\g$ is a local branch consisting of regular points, necessarily the set of singular
points of $\mf{F}^{-1}(0)$ is discrete. On the other hand, by complexifying $U$, it follows from the theorem of structure of analytic varieties \cite[Th.7.4.7]{BT} and the parametrization theorem \cite[Cor.7.5.3]{BT} that  any arc of analytic branch can be prolonged, after passing any singular point, to another
branch of analytic curve. After taking the maximal route (see Step 3 of page 118 of \cite{BT}), we conclude the existence of the extension curve. Finally, if condition (a) fails, since the bounded subsets of $\mathfrak{F}^{-1}(0)$ are compact, there exists an accumulation point that must coincide with a singular point. By the definition of maximal route,  $\Gamma$ must be a closed loop (see p. 119 of \cite{BT} for any further detail).
\end{proof}

The global alternatives of Theorems \ref{C6.3.6} and \ref{th.BT} are independent. Indeed, if the connected component of $\mf{F}^{-1}(0)$ bifurcating from $(0,0)$, say
$\mf{C}$,  is bounded, then, according to Theorem \ref{th.BT}, $\mf{F}^{-1}(0)$ contains a closed loop. But this does not entail, necessarily, the existence of some $(\l_1,0)\in\mf{C}$ with $\l_1\neq 0$, as it is guaranteed by Theorem \ref{C6.3.6} when, in addition, $\chi$ is odd. Conversely, when $\mf{C}$ is bounded and $\chi$ is odd, then, owing to Theorem \ref{C6.3.6}, $(\l_1,0)\in \mf{C}$ for some $\l_1\neq 0$, though this does not entail that any local analytic curve bifurcating from $(0,0)$ can be continued to a global closed loop.
\par
Such an independence is far from surprising, as these global alternatives are of a completely different nature: algebraic the one of Theorem \ref{th.BT} and topological the classical one of Theorem \ref{C6.3.6}. Actually, the proof of Theorem \ref{th.BT} does not invoke the degree, which was essential for the proof of Theorem \ref{C6.3.6}, but simply the theorem of structure of analytic varieties, which remained outside the proof of Theorem \ref{C6.3.6}. Thus, it should not come to surprise that they are alternatives of a rather different nature, though certainly reminiscent.

\subsection{Global graphs}\label{SGG}
In this section, inspired by a novel idea of Dancer \cite[Th.3]{Da732}, we study the global structure of the zero set of an analytic nonlinearity satisfying the special conditions  set out below. Given a pair $(U,V)$ of real Banach spaces, we consider an analytic map $\mathfrak{F}\in\mathcal{C}^{\omega}(\mathbb{R}\times U,V)$ satisfying the following properties:
\begin{enumerate}
	\item[(F1)] $\mathfrak{F}(\lambda,0)=0$ for all $\lambda\in\mathbb{R}$.
	\item[(F2)] $D_{u}\mathfrak{F}(\lambda,u)\in\Phi_{0}(U,V)$ for all
	$(\lambda,u)\in\mathbb{R}\times U$.
	\item[(F3)] $\dim N[D_{u}\mathfrak{F}(\lambda,u)]\in\{0,1\}$ for all $(\lambda,u)\in\mathbb{R}\times U$.
\end{enumerate}
These assumptions are fulfilled in most of the applications involving one-dimensional nonlinear boundary value problems. The next definition introduces the concept of
\emph{analytic graph} that we are going to use in this section to describe the global structure of the zero set of $\mf{F}$.

\begin{definition}[\textbf{Analytic graph}]
	\label{D6.4.6}
	A closed subset $\mathscr{A}\neq \emptyset$ of $\mathbb{R}\times U$ is said to be an analytic graph when, for every $(\lambda,u)\in\mathscr{A}$, one of the following excluding options occurs:
	\begin{enumerate}
		\item[{\rm (a)}]  There exists $\varepsilon>0$ such that $B_\e(\lambda,u)\cap \mathscr{A}$ is the graph of an injective analytic curve $\gamma:(-\delta,\delta)\to\mathbb{R}\times U$, $\gamma(0)=(\l,u)$, i.e., $B_\e(\lambda,u)\cap \mathscr{A}=\gamma((-\delta,\delta))$. In such case, $(\lambda,u)$ is said to be an edge point.

		\item[{\rm (b)}] Item {\rm(a)} does not occur and there exists $\varepsilon>0$ such that either
		$B_\e(\lambda,u)\cap \mathscr{A}=\{(\l,u)\}$, or $B_\e(\lambda,u)\cap \mathscr{A}$ consists of the point $(\l,u)$ and $N\geq 1$ graphs of analytic injective curves $\gamma_{i}:(0,1)\to \mathbb{R}\times U$, $(\l,u)\notin \gamma_{i}((0,1))$, $i\in\{1,\cdots,N\}$, such that $\gamma_{i}(t)\to (\l,u)$ as $t\uparrow 1$, i.e.,
$$
  B_\e(\lambda,u)\cap \mathscr{A}=\bigcup_{i=1}^{N}\gamma_{i}((0,1))\cup \{(\l,u)\}.
$$
		In such case, $(\lambda,u)$ is called a vertex, or nodal point.
		\item[{\rm(c)}] There exists $\varepsilon>0$, an open subset $\O\subset \R^{2}$ and an analytic homeomorphism $\Psi: \Omega \to B_\e(\lambda,u)\cap \mathscr{A}$. In such case, $(\l,u)$ is called a residual point.
	\end{enumerate}
\end{definition}
	
	For any given analytic graph, $\mathscr{A}$, we will denote by $\mathcal{G}_{\mathscr{A}}$ the set of edge points of $\mathscr{A}$, by $\mc{V}_{\mathscr{A}}$ the set of vertex points of $\mathscr{A}$ and by $\mc{R}_{\mathscr{A}}$ the set of residual points of $\mathscr{A}$. From the definition we infer that $\mc{G}_{\mathscr{A}}$ and $\mc{R}_{\mathscr{A}}$ are open subsets of $\mathscr{A}$ and that $\mc{V}_{\mathscr{A}}$ is closed in $\mathscr{A}$. We define the \textit{skeleton} of $\mathscr{A}$ by $\mc{K}_{\mathscr{A}}:=\mc{V}_{\mathscr{A}}\uplus \mc{G}_{\mathscr{A}}$. It is easy to see that $\mc{K}_{\mathscr{A}}$ is an open subset of $\mathscr{A}$. Clearly, $\mathscr{A}=\mc{K}_{\mathscr{A}}\uplus \mc{R}_{\mathscr{A}}$ and $\mc{K}_{\mathscr{A}}\cap \mc{R}_{\mathscr{A}}=\emptyset$. The main result of this section reads as follows.

\noindent

\begin{theorem}
	\label{T6.4.8}
	Let $\mathfrak{F}\in\mathcal{C}^{\omega}(\mathbb{R}\times U,V)$ satisfying {\rm (F1)--(F3)}. Then, $\mathfrak{F}^{-1}(0)$ is an analytic graph of $\mathbb{R}\times U$.
\end{theorem}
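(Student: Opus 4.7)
The plan is to classify the germ of $\mathfrak{F}^{-1}(0)$ at each $(\lambda_0,u_0)\in\mathfrak{F}^{-1}(0)$ and check that it fits one of the three categories of Definition \ref{D6.4.6}. Hypothesis (F3) splits the argument into two disjoint regimes according to whether $\dim N[D_u\mathfrak{F}(\lambda_0,u_0)]$ equals $0$ or $1$.

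When $\dim N[D_u\mathfrak{F}(\lambda_0,u_0)]=0$, the operator $D_u\mathfrak{F}(\lambda_0,u_0)\in\Phi_0(U,V)$ is an isomorphism, and the analytic implicit function theorem yields a unique analytic function $u:(\lambda_0-\delta,\lambda_0+\delta)\to U$ with $u(\lambda_0)=u_0$ whose graph exhausts $\mathfrak{F}^{-1}(0)$ in a product neighborhood of $(\lambda_0,u_0)$; the parametrization $t\mapsto(\lambda_0+t,u(\lambda_0+t))$ exhibits $(\lambda_0,u_0)$ as an edge point.

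When $\dim N[D_u\mathfrak{F}(\lambda_0,u_0)]=1$, I would invoke the analytic Lyapunov--Schmidt reduction of Theorem \ref{Lyap-Smith}. It produces a neighborhood $\mathcal{O}$ of $(\lambda_0,u_0)$ and mutually inverse analytic homeomorphisms $\Psi:\mathfrak{F}^{-1}(0)\cap\mathcal{O}\to\mathfrak{G}^{-1}(0)$ and $\Psi^{-1}$, where $\mathfrak{G}:\Omega\subset\mathbb{R}^{2}\to\mathbb{R}$ is real analytic with $\Psi(\lambda_0,u_0)=(\lambda_0,0)$. Since both maps are analytic, they transport injective analytic arcs to injective analytic arcs and carry open planar sets to subsets of $\mathfrak{F}^{-1}(0)\cap\mathcal{O}$ admitting an analytic homeomorphism onto an open subset of $\mathbb{R}^{2}$. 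Hence it suffices to classify the germ of $\mathfrak{G}^{-1}(0)$ at $(\lambda_0,0)$. If $\mathfrak{G}\equiv 0$ on some neighborhood of $(\lambda_0,0)$, then $\Psi^{-1}$ restricted to that neighborhood provides the analytic homeomorphism required by item (c) of Definition \ref{D6.4.6}, so $(\lambda_0,u_0)$ is a residual point. Otherwise, the structure theorem for real analytic varieties---whose one-dimensional version is a direct consequence of the Weierstrass preparation and analytic-continuation analysis developed in Section \ref{Sect. Real}---guarantees that $\mathfrak{G}^{-1}(0)$ near $(\lambda_0,0)$ is either the singleton $\{(\lambda_0,0)\}$ or the union of $\{(\lambda_0,0)\}$ with finitely many injective real analytic half-branches $\gamma_i:(0,1)\to\Omega$, $1\le i\le N$, each satisfying $(\lambda_0,0)\notin\gamma_i((0,1))$ and $\gamma_i(t)\to(\lambda_0,0)$ as $t\uparrow 1$.

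To separate edges from vertices in this last situation, one checks whether two half-branches concatenate into a single injective analytic curve through $(\lambda_0,0)$, which happens precisely when their Puiseux developments agree after reparametrization---a combinatorial criterion read off from the irreducible factorization in $\mathcal{M}(\mathscr{U}_{z})[\lambda]$ of the Weierstrass polynomial attached to $\mathfrak{G}$ in the spirit of Theorem \ref{T6.4.1}. When exactly two half-branches fuse in this way and no further ones remain, the local set is an injective analytic curve through $(\lambda_0,0)$ and $(\lambda_0,u_0)$ is an edge point; in every other configuration---no half-branches, a single half-branch, or $N\ge 2$ half-branches that fail to pair into a smooth curve through the origin---$(\lambda_0,u_0)$ is a vertex point. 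Transporting this classification back through $\Psi^{-1}$ completes the proof. The principal technical obstacle is this real bookkeeping of half-branches, since complex-conjugate roots of the complexified Weierstrass polynomial contribute no real arcs and must be discarded from the count; however, the framework developed in Section \ref{Sect. Real} handles precisely this issue.
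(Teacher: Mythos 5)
Your argument follows the paper's approach exactly: split on $\dim N[D_u\mathfrak{F}(\lambda_0,u_0)]$, invoke the analytic implicit function theorem in the regular case, and in the degenerate case perform the analytic Lyapunov--Schmidt reduction, treat $\mathfrak{G}\equiv 0$ as the residual case, and otherwise use Weierstrass preparation and the real half-branch analysis of Section \ref{Sect. Real} to resolve $\mathfrak{G}^{-1}(0)$ into finitely many analytic half-branches near $(\lambda_0,0)$. Your final refinement---checking whether two half-branches concatenate into a single injective analytic curve through $(\lambda_0,u_0)$, so that the point is classified as an edge rather than a vertex---is in fact slightly more careful than the paper, which simply declares every degenerate non-residual point a vertex; since Definition \ref{D6.4.6}(b) explicitly requires item (a) to fail, your extra step correctly honors the mutual exclusivity of the classification (a reduced map such as $\mathfrak{G}(\lambda,z)=(\lambda-\lambda_0)-z^{2}$ has zero set a parabola through $(\lambda_0,0)$, hence an edge and not a vertex, even though $D_z\mathfrak{G}(\lambda_0,0)=0$), while the overall conclusion that $\mathfrak{F}^{-1}(0)$ is an analytic graph is unaffected in either treatment.
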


\begin{proof}
	Pick a point $(\lambda_{0},u_{0})\in\mathfrak{F}^{-1}(0)$. Then, either $D_{u}\mathfrak{F}(\lambda_{0},u_{0})\in GL(U,V)$, or, due  to (F3),
	we have that
	\begin{equation}
		\label{6.6.43}
		\dim N[D_{u}\mathfrak{F}(\lambda_{0},u_{0})]=1.
	\end{equation}
	Suppose that $D_{u}\mathfrak{F}(\lambda_{0},u_{0})\in GL(U,V)$. Then, by the implicit function theorem, in a neighborhood of $(\lambda_{0},u_{0})$, $\mathfrak{F}^{-1}(0)$ consists of the graph of an injective analytic curve through the point $(\lambda_{0},u_{0})$. Therefore,  $(\lambda_{0},u_{0})$ is an edge point.
	\par
	Suppose \eqref{6.6.43}.  In this case, $N[D_{u}\mathfrak{F}(\lambda_{0},u_{0})]=\text{span}\{\varphi\}$
	for some $\varphi\in U\backslash\{0\}$. By the Hahn--Banach theorem, there exists $\varphi^{\ast}\in U^{\ast}$ such that $\langle \varphi, \varphi^{\ast}\rangle =1$. Let
$P:U\to N[D_{u}\mathfrak{F}(\lambda_{0},u_{0})]$ be the continuous projection defined by $P(u):=\langle u,\varphi^{\ast}\rangle \varphi$ for all $u\in U$, and consider any other continuous projection
$Q:V\to R[D_{u}\mathfrak{F}(\lambda_{0},u_{0})]$. Then,
$$
		U= N[D_{u}\mathfrak{F}(\lambda_{0},u_{0})] \oplus Y \;\; (Y=N[P]), \quad V =Z \oplus R[D_{u}\mathfrak{F}(\lambda_{0},u_{0})]  \;\; (Z=N[Q]).
$$
	In the sequel, we identify $\mathbb{R}\times N[D_{u}\mathfrak{F}(\lambda_{0},u_{0})]$ with $\mathbb{R}^{2}$ via the linear
	isomorphism
	\begin{equation*}
		T:\mathbb{R}\times N[D_{u}\mathfrak{F}(\lambda_{0},u_{0})]\longrightarrow \R^{2}, \quad T(\lambda,z\varphi)=(\lambda,L[z\varphi]),
	\end{equation*}
where, $L: N[D_{u}\mathfrak{F}(\lambda_{0},u_{0})]\longrightarrow \R$ is defined by $L[z\varphi]:=z$,
and we identify $Z$ with $\R$ via another fixed isomorphism $S:Z\to\R$. As in in Section \ref{SectionALS},
performing a Lyapunov--Schmidt reduction to $\mf{F}(\l,u)=0$ on $(\l_{0},u_{0})$ under the pair of $D_{u}\mf{F}(\l_{0},u_{0})$-projections $(P,Q)$, it becomes apparent that there are
a  neighborhood $\mc{U}$ of $(\l_{0},0)$ in $\R\times N[D_{u}\mathfrak{F}(\lambda_{0},u_{0})]$,
a neighborhood $\mc{O}$ of $(\l_0,u_{0})$ in $\R\times N[D_{u}\mathfrak{F}(\lambda_{0},u_{0})]$,
and an analytic operator 	$\mc{Y}:\mc{U}\to Y$ such that the maps
\begin{align*}
		\Psi:\mathfrak{F}^{-1}(0)\cap \mc{O}\longrightarrow \mathfrak{G}^{-1}(0), & \quad (\lambda,u)\mapsto (\lambda,\langle u-u_{0}, \varphi^{\ast}\rangle),\\
		\Psi^{-1}:\mathfrak{G}^{-1}(0)\longrightarrow \mathfrak{F}^{-1}(0)\cap\mc{O}, & \quad (\lambda,z)\mapsto (\lambda,u_{0}+z\varphi+\mathcal{Y}(\lambda,z\varphi)),
	\end{align*}
are mutually inverses, where $\mf{G}:\O\subset \R\times \R\longrightarrow \R$, $\mf{G}=\mf{G}(\l,z)$, is given by
	\begin{equation*}
		\mf{G}(\lambda,z):=S(I_{V}-Q)\mathfrak{F}(\lambda,u_{0}+z\varphi+\mathcal{Y}(\lambda,z\varphi)), \quad (\l,z)\in\O,
	\end{equation*}
	where $\O:=\{(\l,z): (\l,z\varphi)\in\mc{U}\}\subset \K\times \K$. 	In particular,  $\mf{G}(\l_{0},0)=0$. If $\mathfrak{G}\equiv 0$ in $\O$, then the map
	$\Psi^{-1}:\O\to \mathfrak{F}^{-1}(0)\cap\mc{O}$, $(\lambda,z)\mapsto (\lambda,u_{0}+z\varphi+\mathcal{Y}(\lambda,z\varphi))$, is an analytic homeomorphism and consequently $(\l_{0},u_{0})$ is a residual point. So, subsequently we suppose that $\mf{G}\not\equiv 0$. Let $\alpha\in\N\cup\{0\}$ be the minimum integer such that
	\begin{equation}
		\label{eq. de orden}
		\xi:=\ord_{\l=\l_{0}}D^{\alpha}_{z}\mf{G}(\l,0)<\infty.
	\end{equation}
The existence of $\alpha$ is guaranteed by the fact that  $\mf{G}\equiv 0$ if
$D_{\l}^{\beta}D^{\alpha}_{x}\mf{G}(\l_{0},0)=0$ for all $\alpha,\beta\in \N\cup\{0\}$. By 
\eqref{eq. de orden}, there exists an analytic function $g:\O\to \R$ such that
$\mf{G}(\l,z)=z^{\alpha}g(\l,z)$ for all $(\l,z)\in \O$. A direct computation shows that
$\xi=\ord_{\l=\l_{0}}g(\l,0)$. Thus, by the Weierstrass preparation theorem, there exists a neighborhood $\mathscr{U}\subset \O$ of $(\l_{0},0)$, such that
$$
   \mathfrak{G}(\lambda,z)=  \ z^{\alpha}c(\lambda,z)
		\left[(\lambda-\l_{0})^{\xi}+c_{\xi-1}(z)(\lambda-\l_{0})^{\xi-1}
		+\cdots+c_{0}(z)\right], \quad (\lambda,z)\in\mathscr{U},
$$
for some analytic function $c:\mathscr{U}\to\mathbb{R}$, with $c(\l_{0},0)\neq 0$,  and $\xi$ analytic functions $c_{j}:\mathscr{U}_{z}\to\mathbb{R}$, with $c_{j}(0)=0$ for all $0\leq j\leq \xi-1$. Then, setting  $\mathscr{U}^{+}_{z}:=(a,0)$, $\mathscr{U}^{-}_{z}:=(0,b)$ and adapting
	the argument of Section \ref{Sect. Real}, it becomes apparent  that there exist $\xi$ analytic functions $\varphi_{i}:\mathscr{U}^{\pm}_{z}\to\mathbb{C}$, $1\leq i \leq \xi$, such that
	\begin{equation*}
		\mathfrak{G}(\lambda,z)= z^{\alpha}c(\lambda,z)\prod_{i=1}^{\xi}(\lambda-\l_{0}-\varphi_{i}(z)), \quad z\in\mathscr{U}^{\pm}_{z}.
	\end{equation*}
	Thus, in a neighborhood of the point $(\lambda_{0},u_{0})$, $\mathfrak{F}^{-1}(0)$ consists of, at most,  $2\xi+2$ graphs of injective analytic curves and the point $(\l_{0},u_{0})$. Therefore, $(\lambda_{0},u_{0})$ is a vertex point. Indeed, either it is isolated, or there emanate from it finitely many arcs of (real) analytic curve. This concludes the proof.
\end{proof}
Actually, the proof of Theorem \ref{T6.4.8} shows that every subset of $\mf{F}^{-1}(0)$ is also an analytic graph. Finally, let us apply Theorem \ref{T6.4.8} to a paradigmatic one-dimensional boundary value problem.
Given $a\in\mc{C}[0,\pi]$ and an integer $p\geq 2$, we consider the boundary value problem
\begin{equation}
	\label{1.12}
	\left\{\begin{array}{l}
		-u''=\lambda u +a(x) u^p \quad \hbox{in}\;\, (0,\pi), \\
		u(0)=u(\pi)=0,
	\end{array}
	\right.
\end{equation}
whose solutions are the zeros of the analytic nonlinear operator
$$
   \mf{F}:\R\times \mc{C}^{2}_{0}[0,\pi]\longrightarrow \mc{C}[0,\pi], \quad \mf{F}(\l,u):=u''+\lambda u +a(x) u^p.
$$
It is straightforward to verify that $\mf{F}$ satisfies hypothesis (F1)--(F3) of this section. Thus, by Theorem \ref{T6.4.8}, the set $\mf{F}^{-1}(0)$ is an analytic graph. Moreover, the set of non-trivial solutions
$$
   \mc{S}=[\mf{F}^{-1}(0)\backslash\mc{T}]\uplus \{(\l,0) : \l\in\Sigma(\mf{L})\},
$$
is also an analytic graph as $\mc{S}\subset\mf{F}^{-1}(0)$.
On the other hand, by the local theorem of Crandall and Rabinowitz \cite{CR} and the global alternative of Rabinowitz \cite{Ra}, it is folklore that, for every integer $n\geq 1$, the set of non-trivial solutions $\mc{S}$
admits a connected component, $\mathscr{C}_n$, with $(\l,u)=(n^2,0)\in \mathscr{C}_n$, which is unbounded in
$\R\times \mc{C}^{2}_{0}[0,\pi]$. Moreover, by the maximum principle, since the number of nodes of
the solutions along $\mathscr{C}_n$ is constant, it turns out that
$
\mathscr{C}_n\cap \mathscr{C}_m=\emptyset$, $ n\neq m$. Note that, for each $n\geq 1$, we have that $\mathscr{C}_{n}\cap \mc{K}_{\mc{S}}\neq \emptyset$ and 
$\mc{S}=\mc{K}_{\mc{S}}\uplus \mc{R}_{\mc{S}}$, where $\mc{K}_{\mc{S}}$ is the skeleton of $\mc{S}$ and
$\mc{R}_{\mc{S}}$ its residual set. Thus, since $\mathscr{C}_{n}$ is a connected component of $\mc{S}$ and both, $\mc{K}_{\mc{S}}$ and $\mc{R}_{\mc{S}}$ are open, it becomes apparent that $\mathscr{C}_{n}\subset \mc{K}_{\mc{S}}$. This shows that actually each of the components $\mathscr{C}_n$, $n\geq 1$, consists of  a discrete set of analytic arcs of curve (edge points) plus a discrete set of branching points (vertex points).

\section{A degenerate one-dimensional problem}\label{SDODP}

\noindent In this section we apply the previous theory to the following
nonlinear one-dimensional boundary value problem
\begin{equation}
\label{vii.1}
\left\{\begin{array}{l}
-u''=\lambda u' +u+(\lambda-u^2)u^2 \quad \hbox{in}\;\, (0,\pi), \\
u(0)=u(\pi)=0.
\end{array}
\right.
\end{equation}
Considering the Hilbert spaces $U \equiv H^{2}(0,\pi)\cap H^{1}_{0}(0,\pi)$ and $V \equiv L^{2}(0,\pi)$,
with the inner products $\langle u,v \rangle_{V}:= \left( \int_0^\pi uv\right)^\frac{1}{2}$, $u, v \in V$, and
$$
   \langle u,v \rangle_{U}:=\langle u, v\rangle_V+\langle u', v'\rangle_V+ \langle u'', v''\rangle_V, \qquad u, v \in U,
$$
the solutions of \eqref{vii.1} can be viewed as the zeroes of the nonlinear operator $\mathfrak{F}:
\mathbb{R}\times U\to V$ defined by
\begin{equation*}
\mathfrak{F}(\lambda,u):=u''+\lambda u'+u+(\lambda-u^2)u^2, \qquad (\lambda,u)\in \mathbb{R}\times U.
\end{equation*}
Since $\mf{F}$ is polynomial in $\l$ and $u$, it is analytic in $(\l,u)\in\mathbb{R}\times U$ (see, e.g., Henry \cite{Henry}). Moreover, for every $(\l_0,u_0)\in \mf{F}^{-1}(0)$ and $u\in U$,
$$
  D_u\mf{F}(\l_0,u_0)u=u''+\l u'+u + 2\l_0 u_0 u -4u_0^3 u=u''+\l u'+u+ W(x)u,
$$
where $W=2\l_0u_0-4u_0^3$. Thus, by the Sturm--Liouville theory,  $N[D_u\mf{F}(\l_0,u_0)]$ is at most
one-dimensional. Moreover, by the Rellich--Kondrachov theorem, the embedding $J: U \hookrightarrow V$ is compact, and, thanks to the Lax--Milgram theorem, for every $\l\in\mathbb{R}$, there exists $\mu_0=\mu_0(\l)\in\mathbb{R}$ such that, setting $D=\frac{d}{dx}$,
\begin{equation}
\label{vii.2}
   D^2+\l D + (1+W)J+ \mu J \in GL(U,V)\quad \hbox{for all}\;\; \mu <\mu_0.
\end{equation}
Thus, since
$$
   D^2+\l D + (1+W)J =D^2+\l D + (1+W)J +\mu J-\mu J,
$$
is a compact perturbation of an invertible operator, it becomes apparent that
$D_u\mf{F}(\l_0,u_0) \in\Phi_{0}(U,V)$ for all $\lambda\in\mathbb{R}$. Therefore, the conditions (F1)--(F3) of Section \ref{SGG} are fulfilled. Consequently, as  a direct consequence of Theorem \ref{T6.4.8},
we have that $\mf{F}^{-1}(0)$ is an analytic graph of $\R\times U$. Note that
\begin{equation*}
\mathfrak{L}(\lambda)u:=D_{u}\mathfrak{F}(\lambda,0)u=u''+\lambda u'+u, \quad u\in U,
\end{equation*}
whose generalized spectrum,
$\Sigma(\mathfrak{L})$, consists of the values $\l\in\mathbb{R}$ for which the problem
\begin{equation}
\label{vii.3}
\left\{\begin{array}{ll}
-u''=\lambda u' +u \quad \hbox{in} \;\; (0,\pi), \\
u(0)=u(\pi)=0,
\end{array}
\right.
\end{equation}
admits a solution $u\neq 0$. Since $\mf{L}(\l)$ is analytic in $\l$, thanks to \eqref{vii.2}, we find from \cite[Th. 4.4.4]{LG01} that $\Sigma(\mf{L})$ is discrete. Obviously, $0\in \Sigma(\mf{L})$ and
$$
      N[\mathfrak{L}(0)]=\mathrm{span}[\varphi_{0}],\qquad \varphi_{0}(x):=\sqrt{\frac{2}{\pi}}\sin x,\quad x\in[0,\pi].
$$
Actually, $\Sigma(\mf{L})=\{0\}$. Indeed, the change of variable $u=e^{-\frac{\l}{2}x}v$ transforms \eqref{vii.3} into
\begin{equation}
\label{vii.4}
\left\{\begin{array}{ll}
-v''=\frac{4-\l^2}{4} v \quad \hbox{in} \;\; (0,\pi), \\
v(0)=v(\pi)=0, \end{array} \right.
\end{equation}
and hence, $\l\in \Sigma(\mf{L})$ if and only if $4-\l^2=4 n^2$ for some integer $n\geq 1$, whose unique real solution is $\l=0$ for the choice $n=1$.
\par
Our main goal is applying the results of Sections \ref{SLBA} and \ref{SAGA} for ascertaining the structure of $\mf{F}^{-1}(0)$. Since $\mathfrak{L}_{1}u\equiv \mathfrak{L}'(0)u=u'$ for all $u \in U$, we have that $\mathfrak{L}_{1}(\varphi_{0})=\phi_{0}$,  where $\phi_{0}(x)=\sqrt{\tfrac{2}{\pi}}\cos x$, $x\in [0,\pi]$. Thus, since $R[\mf{L}_0] = \left\{f\in L^{2}(0,\pi): \; \langle f, \v_0 \rangle_V =0 \right\}$,
it is apparent that $\phi_0 \in R[\mf{L}_0]$, where we are denoting $\mf{L}_0=\mf{L}(0)$. Thus,   $\l_0=0$ is not a $1$-transversal eigenvalue of $\mf{L}(\l)$, i.e., the transversality condition of Crandall and Rabinowitz (see \eqref{ii.4}) fails. Actually, since
$\mathfrak{L}^{(n)}(\lambda)=0$ for all $\l\in\mathbb{R}$ and $n\geq 2$, $0$ cannot be a transversal eigenvalue of $\mf{L}(\l)$ of any order. Indeed, the change of variable $u=e^{-\frac{\l}{2}x}v$, transforms the eigenvalue perturbation problem
\begin{equation*}
\left\{\begin{array}{ll}
u''+\l u'+u=\mu(\l) u \quad \hbox{in} \;\; (0,\pi), \\
u(0)=u(\pi)=0, \end{array} \right.
\end{equation*}
into
\begin{equation*}
\left\{\begin{array}{ll}
v''+\left(1-\frac{\l^{2}}{4}-\mu(\l)\right)v=0 \quad \hbox{in} \;\; (0,\pi), \\
v(0)=v(\pi)=0, \end{array} \right.
\end{equation*}
and provides us with the perturbed eigenvalue from $\l=0$ of $\mf{L}(\l)$, $\mu(\l)=-\frac{\l^{2}}{4}$. Thus, since $\mu(0)=\mu'(0)=0$ and $\mu''(0)\neq 0$, it follows from \cite[th. 4.3.3]{LGMC} that
$\chi[\mf{L},0]=2$.
Since $\chi=2$, Theorem \ref{T6.2.1} cannot guarantee the existence of a continuum emanating from $(0,0)\in\mathbb{R}\times U$. Thus, the techniques developed in
Sections \ref{SLBA} and \ref{SAGA} are imperative to analyze the structure of $\mf{F}^{-1}(0)$.
\subsection{Local structure of the solution set:}\label{SCh143} As in this setting $V=L^{2}(0,\pi)$, we have that $V' =L^{2}(0,\pi)$.
Thus,  the duality pairing $\langle\cdot,\cdot\rangle: V \times V'\to \R$ is given through
$\langle f,g \rangle :=\int_{0}^{\pi} fg \, dx$ for all $f, g\in L^{2}(0,\pi)$.
In this way, we can choose $\varphi_{0}^{\ast}=\varphi_{0}$ and in particular $\langle \varphi_{0}, \varphi_{0}^{\ast}\rangle=1$. Let us consider the pair $\mc{P}=(P,Q)$ of $\mf{L}_{0}$-projections,
\begin{align*}
	 P:L^{2}(0,\pi)\to N[\mathfrak{L}_{0}], \quad P(u):=\langle u,\varphi_{0}\rangle \varphi_{0}, \quad Q:L^{2}(0,\pi)\to R[\mathfrak{L}_0], \quad Q:=I_{V}-P.
\end{align*}
As already described in Section \ref{SectionALS}, performing a Lyapunov--Schmidt reduction to $\mf{F}(\l,u)=0$ at $(0,0)\in \R\times U$ under the pair of $\mf{L}_{0}$-projections $(P,Q)$, it is easily seen that there exist a neighborhood $\mc{U}$ of $(0,0)$ in $\R\times N[\mf{L}_{0}]$, a neighborhood $\mc{O}$ of $(0,0)$ in $\R\times U$, and an analytic operator $\mc{Y}:\mc{U}\to R[\mf{L}_{0}]$ such that
the maps
\begin{equation}
\label{Eq.LS}
\begin{split}
	\Psi:\mathfrak{F}^{-1}(0)\cap \mc{O}\to \mathfrak{G}^{-1}(0), & \quad (\lambda,u)\mapsto (\lambda,\langle u, \varphi_{0}\rangle),\\ \Psi^{-1}:\mathfrak{G}^{-1}(0)\to \mathfrak{F}^{-1}(0)\cap\mc{O}, & \quad (\lambda,z)\mapsto (\lambda,z\varphi_{0}+\mathcal{Y}(\lambda,z\varphi_{0})),
\end{split}
\end{equation}
are inverses of each other,  where the operator
$\mathfrak{G}: \mathscr{U}\subset\mathbb{R}^{2} \to \mathbb{R}$, $\mf{G}=\mf{G}(\l,z)$,
is given by
\begin{equation}
	\label{LSRANA,3} \mf{G}(\lambda,z):=\langle \mathfrak{F}(\lambda,z\varphi_{0}+\mathcal{Y}(\lambda,z\varphi_{0})),\varphi_{0}\rangle,
\end{equation}
with $\mathscr{U}:=\{(\l,z): (\l,z\varphi_{0})\in\mc{U}\}\subset \R^{2}$. Since $\mathfrak{G}(\lambda,0)=0$, there exists an analytic function $g: \mathscr{U}\to \mathbb{R}$ such that $\mathfrak{G}(\lambda,z)=zg(\lambda,z)$ for all $(\lambda,z)\in  \mathscr{U}$. To apply Theorem \ref{th4.2}, it is appropriate to express the operator $\mathfrak{F}$ in the form
\begin{equation*}
	\mathfrak{F}(\lambda,u)=\mathfrak{L}(\lambda)u+\lambda L^{1}_{2}u^{2}+L^{0}_{4}u^{4}, \qquad (\lambda,u)\in \mathbb{R}\times U,
\end{equation*}
where the symmetric operators $L^{1}_{2}\in\mc{S}^{2}(U,V)$, $L^{0}_{4}\in\mc{S}^{4}(U,V)$, are given by
\begin{align*}
	& L^{1}_{2}(u_{1},u_{2}):=u_{1} u_{2}, \quad u_{1},u_{2}\in U, \\
	& L^{0}_{q}(u_{1},u_{2}, u_{3}, u_{4}):=-u_{1}u_{2}u_{3}u_{4}, \quad u_{1},u_{2},u_{3},u_{4}\in U,
\end{align*}
which is consistent with the notations used in the expansion \eqref{iv.3}. In particular,
$$L^{1}_{2}u^{2}=u^{2}, \quad L^{0}_{4}u^{4}=-u^{4}, \quad u\in U.$$
According to Theorem \ref{th4.2}, and taking into account that $\mu(\l)=-\frac{\l^{2}}{4}$, it becomes apparent that
\begin{align*}
\mathfrak{G}(\lambda,z) & =z\Big(-\frac{1}{4}\lambda^{2}+\langle \varphi_{0}^2,\varphi_{0}
   \rangle z\lambda-\langle \varphi_{0}^{4},\varphi_{0}\rangle z^{3}+\sum_{j,k}C_{j,k}\lambda^{j}z^{k}\Big)\\ & =z\Big(-\frac{1}{4}\lambda^{2}+\frac{8}{3\pi}\sqrt{\frac{2}{\pi}}z\lambda -\frac{64}{15\pi^{2}}\sqrt{\frac{2}{\pi}}z^3+\sum_{j,k}C_{j,k}\lambda^{j}z^{k}\Big)= z g(\l,z),
\end{align*}
where the summation of the second sum is taken only on the points $(k,j)$ lying above the polygonal line joining $(0,2)$, $(1,1)$ and $(3,0)$. Owing the Newton--Puiseux algorithm, we obtain the following asymptotic expansion for the solutions of $g(\lambda,z)=0$ close to $(0,0)$,

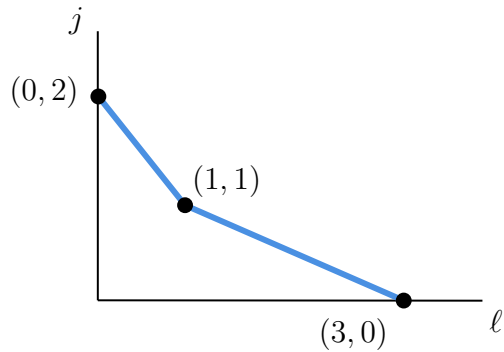
\begin{figure}[h!]
	\begin{center}

		\tikzset{every picture/.style={line width=0.75pt}} 
		
		\begin{tikzpicture}[x=0.75pt,y=0.75pt,yscale=-1,xscale=1]
			
			\draw    (55.98,24.19) -- (56,160) ;
			\draw    (56,160) -- (249.98,160.19) ;
			\draw [color={rgb, 255:red, 74; green, 144; blue, 226 }  ,draw opacity=1 ][line width=2.25]    (56.28,57.13) -- (100,112) ;
			\draw [color={rgb, 255:red, 74; green, 144; blue, 226 }  ,draw opacity=1 ][line width=2.25]    (100,112) -- (210.28,160.13) ;
			\draw  [fill={rgb, 255:red, 0; green, 0; blue, 0 }  ,fill opacity=1 ] (96.5,112) .. controls (96.5,110.07) and (98.07,108.5) .. (100,108.5) .. controls (101.93,108.5) and (103.5,110.07) .. (103.5,112) .. controls (103.5,113.93) and (101.93,115.5) .. (100,115.5) .. controls (98.07,115.5) and (96.5,113.93) .. (96.5,112) -- cycle ;
			\draw  [fill={rgb, 255:red, 0; green, 0; blue, 0 }  ,fill opacity=1 ] (52.78,57.13) .. controls (52.78,55.2) and (54.35,53.63) .. (56.28,53.63) .. controls (58.22,53.63) and (59.78,55.2) .. (59.78,57.13) .. controls (59.78,59.06) and (58.22,60.63) .. (56.28,60.63) .. controls (54.35,60.63) and (52.78,59.06) .. (52.78,57.13) -- cycle ;
			\draw  [fill={rgb, 255:red, 0; green, 0; blue, 0 }  ,fill opacity=1 ] (206.78,160.13) .. controls (206.78,158.2) and (208.35,156.63) .. (210.28,156.63) .. controls (212.22,156.63) and (213.78,158.2) .. (213.78,160.13) .. controls (213.78,162.06) and (212.22,163.63) .. (210.28,163.63) .. controls (208.35,163.63) and (206.78,162.06) .. (206.78,160.13) -- cycle ;
			
			\draw (102,90.4) node [anchor=north west][inner sep=0.75pt]    {$( 1,1)$};
			\draw (10,45.4) node [anchor=north west][inner sep=0.75pt]    {$( 0,2)$};
			\draw (166,167.4) node [anchor=north west][inner sep=0.75pt]    {$( 3,0)$};
			\draw (40,10.4) node [anchor=north west][inner sep=0.75pt]    {$j$};
			\draw (251.98,163.59) node [anchor=north west][inner sep=0.75pt]    {$\ell $};

		\end{tikzpicture}
	\end{center}
	\caption{Newton diagram of $g(\l,z)$}
	\label{F5}
\end{figure}
\noindent
\begin{equation}
	\label{iv.7}
	\begin{split}
		z(\lambda) & =\frac{1}{4\langle \varphi_{0}^{2},\varphi_{0}\rangle}\lambda+O(\lambda)=\frac{3\pi}{32}\sqrt{\frac{\pi}{2}}\lambda+O(\lambda)
		\quad \hbox{as}\;\;\l\to 0, \\
		z(\lambda) & =\pm \sqrt{\frac{\langle \varphi_{0}^{2},\varphi_{0}\rangle}{\langle \varphi_{0}^{4},\varphi_{0}\rangle}}\sqrt{\lambda}
		+O(\sqrt{\lambda})=\pm\sqrt{\frac{5\pi}{8}}\sqrt{\lambda}
		+O(\sqrt{\lambda}) \quad \hbox{as}\;\; \l\downarrow 0,
	\end{split}
\end{equation}
On the other hand, since $\chi[\mathfrak{L},0]=2$,
it follows from Lemma \ref{Lemma12.1.2} that $\chi[\mathfrak{L},0] = \ord_{\l=0}g(\l,0)=2$. Thus,
by the Weierstrass preparation theorem \cite[Th. 5.3.1]{BT}, shortening the neighborhood
$\mathscr{U}= \mathscr{U}_{\lambda}\times \mathscr{U}_{z}\subset\mathbb{R}^{2}$  if necessary,  there exists an analytic function $c: \mathscr{U}\to\mathbb{R}$ such that  $c(0,0)\neq 0$, plus $\chi=2$ analytic functions, $c_{j}: \mathscr{U}_{z}\to\mathbb{R}$ with $c_{j}(0)=0$ for $j=1,2$, such that
\begin{equation*}
	g(\lambda,z)=c(\lambda,z)\left[\lambda^{2}+c_{1}(z)\lambda+c_{2}(z)\right].
\end{equation*}
Hence for every $z\in  \mathscr{U}_{z}$, the equation $g(\lambda,z)=0$ has, at most, two solutions. This shows that indeed, the solutions \eqref{iv.7}, are the unique ones of $g(\lambda,z)=0$ in a neighbourhood of $(0,0)$. Figure \ref{Fig7.4} represents these branches.

\begin{center}
	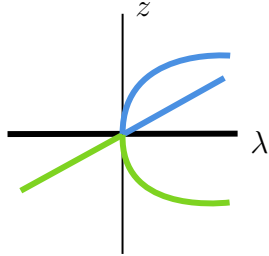
\begin{figure}[h!]

		\tikzset{every picture/.style={line width=0.75pt}} 
		
		\begin{tikzpicture}[x=0.75pt,y=0.75pt,yscale=-1,xscale=1]
			
			\draw    (89.04,18.78) -- (89,140) ;
			\draw [line width=2.25]    (147.04,79.28) -- (31,79.5) ;
			\draw [color={rgb, 255:red, 74; green, 144; blue, 226 }  ,draw opacity=1 ][line width=2.25]    (140.33,51.18) -- (89.02,79.88) ;
			\draw [color={rgb, 255:red, 126; green, 211; blue, 33 }  ,draw opacity=1 ][line width=2.25]    (89.02,79.88) -- (37.7,108.09) ;
			\draw [color={rgb, 255:red, 74; green, 144; blue, 226 }  ,draw opacity=1 ][line width=2.25]    (89.02,79.39) .. controls (89,36) and (141.59,39.77) .. (143.29,39.92) ;
			\draw [color={rgb, 255:red, 126; green, 211; blue, 33 }  ,draw opacity=1 ][line width=2.25]    (89.02,79.88) .. controls (88,118) and (134,115) .. (143,114) ;
			
			\draw (152.04,76.68) node [anchor=north west][inner sep=0.75pt]    {$\lambda $};
			\draw (94,11.4) node [anchor=north west][inner sep=0.75pt]    {$z$};

		\end{tikzpicture}
		\caption{$g^{-1}(0)$ in a neighbourhood of $(0,0)$}
		\label{Fig7.4}
	\end{figure}
\end{center}

\subsection{Global structure of the solution set:} Throughout the rest of this section, we will study the global structure of the set of positive solutions of the problem \eqref{vii.1}. The next result shows that any  small solution of \eqref{vii.1} must be either positive or negative.

\begin{lemma}
\label{levii.1} There exists $\e>0$ such that if $(\l,u)$ solves \eqref{vii.1} with
$|\l|+\|u\|_\infty\leq \e$, then, either $u>0$, or $u<0$.
\end{lemma}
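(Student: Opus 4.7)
The strategy is to combine the local Lyapunov--Schmidt parametrization of $\mf{F}^{-1}(0)$ around $(0,0)$ established in Section \ref{SCh143} with the strict positivity of the kernel generator $\v_{0}(x)=\sqrt{2/\pi}\sin x$ in the interior of the order cone of $\mc{C}^{1}$-Dirichlet functions on $[0,\pi]$. The main technical difficulty is that the hypothesis only controls $\|u\|_{\infty}$ and $|\l|$, whereas the parametrization \eqref{Eq.LS} is a priori valid only in a $U$-neighbourhood of $(0,0)$, so the first task will be to upgrade the sup-norm bound to a $U$-bound.

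First I will establish the elliptic bootstrap. Rewriting \eqref{vii.1} as $-u''-\l u'=u+(\l-u^{2})u^{2}$, using the interpolation inequality $\|u'\|_{L^{2}}^{2}\leq\|u\|_{L^{2}}\|u''\|_{L^{2}}$ (valid for $u\in H^{2}\cap H^{1}_{0}$), the Poincar\'e bound $\|u\|_{L^{2}}\leq\sqrt{\pi}\|u\|_{\infty}$, and absorbing the resulting $|\l|\|u''\|_{L^{2}}^{1/2}\|u\|_{L^{2}}^{1/2}$ term on the left-hand side via Young's inequality, will yield an a priori estimate of the form $\|u\|_{H^{2}}\leq\omega(|\l|+\|u\|_{\infty})$ with $\omega(\e)\da 0$ as $\e\da 0$. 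Hence, after choosing $\varepsilon$ small enough, every solution $(\l,u)$ satisfying $|\l|+\|u\|_{\infty}\leq\varepsilon$ will belong to the neighbourhood $\mc{O}$ of Section \ref{SCh143}.

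Next I will invoke the bijection \eqref{Eq.LS}. Setting $z:=\langle u,\v_{0}\rangle$, the point $u$ can be written as $u=z\v_{0}+\mc{Y}(\l,z\v_{0})$, where $\mc{Y}$ is the analytic $Y$-valued map supplied by the analytic implicit function theorem. Since $\mf{F}(\l,0)=0$, the uniqueness clause of that theorem forces $\mc{Y}(\l,0)=0$; differentiating the identity $Q\mf{F}(\l,z\v_{0}+\mc{Y}(\l,z\v_{0}))=0$ once with respect to $z$ and evaluating at $(\l,z)=(0,0)$ produces, thanks to $Q\mf{L}_{0}\v_{0}=0$ and the invertibility of $Q\mf{L}_{0}|_{Y}$, the vanishing $\partial_{z}[\mc{Y}(\l,z\v_{0})]|_{(\l,z)=(0,0)}=0$. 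The analytic Hadamard lemma then supplies an analytic map $s:\mathscr{U}\to U$ with $s(0,0)=0$ such that
\begin{equation*}
u=z\bigl(\v_{0}+s(\l,z)\bigr).
\end{equation*}

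The conclusion is then immediate. The Sobolev embedding $U\hookrightarrow\mc{C}^{1}[0,\pi]$ ensures that $s(\l,z)\to 0$ in the $\mc{C}^{1}$-topology as $(\l,z)\to(0,0)$; since $\v_{0}>0$ on $(0,\pi)$ with $\v_{0}'(0)>0$ and $\v_{0}'(\pi)<0$, the function $\v_{0}$ lies in the interior of the positive cone of $\{u\in\mc{C}^{1}[0,\pi]:u(0)=u(\pi)=0\}$. Thus, shrinking $\varepsilon$ once more, $\v_{0}+s(\l,z)$ remains strictly positive on $(0,\pi)$ for every admissible $(\l,z)$, and therefore $u$ inherits the sign of $z$: either $u>0$ in $(0,\pi)$ (when $z>0$) or $u<0$ in $(0,\pi)$ (when $z<0$); the vanishing case $z=0$ corresponds to the trivial solution $u\equiv 0$, which is implicitly excluded by the claim. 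The main obstacle is the elliptic bootstrap of the first step; once $\|u\|_{U}$ is controlled, everything else follows directly from the local parametrization and from the strict-positivity picture inherited from $\v_{0}$.
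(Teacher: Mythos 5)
Your proof is correct, but it takes a different route from the paper's. The paper argues directly by contradiction: it assumes a sequence of sign-changing solutions $(\l_n,u_n)\to(0,0)$ in $\R\times\mathcal{C}[0,\pi]$, performs the change of variable $v_n=e^{\l_n x/2}u_n$ to transform the ODE into a fixed-point equation for the compact Green's operator $\mathcal{K}:\mathcal{C}[0,\pi]\to\mathcal{C}^1[0,\pi]$, normalizes $\psi_n=v_n/\|v_n\|_\infty$, extracts a $\mathcal{C}^1$-convergent subsequence, and finds the limit must be $\pm\sin x$, giving the contradiction. This works directly in the sup-norm topology of the hypothesis, with no need for an a priori $H^2$-bound: the upgrade from $\mathcal{C}^0$ to $\mathcal{C}^1$ is handed to you for free by the compactness of $\mathcal{K}$. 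Your argument, by contrast, routes through the analytic Lyapunov--Schmidt parametrization $u=z\varphi_0+\mathcal{Y}(\l,z\varphi_0)$ from Section \ref{SCh143}. Since that parametrization is only valid in a $U=H^2\cap H^1_0$ neighbourhood of $(0,0)$, you correctly identify the need for, and supply, an elliptic bootstrap converting the $\|u\|_\infty$-smallness into $\|u\|_{H^2}$-smallness (the interpolation $\|u'\|_{L^2}^2\le\|u\|_{L^2}\|u''\|_{L^2}$ plus Young does this). You then factor $u=z(\varphi_0+s(\l,z))$ using $\mathcal{Y}(\l,0)=0$ and $D_x\mathcal{Y}(0,0)\varphi_0=0$, and exploit the fact that $\varphi_0$ lies in the interior of the positive cone of $\mathcal{C}^1_0[0,\pi]$, so small $\mathcal{C}^1$-perturbations remain positive. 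Both proofs are valid. Yours is heavier (it relies on the analyticity and the full LS machinery, and needs the extra bootstrap step) but has the virtue of tying the sign property directly to the local parametrization that the paper later uses in Sections \ref{SCh143} and \ref{SDODP}, in effect deriving the lemma as a corollary of the local structure theorem rather than by an independent blow-up argument. The paper's proof is more elementary and self-contained, and extends without change to merely $\mathcal{C}^1$ nonlinearities, where the analytic LS reduction would not be available.
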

\begin{proof}
On the contrary, suppose that there exists a sequence of solutions of \eqref{vii.1}, $\{(\l_n,u_n)\}_{n\geq 1}$, such that
\begin{equation}
\label{vii.8}
  \lim_{n\to \infty}(\l_n,u_n)=(0,0) \quad \hbox{in}\;\;\mathbb{R}\times \mathcal{C}[0,\pi]
\end{equation}
and $u_n$ changes sign in $(0,\pi)$; in particular, $u_n\neq 0$. Then, for every $n\geq 1$, the functions
$v_n(x):=e^{\frac{\l_n}{2}x}u_n(x)$, $x\in [0,\pi]$, satisfy $v_n(0)=v_n(\pi)=0$ and
\begin{equation}
\label{vii.9}
  v_n = \mathcal{K}\left[ \left(1-\frac{\l_n^2}{4}\right)v_n+\left( \l_n-
  e^{-\l_n x}v_n^2\right) e^{-\frac{\l_n}{2}x}v_n^2\right],
\end{equation}
where, for every $f\in\mathcal{C}[0,\pi]$, we have denoted
$$
  \mathcal{K}[f](x):= \int_{0}^x(s-x)f(s)\,ds-\frac{x}{\pi}\int_0^\pi(s-\pi)f(s)\,ds.
$$
Note that $u\equiv \mathcal{K}[f]$ is the unique solution of $-u''=f$ in $[0,\pi]$ such that $u(0)=u(\pi)=0$.  Thus, setting $\psi_n := \frac{v_n}{\|v_n\|_\infty}$, $n\geq 1$, and dividing \eqref{vii.9}
by $\|v_n\|_\infty$ yields to
\begin{equation}
\label{vii.10}
  \psi_n = \mathcal{K}\left[ \left(1-\frac{\l_n^2}{4}\right)\psi_n+\left( \l_n-
  e^{-\l_n x}v_n^2\right) e^{-\frac{\l_n}{2}x}v_n \psi_n \right],\qquad n\geq 1.
\end{equation}
By \eqref{vii.8} and the definition of the $v_n$'s, the sequence of continuous functions
$$
  f_n:= \left(1-\frac{\l_n^2}{4}\right)\psi_n+\left( \l_n-
  e^{-\l_n x}v_n^2\right) e^{-\frac{\l_n}{2}x}v_n \psi_n,\qquad n\geq 1,
$$
is bounded in $\mathcal{C}[0,\pi]$. Thus, since $\mathcal{K}:\mc{C}[0,\pi]\to \mc{C}^1[0,\pi]$
is a compact operator, along some subsequence, relabeled by $n$, one has that
$\lim_{n\to \infty}\psi_n = \psi$ in $\mc{C}^1[0,\pi]$. In particular, $\|\psi\|_\infty=1$.
On the other hand, letting $n\to \infty$ in \eqref{vii.10}, it becomes apparent that
$\psi= \mathcal{K}\psi$ and hence, either $\psi(x)=\sin x$ for all $x\in [0,\pi]$, or
$\psi(x)=-\sin x$ for all $x\in [0,\pi]$. Therefore, $v_n$ must be positive for sufficiently large $n$ if $\psi(x)=\sin x$, whereas it is negative in $(0,\pi)$ if $\psi(x)=-\sin x$. This contradicts the assumption that $u_n$, and so $v_n$, change sign as $n\to \infty$, and ends the proof.
\end{proof}

In the Lyapunov--Schmidt reduction above, by \eqref{Eq.LS}, we have $z=\langle u, \varphi_{0}\rangle$. Thus,
an integration by parts yields
\begin{equation*}
z=\langle u, \varphi_{0}\rangle=\int_{0}^{\pi}u \varphi_{0} \ dx+\int_{0}^{\pi}u'\varphi'_{0} \ dx+\int_{0}^{\pi}u''\varphi''_{0}=3\int_{0}^{\pi}u\varphi_{0}.
\end{equation*}
Hence, by Lemma \ref{levii.1}, it follows that the solutions of $\mathfrak{G}(\l,z)=0$ are positive for $z>0$ and negative for $z<0$. Therefore, according to the asymptotic expansions \eqref{iv.7}, for $\l>0$ there emanate from $(0,0)$ two branches of positive solutions and one branch of negative solutions, while another branch of negative solutions emanates for $\l<0$, as illustrated by Figure \ref{Fig7.4}. We have represented the positive solutions in blue colour while the negative ones have been represented in green colour.
\par
We conclude this section by analyzing the structure of the set of positive solutions of \eqref{vii.1}. The next result shows that $\l>0$ is necessary for the existence of a positive solution.

\begin{lemma}
\label{levii.2}
The problem \eqref{vii.1} cannot admit any positive solution if $\l\leq 0$.
\end{lemma}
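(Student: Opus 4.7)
My plan is to test the equation against $u$ itself and exploit the Poincar\'e inequality on $(0,\pi)$ together with the fact that the convective term $\lambda u'$ disappears after this pairing. Concretely, I would multiply the identity $-u''=\lambda u'+u+(\lambda-u^2)u^2$ by a putative positive solution $u$ and integrate over $(0,\pi)$. Using the Dirichlet boundary conditions $u(0)=u(\pi)=0$, two integration by parts yield
$$\int_{0}^{\pi}(-u'')\,u\,dx = \int_{0}^{\pi}(u')^{2}\,dx, \qquad \lambda\int_{0}^{\pi}u'u\,dx=\frac{\lambda}{2}\bigl[u^{2}\bigr]_{0}^{\pi}=0,$$
so the convection drops out and I am left with
$$\int_{0}^{\pi}(u')^{2}\,dx-\int_{0}^{\pi}u^{2}\,dx=\lambda\int_{0}^{\pi}u^{3}\,dx-\int_{0}^{\pi}u^{5}\,dx.$$

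Now I would invoke the Poincar\'e inequality in its sharp form, namely the variational characterization of the first Dirichlet eigenvalue $\mu_{1}=1$ of $-D^{2}$ on $(0,\pi)$, which gives $\int_{0}^{\pi}(u')^{2}\geq \int_{0}^{\pi}u^{2}$ for every $u\in H_{0}^{1}(0,\pi)$. Therefore the left-hand side of the identity above is nonnegative. On the other hand, if $u>0$ in $(0,\pi)$ then $\int_{0}^{\pi}u^{3}\,dx>0$ and $\int_{0}^{\pi}u^{5}\,dx>0$; combined with $\lambda\leq 0$ this forces
$$\lambda\int_{0}^{\pi}u^{3}\,dx-\int_{0}^{\pi}u^{5}\,dx<0,$$
contradicting the nonnegativity of the left-hand side. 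This contradiction proves that no positive solution can exist when $\lambda\leq 0$.

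There is no genuine obstacle here; the proof reduces to a one-line integration-by-parts. The only subtle point worth flagging is the cancellation of the drift term $\lambda\int u'u=0$, which is what makes the sign inspection clean and forces the test function to be $u$ itself (rather than, say, $\varphi_{0}$, which would leave an ambiguous cross term $\lambda\int u\,\varphi_{0}'$). I would present it as a short, self-contained calculation immediately after the statement.
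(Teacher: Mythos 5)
Your proof is correct and is essentially the same as the paper's: multiply by $u$, integrate by parts to kill both $-\int u''u$ and the drift term $\lambda\int u'u$, invoke the sharp Poincar\'e inequality $\int_0^\pi u^2 \leq \int_0^\pi (u')^2$, and observe that $\int_0^\pi(\lambda-u^2)u^3\,dx$ cannot be nonnegative when $u>0$ and $\lambda\leq 0$. The only cosmetic difference is that you expand $(\lambda-u^2)u^3$ as $\lambda u^3 - u^5$ while the paper keeps it grouped.
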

\begin{proof}
Suppose \eqref{vii.1} has a positive solution, $u$. Then, multiplying \eqref{vii.1} by $u$ and integrating in $(0,\pi)$, it follows that
$$
  -\int_0^\pi u''u\,dx=\l\int_0^\pi u'u\,dx +\int_0^\pi u^2\,dx+\int_0^\pi (\l-u^2)u^3\,dx.
$$
Moreover, since $u(0)=u(\pi)=0$,
\begin{align*}
  \int_0^\pi u''u\,dx & =\int_0^\pi(u'u)'\,dx-\int_0^\pi (u')^2\,dx =-\int_0^\pi (u')^2\,dx,\\
  \int_0^\pi u'u\,dx & = \frac{1}{2}\int_0^\pi(u^2)'=0.
\end{align*}
Thus,
$$
   \int_0^\pi(u')^2 \,dx= \int_0^\pi u^2\,dx+\int_0^\pi (\l-u^2)u^3\,dx.
$$
On the other side, it is well known that $\int_0^\pi u^2\,dx \leq \int_0^\pi (u')^2\,dx$ for all
$u \in\mc{C}_0^1[0,\pi]$. Therefore, $\int_0^\pi (\l-u^2)u^3\,dx\geq 0$. Consequently,
$\l>0$, which ends the proof.
\end{proof}

The next result provides us with some useful estimates for the positive solutions of
\eqref{vii.1}.

\begin{lemma}
\label{levii.3}
Let $(\l,u)$ be a positive solution of \eqref{vii.1}. Then, $\l>0$ and
\begin{equation}
\label{vii.11}
  \|u\|_\infty \leq \sqrt{\l}+1.
\end{equation}
\end{lemma}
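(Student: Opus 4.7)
The positivity of $\l$ is already delivered by Lemma \ref{levii.2}, so the whole content of the lemma is the $L^\infty$ estimate, and the plan is to derive it from a standard maximum principle argument for the ODE. Since $u(0)=u(\pi)=0$ and $u>0$ in $(0,\pi)$, the function $u$ attains its supremum at some interior point $x_0\in(0,\pi)$, where $u'(x_0)=0$ and $u''(x_0)\le 0$. Setting $M:=\|u\|_\infty=u(x_0)$ and evaluating the equation $-u''=\l u'+u+(\l-u^2)u^2$ at $x_0$ yields
\begin{equation*}
0\le -u''(x_0)=M+\l M^{2}-M^{4}=M\bigl(1+\l M-M^{3}\bigr),
\end{equation*}
so, using $M>0$, one obtains the key scalar inequality
\begin{equation*}
p(M):=M^{3}-\l M-1\le 0.
\end{equation*}

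The second step is purely algebraic: I will show that $p(\sqrt{\l}+1)>0$, which, together with $p(0)=-1<0$ and $p$ being increasing on $[\sqrt{\l/3},\infty)$, forces $M\le \sqrt{\l}+1$. Indeed, expanding directly,
\begin{equation*}
p(\sqrt{\l}+1)=(\sqrt{\l}+1)^{3}-\l(\sqrt{\l}+1)-1=\l\sqrt{\l}+3\l+3\sqrt{\l}+1-\l\sqrt{\l}-\l-1=2\l+3\sqrt{\l}>0,
\end{equation*}
for $\l>0$. Since $p'(M)=3M^{2}-\l$ vanishes only at $M=\sqrt{\l/3}$ and $p$ is strictly increasing on $[\sqrt{\l/3},+\infty)$, the equation $p(M)=0$ has a unique real root $M^{\ast}$, and $p(M)\le 0$ is equivalent to $M\le M^{\ast}$. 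The inequality $p(\sqrt{\l}+1)>0$ then gives $M^{\ast}<\sqrt{\l}+1$, which, combined with $M\le M^{\ast}$, yields the desired bound $\|u\|_\infty\le \sqrt{\l}+1$.

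There is essentially no obstacle here: the only delicate point is checking that the cubic $p(M)$ is monotone beyond its unique local minimum, which is an elementary calculus computation, and that the ansatz $\sqrt{\l}+1$ beats the largest real root of $p$, which is the explicit expansion written above. The argument never uses that $\l$ is small, so the estimate is uniform on the full positive solution set.
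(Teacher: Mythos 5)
Your proof is correct and follows exactly the same line as the paper: evaluate the equation at an interior maximum $x_0$ to derive $P(\|u\|_\infty)\le 0$ with $P(z)=z^3-\l z-1$, check $P(\sqrt{\l}+1)=2\l+3\sqrt{\l}>0$, and conclude by monotonicity of $P$ on $[\sqrt{\l/3},\infty)$. One small imprecision worth noting: for $\l$ large enough the local maximum value $P(-\sqrt{\l/3})$ becomes positive and $P$ then has three real roots, so ``unique real root'' should be ``unique positive root'' (as the paper phrases it, writing $\tau(\l)$); this does not affect your argument since you only need $M=\|u\|_\infty>0$ and the monotonicity beyond $\sqrt{\l/3}$, which you invoke correctly.
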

\begin{proof}
By Lemma \ref{levii.2}, $\l>0$. Let $x_0\in (0,\pi)$ be such that $u(x_0)=\|u\|_\infty = \max_{[0,\pi]}u$.
Then, $u'(x_0)=0$ and  $u''(x_0)\leq 0$. Thus,
$$
  0\leq -u''(x_0)=\l u'(x_0)+u(x_0)+(\l-u^2(x_0))u^2(x_0)=u(x_0) +(\l-u^2(x_0))u^2(x_0).
$$
Hence, since $u(x_0)>0$, $1+ (\l-u^2(x_0))u(x_0)\geq 0$. Consequently, setting
$P(z):=z^3-\l z -1$, $z\in\mathbb{R}$, we have that $P(\|u\|_\infty)\leq 0$. Since $P'(z)=3z^2-\l$, it is easily seen that the cubic polynomial $P(z)$ has a local maximum at $-\sqrt{\l/3}$, a local minimum at
$\sqrt{\l/3}$, and it satisfies $P(0)=-1<0$. Since  $P(\|u\|_{\infty})\leq0$, necessarily $\|u\|_{\infty}\in(0,\tau(\lambda))$, where $\tau(\lambda)$ is the unique positive root of $P$. Moreover, since
\begin{align*}
  P(\sqrt{\l}+1)  =(\sqrt{\l}+1)^3-\l(\sqrt{\l}+1)-1 =2\l+3\sqrt{\l}>0
\end{align*}
for all $\l>0$, then $\tau(\lambda)\leq \sqrt{\l}+1$. Therefore, \eqref{vii.11} holds.
\end{proof}

As a consequence of Lemma \ref{levii.3}, the next result holds.

\begin{lemma}
\label{levii.4}
Let $(\l,u)$ be a positive solution of \eqref{vii.1}. Then,
\begin{equation}
\label{vii.12}
   0<\l\leq 4(1+\sqrt{2})^2.
\end{equation}
\end{lemma}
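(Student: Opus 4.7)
The lower bound $\lambda>0$ is immediate from Lemma \ref{levii.2}, so the entire content of the proof is the upper bound. The plan is to combine the $L^\infty$ estimate of Lemma \ref{levii.3} with an integral identity obtained by testing a transformed version of the equation against the first Dirichlet eigenfunction $\sin x$ of $-u''$ on $(0,\pi)$.

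The first step is to perform the exponential substitution $u(x) = e^{-\lambda x/2} v(x)$ that was used earlier in the section to kill the convection term in the linearization. A direct computation shows that, under this substitution, the equation \eqref{vii.1} becomes
\begin{equation*}
  -v'' + \Big(\tfrac{\lambda^2}{4} - 1\Big) v \;=\; (\lambda - u^{2}) u^{2} e^{\lambda x/2} \quad \text{in } (0,\pi), \qquad v(0)=v(\pi)=0,
\end{equation*}
where on the right-hand side one reads $u = e^{-\lambda x/2}v$. The key point is that the linear part on the left is now self-adjoint with respect to the Lebesgue measure, and $\sin x$ is precisely the eigenfunction of $-v''$ corresponding to the smallest Dirichlet eigenvalue $1$.

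The second step is therefore to multiply the transformed equation by $\sin x$ and integrate over $(0,\pi)$. Two integrations by parts applied to $\int_0^\pi -v''\sin x\,dx$ produce $\int_0^\pi v\sin x\,dx$ (the boundary contributions vanish because $v(0)=v(\pi)=0$ and $\sin 0 = \sin\pi = 0$), so the $-1$ in the potential cancels and one is left with the clean identity
\begin{equation*}
  \tfrac{\lambda^{2}}{4} \int_{0}^{\pi} u\, e^{\lambda x/2} \sin x \, dx \;=\; \int_{0}^{\pi} (\lambda - u^{2}) u^{2} \, e^{\lambda x/2} \sin x \, dx,
\end{equation*}
after substituting $v = e^{\lambda x/2} u$. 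Since $u>0$ and $\sin x>0$ on $(0,\pi)$, the weighted integral $\int_0^\pi u\,e^{\lambda x/2}\sin x\,dx$ is strictly positive, so it may be cancelled at the end.

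In the third step I will bound the right-hand side. Using the elementary estimate $(\lambda - u^{2})u^{2} \leq \lambda u^{2}$ together with the bound $u\leq \|u\|_{\infty}\leq \sqrt{\lambda}+1$ from Lemma \ref{levii.3} gives $u^{2}\leq(\sqrt{\lambda}+1)u$, and hence
\begin{equation*}
  \int_{0}^{\pi} (\lambda - u^{2}) u^{2} \, e^{\lambda x/2} \sin x \, dx \;\leq\; \lambda\,(\sqrt{\lambda}+1) \int_{0}^{\pi} u\, e^{\lambda x/2} \sin x \, dx.
\end{equation*}
Cancelling the positive weighted integral yields $\lambda^{2}/4 \leq \lambda(\sqrt{\lambda}+1)$, i.e. $\lambda \leq 4(\sqrt{\lambda}+1)$. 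Setting $t := \sqrt{\lambda}$ reduces this to the quadratic inequality $t^{2}-4t-4 \leq 0$, whose unique positive root is $t=2(1+\sqrt{2})$; squaring gives the claimed bound $\lambda \leq 4(1+\sqrt{2})^{2}$. No step looks difficult: the only delicate point is the verification of the transformed equation and the double integration by parts, both of which are routine bookkeeping.
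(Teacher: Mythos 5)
Your proof is correct and arrives at exactly the same inequality $\lambda/4\leq 1+\sqrt{\lambda}$ as the paper, but via a slightly different test function. You perform the same change of variable $u=e^{-\lambda x/2}v$, but then you pair the transformed equation against the first Dirichlet eigenfunction $\sin x$ and exploit $-(\sin x)''=\sin x$ through double integration by parts, whereas the paper pairs against $v$ itself (the weak/energy formulation) and then invokes the Poincar\'e inequality $\int_0^\pi v^2\,dx\leq\int_0^\pi (v')^2\,dx$, which of course encodes the same spectral information. Both arguments then apply $(\lambda-u^2)u^2\leq\lambda u^2$ together with the $L^\infty$ bound of Lemma \ref{levii.3} and cancel a strictly positive integral. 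The two routes are essentially two faces of the same variational idea; yours is marginally more elementary in that it dispenses with the Rayleigh--quotient step in favor of explicit integration by parts against $\sin x$, while the paper's version has the advantage of not needing positivity of $u$ to guarantee the cancelled integral is nonzero (it cancels $\int_0^\pi v^2\,dx$, positive as long as $u\not\equiv 0$).
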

\begin{proof}
The change of variable $v=e^{\frac{\l}{2}x}u$ transforms the problem \eqref{vii.1} into the next one
\begin{equation}
\label{vii.13}
  \left\{ \begin{array}{l} -v''= \frac{4-\l^2}{4}v+\left(\l-e^{-\l x}v^2\right) e^{-\frac{\l}{2} x}v^2,
  \quad x\in [0,\pi],\\[1ex] v(0)=v(\pi)=0. \end{array} \right.
\end{equation}
Multiplying the $v$-differential equation by $v$ and integrating by parts in $(0,\pi)$ yields to
$$
  \int_0^\pi (v')^2\,dx =\int_{0}^{\pi}\frac{4-\l^2}{4}v^2\,dx+\int_0^\pi  \left(\l-e^{-\l x}v^2\right) e^{-\frac{\l}{2} x}v^3\,dx.
$$
Thus, since $\int_0^\pi v^2 \leq \int_0^\pi (v')^2\,dx$, it is apparent that
$$
  \frac{\l^2}{4}\int_0^\pi v^2\,dx \leq \int_0^\pi \left(\l-e^{-\l x}v^2\right) e^{-\frac{\l}{2} x}v^3\,dx\leq \l \int_0^\pi u v^2\,dx.
$$
Therefore, combining this estimate with \eqref{vii.11}, we find that $\frac{\l}{4} \leq 1+\sqrt{\l}$. From this estimate, the proof is straightforward.
 \end{proof}

Subsequently, we will say that a function  $u\in\mathcal{C}^{1}_0[0,\pi]$ is strongly positive if $u(x)>0$ for every $x\in(0,\pi)$, $u'(0)>0$, and $u'(\pi)<0$. In such case, we simply write $u\gg 0$. Similarly, it is said that a function $u\in\mathcal{C}^{1}_0[0,\pi]$ is strongly negative if $-u\gg 0$, i.e., $u\ll 0$. We claim that any positive (resp. negative) solution of \eqref{vii.1} is strongly positive (resp. negative).  Indeed, suppose that $u\gneq 0$ is a positive solution
of \eqref{vii.1} such that either $u'(0)=0$, or $u'(\pi)=0$, or $u(\eta)=0$ for some $\eta\in (0,\pi)$.
Then, since $u'(\eta)=0$ in the latest case, in any of these cases there exists $x_0\in [0,\pi]$ for which $u$ satisfies the Cauchy problem
\begin{equation}
	\label{eq}
	\left\{\begin{array}{l}
	-u''=\l_0 u'+u+(\l-u^2)u^2 \quad  \text{ in } (0,\pi), \\
	u(x_0)=0,\;\; 	u'(x_0)=0.	\end{array} \right.
\end{equation}
By the Cauchy--Lipschitz theorem, there exists a unique maximal solution $u\in\mathcal{C}^{2}$ of \eqref{eq}. By uniqueness, $u=0$, which contradicts $u\gneq 0$. Therefore, $u\gg 0$. Similarly, any negative solution must be strongly negative. Based on this positivity result, the next result holds.

\begin{lemma}
	\label{levii.5}
	Let $\{(\l_{n},u_{n})\}_{n\in\mathbb{N}}\subset\mathfrak{F}^{-1}(0)$ be a sequence of positive (resp.  negative) solutions of \eqref{vii.1}, such that
$\lim_{n\to\infty}(\l_n,u_n)=(\l_0,u_0)\in\mathfrak{F}^{-1}(0)$ in $\mathbb{R}\times U$. Then, either $u_0\gg 0$, or $u_0=0$. In other words, the unique way to abandone the interior of the positive cone of the ordered Banach space $\mc{C}_0^1[0,\pi]$ is through $u=0$.
\end{lemma}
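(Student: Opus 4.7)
The plan is to reduce the claim to the strong positivity property already established (just before the lemma statement) for nontrivial nonnegative solutions of \eqref{vii.1}. The argument has two ingredients: passing the nonnegativity of the $u_n$'s to the limit, and then invoking the Cauchy--Lipschitz uniqueness argument at any candidate boundary/interior zero of $u_0$.

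First, I would exploit the continuous Sobolev embedding $U = H^2(0,\pi) \cap H^1_0(0,\pi) \hookrightarrow \mathcal{C}^1_0[0,\pi]$ valid in one dimension, so that the hypothesis $u_n \to u_0$ in $U$ yields $u_n \to u_0$ in $\mathcal{C}^1_0[0,\pi]$. Since $u_n \gg 0$ for every $n\geq 1$, the pointwise limit satisfies $u_0 \geq 0$ on $[0,\pi]$ and $u_0'(0) \geq 0$, $u_0'(\pi) \leq 0$. Moreover, the continuity of $\mathfrak{F}: \mathbb{R}\times U \to V$ forces $\mathfrak{F}(\l_0,u_0)=0$, so that $u_0$ is a classical solution of \eqref{vii.1} with parameter $\l_0$.

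Next I would dichotomize. If $u_0 \equiv 0$ there is nothing to prove, so suppose $u_0 \gneq 0$. I would argue by contradiction that $u_0$ must be strongly positive. Assume that some of the following occur: $u_0(\eta)=0$ at a point $\eta\in(0,\pi)$, or $u_0'(0)=0$, or $u_0'(\pi)=0$. In the first case the nonnegativity of $u_0$ implies that $\eta$ is an interior minimum, hence $u_0'(\eta)=0$ as well; in each of the three cases one produces a point $x_0\in[0,\pi]$ where both $u_0(x_0)=0$ and $u_0'(x_0)=0$. As in the strong positivity argument recorded in the paragraph preceding this lemma, the function $u_0$ then satisfies the Cauchy problem
\begin{equation*}
\left\{\begin{array}{l}
-u''=\l_0 u'+u+(\l_0-u^2)u^2 \quad \text{in } (0,\pi),\\
u(x_0)=0,\; u'(x_0)=0,
\end{array}\right.
\end{equation*}
whose unique maximal $\mathcal{C}^2$ solution, by the Cauchy--Lipschitz theorem applied to the equivalent first-order planar system with locally Lipschitz right-hand side, is $u_0\equiv 0$. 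This contradicts $u_0\gneq 0$. Consequently $u_0(x)>0$ for all $x\in(0,\pi)$, $u_0'(0)>0$ and $u_0'(\pi)<0$, i.e., $u_0 \gg 0$.

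The negative case is treated verbatim upon replacing $u_n$ by $-u_n$ and using the symmetry $u\mapsto -u$ in the signs of the terms in \eqref{vii.1} plus the analogous Cauchy--Lipschitz argument, or, more directly, by applying the positive case to the sequence $\{(\l_n,-u_n)\}_{n\in\mathbb{N}}$ after observing that the nonlinearity $(\l-u^2)u^2$ is even in $u$ only up to the term $\l u'$; since the differential equation is not invariant under $u\mapsto -u$, one reruns the same argument with $u_n\ll 0$, noting that $u_0\leq 0$ by limits and that a nontrivial nonpositive solution with a vanishing derivative at an endpoint or an interior zero would again force $u_0\equiv 0$ via Cauchy--Lipschitz. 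No routine step is genuinely delicate here: the only point to be careful about is that the convergence in $U$ is strong enough to transfer information on $u'$ at the boundary, which is secured by $H^2\hookrightarrow \mathcal{C}^1$.
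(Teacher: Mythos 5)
Your proof is correct and follows essentially the same route as the paper's: use the one-dimensional embedding $U\hookrightarrow\mathcal{C}^1[0,\pi]$ to pass the sign condition to the limit, giving $u_0\geq 0$, and then conclude $u_0\gg 0$ or $u_0\equiv 0$ via the Cauchy--Lipschitz strong-positivity dichotomy established in the paragraph preceding the lemma. The only difference is cosmetic: the paper simply cites that dichotomy (``we already know that $u_0\gg 0$''), whereas you rerun the Cauchy--Lipschitz argument explicitly; the added remark about $\mathfrak{F}(\lambda_0,u_0)=0$ is also harmless but redundant, since the lemma's hypotheses already assume $(\lambda_0,u_0)\in\mathfrak{F}^{-1}(0)$.
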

\begin{proof}
We will detail the proof of the result when $u_n\gg 0$, $n\geq 1$, since the case when $u_n\ll 0$ is analogous. Since $u_{n}\to u_0$ in $U$ as $n\to\infty$ and $U\hookrightarrow \mathcal{C}^{1}[0,\pi]$,  we have that  $u_{n}\to u_{0}$ in $\mathcal{C}^{1}[0,\pi]$ as $n\to\infty$. This implies that $u_0\geq 0$ in $[0,\pi]$. Thus, either $u_0 \gneq 0$, or $u_0\equiv 0$, and, should the first case occurs,  we already know that $u_0\gg 0$. This ends the proof.
\end{proof}

This section finalizes by proving that, there is a loop of positive solutions of \eqref{vii.1} emanating from $u=0$ at $\l=0$. The existence of a connected component of the set of positive solutions
$$\mathscr{S}:=\{(\l,u)\in\mathfrak{F}^{-1}(0): \;u\gg 0\}\subset \mathbb{R}\times U,$$
bifurcating from $(\lambda,u)=(0,0)$ has been already established. More precisely,
we already know that there emanate from $(0,0)$  two analytic arcs of positive solutions $\gamma_{i}:(0,\varepsilon)\to \mathbb{R}\times U$ of the form $\gamma_{i}(\lambda)=(\lambda,u_{i}(\lambda))$, with $\lim_{\lambda\downarrow 0}u_{i}(\lambda)=0$, $i\in\{1,2\}$.
The connected components of the set of positive solutions $\mathscr{S}$ containing to each of the curves $\gamma_{1}$ and $\gamma_{2}$, locally at $(0,0)$, will be called $\mathscr{C}^{+}_{1}$ and $\mathscr{C}^{+}_{2}$, respectively.

\begin{theorem}
	\label{th7.8}
	Under the previous assumptions, $\mathscr{C}^{+}_{1}=\mathscr{C}^{+}_{2}$. Moreover each of the local curves $\gamma_{i}:(0,\varepsilon)\to \mathbb{R}\times U$ can be continued to a global locally injective continuous curve $\Gamma_{i}: (0,T)\to\mathscr{C}^{+}_{i}$ such that $\Gamma_{i}|_{[T-\delta,T)}=\gamma_{j}$ for some $\delta>0$ and  $j\in\{1,2\}\backslash\{i\}$. Thus, there is a loop of positive solutions of \eqref{vii.1} with vertex at $(0,0)$.
\end{theorem}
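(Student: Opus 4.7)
My plan is to apply the analytic global alternative of Theorem \ref{th.BT} to each arc $\gamma_{i}$, $i=1,2$, and to identify the return arc of each resulting loop by means of the local analysis of Section \ref{SCh143}. I would first establish a priori bounds for positive solutions. Lemmas \ref{levii.2}--\ref{levii.4} already give $0<\lambda\leq 4(1+\sqrt{2})^{2}$ and $\|u\|_{\infty}\leq \sqrt{\lambda}+1$ for every positive solution $(\lambda,u)$ of \eqref{vii.1}. Multiplying the equation by $u$ and integrating by parts produces a bound on $\|u'\|_{L^{2}}$ in terms of $\lambda$ and $\|u\|_{\infty}$, and inserting this back into the differential equation bounds $\|u''\|_{L^{2}}$. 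Hence the set of positive solutions is bounded in $\mathbb{R}\times U$.

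Next, the Weierstrass factorization performed in Section \ref{SCh143} exhibits $\gamma_{1}$ and $\gamma_{2}$ as two distinct non-constant real roots of $\lambda^{2}+c_{1}(z)\lambda+c_{0}(z)$, so by Theorem \ref{T6.4.4} both arcs consist of regular points of $\mathfrak{F}$. Applying Theorem \ref{th.BT} to each $\gamma_{i}$ I obtain a locally injective continuous extension $\Gamma_{i}\colon [0,\infty)\to \mathfrak{F}^{-1}(0)$ coinciding with $\gamma_{i}$ on a small initial interval. The unbounded alternative is ruled out by the bound above, so each $\Gamma_{i}$ is a closed loop; let $T_{i}>0$ denote the minimal time with $\Gamma_{i}(T_{i})=(0,0)$. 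The maximal-route construction underlying Theorem \ref{th.BT} ensures that $\Gamma_{i}$ is a simple loop, that is, injective on $(0,T_{i})$.

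I would then show that $\Gamma_{i}((0,T_{i}))\subset \mathscr{S}$. The set of those $t\in(0,T_{i})$ for which $\Gamma_{i}(t)\in\mathscr{S}$ is open and nonempty; moreover, any interior boundary point $t_{0}$ would yield $\Gamma_{i}(t_{0})=(\lambda_{0},0)$ by Lemma \ref{levii.5}. Since $\Sigma(\mathfrak{L})=\{0\}$, for any $\lambda_{0}\neq 0$ the operator $\mathfrak{L}(\lambda_{0})$ is invertible and the implicit function theorem identifies a neighborhood of $(\lambda_{0},0)$ in $\mathfrak{F}^{-1}(0)$ with the trivial branch, preventing approach from nontrivial solutions; hence $\lambda_{0}=0$, contradicting the minimality of $T_{i}$. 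Consequently the image of $\Gamma_{i}$ lies in $\mathscr{C}_{i}^{+}$ away from its endpoints.

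It remains to identify the return arc, which is the main obstacle. The local description of $\mathscr{S}$ near $(0,0)$ obtained in Section \ref{SCh143} provides some $\eta>0$ such that $\mathscr{S}\cap B_{\eta}(0,0)$ consists exactly of the initial pieces of $\gamma_{1}$ and $\gamma_{2}$. By local injectivity of $\Gamma_{i}$ at $T_{i}$, there exist $\delta>0$ and an index $j\in\{1,2\}$ such that $\Gamma_{i}|_{[T_{i}-\delta,T_{i})}$ parametrizes a portion of $\gamma_{j}$. To rule out $j=i$, I assume the contrary: then both the initial and terminal pieces of $\Gamma_{i}$ parametrize arbitrarily small initial segments of the single injective arc $\gamma_{i}$, so for any sufficiently small $t>0$ there exists $s$ near $T_{i}$ with $\Gamma_{i}(t)=\gamma_{i}(t)=\Gamma_{i}(s)$. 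Since $t<s$ and both lie in $(0,T_{i})$, this contradicts the injectivity of $\Gamma_{i}$ on $(0,T_{i})$ established above. Thus $j\neq i$, whence $\gamma_{j}\subset \mathscr{C}_{i}^{+}$ and $\mathscr{C}_{1}^{+}=\mathscr{C}_{2}^{+}$; the compact set $\Gamma_{1}([0,T_{1}])$ is the asserted closed loop of positive solutions with vertex at $(0,0)$.
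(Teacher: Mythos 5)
Your proposal follows essentially the same route as the paper's own proof: both establish that the local arcs $\gamma_{1},\gamma_{2}$ consist of regular points by factoring the Weierstrass polynomial and inspecting the discriminant/derivatives; both invoke Theorem \ref{th.BT} to extend the arcs to global locally injective paths; both use Lemma \ref{levii.5} to keep the continuation inside the positive cone, and the a priori bounds of Lemmas \ref{levii.3}--\ref{levii.4} to rule out the unbounded alternative; and both close the loop at $(0,0)$ and identify the return arc via the local description of the solution set. You actually supply more detail than the paper in two places: the bootstrap from an $L^{\infty}$ bound to a bound in the $H^{2}$-norm (the paper simply cites Lemmas \ref{levii.3}--\ref{levii.4}), and the explicit argument that the terminal arc cannot coincide with the initial one (the paper merely asserts $\Gamma_{1}|_{(T-\delta,T]}=\gamma_{2}$).

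One cautionary note on that last step: Theorem \ref{th.BT} as stated only guarantees that $\Gamma_{i}$ is \emph{locally} injective, not globally injective on $(0,T_{i})$. The Buffoni--Toland maximal-route construction ensures that no distinguished arc of regular points is traversed twice, but it does not in general prevent revisiting vertices, so ``injective on $(0,T_{i})$'' is formally stronger than what the cited theorem provides. Fortunately your argument only applies the claim at the points $\gamma_{i}(t)$ with $t>0$ small, which are regular points lying on a single distinguished arc; at such points, passing through twice would mean retraversing that arc, and that \emph{is} excluded by maximality. So the conclusion is sound, but it would be cleaner to invoke arc non-repetition directly rather than global injectivity of $\Gamma_{i}$.
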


\begin{proof}
	Once given the local curve $\gamma_{1}:(0,\varepsilon)\to \mathbb{R}\times U$ and the component  $\mathscr{C}^{+}_{1}$, in order to apply Theorem \ref{th.BT}, we should make sure that, for sufficiently small $\varepsilon>0$, the set $\gamma_{1}(0,\varepsilon)\subset \mathfrak{F}^{-1}(0)$
	consists of regular points of $\mathfrak{F}$. By the local analysis already done in Section \ref{SectionALS}, the regular and singular points of $\mathfrak{F}$ in $\mathfrak{F}^{-1}(0)\cap \mathcal{O}$ are in analytical correspondence with those of the reduced map $\mathfrak{G}(\l,z)=zg(\l,z)$, $(\l,z)\in\mathscr{U}$. So, it suffices to prove that, near $(0,0)$, the set $\mathfrak{G}^{-1}(0)$ does not contain any singular point of $\mathfrak{G}$ different form $(0,0)$. By the Weierstrass preparation theorem, shortening the neighborhood $\mathscr{U}=\mathscr{U}_{\lambda}\times\mathscr{U}_{z}\subset\mathbb{R}^{2}$,  if necessary,  there exists an analytic function $c:\mathscr{U}\to\mathbb{R}$ such that  $c(0,0)\neq 0$, plus $\chi=2$ analytic functions, $c_{j}:\mathscr{U}_{z}\to\mathbb{R}$, $c_{j}(0)=0$, $j=1,2$, such that
	\begin{equation*}
		g(\lambda,z)=c(\lambda,z)\left[\lambda^{2}+c_{1}(z)\lambda+c_{2}(z)\right].
	\end{equation*}
Hence, we can express $\mathfrak{G}:\mathscr{U}\to\mathbb{R}$ in the form
	$$
	\mathfrak{G}(\lambda,z)=zc(\lambda,z)\left[\lambda^{2}+c_{1}(z)\lambda+c_{2}(z)\right].
	$$
	By the local analysis already done in Section \ref{SCh143} (see Figure \ref{Fig7.4}), for every $z\in \mathscr{U}_{z}\backslash\{0\}$, the equation $\mathfrak{G}(\l,z)=0$ has two positive different solutions in $\lambda\in\mathscr{U}_{\lambda}$. Thus, there are two analytic maps,  $\varphi_{j}:(-\delta,\delta)\backslash\{0\}\to\mathbb{R}$, $j=1,2$, such that
	\begin{equation*}
		\mathfrak{G}(\l,z)=z c(\l,z)(\lambda-\varphi_{1}(z))(\l-\varphi_{2}(z)), \quad z\in\mathscr{U}_{z}\backslash\{0\}.
	\end{equation*}
	By a direct computation if follows that $(\l,z)\in\mathfrak{G}^{-1}(0)\cap\mathscr{U}$, $(\l,z)\neq(0,0)$,  is a singular point, i.e.,  $D_{z}\mathfrak{G}(\lambda,z)=0$,  if and only if $\varphi_{1}(z)=\varphi_{2}(z)$ or $\varphi'_{j}(z)=0$ for some $j=1,2$. According to
	\eqref{iv.7}, for sufficiently small $\mathscr{U}$, this is not possible. Therefore, $\gamma_{1}:(0,\varepsilon)\to\mathbb{R}\times U$ consists of regular points for
	sufficiently small $\varepsilon>0$. By Theorem \ref{th.BT}, $\gamma_{1}$ admits a prolongation
	to a global locally injective continuous map $\Gamma_{1}:[0,\infty)\to\mathbb{R}\times U$, $\Gamma_{1}([0,\infty))\subset \mathfrak{F}^{-1}(0)$, satisfying one of the alternatives (a) or (b). Due to Lemma \ref{levii.5}, $\Gamma_{1}([0,\infty))\subset\mathscr{C}^{+}_{1}$. Thanks to Lemmas \ref{levii.3} and \ref{levii.4}, $\Gamma_{1}([0,\infty))$ is bounded. Therefore, the alternative (a) cannot occur. Consequently, there exists some $T>0$ such that $\Gamma_{1}(T)=(0,0)$. As in a neighborhood of $(0,0)$ the set of positive solutions consists of
	the graphs of $\gamma_{1}$ and $\gamma_{2}$, being  $\Gamma_{1}$ is locally injective, it follows that, modulus a re-parametrization (if necessary), $\Gamma_{1}|_{(T-\delta,T]}=\gamma_{2}$. This implies, in particular, that $\mathscr{C}^{+}_{1}=\mathscr{C}^{+}_{2}$ and concludes the proof.
\end{proof}

\section{Unilateral bifurcation at geometrically simple eigenvalues}\label{SUBGSE}

\noindent This section is devoted to the study of the unilateral bifurcation problem. Throughout this section, we consider a pair $(U,V)$ of real Banach spaces such that
\begin{enumerate}
	\item[(C)] $U$  is a subspace of $V$ with compact inclusion  $U \hookrightarrow V$.
\end{enumerate}
We consider a map $\mathfrak{F}\in\mathcal{C}^1(\mathbb{R}\times U,V)$ satisfying
the following assumptions:
\begin{enumerate}
	\item[(F1)] $\mathfrak{F}(\lambda,0)=0$ for all $\lambda\in\mathbb{R}$.
	\item[(F2)] $D_{u}\mathfrak{F}(\lambda,u)\in\Phi_{0}(U,V)$ for all $\lambda\in\mathbb{R}$ and $u\in U$.
	\item[(F3)] $\mf{F}$ is proper on closed and bounded subsets of $\R\times U$.
	\item[(F4)] The map $\mf{N}(\l,u):= \mf{F}(\l,u)-D_u\mf{F}(\l,0)u$, $(\l,u)\in \R\times U$,
	admits a continuous extension, also denoted by $\mf{N}$, to $\R \times V$.
	\item[(F5)] $\l_0$ is an isolated eigenvalue of $\mf{L}(\l):=D_u\mf{F}(\l,0)$ such that
	$N[\mf{L}_{0}]=\mathrm{span}[\varphi_{0}]$ for some $\varphi_0\in U$ with  $\|\v_0\|=1$.
\end{enumerate}
Let us consider a closed subspace $Z\subset U$ such that $U=N[\mf{L}_0]\oplus Z$.
Then, by the Hahn--Banach theorem, there exists $\v_0^* \in U'$ such that
\begin{equation*}
	Z=\{u\in U\;:\; \langle\v_0^*,u\rangle=0\} =N[\v_0^*],\qquad \langle\v_0^*,\v_0\rangle =1,
\end{equation*}
where $\langle\cdot,\cdot\rangle$ stands for the $\langle U',U\rangle$-duality. In particular, every $u\in U$ admits a unique decomposition as $u=s\v_0+z$ for some $(s,z)\in\R\times Z$. Necessarily, $s:= \langle \v_0^*,u\rangle$. Let $\psi :U\to \mathbb{R}$ be a continuous functional such that, for some positive constants $0<C_{1}<C_{2}$,
\begin{equation}
	\label{6.6.47}
	\psi(0)=0,\qquad C_{1}\|u\|\leq \psi(u)\leq C_{2} \|u\|\quad \hbox{for all}\;\; u\in U.
\end{equation}
Then, for every $\e>0$ and $\eta \in (0,1/C_{2})$, we consider
\[
Q_{\e,\eta}:=\{(\l,u)\in\R\times U\;:\;|\l-\l_0|<\e,\;\; |\langle \v_0^*,u\rangle|>\eta \psi(u)\}.
\]
Since the mapping $u\mapsto |\langle \v_0^*,u\rangle|-\eta \psi(u)$ is continuous, $Q_{\e,\eta}$ is an open subset of $\R\times U$, and it consists of the two open subsets
\begin{align*}
	Q_{\e,\eta}^+ & :=\{(\l,u)\in\R\times U\;:\; |\l-\l_0|<\e,\;\; \langle \v_0^*,u\rangle >\eta \psi(u)\},\cr
	Q_{\e,\eta}^- & :=\{(\l,u)\in\R\times U\;:\; |\l-\l_0|<\e,\;\; \langle \v_0^*,u\rangle <- \eta \psi(u)\}.
\end{align*}
The following result establishes that, under the conditions of this section, the nontrivial solutions of $\mf{F}(\l,u)=0$ in a neighborhood of $(\l_0,0)$ must lie in $Q_{\e,\eta}$.  Note that $(\l_0,0)$ might not be a bifurcation point of $\mf{F}(\l,u)=0$ from $\mc{T}=\{(\l,0):\l\in\R\}$. We denote by $\mc{S}$ the set of non-trivial solutions of $\mf{F}(\l,u)=0$, which is given by
$\mc{S}=\left[ \mf{F}^{-1}(0)\backslash \mc{T}\right]\cup \{(\l,0):\;\l\in \Sigma(\mathfrak{L})\}$.
The first result of this section reads as follows.
\begin{proposition}
	\label{P6.6.1}
	Let $(U,V)$ be a pair of real Banach spaces satisfying {\rm (C)} and $\mf{F}\in\mc{C}^{1}(\R\times U,V)$ a map satisfying {\rm (F1)--(F5)}. Then, for sufficiently small $\e>0$, there exists $\d_0=\d_0(\eta)>0$ such that, for every $\d \in (0,\d_0)$, $[\mc{S}\setminus\{(\l_0,0)\}]\cap B_\d(\l_0,0)\subset Q_{\e,\eta}$.
\end{proposition}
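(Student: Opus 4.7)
The plan is to argue by contradiction. First I would shrink $\varepsilon>0$ so that $\l_0$ is the unique generalized eigenvalue of $\mf{L}$ in $[\l_0-\varepsilon,\l_0+\varepsilon]$; this is possible by hypothesis (F5). If the statement fails for this $\varepsilon$, then there exists a sequence $\{(\l_n,u_n)\}_{n\in\N}\subset \mc{S}\setminus\{(\l_0,0)\}$ with $(\l_n,u_n)\to (\l_0,0)$ in $\R\times U$ such that
\[
|\langle\v_0^{*},u_n\rangle|\leq \eta\,\psi(u_n)\qquad \hbox{for all } n\geq 1.
\]
If $u_n=0$ for infinitely many $n$, then $\l_n\in\Sigma(\mf{L})\setminus\{\l_0\}$ and $\l_n\to\l_0$, contradicting the isolation of $\l_0$. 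Hence, passing to a subsequence, I may assume $u_n\neq 0$ for every $n$.

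Normalize by setting $v_n:=u_n/\|u_n\|_U$ and decompose uniquely $v_n=s_n\v_0+z_n$ with $s_n:=\langle\v_0^{*},v_n\rangle\in\R$ and $z_n\in Z$. Dividing the contradiction hypothesis by $\|u_n\|_U$ and invoking the upper bound $\psi(u)\leq C_2\|u\|_U$ from \eqref{6.6.47} yields the uniform estimate
\[
|s_n|\;=\;\frac{|\langle\v_0^{*},u_n\rangle|}{\|u_n\|_U}\;\leq\;\eta\,\frac{\psi(u_n)}{\|u_n\|_U}\;\leq\;\eta\,C_2\;<\;1,
\]
the last strict inequality being exactly the condition $\eta<1/C_2$ imposed in the definition of $Q_{\varepsilon,\eta}$.

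Next I would extract the linearized limit. Writing $\mf{F}(\l_n,u_n)=\mf{L}(\l_n)u_n+\mf{N}(\l_n,u_n)=0$ and dividing by $\|u_n\|_U$,
\[
\mf{L}(\l_n)\,v_n \;=\; -\,\frac{\mf{N}(\l_n,u_n)}{\|u_n\|_U}.
\]
Since $\mf{F}\in\mc{C}^{1}(\R\times U,V)$ and $D_u\mf{N}(\l_0,0)=0$, the fundamental theorem of calculus gives
$\mf{N}(\l_n,u_n)/\|u_n\|_U=\int_0^1 D_u\mf{N}(\l_n,tu_n)\,v_n\,dt,$
whose $V$-norm is dominated by $\sup_{t\in[0,1]}\|D_u\mf{N}(\l_n,tu_n)\|_{U\to V}\to 0$ by continuity of $D_u\mf{N}$ at $(\l_0,0)$. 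Together with the norm continuity of $\l\mapsto\mf{L}(\l)$, this forces $\mf{L}_0 v_n\to 0$ in $V$. Because $\v_0\in N[\mf{L}_0]$, one has $\mf{L}_0 v_n=\mf{L}_0 z_n$; and since $\mf{L}_0\in\Phi_0(U,V)$ has closed range, the restriction $\mf{L}_0|_Z\colon Z\to R[\mf{L}_0]$ is a topological isomorphism by the open mapping theorem. Therefore $z_n\to 0$ in $U$.

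The contradiction now follows from the normalization $\|v_n\|_U=1$ and the triangle inequality, using $\|\v_0\|_U=1$:
\[
|s_n|=\|s_n\v_0\|_U\;\geq\;\|v_n\|_U-\|z_n\|_U\;=\;1-\|z_n\|_U\;\longrightarrow\; 1,
\]
which is incompatible with the uniform bound $|s_n|\leq \eta C_2<1$ obtained above. The main technical point is the identity $\mf{L}_0v_n\to 0$ in $V$, which depends crucially on the $\mc{C}^1$ regularity of $\mf{F}$ with respect to the $U$-topology; hypotheses (C) and (F4) are not needed for the proposition itself but will be decisive when this localization is combined with the degree-theoretic continuation argument in the forthcoming unilateral bifurcation theorem.
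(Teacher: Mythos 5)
Your proof is correct, and it takes a genuinely different route from the paper's. The paper first passes through a parametrix $\mf{P}(\l)$ to rewrite $\mf{F}(\l,u)=0$ as a compact perturbation of the identity, $u-\mc{K}(\l)u+\mf{P}(\l)\mf{N}(\l,u)=0$, normalizes by $\psi(u_n)$, and then uses the compactness of $\mc{K}(\l_0)$ (which relies on Lemma \ref{L6.6.2}, and hence on hypotheses (C) and (F4)) to extract a subsequence of $u_n/\psi(u_n)$ converging to a multiple of $\v_0$. You instead normalize by $\|u_n\|_U$, use the fundamental theorem of calculus and the $\mc{C}^1$-regularity of $\mf{F}$ on $\R\times U$ to get $\mf{L}_0 v_n\to 0$ in $V$, and then invoke the open mapping theorem on $\mf{L}_0|_Z\colon Z\to R[\mf{L}_0]$ to force the $Z$-component of $v_n$ to vanish in the limit. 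Both arguments are sound; the tradeoff is instructive. Your argument is more economical: it isolates the proposition as a pure $\mc{C}^1$-plus-Fredholm statement, and your closing observation that (C) and (F4) are dispensable for the proposition itself (while still being needed for the reflection-and-degree step in Theorem \ref{P6.6.4}) is accurate. The paper's route, by contrast, deliberately sets up the same parametrix machinery and the compact operator $\mf{\hat M}$ that reappear in the proofs of Theorems \ref{P6.6.4} and \ref{T6.6.5}, so the extra hypotheses are being exploited once and amortized over the whole section. One small bookkeeping point worth noting: in the proof you only use the one-sided estimate $\psi(u)\le C_2\|u\|$, not the lower bound $C_1\|u\|\le\psi(u)$ from \eqref{6.6.47}; the paper uses both (to keep $u_n/\psi(u_n)$ bounded away from $0$ and $\infty$), which is another sign that the two normalizations really do distribute the work differently.
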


\noindent The proof of Proposition \ref{P6.6.1} is based on the following lemma of technical nature.

\begin{lemma}
	\label{L6.6.2}
	Let $(U,V)$ be a pair of real Banach spaces satisfying {\rm (C)} and $\mf{F}\in\mc{C}^{1}(\R\times U,V)$ a map satisfying {\rm (F1)--(F5)}. Then, $\mf{N}:\R\times U\to V$ is a compact operator.
\end{lemma}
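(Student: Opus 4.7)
The plan is to exploit the two assumptions that give compactness for free: the compact inclusion $U\hookrightarrow V$ in (C), and the continuous extension of $\mf{N}$ to the larger space $\R\times V$ in (F4). Given any bounded sequence $\{(\l_n,u_n)\}_{n\geq 1}\subset\R\times U$, I want to extract a subsequence along which $\mf{N}(\l_n,u_n)$ converges in $V$, which is the definition of compactness for $\mf{N}:\R\times U\to V$.

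First, since $\{\l_n\}_{n\geq 1}$ is bounded in $\R$, the Bolzano--Weierstrass theorem provides a subsequence, still labeled by $n$, such that $\l_n\to\l_\ast$ for some $\l_\ast\in\R$. Next, since $\{u_n\}_{n\geq 1}$ is bounded in $U$ and the embedding $U\hookrightarrow V$ is compact by (C), a further subsequence, again relabeled by $n$, satisfies $u_n\to v_\ast$ in $V$ for some $v_\ast\in V$. Note that although $v_\ast$ need not lie in $U$, this is not an obstruction because the extension provided by (F4) is defined on the whole of $\R\times V$.

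For each $n\geq 1$, the element $(\l_n,u_n)$ belongs to $\R\times U\subset \R\times V$, and by the definition of the extension, the value $\mf{N}(\l_n,u_n)=\mf{F}(\l_n,u_n)-D_u\mf{F}(\l_n,0)u_n$ computed intrinsically in $U$ coincides with the value of the extended map at $(\l_n,u_n)\in\R\times V$. Hence, by the continuity of $\mf{N}:\R\times V\to V$ postulated in (F4), we conclude that
\begin{equation*}
\mf{N}(\l_n,u_n)\longrightarrow \mf{N}(\l_\ast,v_\ast)\quad\hbox{in}\;\; V.
\end{equation*}

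The main, and essentially only, subtle point will be the bookkeeping: the original $\mf{N}$ is only $\mc{C}^1$ as a map $\R\times U\to V$ (inheriting regularity from $\mf{F}$), and its continuous extension to $\R\times V$ is merely continuous. One might be tempted instead to decompose $\mf{N}(\l_n,u_n)$ into the two summands $\mf{F}(\l_n,u_n)$ and $D_u\mf{F}(\l_n,0)u_n$ and argue compactness separately, but neither piece is compact in general. The whole point of hypothesis (F4) is that their difference, being the nonlinear remainder, admits enough regularity on the weaker space $V$ to make the compactness argument above go through in one shot.
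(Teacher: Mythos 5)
Your proof is correct and follows essentially the same route as the paper's: extract a subsequence along which $\l_n$ converges by Bolzano--Weierstrass and $u_n$ converges in $V$ via the compact embedding (C), then invoke the continuity of the extended $\mf{N}$ on $\R\times V$ guaranteed by (F4). (The paper's own proof cites (F3) at this last step, which appears to be a typo for (F4); your citation is the correct one.)
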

\begin{proof}
	Let $(\l_n,u_n)\in \R\times U$, $n\geq 1$, be  a bounded sequence. As $\{\l_n\}_{n\geq 1}$ is bounded in $\R$ we can extract a subsequence, relabeled by $n$, such that $\lim_{n\to \infty}\l_n =\l_\o$ for some $\l_\o\in\R$. According to (C), 	we also can extract a subsequence of $\{u_n\}_{n\geq 1}$, labeled again by $n$, such that $\lim_{n\to \infty}u_n=v_\o$ for some 	$v_\o \in V$. Thus, by (F3),
	$\lim_{n\to\infty} \mf{N}(\l_n,u_n)= \mf{N}(\l_\o,v_\o)$.
\end{proof}

\begin{proof}[Proof of Proposition \ref{P6.6.1}]
	By (F5), there is $\e_0>0$ such that $\Sigma(\mathfrak{L})\cap[\l_0-\e_0,\l_0+\e_0]=\{\l_0\}$.
	By \cite[Th. 2.1]{FP2}, there exists a parametrix $\mf{P} :[\l_0-\e_0,\l_0+\e_0]\to GL(V,U)$,
	of the restricted curve $\mf{L}:[\l_{0}-\varepsilon_{0},\l_{0}+\varepsilon_{0}]\to \Phi_{0}(U,V)$, such that
	$$
	\mc{K}(\l)\equiv I_U-\mf{P}(\l)\mf{L}(\l) \in \mc{K}(U)\quad \hbox{if}\;\; |\l-\l_0|\leq \e_0.
	$$
	As for $|\l-\l_0|\leq \e_0$  the equation $\mf{F}(\l,u)=0$ can be equivalently written as
	$\mf{P}(\l)\mf{F}(\l,u)=0$, 	it becomes apparent that $\mf{F}(\l,u)=0$ can be expressed as
	\begin{equation}
		\label{6.6.50}
		u-\mc{K}(\l)[u]+\mf{P}(\l)\mf{N}(\l,u)=0,\qquad |\l-\l_0|\leq \e_0,\quad u\in U.
	\end{equation}
	Since $\mc{K}(\l)\in \mc{K}(U)$ and, due to Lemma \ref{L6.6.2},
	$\mf{P}(\l)\mf{N}(\l,u):\R\times U \to U$, is compact, we have reduced our problem to the compact case. Thus, setting $\mathfrak{G}(\lambda,u):=\mf{P}(\l)\mf{F}(\l,u)$, $\mathfrak{K}(\lambda)u:=u-\mathcal{K}(\l)u$ and $\mathfrak{M}(\lambda,u):=\mf{P}(\l)\mf{N}(\l,u)$, it suffices to show the validity of the proposition for
	\begin{equation}
		\label{6.6.50,2}
		\mathfrak{G}(\l,u)=\mf{K}(\l)[u]+\mf{M}(\l,u)=0, \quad |\l-\l_0|\leq \e_0,\quad u\in U.
	\end{equation}
	To prove the result, we will argue by contradiction. Should not exist $\delta_{0}$ satisfying the desired requirements, there exist two sequences, $\delta_{n}$, $n\geq 1$, and $		 (\lambda_{n},u_{n})\in(\mc{S}\backslash\{(\lambda_{0},0)\})\cap B_{\delta_{n}}(\lambda_{0},0)$, $n\geq 1$,
	such that
	\begin{equation*}
		\lim_{n\to \infty}\d_n=0 \;\;\hbox{and}\;\; (\lambda_{n},u_{n})\notin Q_{\varepsilon,\eta}\;\;
		\hbox{for all}\;\; n\geq 1.
	\end{equation*}
	Since $(\l_n,u_n)\in B_{\delta_{n}}(\lambda_{0},0)$, we have that $|\lambda_{n}-\lambda_{0}|\leq \delta_{n}<\varepsilon$ for sufficiently large $n$, say $n\geq n_0$. Thus, we can infer from  $(\lambda_{n},u_{n})\notin Q_{\varepsilon,\eta}$ that
	\begin{equation}
		\label{6.6.51}
		|\langle \varphi^{\ast}_{0},u_{n}\rangle|\leq \eta\psi(u_{n})\quad \hbox{for all}\;\; n\geq n_0.
	\end{equation}
	Moreover, by construction, $(\lambda_{n},u_{n})\in\mc{S}\backslash\{(\lambda_{0},0)\}$. So,
	$(\lambda_{n},u_{n})\neq(\lambda_{0},0)$. On the other hand, as $\lambda_{0}$ is an isolated eigenvalue of $\Sigma(\mf{L})$, for sufficiently large $n$, $\l_n\notin\Sigma(\mf{L})$ and hence $u_{n}\neq 0$, because $(\lambda_{n},u_{n})\in \mc{S}$. Consequently, since $\psi(u)>0$ for all $u\in U\setminus\{0\}$, it follows from \eqref{6.6.50,2} that
	\begin{equation}
		\label{6.6.52}
		\frac{u_{n}}{\psi(u_{n})}=\mathcal{K}(\lambda_{n})\frac{u_{n}}{\psi(u_{n})}-
		\frac{\mathfrak{M}(\lambda_{n},u_{n})}{\psi(u_{n})}.
	\end{equation}
	Since $\mf{N}(\l,u)=o(\|u\|)$ as $u\to 0$,  by the continuity of $\mf{P}(\lambda)$, it follows from \eqref{6.6.47} and (F4) that
	\begin{equation*}
		\lim_{n\to \infty}\frac{\mathfrak{M}(\lambda_{n},u_{n})}{\psi(u_{n})}=\lim_{n\to \infty}\frac{\mf{P}(\lambda_n)\mathfrak{N}(\lambda_{n},u_{n})}{\psi(u_{n})}= \lim_{n\to \infty}\frac{\mf{P}(\lambda_n)\mathfrak{N}(\lambda_{n},u_{n})}{C \|u_{n}\|}=0,
	\end{equation*}
	for some constant $C>0$. Moreover, by \eqref{6.6.47} and the continuity of $\mc{K}(\l)$,
	\begin{equation*}
		\lim_{n\to \infty}\|[\mathcal{K}(\lambda_{n})-\mathcal{K}(\lambda_{0})]\frac{u_{n}}{\psi(u_{n})}\|=0.
	\end{equation*}
	Thus, letting $n\to \infty$ in \eqref{6.6.52} yields
	\begin{equation}
		\label{6.6.53}
		\lim_{n\to \infty}\|[I_{U}-\mathcal{K}(\lambda_{0})]\frac{u_{n}}{\psi(u_{n})}\|=0.
	\end{equation}
	On the other hand, by \eqref{6.6.47},
	\begin{equation}
		\label{6.6.54}
		\frac{1}{C_{2}}=\frac{\|u_{n}\|}{C_{2}\|u_{n}\|}\leq\left\|\frac{u_{n}}{\psi(u_{n})}\right\|\leq \frac{\|u_{n}\|}{C_{1}\|u_{n}\|}=\frac{1}{C_{1}}.
	\end{equation}
	Thus, the sequence $\frac{u_{n}}{\psi(u_{n})}$, $n\geq 1$,
	is bounded and hence, since $\mathcal{K}(\lambda_{0})$ is compact, along some subsequence, relabeled by $n$, we find that, for some
	$\v\in U$,
	\begin{equation}
		\label{6.6.55}
		\lim_{n\to \infty}\mathcal{K}(\lambda_{0})\left(\frac{u_{n}}{\psi(u_{n})}\right)=\varphi.
	\end{equation}
	Combining \eqref{6.6.53} with \eqref{6.6.55}, it becomes apparent that
	$\lim_{n\to \infty}\frac{u_{n}}{\psi(u_{n})}=\varphi$.
	By \eqref{6.6.54}, we have the bounds $\frac{1}{C_{2}}\leq\|\varphi\|\leq \frac{1}{C_{1}}$.
	Thus, letting $n\to\infty$ in \eqref{6.6.55} gives $\mathcal{K}(\lambda_{0})\varphi=\varphi$.
	Equivalently, $\v -\mf{P}(\l_0)\mf{L}(\l_0)[\v] = \v$, and, since $\mf{P}(\l_0)\in GL(U,V)$, this implies that $\mf{L}(\l_{0})[\varphi]=0$. Consequently, $\varphi=\pm\|\varphi\|\varphi_{0}$, where
	$\v_0$ is the generator of $N[\mf{L}(\l_0)]$ in (F5).  Finally, letting  $n\to\infty$ in \eqref{6.6.51}, we obtain that
	\begin{equation*}
		\frac{1}{C_{2}}\leq \|\varphi\|= |\langle \varphi_{0}^{\ast}, \varphi \rangle|= \lim_{n\to \infty}|\langle \varphi_{0}^{\ast}, \frac{u_{n}}{\psi(u_{n})}\rangle|= \lim_{n\to \infty}\frac{|\langle \varphi_{0}^{\ast}, u_{n}\rangle|}{\psi(u_{n})}\leq  \eta,
	\end{equation*}
	which implies $1/C_{2}\leq \eta$ and contradicts the choice of $\eta\in (0,1/C_{2})$. This contradiction shows the existence of $\delta_{0}$ for which $[\mc{S}\setminus\{(\l_0,0)\}]\cap B_\d(\l_0,0)\subset Q_{\e,\eta}$.
\end{proof}

\noindent The following result establishes the existence of unilateral components.

\begin{theorem}[\textbf{Unilateral components}]
	\label{P6.6.4}
	Let $(U,V)$ be a pair of real Banach spaces satisfying {\rm (C)} and $\mf{F}\in\mc{C}^{1}(\R\times U,V)$ a map satisfying {\rm (F1)--(F5)}. Suppose that for sufficiently small $\rho>0$,
	\begin{equation}
		\label{6.6.57}
		\chi[\mathfrak{L}_{\omega},[\lambda_{-},\lambda_{+}]]\in 2\mathbb{N}+1, \quad \l_{-}=\l_{0}-\rho, \ \l_{+}=\l_{0} + \rho,
	\end{equation}
	where $\mathfrak{L}_{\omega}\in\mathscr{C}^{\omega}([\lambda_{-},\lambda_{+}],\Phi_{0}(U,V))$ is any  analytic curve $\mc{A}$-homotopic to $\mf{L}(\l)$, $\lambda\in[\lambda_{-},\lambda_{+}]$. Then, there exists a component $\mf{C}$ of the set of nontrivial solutions $\mc{S}$, such that $(\l_0,0)\in \mf{C}$ and,
choosing $\varepsilon>0$ and $\delta>0$ as in Proposition \ref{P6.6.1}, there exist two closed connected subsets $\mf{C}_{\delta}^{+},\mf{C}_{\delta}^{-}\subset\mf{C}\cap B_{\delta}(\l_{0},0)$, such that $\mf{C}_{\delta}^{+}\subset Q_{\e,\eta}^+\cup\{(\l_{0},0)\}$,
  $\mf{C}_{\delta}^{-}\subset Q_{\e,\eta}^-\cup\{(\l_{0},0)\}$ and $\mf{C}_{\delta}^{+}\cap\mf{C}_{\delta}^{-}=\{(\l_{0},0)\}$. 	Moreover, $\mf{C}_{\delta}^{+}$ and $\mf{C}_{\delta}^{-}$ links $(\l_0,0)$ to $\p B_\d(\l_0,0)$.
\end{theorem}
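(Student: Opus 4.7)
The plan combines the local bifurcation theorem, the cone-localization of Proposition \ref{P6.6.1}, and a Whyburn-based separation closed by a degree contradiction driven by the parity hypothesis \eqref{6.6.57}.

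First, I would invoke Theorem \ref{T6.2.1} with $[\l_{-},\l_{+}]=[\l_{0}-\rho,\l_{0}+\rho]$: thanks to (F5) (isolation of $\l_{0}$, after possibly shrinking $\rho$) and \eqref{6.6.57}, there is a connected component $\mf{C}$ of the nontrivial solution set $\mc{S}$ with $(\l_{0},0)\in\mf{C}$ which meets the sphere $\{\|(\l,u)-(\l_{0},0)\|=\eta\}$ for every sufficiently small $\eta>0$. Choosing $\delta\in(0,\delta_{0})$ as in Proposition \ref{P6.6.1} yields
\[
[\mf{C}\setminus\{(\l_{0},0)\}]\cap B_{\delta}\subset Q_{\varepsilon,\eta}^{+}\uplus Q_{\varepsilon,\eta}^{-},
\]
and by (F3) the set $K:=\mf{C}\cap\overline{B_{\delta}(\l_{0},0)}$ is compact. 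Define
\[
K^{\pm}:=K\cap\bigl(Q_{\varepsilon,\eta}^{\pm}\cup\{(\l_{0},0)\}\bigr);
\]
since $Q_{\varepsilon,\eta}^{\mp}$ is open, $K^{\pm}$ is closed in $K$, and $K^{+}\cup K^{-}=K$, $K^{+}\cap K^{-}=\{(\l_{0},0)\}$. Let $\mf{C}_{\delta}^{\pm}$ be the connected component of $K^{\pm}$ containing $(\l_{0},0)$. Then $\mf{C}_{\delta}^{\pm}$ is a closed connected subset of $\mf{C}\cap\overline{B_{\delta}}$ with $\mf{C}_{\delta}^{\pm}\setminus\{(\l_{0},0)\}\subset Q_{\varepsilon,\eta}^{\pm}$ and $\mf{C}_{\delta}^{+}\cap\mf{C}_{\delta}^{-}=\{(\l_{0},0)\}$.

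The core claim is that $\mf{C}_{\delta}^{\pm}$ meets $\partial B_{\delta}(\l_{0},0)$; I argue for the ``$+$'' case, the other being symmetric. Suppose for contradiction that $\mf{C}_{\delta}^{+}\cap\partial B_{\delta}=\emptyset$. Applying Whyburn's Lemma \ref{Wh.} to $K^{+}$ with disjoint compact subsets $A=\mf{C}_{\delta}^{+}$ and $B=K^{+}\cap\partial B_{\delta}$---no component of $K^{+}$ can meet both, since the only candidate $\mf{C}_{\delta}^{+}$ misses $B$ by hypothesis---one obtains a decomposition $K^{+}=M_{A}\uplus M_{B}$ with $A\subset M_{A}$, $B\subset M_{B}$, and $\rho_{0}:=\dist(M_{A},M_{B}\cup\partial B_{\delta})>0$.

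The main obstacle is then to construct an open bounded isolating neighborhood $\Omega\subset\R\times U$ of $M_{A}$ satisfying $\overline{\Omega}\subset B_{\delta}(\l_{0},0)$, $\overline{\Omega}\cap(M_{B}\cup[K^{-}\setminus\{(\l_{0},0)\}])=\emptyset$, and $\partial\Omega\cap\mf{F}^{-1}(0)=\emptyset$, the delicate point being that $K^{+}$ and $K^{-}$ share the apex $(\l_{0},0)$. The remedy is to sharpen the cone localization: Proposition \ref{P6.6.1} applied with $\eta/2<1/C_{2}$ in place of $\eta$ shows that, after further shrinking $\delta$, all nontrivial solutions in $B_{\delta}$ actually lie in $Q_{\varepsilon,\eta/2}^{\pm}$, whereas $\overline{Q_{\varepsilon,\eta/2}^{+}}\setminus\{(\l_{0},0)\}$ is disjoint from $Q_{\varepsilon,\eta}^{-}$. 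One then takes $\Omega$ to be the intersection of an open $(\rho_{0}/2)$-tube around $M_{A}$ with $Q_{\varepsilon,\eta/2}^{+}\cup B_{\alpha}(\l_{0},0)$ for small $\alpha>0$, further pruned (possible because $\mf{F}^{-1}(0)\cap\overline{B_{\delta}}$ is compact) so that $\partial\Omega$ avoids the residual $\mf{F}^{-1}(0)$-points off $\mf{C}$. With this $\Omega$, Theorem \ref{T6.1.1.1} applied to the generalized admissible homotopy $(\mf{F},\Omega,\varepsilon)$ yields $\deg(\mf{F}(\l_{\pm},\cdot),\Omega_{\l_{\pm}},\varepsilon_{\l_{\pm}})=0$ for $\l_{\pm}$ outside the $\l$-projection of $\Omega$. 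Comparing with the analogous computation on the enlarged neighborhood $\widetilde{\Omega}=\Omega\cup B_{r}(\l_{0},0)$ that captures the trivial branch near $(\l_{0},0)$, Lemma \ref{leiii.5} together with the odd parity \eqref{6.6.57} forces the trivial branch's degree to change sign across $\l_{0}$; this jump must be balanced by bifurcating solutions in $\widetilde{\Omega}$, which by cone localization lie in $Q_{\varepsilon,\eta}^{+}\cup Q_{\varepsilon,\eta}^{-}$. Since the ``$+$'' contribution is trapped inside $\Omega$ (and its degree on $\Omega$ vanishes by the above), only ``$-$'' solutions can account for the sign change, but their aggregate degree has the wrong parity to do so. This contradiction establishes $\mf{C}_{\delta}^{+}\cap\partial B_{\delta}\neq\emptyset$, and the symmetric argument on $K^{-}$ concludes the proof.
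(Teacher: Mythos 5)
Your approach departs from the paper's, and the departure introduces a genuine gap at the very step that carries the whole weight of the unilateral conclusion.

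The paper does not run a degree contradiction on the original operator $\mathfrak{F}$ at all. Instead it passes to $\mathfrak{G}(\lambda,u)=\mathfrak{P}(\lambda)\mathfrak{F}(\lambda,u)=\mathfrak{K}(\lambda)u+\mathfrak{M}(\lambda,u)$ via a parametrix (reducing to a compact perturbation of $I-\mathcal{K}(\lambda)$), and then \emph{oddifies} the nonlinearity: it replaces $\mathfrak{M}$ by the operator $\hat{\mathfrak{M}}$ in \eqref{DefGHA} that agrees with $\mathfrak{M}$ on $Q^-_{\varepsilon,\eta}$, is interpolated on the transition region $-\eta\psi(u)\le\langle\varphi_0^*,u\rangle\le 0$, and is reflected oddly on $\langle\varphi_0^*,u\rangle\ge 0$. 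Because $\hat{\mathfrak{G}}=\mathfrak{K}(\lambda)+\hat{\mathfrak{M}}$ is odd in $u$ and still satisfies the cone localization, the classical Leray--Schauder bifurcation theorem and the Rabinowitz alternative for compact operators give a component $\hat{\mathfrak{C}}$ which is \emph{symmetric} under $u\mapsto -u$; symmetry forces $\hat{\mathfrak{C}}\cap\partial B_\delta\cap Q^-_{\varepsilon,\eta}\neq\emptyset$, and since $\hat{\mathfrak{G}}\equiv\mathfrak{G}$ on $Q^-_{\varepsilon,\eta}$, the same holds for the original $\mathfrak{C}$. The passage to the compact setting is not cosmetic: $\hat{\mathfrak{M}}$ is merely continuous, so the Fredholm degree theory of this paper is unavailable, while the Leray--Schauder theory requires only continuity.

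The gap in your plan is at the closing claim ``only `$-$' solutions can account for the sign change, but their aggregate degree has the wrong parity to do so.'' Nothing you have set up gives any information about the parity of the degree contribution from $Q^-_{\varepsilon,\eta}$; indeed there is no such obstruction. The jump in $\deg(I-\mathcal{K}(\lambda),B_\rho)$ across $\lambda_0$ (which, by \eqref{6.6.57} and Theorem~\ref{th3.2}, is $\pm 2$) can be compensated \emph{entirely} by nontrivial solutions lying in $Q^-_{\varepsilon,\eta}$. Concretely, in a transcritical Crandall--Rabinowitz bifurcation (so $\chi=1$) each half of the bifurcated curve contributes $\pm1$ to the slice degree; if only the $Q^-$-half happened to reach $\partial B_\delta$, the degree accounting would close without contradiction. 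In other words, a degree argument applied to $\mathfrak{F}$ directly can only recover what Theorem~\ref{T6.2.1}(ii) already gives, namely that $\mathfrak{C}$ reaches $\partial B_\delta$ \emph{somewhere} in $Q^+_{\varepsilon,\eta}\cup Q^-_{\varepsilon,\eta}$; it cannot distinguish the two cones. The oddification is precisely the device that converts ``somewhere'' into ``on both sides,'' because it transfers symmetry from the operator to the bifurcated continuum. Your Whyburn separation of $K^+$ is a fine preliminary, but without an odd comparison operator there is no mechanism to force the $Q^+$ side to carry a nonzero share of the jump, and the proposal would need to introduce some version of $\hat{\mathfrak{G}}$ (or an equivalent symmetry argument) to close.
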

\begin{proof}
	The existence of a component $\mf{C}$ of the set of nontrivial solutions $\mc{S}$ such that $(\l_0,0)\in \mf{C}$, follows from Theorem \ref{C6.3.6}.  Choose  $\varepsilon>0$ and $\delta>0$ as in Proposition \ref{P6.6.1}. Suppose no closed and connected subset $\mf{\tilde C}\subset \mf{C}\cap B_{\delta}(\l_{0},0)$ exists such that $\mf{\tilde C}\subset Q_{\e,\eta}^-\cup\{(\l_0,0)\}$, $(\l_0,0)\in \mf{\tilde C}$ and $\mf{\tilde C}\cap\partial B_\d(\l_0,0)\neq \emptyset$. Then, arguing as in the proof of Proposition \ref{P6.6.1}, by taking a parametrix $\mf{P}:[\l_{-},\l_{+}]\to GL(V,U)$ of $\mf{L}:[\l_{-},\l_{+}]\to\Phi_{0}(U,V)$, the equation $\mf{F}(\lambda,u)=0$ can be equivalently written in the form
	\begin{equation*}
		\mathfrak{G}(\l,u):=\mf{P}(\l)\mf{F}(\l,u)=\mf{K}(\l)[u]+\mf{M}(\l,u)=0, \quad \l\in[\l_{-},\l_{+}], \quad u\in U,
	\end{equation*}
	where we are using the same notation as in \eqref{6.6.50,2}. Subsequently, we define
	\begin{equation*}
		\mf{\hat M}(\l,u)= \left\{ \begin{array}{ll} \mf{M}(\l,u) & \quad \hbox{if}\;\; (\l,u)\in Q_{\e,\eta}^-,\\[1ex]  -\frac{\langle\v_0^*,u\rangle}{\eta \psi(u)}\mf{M}(\l,-\eta \psi(u)\v_0+z) &
			\quad \hbox{if}\;\; -\eta \psi(u) \leq
			\langle\v_0^*,u\rangle\leq 0, \ u\neq 0, \\[1ex] -\mf{\hat M}(\l,-u) & \quad \hbox{if}\;\; \langle\v_0^*,u\rangle\geq 0, \end{array} \right.
	\end{equation*}
	where $z$ stands for the projection of $u$ on $Z$ parallel to $\mathrm{span}[\v_0]$. Set
	\begin{equation}
		\label{DefGHA}
		\mf{\hat G}(\l,u):=\mf{K}(\l)[u]+\mf{\hat M}(\l,u),\qquad \l\in[\l_{-},\l_{+}],\quad u \in U.
	\end{equation}
	Clearly,  $\mf{\hat M}$ satisfies the same continuity and compactness properties as $\mf{M}$ and, in addition, it is odd in $u$. Thus, 	$\mf{\hat G}$ also is odd in $u$. However, note that $\mf{M}$ is not necessarily of class $\mc{C}^{1}$ and therefore the bifurcation theory of Sections \ref{SLBT} and \ref{SGBT} cannot be applied to this case. Nevertheless, thanks to the compactness, the classical bifurcation theory for compact operators collected are refined in \cite{LG01} can be applied.
	\par By \eqref{6.6.57}, it follows from Theorem \ref{th3.2} that $\sigma(\mf{L},[\l_{-},\l_{+}])=-1$. 	 Consequently, by the definition of the parity in terms of the parametrix, we obtain that
	$$\deg(\mf{P}(\l_{-})\mf{L}(\l_{-}))\cdot \deg(\mf{P}(\l_{+})\mf{L}(\l_{+}))=-1,$$
	where $\deg$ is the Leray--Schauder degree. Thus, the Leray--Schauder degree $\deg(\mf{K}(\l))$ changes as $\l$ crosses $\l_{0}$. Therefore, the bifurcation theorem for compact operators \cite[Th. 6.2.1]{LG01} yields the existence of a component, $\mf{\hat C}$, of nontrivial solutions of  $\mf{\hat G}(\l,u)=0$, such that $(\l_0,0)\in \mf{\hat C}$. By applying the same procedure as in the proof of Proposition \ref{P6.6.1}, shortening $\delta$ if necessary, we can get  $\mf{\hat C} \cap B_\d(\l_0,0) \subset Q_{\e,\eta}\cup\{(\l_0,0)\}$. Moreover, by applying \cite[Cor. 6.3.2]{LG01}, either $\mf{\hat C}$ is unbounded, or it contains a point $(\l_1,0)$ with $\l_1\neq \l_0$. In either case,
$\mf{\hat C}\cap \p B_\d(\l_0,0)\cap Q_{\e,\eta}\neq \emptyset$. On the other hand, as $\mf{\hat G}(\l,u)$ is odd in $u$, it is apparent that $\mf{\hat C}\cap Q_{\e,\eta}^+ =\{(\l,-u) : (\l,u)\in \mf{\hat C}\cap Q_{\e,\eta}^-\}$. Consequently, necessarily
	\begin{equation}
		\label{6.6.61}
		\mf{\hat C}\cap \p B_\d(\l_0,0)\cap Q_{\e,\eta}^- \neq \emptyset.
	\end{equation}
	Moreover, since $\mathfrak{G}(\lambda,u)=\mathfrak{F}(\lambda,u)$ for all $(\lambda,u)\in Q^{-}_{\varepsilon,\eta}$, it becomes apparent that $\mf{\hat C}\cap Q^{-}_{\varepsilon,\eta}=\mathfrak{C}\cap Q^{-}_{\varepsilon,\eta}$. Therefore, \eqref{6.6.61} can be
	equivalently expressed in the form $\mf{C}\cap \p B_\d(\l_0,0)\cap Q_{\e,\eta}^- \neq \emptyset$,
	which contradicts our first assumption. Similarly, the result follows for $Q^{+}_{\varepsilon,\eta}$.
\end{proof}

\noindent Under the hypothesis of Theorem \ref{P6.6.4}, we consider the closed and connected subsets of $\mc{S}$, $\mf{C}^{+}:=\mathscr{C}^{+}\cup \{(\l_{0},0)\}$ and $\mf{C}^{-}:=\mathscr{C}^{-}\cup \{(\l_{0},0)\}$, where $\mathscr{C}^{+}$ (resp. $\mathscr{C}^{-}$) is the connected component of $\mc{S}\backslash\{(\l_{0},0)\}$ containing $\mf{C}_{\delta}^{+}\backslash\{(\l_{0},0)\}$ (resp. $\mf{C}^{-}_{\delta}\backslash\{(\l_{0},0)\}$). The existence of these components is guaranteed by Theorem \ref{P6.6.4}. In particular $\mf{C}^{\pm}_{\delta}\subset \mf{C}^{\pm}\subset\mf{C}$. The sets $\mf{C}^{\pm}$ are called the \textit{unilateral components} of $\mf{C}$.

\begin{theorem}[\textbf{Unilateral global alternative}]
	\label{T6.6.5}
	Let $(U,V)$ be a pair of real Banach spaces satisfying {\rm (C)} and $\mf{F}\in\mc{C}^{1}(\R\times U,V)$ a map satisfying {\rm (F1)--(F5)}. Suppose that, for sufficiently small $\rho>0$,
	\begin{equation}
		\label{6.6.57,2}
		\chi[\mathfrak{L}_{\omega},[\lambda_{-},\lambda_{+}]]\in 2\mathbb{N}+1, \quad \l_{-}=\l_{0}-\rho, \ \l_{+}=\l_{0} + \rho,
	\end{equation}
	where $\mathfrak{L}_{\omega}\in\mathscr{C}^{\omega}([\lambda_{-},\lambda_{+}],\Phi_{0}(U,V))$ is any  analytic curve $\mc{A}$-homotopic to $\mf{L}(\l)$, $\lambda\in[\lambda_{-},\lambda_{+}]$.
	Then, for each $\nu \in \{-,+\}$, the unilateral component $\mf{C}^\nu$ whose existence has been established in Theorem \ref{P6.6.4}, satisfies some of the following alternatives:
	\begin{enumerate}
		\item[{\rm (i)}] $\mf{C}^\nu$ is not compact in $\R\times U$.
		\item[{\rm (ii)}] There exists $\l_1\neq \l_0$ such that $(\l_1,0)\in \mf{C}^\nu$.
		\item[{\rm (iii)}]  There exist $\l\in\R$ and $z\in Z\setminus\{0\}$ such that $(\l,z)\in \mf{C}^\nu$.
	\end{enumerate}
\end{theorem}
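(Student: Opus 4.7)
\medskip

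The plan is to argue by contradiction: fix $\nu\in\{+,-\}$ and assume that $\mf{C}^{\nu}$ satisfies none of (i), (ii), (iii). First I would extract the structural consequences of this assumption. The negation of (i) gives that $\mf{C}^{\nu}$ is compact in $\R\times U$; the negation of (ii) gives $\mf{C}^{\nu}\cap\mc{T}=\{(\l_{0},0)\}$; and the negation of (iii) gives $\mf{C}^{\nu}\cap[\R\times(Z\setminus\{0\})]=\emptyset$. Combining these, for every $(\l,u)\in\mf{C}^{\nu}\setminus\{(\l_{0},0)\}$ we have $u\neq 0$ and $\langle\v_{0}^{\ast},u\rangle\neq 0$. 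Since $\mathscr{C}^{\nu}=\mf{C}^{\nu}\setminus\{(\l_{0},0)\}$ is connected and, by Theorem \ref{P6.6.4}, $\mf{C}^{\nu}_{\d}\setminus\{(\l_{0},0)\}\subset Q^{\nu}_{\e,\eta}$, the continuous functional $u\mapsto\langle\v_{0}^{\ast},u\rangle$ has constant sign $\nu$ on $\mathscr{C}^{\nu}$. Hence $\mf{C}^{\nu}\setminus\{(\l_{0},0)\}$ lies in the open half-space $U^{\nu}:=\{u\in U:\nu\langle\v_{0}^{\ast},u\rangle>0\}$.

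Second, I would build an isolating neighborhood of $\mf{C}^{\nu}$. Using the compactness of $\mf{C}^{\nu}$, the fact that $\mc{S}\setminus\mf{C}^{\nu}$ is closed and disjoint from $\mf{C}^{\nu}$, and Whyburn's lemma (Lemma \ref{Wh.}), there exists a bounded open set $\O\subset\R\times U$ satisfying: (a) $\mf{C}^{\nu}\subset\O$; (b) $\p\O\cap\mc{S}=\emptyset$; (c) $\O\cap\mc{T}$ is a small arc centered at $(\l_{0},0)$ contained in $(\l_{-},\l_{+})\times\{0\}$. By intersecting $\O$ with $\R\times(U^{\nu}\cup\{0\})$ — which is possible because, by the structural step, the complement of this set is closed and disjoint from the compact set $\mf{C}^{\nu}$ — one can further arrange (d) every $(\l,u)\in\O$ with $u\neq 0$ satisfies $\nu\langle\v_{0}^{\ast},u\rangle>0$. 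Moreover, restricting $\O$ further near $(\l_{0},0)$ via Proposition \ref{P6.6.1}, one guarantees that $\O$ intersected with a small ball around $(\l_{0},0)$ is contained in $Q^{\nu}_{\e,\eta}\cup\{(\l_{0},0)\}$.

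Third, I would derive the contradiction by applying the odd-extension technique used in the proof of Theorem \ref{P6.6.4}. Working in the compact-operator framework obtained by composing with a parametrix $\mf{P}(\l)$ of $\mf{L}(\l)$ on $[\l_{-},\l_{+}]$ (and extending it to cover the compact $\l$-projection of $\mf{C}^{\nu}$ by patching parametrices via the local Fredholm stability invoked in Theorem \ref{T6.2.1}), I would construct an odd-in-$u$ continuous map $\mf{\hat G}:\R\times U\to U$, a compact perturbation of the identity, whose zero set inside $\O\cup(-\O)$ coincides with $\mf{C}^{\nu}\cup(-\mf{C}^{\nu})$. By construction $\tilde{\O}:=\O\cup(-\O)$ is a bounded isolating neighborhood of a connected component $\mf{\hat C}\subset\mf{\hat G}^{-1}(0)$ containing $(\l_{0},0)$, and $\mf{\hat C}\cap\mc{T}=\{(\l_{0},0)\}$. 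However, by hypothesis \eqref{6.6.57,2} and Theorem \ref{th3.2}, the parity $\s(\mf{L},[\l_{-},\l_{+}])=-1$, so the Leray--Schauder degree of $I_{U}-\mc{K}(\l)$ jumps at $\l_{0}$, and the classical Rabinowitz global bifurcation theorem for compact operators (see, e.g., \cite[Th. 6.2.1 and Cor. 6.3.2]{LG01}) applied to $\mf{\hat G}$ forces the component of its nontrivial solutions through $(\l_{0},0)$ to be either unbounded or to meet another trivial solution $(\l_{1},0)$ with $\l_{1}\neq\l_{0}$. Either option contradicts the fact that $\mf{\hat C}$ is compactly contained in $\tilde{\O}$ with $\mf{\hat C}\cap\mc{T}=\{(\l_{0},0)\}$.

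The main obstacle is the construction of the global odd extension $\mf{\hat G}$ in the third step. The extension described in Theorem \ref{P6.6.4} is only defined on the narrow cones $Q^{\pm}_{\e,\eta}$ near $(\l_{0},0)$, and coincides with $\mf{F}$ only there; propagating it globally to an operator whose nontrivial zeros in $\tilde{\O}$ match those of the original equation requires combining three ingredients: (1) the half-space localization of step 2, which ensures all relevant solutions lie in $\R\times U^{\nu}$ rather than the narrower cone, (2) a suitable cutoff supported in $\O$ that vanishes before reaching $\p\O$ so that outside $\tilde{\O}$ the operator becomes trivially $u\mapsto u$ and contributes no spurious solutions, and (3) the extension of the parametrix to a $\l$-interval covering $\mathrm{proj}_{\R}(\mf{C}^{\nu})$, which follows from covering this compact set by finitely many intervals on each of which a local parametrix exists.
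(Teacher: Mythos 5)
There is a genuine gap, and it sits exactly where you locate the ``main obstacle''. Your Step~1 extracts from the negation of (iii) only that $\mathfrak{C}^{\nu}\setminus\{(\lambda_0,0)\}$ lies in the open half-space $U^{\nu}=\{u:\nu\langle\varphi_0^*,u\rangle>0\}$. That is strictly weaker than what is needed. The odd extension $\hat{\mathfrak{G}}$ from Theorem~\ref{P6.6.4} agrees with the original operator only on the \emph{cone} $Q^{\nu}_{\e,\eta}$, not on the whole half-space, because the interpolation formula is active on the slab $-\eta\psi(u)\leq\langle\varphi_0^*,u\rangle\leq 0$ (and its mirror). Half-space containment does not prevent the component from entering that slab far away from $(\lambda_0,0)$, where $\hat{\mathfrak{G}}\neq\mathfrak{G}$, and so one cannot conclude that the odd component $\hat{\mathfrak{C}}$ coincides with $\mathfrak{C}^{\nu}\cup\{(\lambda,-u):(\lambda,u)\in\mathfrak{C}^{\nu}\}$. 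Your proposed remedy --- replacing the cone extension by a cutoff supported in an isolating neighborhood $\Omega$ and building a new global odd compact perturbation --- is not carried out and faces real difficulties: for the Leray--Schauder argument to detect a degree jump at $\lambda_0$ the modified operator must still linearize to $\mathfrak{L}(\lambda)$ along the trivial line, and cutting off inside a $\lambda$-dependent bounded $\Omega$ while preserving oddness, continuity, and the identity-plus-compact structure is precisely the thing the cone-based construction was designed to sidestep.

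What the paper actually proves at this juncture is a \emph{sharper} consequence of $\neg$(iii) together with $\neg$(i), $\neg$(ii): for sufficiently small $\eta>0$ the whole compact component satisfies $\mathfrak{C}^{\nu}\subset Q^{\nu}_{\e,\eta}\cup\{(\lambda_0,0)\}$. The argument is a compactness argument on $\mathfrak{C}^{\nu}$ itself: if this failed along $\eta_n\downarrow 0$, Proposition~\ref{P6.6.1} would force the component to cross the cone boundary outside a fixed ball $B_{\delta}(\lambda_0,0)$, producing points $(\lambda_n,u_n)\in\mathfrak{C}^{\nu}$ with $\langle\varphi_0^*,u_n\rangle=-\eta_n\psi(u_n)$; passing to a convergent subsequence in the compact set $\mathfrak{C}^{\nu}$ yields a limit $(\lambda_\omega,u_\omega)$ with $u_\omega\neq 0$ (by $\neg$(ii)) and $\langle\varphi_0^*,u_\omega\rangle=0$ (since $\eta_n\to 0$), i.e.\ $u_\omega\in Z\setminus\{0\}$, contradicting $\neg$(iii). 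Once this cone inclusion is in hand, no new extension is needed: the map $\hat{\mathfrak{G}}$ already defined in the proof of Theorem~\ref{P6.6.4} coincides with the reduced operator on all of $\mathfrak{C}^{\nu}$, oddness gives $\hat{\mathfrak{C}}=\mathfrak{C}^{\nu}\cup(-\mathfrak{C}^{\nu})$, this set meets the trivial branch only at $(\lambda_0,0)$, and \cite[Cor.~6.3.2]{LG01} then forces $\hat{\mathfrak{C}}$ (hence $\mathfrak{C}^{\nu}$) to be unbounded, contradicting $\neg$(i). In short, the Whyburn-neighborhood and cutoff machinery you introduce is unnecessary, and the missing step --- upgrading half-space containment to cone containment by compactness --- is both the crux and the easiest way to close the argument.
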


\begin{proof}
	It proceeds by contradiction. Assume, for example, that $\mathfrak{C}^{-}$ does not satisfy any of the three alternatives (i)-(iii).  Then, it satisfies the following properties:
	\begin{enumerate}
		\item[(a)] $\mathfrak{C}^{-}$ is bounded in $\mathbb{R}\times U$.
		\item[(b)] For sufficiently small $\rho>0$, the component $\mathfrak{C}^{-}$ is bounded away from
		$\{(\lambda,0)\in \R\times U:\;  |\lambda-\lambda_{0}|\geq \rho\}$. 
		\item[(c)] $\mathfrak{C}^{-}\cap [\mathbb{R}\times(Z\backslash\{0\})]=\emptyset$.
	\end{enumerate}
	According to (a), since $\mathfrak{F}$ is proper on closed and bounded sets, the component $\mathfrak{C}^{-}$ is compact. We now show that there exists $\eta_{0}\in(0,1/C_{2})$ such that, for every $0<\eta<\eta_{0}$,
	\begin{equation}
		\label{6.6.62}
		\mathfrak{C}^{-}\subset Q^{-}_{\varepsilon,\eta}\cup\{(\lambda_{0},0)\}.
	\end{equation}
	Observe that $Q^{-}_{\varepsilon,\eta}\subset Q^{-}_{\varepsilon,\tilde\eta}$ if $\eta>\tilde\eta$.
	Indeed, for any given $(\lambda,u)\in Q^{-}_{\varepsilon,\eta}$, since $\psi(u)\geq 0$, we have that
$\langle \varphi_{0}^{\ast},u\rangle < -\eta \psi(u) <-\tilde\eta \psi(u)$,
	and hence $(\lambda,u)\in Q^{-}_{\varepsilon,\tilde\eta}$. To prove \eqref{6.6.62} we proceed by
	contradiction. So, suppose \eqref{6.6.62} fails.  Then, there exists a sequence, $\{\eta_n\}_{n\geq 1}$, such that $\lim_{n\to\infty}\eta_n=0$ for which  $\mf{C}^-$ must leave $Q_{\e,\eta_n}^-$. Thus,
	by Proposition \ref{P6.6.1}, there exist $\delta>0$ and a sequence, $(\lambda_{n},u_{n})\in\mathfrak{C}^{-}\backslash B_{\delta}(\lambda_{0},0)$, $n\geq 1$,
	such that, for sufficiently large $n\geq 1$,
	\begin{equation}
		\label{6.6.64}
		\langle \varphi_{0}^{\ast},u_{n}\rangle =-\eta_{n} \psi(u_{n}),
	\end{equation}
	because $\mathfrak{C}^{-}$ leaves $Q^{-}_{\varepsilon,\eta_{n}}$ outside $B_{\delta}(\lambda_{0},0)$ for sufficiently large $n$. Moreover, since $\mathfrak{C}^{-}$ is compact, there exists a subsequence of $(\lambda_{n},u_{n})$, labeled again by $n$, such that $\lim_{n\to \infty} (\lambda_{n},u_{n})= (\lambda_{\omega},u_{\omega})\in\mathfrak{C}^{-}$. Since $(\lambda_{n},u_{n})\in\mathfrak{C}^{-}\backslash B_{\delta}(\lambda_{0},0)$, $n\geq 1$, it is apparent that $(\lambda_{\omega},u_{\omega})\notin B_{\frac{\delta}{2}}(\lambda_{0},0)$. Moreover, thanks to (b), $u_{\omega}\neq 0$. Furthermore, since $\mathfrak{C}^{-}$ is bounded and $\psi$ is continuous, letting $n\to\infty$ in \eqref{6.6.64} yields to $\langle \varphi^{\ast}_{0},u_{\omega}\rangle =0$ and hence $u_{\omega}\in Z$. Therefore, $	 (\lambda_{\omega},u_{\omega})\in\mathfrak{C}^{-}\cap[\mathbb{R}\times(Z\backslash\{0\})]$,
	which contradicts (c). This contradiction shows the existence of $\eta_{0}>0$ such that \eqref{6.6.62} holds for all $\eta\in(0,\eta_{0})$.
	\par
	Now, pick $\eta\in (0,\eta_0)$ and consider the map $\hat{\mathfrak{G}}:[\l_{-},\l_{+}]\times U\to U$
	defined by \eqref{DefGHA} in the proof of Theorem \ref{P6.6.4}. Since the Leray-Schauder degree $\deg(\mf{K}(\l))$ changes as $\l$ crosses $\l_0$, by \cite[Th. 6.2.1]{LG01}, there exists a component $\hat{\mathfrak{C}}$ of the set of nontrivial solutions of $\hat{\mathfrak{G}}(\l,u)=0$ bifurcating from $(\lambda_{0},0)$. Since $\mf{\hat C}\cap Q^{-}_{\varepsilon,\eta}=\mathfrak{C}\cap Q^{-}_{\varepsilon,\eta}$, by \eqref{6.6.62}, 	it follows from the oddness of $\hat{\mf{G}}(\lambda,u)$ in $u$ that
	\begin{equation}
		\label{6.6.65}
		\hat{\mf{C}}=\mf{C}^{-}\cup \{(\l,-u): (\l,u)\in \mf{C}^{-}\}.
	\end{equation}
	Lastly, according to (b), we have that $\hat{\mf{C}}\cap \{(\l,0):\l\in\Sigma\}=\{(\l_{0},0)\}$.
	Therefore, we infer from \cite[Cor. 6.3.2]{LG01} that $\hat{\mf{C}}$ is unbounded in $\mathbb{R}\times U$. By \eqref{6.6.65}, also $\mf{C}^{-}$ must be  unbounded in $\mathbb{R}\times U$, which contradicts (a) and ends the proof.
\end{proof}

When the compactness assumption {\rm (C)} is removed for our list of hypothesis, things do not work
as before, by the failure of the compactness arguments involved  in the proof of Proposition \ref{P6.6.1}. Nevertheless, these technical difficulties can ve overcome if $\lambda_{0}$ is a $1$-transversal eigenvalue and, in addition, $\psi$ is differentiable. In the special case when $\psi(u)=\|u\|$,
our corresponding results provides us with the unilateral theorem of Shi and Wang
\cite{XW}, whose proof follows the same general patterns as the proof of Theorem 6.4.3 of L\'{o}pez-G\'{o}mez \cite{LG01}.
\par
Subsequently, we consider two real Banach spaces, $U, V$,  and an operator $\mathfrak{F}\in\mathcal{C}^1(\mathbb{R}\times U,V)$ satisfying
the following assumptions:
\begin{enumerate}
	\item[(F1)] $\mathfrak{F}(\lambda,0)=0$ for all $\lambda\in\mathbb{R}$.
	\item[(F2)] $\mf{F}$ is orientable.
	\item[(F3)] $D_{u}\mathfrak{F}(\lambda,u)\in\Phi_{0}(U,V)$ for all $\lambda\in\mathbb{R}$ and $u\in U$.
	\item[(F4)] $\mf{F}$ is proper on closed and bounded subsets of $\R\times U$.
	\item[(F5)] $\l_0$ is an isolated eigenvalue of $\mf{L}(\l):=D_u\mf{F}(\l,0)$ such that
	$N[\mathfrak{L}(\lambda_{0})]=\mathrm{span}[\varphi_{0}]$ for some $\varphi_0\in U$ with  $\|\v_0\|=1$ and $\mathfrak{L}'(\lambda_{0})\varphi_{0}\notin R[\mathfrak{L}(\lambda_{0})]$, i.e., $\chi[\mathfrak{L},\l_0]=1$.
\end{enumerate}
We keep the same notations as above for $Z$, $\mc{S}$, $Q_{\varepsilon,\eta}$ and $Q^{\pm}_{\varepsilon,\eta}$. By (F5), one can adapt the proof of the main theorem of Crandall and Rabinowitz \cite{CR},
 to show that there exist $\varepsilon>0$ and two continuous functions, $\mu: (-\varepsilon,\varepsilon)\to \mathbb{R}$ and $\phi: (-\varepsilon,\varepsilon)\to Z$,
such that $\mu(0)=\lambda_{0}$, $\phi(0)=0$, and $\mathfrak{F}^{-1}(0)$ consists, in a neighborhood of $(\lambda_{0},0)$, of the trivial branch $\{(\lambda,0):\lambda\sim\lambda_{0}\}$ and the curve
$(\mu(s),s[\varphi_{0}+\phi(s)])$, $s\sim 0$. In this case, since the component emanating from $(\lambda_{0},0)$ has locally the  form $(\mu(s),s[\varphi_{0}+\phi(s)])$, for sufficiently small $\e>0$, there exists $\d_0=\d_0(\eta)>0$ such that, for every $\d \in (0,\d_0)$,
\begin{equation}
	\label{E3}
	[\mc{S}\setminus\{(\l_0,0)\}]\cap B_\d(\l_0,0)\subset Q_{\e,\eta}.
\end{equation}

\begin{theorem}
	\label{th8.6}
	Suppose $\mf{F}$ satisfies {\rm (F1)--(F5)} and $\psi$ is of class $\mathcal{C}^{1}$. Then, the set of nontrivial solutions, $\mc{S}$, possesses a connected component $\mf{C}$ such that $(\l_0,0)\in \mf{C}$ and, choosing  $\varepsilon>0$ and $\delta>0$ as in \eqref{E3}, there exist two closed connected subsets
		$\mf{C}_{\delta}^{+},\mf{C}_{\delta}^{-}\subset\mf{C}\cap B_{\delta}(\l_{0},0)$ such that
		$\mf{C}_{\delta}^{+}\subset Q_{\e,\eta}^+\cup\{(\l_{0},0)\}$, $\mf{C}_{\delta}^{-}\subset Q_{\e,\eta}^-\cup\{(\l_{0},0)\}$ and $\mf{C}_{\delta}^{+}\cap\mf{C}_{\delta}^{-}=\{(\l_{0},0)\}$. 		 Moreover, $\mf{C}_{\delta}^{+}$ and $\mf{C}_{\delta}^{-}$ links $(\l_0,0)$ to $\p B_\d(\l_0,0)$.
Furthermore, setting $\mf{C}^{+}:=\mathscr{C}^{+}\cup \{(\l_{0},0)\}$ and $\mf{C}^{-}:=\mathscr{C}^{-}\cup \{(\l_{0},0)\}$,  where $\mathscr{C}^{+}$ {\rm(} resp. $\mathscr{C}^{-}${\rm)} is the connected component of $\mc{S}\backslash\{(\l_{0},0)\}$ containing $\mf{C}_{\delta}^{+}\backslash\{(\l_{0},0)\}$ {\rm(}resp. $\mf{C}^{-}_{\delta}\backslash\{(\l_{0},0)\}${\rm)}, then
     for every $\nu \in \{-,+\}$, some of the following alternatives hold:
	\begin{enumerate}
		\item[{\rm (i)}] $\mf{C}^\nu$ is not compact in $\R\times U$.
		\item[{\rm (ii)}] There exists $\l_1\neq \l_0$ such that $(\l_1,0)\in \mf{C}^\nu$.
		\item[{\rm (iii)}]  There exist $\l\in\R$ and $z\in Z\setminus\{0\}$ such that $(\l,z)\in \mf{C}^\nu$.
	\end{enumerate}
\end{theorem}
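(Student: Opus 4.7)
\textbf{Proof plan for Theorem \ref{th8.6}.} The strategy is to combine the local description supplied by the Crandall--Rabinowitz theorem with the global Theorem \ref{C6.3.6}, and then to emulate the truncation--plus--oddification device already used in the proof of Theorem \ref{T6.6.5}, replacing the compactness tool (which is absent now) by the $\mc{C}^{1}$-regularity of $\psi$.

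\textbf{Step 1 (Local branch and the set $Q_{\varepsilon,\eta}^{\pm}$).} Because $\chi[\mf{L},\l_{0}]=1$, the Crandall--Rabinowitz theorem applies. Hence, there exist $\varepsilon_{0}>0$ and continuous maps $\mu:(-\varepsilon_{0},\varepsilon_{0})\to\R$, $\phi:(-\varepsilon_{0},\varepsilon_{0})\to Z$, with $\mu(0)=\l_{0}$, $\phi(0)=0$, such that, near $(\l_{0},0)$, $\mf{F}^{-1}(0)$ is the union of $\mc{T}$ and the curve $s\mapsto(\mu(s),\,s[\v_{0}+\phi(s)])$. For $s$ small,
\[
\langle\v_{0}^{*},s[\v_{0}+\phi(s)]\rangle-\eta\psi(s[\v_{0}+\phi(s)])=s\big(1+\langle\v_{0}^{*},\phi(s)\rangle\big)-\eta\psi(s[\v_{0}+\phi(s)]),
\]
and since $\psi(u)\leq C_{2}\|u\|$ and $\phi(s)\to 0$, this expression has the sign of $s$ once $\eta\in(0,1/C_{2})$. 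Therefore the $s>0$ half-branch lies in $Q_{\varepsilon,\eta}^{+}$ and the $s<0$ half-branch in $Q_{\varepsilon,\eta}^{-}$, which establishes \eqref{E3}.

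\textbf{Step 2 (Existence of $\mf{C}$ and of $\mf{C}^{\pm}_{\delta}$).} Since $\chi[\mf{L},\l_{0}]=1$ is odd, Theorem \ref{C6.3.6} supplies a connected component $\mf{C}\subset\mc{S}$ with $(\l_{0},0)\in\mf{C}$. Given $\delta\in(0,\delta_{0})$ as in \eqref{E3}, let $\mf{C}_{\delta}^{\nu}$ ($\nu\in\{+,-\}$) be the connected component of $\mf{C}\cap\overline{B_{\delta}(\l_{0},0)}$ containing the corresponding half-branch of the Crandall--Rabinowitz curve. By \eqref{E3}, $\mf{C}_{\delta}^{\nu}\subset Q_{\varepsilon,\eta}^{\nu}\cup\{(\l_{0},0)\}$, the two sets meet only at $(\l_{0},0)$, and by a standard Whyburn-type argument (Lemma \ref{Wh.}) each of them reaches $\partial B_{\delta}(\l_{0},0)$.

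\textbf{Step 3 (Unilateral global alternative).} Define $\mf{C}^{\pm}$ as in the statement, and argue by contradiction: assume that $\mf{C}^{-}$ fails (i), (ii) and (iii). Then $\mf{C}^{-}$ is compact (by (F4) and (i)), it stays away from trivial solutions different from $(\l_{0},0)$ (by (ii)), and $\langle\v_{0}^{*},u\rangle\neq 0$ whenever $(\l,u)\in\mf{C}^{-}$ and $u\neq 0$ (by (iii)). Since $\mf{C}^{-}$ is compact, one can shrink $\eta\in(0,1/C_{2})$ to achieve
\[
\mf{C}^{-}\subset Q_{\varepsilon,\eta}^{-}\cup\{(\l_{0},0)\};
\]
the argument is the one used in the proof of Theorem \ref{T6.6.5}: if it failed along a sequence $(\l_{n},u_{n})\in\mf{C}^{-}\setminus B_{\delta}(\l_{0},0)$ with $\eta_{n}\downarrow 0$, any limit $(\l_{\omega},u_{\omega})\in\mf{C}^{-}$ would satisfy $\langle\v_{0}^{*},u_{\omega}\rangle=0$ with $u_{\omega}\neq 0$, contradicting (iii).

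\textbf{Step 4 (The odd-extension trick).} Using the $\mc{C}^{1}$-regularity of $\psi$, define on $\R\times U$ a continuously differentiable map $\hat{\mf{F}}$ that coincides with $\mf{F}$ on a neighborhood of $\mf{C}^{-}$ inside $Q_{\varepsilon,\eta}^{-}$, equals $-\mf{F}(\l,-u)$ on a neighborhood of $-\mf{C}^{-}$ inside $Q_{\varepsilon,\eta}^{+}$, and is interpolated in the slab $\{|\langle\v_{0}^{*},u\rangle|\leq\eta\psi(u)\}$ through a $\mc{C}^{1}$ odd bridge built from $\psi$, in the spirit of the piecewise definition of $\hat{\mf{M}}$ in the proof of Theorem \ref{P6.6.4}. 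The smoothness of $\psi$ guarantees that $\hat{\mf{F}}$ is $\mc{C}^{1}$, that $D_{u}\hat{\mf{F}}(\l,u)\in\Phi_{0}(U,V)$, and that $\hat{\mf{F}}$ remains proper on bounded closed sets. Hence the bifurcation theorems of Section \ref{SGBT} apply to $\hat{\mf{F}}$; since the linearization at the trivial branch is unchanged near $\l_{0}$, $\hat{\mf{F}}$ admits a component $\hat{\mf{C}}$ bifurcating at $(\l_{0},0)$ which, by oddness, satisfies
\[
\hat{\mf{C}}=\mf{C}^{-}\cup\{(\l,-u):(\l,u)\in\mf{C}^{-}\}.
\]
Applying Theorem \ref{C6.3.6} to $\hat{\mf{F}}$, the component $\hat{\mf{C}}$ must be unbounded or hit $(\l_{1},0)$ with $\l_{1}\neq\l_{0}$. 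Either situation contradicts the compactness of $\mf{C}^{-}$ together with the failure of (ii). The same argument handles $\mf{C}^{+}$.

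\textbf{Main obstacle.} The delicate point is Step 4: without the compact embedding $U\hookrightarrow V$, we cannot appeal to the Leray--Schauder machinery invoked in Theorem \ref{P6.6.4}, and the $\mc{C}^{1}$ odd extension must be produced so that $D_{u}\hat{\mf{F}}$ remains Fredholm of index zero and $\hat{\mf{F}}$ remains proper on bounded closed sets. This is where the differentiability of $\psi$ (as opposed to mere continuity, as in Theorem \ref{T6.6.5}) is essential, and it also explains why the argument needs $\chi[\mf{L},\l_{0}]=1$: the Crandall--Rabinowitz local parametrization is what lets us use $\langle\v_{0}^{*},u\rangle$ as a one-sided selector without invoking compactness.
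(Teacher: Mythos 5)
Your strategy matches the paper's: use the $\mathcal{C}^{1}$-regularity of $\psi$ to produce a $\mathcal{C}^{1}$ odd extension $\hat{\mathfrak{F}}$ (the paper does this via a fixed increasing function $\xi\in\mathcal{C}^{1}$ with $\xi(0)=0$, $\xi(1)=1$, $\xi'(1)=0$, applied to $\mathfrak{N}(\lambda,u)$ in the slab), appeal to \cite{XW} for the Fredholmness of $D_u\hat{\mathfrak{F}}$, invoke Theorems \ref{T6.2.1} and \ref{C6.3.6} in the Fredholm setting, and then exploit oddness. Steps~3 and~4 of your plan reproduce, in compressed form, exactly the paper's ``mutatis mutandis from Theorem \ref{T6.6.5} with $\hat{\mathfrak{G}}$ replaced by $\hat{\mathfrak{F}}$.''

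There is, however, a genuine flaw in Step~2. You define $\mathfrak{C}_{\delta}^{\nu}$ as \emph{the connected component of $\mathfrak{C}\cap\overline{B_{\delta}(\lambda_{0},0)}$ containing the corresponding half-branch of the Crandall--Rabinowitz curve}. For $\delta$ small enough that the Crandall--Rabinowitz local description holds in $\overline{B_{\delta}(\lambda_{0},0)}$, the set $\mathfrak{C}\cap\overline{B_{\delta}(\lambda_{0},0)}$ near $(\lambda_{0},0)$ is precisely $\{(\lambda_{0},0)\}$ together with the full bifurcated arc $\{(\mu(s),s[\varphi_{0}+\phi(s)]) : |s|\leq s_{\delta}\}$, which is a single connected arc passing through $(\lambda_{0},0)$. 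Consequently, the connected component you describe is the whole arc, so $\mathfrak{C}_{\delta}^{+}=\mathfrak{C}_{\delta}^{-}$ and neither is contained in $Q_{\varepsilon,\eta}^{+}\cup\{(\lambda_{0},0)\}$ or $Q_{\varepsilon,\eta}^{-}\cup\{(\lambda_{0},0)\}$. The correct choice is to take $\mathfrak{C}_{\delta}^{\pm}$ to be the closed half-arcs themselves,
\[
\mathfrak{C}_{\delta}^{\pm}:=\{(\lambda_{0},0)\}\cup\big\{(\mu(s),s[\varphi_{0}+\phi(s)]) : 0<\pm s\leq s_{\delta}\big\},
\]
which by your Step~1 indeed lie in $Q_{\varepsilon,\eta}^{\pm}\cup\{(\lambda_{0},0)\}$ and meet only at $(\lambda_{0},0)$. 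Alternatively, the paper avoids this issue entirely by proving the existence of $\mathfrak{C}_{\delta}^{\pm}$ by the same contradiction argument (odd extension plus Theorems \ref{T6.2.1} and \ref{C6.3.6}) that it uses for the global alternative, which is more robust because it does not rely on the precise local parametrization. Your direct route via Crandall--Rabinowitz is simpler here since $\chi=1$, but the component definition must be repaired as above.
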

\begin{proof}
The existence of the connected component $\mf{C}$ is evident from the adaptation of 
the main theorem of \cite{CR}.  Let us choose $\varepsilon>0$ and $\delta>0$ as in \eqref{E3}. Suppose no closed and connected subset $\mf{\tilde C}\subset \mf{C}\cap B_{\delta}(\l_{0},0)$ exists such that $\mf{\tilde C}\subset Q_{\e,\eta}^-\cup\{(\l_0,0)\}$,
$(\l_0,0)\in \mf{\tilde C}$ and $\mf{\tilde C}\cap\partial B_\d(\l_0,0)\neq \emptyset$. Next, setting $\mf{N}(\l,u):= \mf{F}(\l,u)-\mf{L}(\l)u$, for $(\l,u)\in \R\times U$, we define
	\begin{equation}
		\label{viii.14}
		\mf{\hat N}(\l,u):= \left\{ \begin{array}{ll} \mf{N}(\l,u) & \quad \hbox{if}\;\; (\l,u)\in Q_{\e,\eta}^-,\\[1ex]  \xi\left(-\frac{\langle\varphi_{0}^{\ast},u\rangle}{\eta\psi(u)}\right)\mathfrak{N}(\lambda,u) &
			\quad \hbox{if}\;\; -\eta \psi(u) \leq
			\langle\v_0^*,u\rangle\leq 0, \ u\neq 0, \\[1ex] -\mf{\hat N}(\l,-u) & \quad \hbox{if}\;\; \langle\v_0^*,u\rangle\geq 0, \end{array} \right.
	\end{equation}
	where $\xi:\mathbb{R}\to\mathbb{R}$ is a fixed increasing $\mathcal{C}^{1}$ function satisfying $\xi(0)=0, \xi(1)=1$ and $\xi'(1)=0$. Now, set $\mf{\hat F}(\l,u):=\mf{L}(\l)u+\mf{\hat N}(\l,u)$, $(\l,u)\in\R\times U$. 	 The same argument of the proof of \cite[Th.4.4]{XW} shows that  $\hat{\mf{F}}$ is of class $\mathcal{C}^{1}$ and $D_{u}\hat{\mathfrak{F}}(\lambda,u)\in \Phi_{0}(U,V)$ for all  $(\l,u)\in\mathbb{R}\times U$. Moreover, $\mf{\hat F}$ is odd in $u$. Since $\mathfrak{L}'(\lambda_{0})[\varphi_{0}]\notin R[\mathfrak{L}(\lambda_{0})]$, it follows that  $\chi[\mf{L},\l_{0}]=1$. Hence, by Theorem \ref{T6.2.1}, there exists a component,
	$\mf{\hat C}$, of nontrivial solutions of  $\mf{\hat F}(\l,u)=0$ such that $(\l_0,0)\in \mf{\hat F}$. By applying the same argument we follow to establish \eqref{E3}, we can prove that
	\begin{equation}
		\mf{\hat C} \cap B_\d(\l_0,0) \subset Q_{\e,\eta}\cup\{(\l_0,0)\}
	\end{equation}
	for sufficiently small $\d$, say $\d \in (0,\d_0]$. Moreover, since $\chi[\mf{L},\l_{0}]=1$, according to Theorem \ref{C6.3.6}, either $\mf{\hat C}$ is unbounded, or it contains a point $(\l_1,0)$ with $\l_1\neq \l_0$. Note that the hypothesis (F2) is necessary to apply Theorem \ref{C6.3.6}. In either case,
$\mf{\hat C}\cap \p B_\d(\l_0,0)\cap Q_{\e,\eta}\neq \emptyset$. On the other hand, as $\mf{\hat F}(\l,u)$ is odd in $u$, it is apparent that $\mf{\hat C}\cap Q_{\e,\eta}^+ =\{(\l,-u) : (\l,u)\in \mf{\hat C}\cap Q_{\e,\eta}^-\}$. Therefore, necessarily
	\begin{equation}
		\label{viii.16}
		\mf{\hat C}\cap \p B_\d(\l_0,0)\cap Q_{\e,\eta}^- \neq \emptyset.
	\end{equation}
	Moreover, since $\mathfrak{F}(\lambda,u)=\hat{\mathfrak{F}}(\lambda,u)$ for all $(\lambda,u)\in Q^{-}_{\varepsilon,\eta}$, it becomes apparent that $\mf{\hat C}\cap Q^{-}_{\varepsilon,\eta}=\mathfrak{C}\cap Q^{-}_{\varepsilon,\eta}$. Therefore, \eqref{viii.16} can be
	equivalently expressed in the form $\mf{C}\cap \p B_\d(\l_0,0)\cap Q_{\e,\eta}^- \neq \emptyset$,
	which contradicts our first assumption.  The same procedure holds for $Q^{+}_{\varepsilon,\eta}$. This ends the proof of the first of the theorem. The second part follows \textit{mutatis mutandis} from the proof of Threorem \ref{T6.6.5} interchanging the operator $\hat{\mf{G}}$ with $\hat{\mf{F}}$.
\end{proof}

\section{The negative solutions of \eqref{vii.1}}\label{SNS}

\noindent In this section we complete the analysis of Section \ref{SDODP} by studying the global structure of the component of negative solutions of \eqref{vii.1} emanating from $(0,0)$. We have postponed this analysis because we need the unilateral theorems of Section \ref{SUBGSE} to get our result.

\begin{lemma}
\label{le9.1}
$u=0$ is the unique solution of \eqref{vii.1} at $\l=0$.
\end{lemma}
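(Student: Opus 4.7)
The plan is to test the equation against the positive eigenfunction $\sin x$, which spans $N[\mathfrak{L}(0)]$, in order to kill the linear part of the nonlinearity and isolate the single term that survives with a definite sign.

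At $\lambda=0$ the problem \eqref{vii.1} reduces to
\begin{equation*}
-u''=u-u^{4}\quad \text{in }(0,\pi),\qquad u(0)=u(\pi)=0.
\end{equation*}
I multiply both sides by $\sin x$ and integrate on $(0,\pi)$. Using $u(0)=u(\pi)=\sin 0=\sin \pi =0$, a double integration by parts yields $-\int_{0}^{\pi} u''\sin x\,dx=\int_{0}^{\pi} u\sin x\,dx$, so the resulting identity reads
\begin{equation*}
\int_{0}^{\pi} u\sin x\,dx=\int_{0}^{\pi} u\sin x\,dx-\int_{0}^{\pi} u^{4}\sin x\,dx,
\end{equation*}
from which the linear contribution cancels and one is left with
\begin{equation*}
\int_{0}^{\pi} u^{4}\sin x\,dx=0.
\end{equation*}
Since $\sin x>0$ in $(0,\pi)$ and $u^{4}\geq 0$ is continuous, this forces $u^{4}\equiv 0$ and hence $u\equiv 0$ on $[0,\pi]$.

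There is essentially no hard step here: the argument is the standard Fredholm-alternative style orthogonality against the kernel, exploiting the fact that at $\lambda=0$ the only nonlinearity is the \emph{single-signed} term $-u^{4}$, whose pairing with the strictly positive test function $\sin x$ is definite. If any obstacle is worth pointing out, it is merely verifying the integration-by-parts computation and the sign bookkeeping to ensure that the linear contributions indeed cancel identically; once this is done, the sign of $u^{4}\sin x$ closes the argument immediately.
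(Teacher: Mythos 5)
Your proof is correct and is essentially identical to the paper's: both test the equation at $\lambda=0$ against the principal eigenfunction $\varphi_0=\sin x$, integrate by parts using the Dirichlet boundary conditions to cancel the linear parts, and conclude from $\int_0^\pi u^4\sin x\,dx=0$ together with the positivity of $\sin x$ on $(0,\pi)$ that $u\equiv 0$.
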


\begin{proof}
Suppose $\l=0$. Then, multiplying  \eqref{vii.1} by $\varphi_{0}=\sin x$ and integrating by parts gives
	\begin{equation*}
\int_{0}^{\pi}u\varphi_{0}\ dx =	-\int_{0}^{\pi}u''\varphi_{0}\ dx=\int_{0}^{\pi}u\varphi_{0}\ dx-\int_{0}^{\pi}u^{4}\varphi_{0}\ dx.
	\end{equation*}
Thus, $\int_{0}^{\pi}u^{4}\varphi_{0}\ dx=0$, and since $\varphi_{0}>0$, it follows that $u=0$. This ends the proof.
\end{proof}

Observe that $(\l,u)$ is a negative solution of \eqref{vii.1} if and only if $(\l,v)=(\l,-u)$ is a positive solution of the problem
\begin{equation}
\label{9.1}
\left\{\begin{array}{l}
-v''=\l v'+v-(\l-v^2)v^2 \quad \text{ in } (0,\pi), \\
v(0)=v(\pi)=0.
\end{array}\right.
\end{equation}
Therefore, the problem of analyzing the negative solutions of \eqref{vii.1} can be solved by studying the positive solutions of \eqref{9.1}.

\begin{theorem}
\label{th9.2}
	Suppose $\l<0$. Then, the problem \eqref{vii.1} admits a negative solution.
\end{theorem}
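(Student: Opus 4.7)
The plan is to extend the local branch of negative solutions of \eqref{vii.1} emanating from $(\lambda,u)=(0,0)$ for $\lambda<0$ via the analytic global alternative (Theorem~\ref{th.BT}), and to combine this with an a priori bound to cover every $\lambda\in(-\infty,0)$.

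First, I would extract from the Newton--Puiseux expansions~\eqref{iv.7} the local analytic curve $\gamma:(-\varepsilon,0)\to\mathbb{R}\times U$, $\gamma(\lambda)=(\lambda,u(\lambda))$, associated to the linear branch $z(\lambda)=\tfrac{3\pi}{32}\sqrt{\pi/2}\,\lambda+O(\lambda)$ restricted to negative $\lambda$; by the strong maximum principle (cf.\ the discussion preceding Lemma~\ref{levii.5}), one has $u(\lambda)\ll 0$ there. By the very same argument used in Theorem~\ref{th7.8}---the three local analytic branches of $\mathfrak{G}^{-1}(0)$ at $(0,0)$ are pairwise distinct and non-stationary, as witnessed by the explicit Weierstrass factorization of Section~\ref{SCh143}---the curve $\gamma$ consists of regular points of $\mathfrak{F}$. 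Theorem~\ref{th.BT} then extends $\gamma$ to a globally defined, locally injective continuous path $\Gamma:[0,T^*)\to\mathfrak{F}^{-1}(0)$ with $\Gamma(0)=(0,0)$, initially coinciding with $\gamma$, satisfying either (a) $\|\Gamma(t)\|_{\mathbb{R}\times U}\to\infty$ as $t\uparrow T^*$, or (b) $\Gamma$ is a closed loop returning to $(0,0)$.

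Next, I would rule out alternative (b). Adapting Lemma~\ref{levii.5} to negative solutions shows that $\Gamma$ stays inside the closure of the strongly negative cone of $U$, while Lemma~\ref{le9.1} forces $\Gamma$ to meet $\{\lambda=0\}$ only at $(0,0)$. Passing to a minimal sub-loop (one visiting $(0,0)$ only at its endpoints, extracted by shrinking should $\Gamma$ revisit $(0,0)$ at an intermediate time), sign considerations together with Lemma~\ref{le9.1} confine it to $\{\lambda<0\}\cup\{(0,0)\}$, so its two ends at $(0,0)$ must both lie on $\gamma$, the unique local negative analytic arc with $\lambda\leq 0$. But $\gamma$ is a graph over $\lambda$, so such a closing would require $\lambda(t)$ to attain an interior extremum along the sub-loop, which contradicts the local injectivity of $\Gamma$ at that time. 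Hence alternative (a) holds and $\Gamma$ is unbounded in $\mathbb{R}\times U$.

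Finally, I would establish an a priori bound $\|u\|_U\leq C(\lambda)$ for negative solutions, locally uniform in $\lambda<0$. Setting $w:=-e^{\lambda x/2}u>0$, the transformed equation reads
\[
-w''=\tfrac{4-\lambda^2}{4}\,w-\lambda\,e^{-\lambda x/2}\,w^2+e^{-3\lambda x/2}\,w^4,
\]
whose right-hand side is non-negative for $\lambda<0$; hence $w$ is concave with $w(0)=w(\pi)=0$. Testing against $\varphi_0=\sqrt{2/\pi}\sin x$ (using $-\int w''\varphi_0=\int w\varphi_0$) yields $\tfrac{\lambda^2}{4}\int w\varphi_0\geq -\lambda\int w^2\varphi_0$, and Cauchy--Schwarz gives $\int w\varphi_0\leq |\lambda|/\sqrt{2\pi}$. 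Concavity then yields $\|w\|_\infty\leq C|\lambda|$, and elliptic regularity bounds $\|u\|_U$ locally uniformly in $\lambda<0$. In particular, the unbounded continuum $\Gamma$ cannot blow up in any compact $\lambda$-subinterval of $(-\infty,0)$, so its $\lambda$-projection must escape to $-\infty$; by connectedness, this projection covers $(-\infty,0)$, furnishing a negative solution of \eqref{vii.1} for every $\lambda<0$. The main obstacle is the rigorous exclusion of the closed-loop alternative, which requires careful bookkeeping of the local analytic structure of $\mathfrak{F}^{-1}(0)$ at $(0,0)$; the a priori bound itself follows routinely from concavity and the integral identity.
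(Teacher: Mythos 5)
Your strategy is genuinely different from the paper's. The paper substitutes $w=e^{\lambda x/2}v$ with $v=-u$, sets $\mu=-\lambda$ and $\rho=\tfrac{4-\mu^2}{4}$, and treats the resulting superlinear problem $-w''=\rho w+\alpha(x)w^2+\beta(x)w^4$ with the Crandall--Rabinowitz theorem plus the \emph{unilateral} global alternative (Theorem~\ref{T6.6.5}), invoking the a priori bounds of Amann and L\'opez-G\'omez to stretch the component over $(-\infty,1]$. You instead try to run the \emph{analytic} global alternative (Theorem~\ref{th.BT}) directly on the original negative branch. That route is attractive, but as written it has two genuine gaps.

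First, the a priori bound. Your claim that the right-hand side of
\begin{equation*}
-w''=\tfrac{4-\lambda^2}{4}\,w-\lambda e^{-\lambda x/2}w^2+e^{-3\lambda x/2}w^4
\end{equation*}
is nonnegative (hence $w$ concave) \emph{for all} $\lambda<0$ is false once $\lambda<-2$: there $\tfrac{4-\lambda^2}{4}<0$, and near the endpoints of $(0,\pi)$, where $w$ is small, the linear term dominates and the right-hand side is negative. Without concavity the $L^1$-weighted bound $\int w\varphi_0\leq|\lambda|/\sqrt{2\pi}$ does not yield $\|w\|_\infty\lesssim|\lambda|$, and your entire localization of the continuum in $\lambda$ collapses. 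The paper sidesteps this by recognizing the transformed problem as a standard superlinear one, where the a priori bounds of \cite{AL} apply irrespective of the sign of $\rho=\tfrac{4-\lambda^2}{4}$.

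Second, the exclusion of the closed-loop alternative. You argue that, because the ends of a minimal sub-loop both run along the unique negative arc $\gamma$ (a graph over $\lambda$ near the origin), $\lambda(t)$ would have to attain an interior extremum, ``which contradicts the local injectivity of $\Gamma$.'' It does not. A locally injective curve can perfectly well have its $\lambda$-component attain an interior extremum: this happens at any singular point of $\mathfrak{F}$ where the analytic graph folds in the $\lambda$-direction (and even at a vertex). The fact that $\gamma$ is a graph over $\lambda$ is a local property near $(0,0)$; it says nothing about edges and vertices further out. The paper rules out boundedness much more cheaply: by Theorem~\ref{T6.6.5}, a bounded unilateral component must either meet a second bifurcation point (impossible since $\Sigma(\mathfrak{L})=\{0\}$ in the original variable, or $\{n^2\}$ in the transformed one, and a nodal constraint blocks it) or contain a point $(\lambda,z)$ with $z$ orthogonal to the principal eigenfunction (impossible for one-signed $z\neq 0$). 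That degree-theoretic exclusion does not rely on the delicate analytic-graph bookkeeping you are attempting.

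In summary, the plan via Theorem~\ref{th.BT} could in principle be made to work, but both of the ingredients you supply---the uniform $L^\infty$ estimate and the loop exclusion---are not established by the arguments given, and the paper's change of variable to a standard superlinear problem handles both issues at once.
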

\begin{proof}
The change of variable $w=e^{\frac{\l}{2}x}v$  transforms \eqref{9.1} into
\begin{equation}
	\label{9.2}
	\left\{\begin{array}{l}
	-w''=\frac{4-\l^{2}}{4}w-\l e^{-\frac{1}{2}\l x}w^{2}+e^{-\frac{3}{2}\l x}w^{4} \quad \text{ in } (0,\pi), \\
	w(0)=w(\pi)=0
	\end{array}\right.
	\end{equation}
and setting $\mu=-\l$, this problem can be expressed as
\begin{equation*}
	\left\{\begin{array}{l}
	-w''=\frac{4-\mu^{2}}{4}w+\mu e^{\frac{1}{2}\mu x}w^{2}+e^{\frac{3}{2}\mu x}w^{4} \quad \text{ in } (0,\pi), \\ 	w(0)=w(\pi)=0, 	\end{array}\right.
\end{equation*}
To study this problem, we will deal with its  generalization
\begin{equation}
	\label{9.3}
	\left\{\begin{array}{l}
	-w''=\rho w+\alpha(x)w^{2}+\beta(x)w^{4} \quad \text{ in } (0,\pi), \\
	w(0)=w(\pi)=0,
	\end{array}\right.
\end{equation}
where $\rho\in\mathbb{R}$ is regarded as a bifurcation parameter and $\alpha,\beta\in\mathcal{C}[0,\pi]$ satisfy $\a(x)>0$ and $\b(x)>0$ for all $x\in[0,\pi]$. The solutions of \eqref{9.3} are the zeroes of the nonlinear operator
\begin{equation*}
	\mathfrak{F}:\mathbb{R}\times U \to V, \quad \mathfrak{F}(\rho,w)=w''+\rho w+\alpha(x)w^{2}+\beta(x)w^{4}.
\end{equation*}
According to the main theorem of Crandall and Rabinowitz \cite{CR}, it is folklore that, for every integer $n\geq 1$,  $\rho_{n}=n^{2}$ is a bifurcation point to a curve of solutions with $n-1$ nodes. In particular, a curve of positive solutions emanates from $\rho=1$.
\par
Suppose that $w$ is a positive solution of \eqref{9.3}. Then, multiplying \eqref{9.3} by $\varphi_{0}(x)=\sin x$ and integrating by parts in $(0,\pi)$ yields
\begin{equation*}
	0=(\rho-1)\int_{0}^{\pi}w\varphi_{0}\ dx+\int_{0}^{\pi}\alpha(x)w^{2}\varphi_{0}\ dx+\int_{0}^{\pi}\beta(x)w^{4}\varphi_{0}\ dx>(\rho-1)\int_{0}^{\pi}w\varphi_{0}\ dx.
\end{equation*}
	Therefore, since $u$ and $\varphi_{0}$ are positive, necessarily $\rho<1$. Consequently,
\eqref{9.3} cannot admit any positive solution if $\r\geq 1$.
\par
Let us denote by $\mathscr{C}$ the component of positive solutions of \eqref{9.3} emanating from $w=0$ at $\rho=1$. We already know that $\mathscr{C}\subset (-\infty,1]\times U$. In particular, $\mathscr{C}$ cannot meet any other bifurcation point $\rho_{n}=n^{2}$. According to Theorem \ref{T6.6.5}, either $\mathscr{C}$ is unbounded in $(-\infty,1]\times U$,  or there exists $(\rho,w)\in\mathscr{C}$, with $w\neq 0$, such that $w\in Z$ where $Z$ stands for the $L^2$-orthogonal of  $\text{span}\left[\varphi_{0}\right]$ in $U$. The second alternative is impossible since $w$ is positive and hence $\int_{0}^{\pi}u\varphi_{0}\ dx>0$. Therefore, $\mathscr{C}$ is unbounded.
\par
Finally, by the \textit{a priori} bounds of Amann and L\'opez-G\'omez \cite{AL}, it is apparent that $\mathcal{P}_{\rho}(\mathscr{C})=(-\infty,1]$ where $\mathcal{P}_{\rho}:\mathbb{R}\times U \to \mathbb{R}$ stands for the $\rho$-projection operator $\mathcal{P}_{\rho}(\rho,w)=\rho$. Consequently, the problem \eqref{9.3} admits a positive solution for each $\rho<1$. Finally, since $ \frac{4-\mu^{2}}{4}<1 $, it is easily realized that, for every  $\mu>0$, the problem \eqref{9.2} admits a positive solution. In other words, \eqref{vii.1} admits a negative solution for all $\l<0$.
\end{proof}

When, instead of negative, $\l>0$, the problem \eqref{9.2} admits the following
generalization
\begin{equation}
	\label{9.4}
	\left\{\begin{array}{l}
	-w''=\rho w-\alpha(x)w^{2}+\beta(x)w^{4} \quad \text{ in } (0,\pi), \\
	w(0)=w(\pi)=0,
	\end{array}\right.
\end{equation}
where $\rho\in\mathbb{R}$ and $\alpha,\beta\in\mathcal{C}[0,\pi]$ satisfy $\a(x)>0$ and $\b(x)>0$ for all $x\in[0,\pi]$. Although the bifurcation to positive solutions from $w=0$ at $\r=1$ is supercritical in this case, because for $w\sim 0$ the problem \eqref{9.4} inherits  a sublinear nature, for sufficiently large $w$ \eqref{9.4} behaves much like a superlinear problem. Thus, the argument given in the proof of Theorem \ref{th9.2} can be also combined with the a priori bounds of \cite{AL} to infer that \eqref{vii.1} possesses a positive solution for all $\l>0$. However, as the argument in this case is more elaborate technically, because we are dealing with a superlinear indefinite problem whose structure is far more intricate, the complete technical details  will be given elsewhere.
\par

\begin{center}
	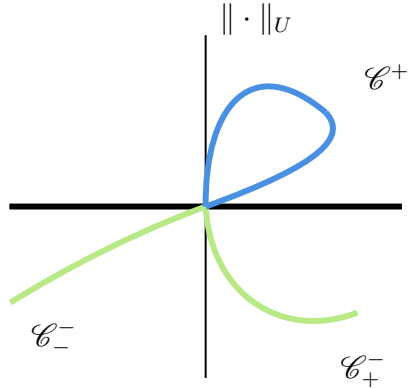
\begin{figure}[h!]
		\tikzset{every picture/.style={line width=0.75pt}} 
		
		\begin{tikzpicture}[x=0.75pt,y=0.75pt,yscale=-1,xscale=1]
			
			\draw    (123.46,31) -- (123.46,204) ;
			\draw [line width=2.25]    (222.46,117.5) -- (24.46,117.5) ;
			\draw [color={rgb, 255:red, 184; green, 233; blue, 134 }  ,draw opacity=1 ][line width=2.25]    (123.46,117.5) .. controls (126.29,163.92) and (164,185) .. (200,171) ;
			\draw [color={rgb, 255:red, 184; green, 233; blue, 134 }  ,draw opacity=1 ][line width=2.25]    (25,166) .. controls (33,161) and (73,136) .. (123.46,117.5) ;
			\draw [color={rgb, 255:red, 74; green, 144; blue, 226 }  ,draw opacity=1 ][line width=2.25]    (123.46,117.5) .. controls (123,66) and (147,39) .. (184,70) ;
			\draw [color={rgb, 255:red, 74; green, 144; blue, 226 }  ,draw opacity=1 ][line width=2.25]    (184,70) .. controls (195,82) and (183,96) .. (123.46,117.5) ;
			
			\draw (129,13.4) node [anchor=north west][inner sep=0.75pt]    {$\| \cdot \| _{U}$};
			\draw (202,44.4) node [anchor=north west][inner sep=0.75pt]    {$\mathscr{C}^{+}$};
			\draw (190,189.4) node [anchor=north west][inner sep=0.75pt]    {$\mathscr{C}_{+}^{-}$};
			\draw (34,173.4) node [anchor=north west][inner sep=0.75pt]    {$\mathscr{C}_{-}^{-}$};

		\end{tikzpicture}
		\caption{Bifurcation diagram of the component emanating at $(0,0)$}
		\label{Fig7.5}
	\end{figure}
\end{center}

According to the analysis carried out in Section \ref{SCh143}, we already know that there are two analytic curves of negative solutions bifurcating from $(0,0)$: One in the direction of $\l>0$ and another in the direction of $\l<0$. Subsequently, we denote by $\mathscr{C}^{-}_{+}$ (resp. $\mathscr{C}^{-}_{-}$)  the connected component of the set of negative solutions
\begin{equation*}
	\mathscr{N}:=\{(\l,u)\in\mathfrak{F}^{-1}(0): \;u\ll 0\}\subset \mathbb{R}\times U,
\end{equation*}
emanating from $(0,0)$ in the direction $\l>0$ (resp. $\l<0$). The next result provides us with their global behavior.

\begin{theorem}
	The components $\mathscr{C}^{-}_{+}$ and  $\mathscr{C}^{-}_{-}$ are unbounded and disjoint, i.e., $\mathscr{C}^{-}_{+}\cap\mathscr{C}^{-}_{-}=\emptyset$.
\end{theorem}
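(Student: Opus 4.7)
Since $\mathscr{N}$ is an open subset of the locally path-connected analytic graph $\mathfrak{F}^{-1}(0)\subset\mathbb{R}\times U$ (cf.\ Theorem~\ref{T6.4.8}), its connected components are path-connected. If $\mathscr{C}^{-}_{+}=\mathscr{C}^{-}_{-}$, a path inside $\mathscr{N}$ would join a point with $\lambda>0$ to a point with $\lambda<0$; the intermediate value theorem, applied to the $\lambda$-projection, would then furnish an interior point lying over $\lambda=0$ that is strongly negative, contradicting Lemma~\ref{le9.1}.

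\textbf{Setup for unboundedness.} Each component $\mathscr{C}^{-}_{\pm}$ emanates from $(0,0)$ along a single analytic half-branch produced by the Weierstrass factorization of Section~\ref{SCh143}: the negative half of the linear root $\lambda=\lambda_{1}(z)$ for $\mathscr{C}^{-}_{-}$ and the negative half of the quadratic root $\lambda=\lambda_{2}(z)$ for $\mathscr{C}^{-}_{+}$. By Theorem~\ref{T6.4.4}, these half-branches consist of regular points of $\mathfrak{F}$, so Theorem~\ref{th.BT} yields locally injective continuous extensions $\Gamma_{\pm}\colon[0,\infty)\to\mathfrak{F}^{-1}(0)$ satisfying either (a) $\|\Gamma_{\pm}(t)\|\to\infty$, or (b) $\Gamma_{\pm}(T)=(0,0)$ for some $T>0$. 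The Cauchy--Lipschitz argument used in Section~\ref{SDODP} shows that $\Gamma_{\pm}(t)$ remains strongly negative for $t\in(0,T)$, since any nontrivial solution vanishing at an interior point of $(0,\pi)$ is forced to vanish identically.

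\textbf{Ruling out the loop.} In alternative~(b), $\Gamma_{\pm}$ must return to $(0,0)$ through one of the two local negative half-branches. Returning through the branch \emph{other} than the one from which $\Gamma_{\pm}$ emanates would give $\mathscr{C}^{-}_{+}=\mathscr{C}^{-}_{-}$, contradicting disjointness; returning through the \emph{same} branch would confine the loop to $\{\lambda\leq 0\}$ (for $\mathscr{C}^{-}_{-}$) or $\{\lambda\geq 0\}$ (for $\mathscr{C}^{-}_{+}$), because an interior crossing of $\lambda=0$ would again contradict Lemma~\ref{le9.1}. To rule out this remaining possibility for $\mathscr{C}^{-}_{-}$ we invoke Theorem~\ref{th9.2}: the bijective substitution $w=e^{\lambda x/2}(-u)$ identifies negative solutions of \eqref{vii.1} at $\lambda<0$ with positive solutions of \eqref{9.2}, and its proof furnishes, for each $\mu=-\lambda>0$, an unbounded component of positive solutions of the auxiliary problem~\eqref{9.3} whose $\rho$-projection equals $(-\infty,1]$. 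A two-parameter bifurcation analysis at the Crandall--Rabinowitz point $(\rho,\mu,w)=(1,0,0)$ patches these $\mu$-indexed continua into an analytic solution variety whose intersection with the curve $\rho=(4-\mu^{2})/4$ provides a continuous path out of $(0,0)$ reaching $\lambda\to-\infty$; by the local uniqueness of the negative half of $\lambda_{1}(z)$ near $(0,0)$, this path is trapped in $\mathscr{C}^{-}_{-}$, contradicting the supposed boundedness of the loop. For $\mathscr{C}^{-}_{+}$ the analogous argument uses the counterpart of Theorem~\ref{th9.2} for $\lambda>0$ sketched after that theorem; this is the main technical obstacle, since the associated problem~\eqref{9.4} is superlinear indefinite and the patching step requires the sharper \emph{a priori} bounds of \cite{AL}.
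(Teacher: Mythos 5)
Your disjointness argument is correct and essentially the paper's, though stated more elaborately (the path-connectedness of components of an analytic graph is implicit in the paper, which just cites Lemma \ref{le9.1} and concludes). The setup for unboundedness — regular points on $\gamma_{\pm}$, extension via Theorem \ref{th.BT}, trapping in $\mathscr{C}^{-}_{\pm}$ by a strong-maximum-principle/Cauchy--Lipschitz argument (the paper invokes Lemma \ref{levii.5} for the same purpose) — also matches.

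The gap is in your ``ruling out the loop'' step. You split alternative (b) of Theorem \ref{th.BT} into two sub-cases: $\Gamma_{\pm}$ returns through the \emph{other} local half-branch, or through the \emph{same} one. The first is handled correctly. But the second sub-case is not a live possibility: Theorem \ref{th.BT} produces $\Gamma_{\pm}$ as the \emph{maximal route} of Buffoni--Toland, and by construction a maximal route never re-traverses a local branch. This is precisely the mechanism used in the proof of Theorem \ref{th7.8}, which the paper says to ``adapt''; once $\Gamma_{+}$ has gone out along $\gamma_{+}$, a return to $(0,0)$ forces re-entry along $\gamma_{-}$, and $\gamma_{-}\subset\mathscr{C}^{-}_{+}\cap\mathscr{C}^{-}_{-}$ contradicts disjointness. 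That is the whole argument. By inventing a third sub-case, you create a burden you cannot discharge, and your attempted discharge does not work. The ``two-parameter bifurcation analysis at $(\rho,\mu,w)=(1,0,0)$'', the ``patching'' of $\mu$-indexed continua into an ``analytic solution variety'', and its intersection with $\rho=(4-\mu^{2})/4$ are not constructions that appear anywhere in the paper, and as stated they are not a proof: there is no lemma asserting that this intersection is a continuous curve, nor that it attaches to $\gamma_{-}$ rather than to some other sheet. Theorem \ref{th9.2} gives, for each fixed $\lambda<0$, \emph{some} negative solution; it says nothing about how these solutions vary with $\lambda$ or whether they lie in $\mathscr{C}^{-}_{-}$. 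Worse, for $\mathscr{C}^{-}_{+}$ you rely on the counterpart of Theorem \ref{th9.2} for $\lambda>0$, which the paper explicitly defers (``the complete technical details will be given elsewhere''). So as written, your proof of unboundedness of $\mathscr{C}^{-}_{+}$ rests on an unproved claim, and even the $\mathscr{C}^{-}_{-}$ side is not rigorous. The fix is to drop the ``same-branch'' sub-case entirely and appeal to the no-re-traversal property of the maximal route, as the paper does.
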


\begin{proof}
	By Lemma \ref{le9.1}, \eqref{vii.1} cannot admit a negative solution at $\l=0$. So,
	$\mathscr{C}^{-}_{+}\cap\mathscr{C}^{-}_{-}=\emptyset$. Let us denote by $\gamma_{+}:(0,\varepsilon)\to\mathbb{R}\times U$ and $\gamma_{-}:(-\varepsilon,0)\to\mathbb{R}\times U$ the two local curves of negative solutions of \eqref{vii.1} that emanate from $(0,0)$ in the direction of $\mathscr{C}^{-}_{+}$ and $\mathscr{C}^{-}_{-}$, respectively. Adapting the argument of the proof of Theorem \ref{th7.8}, it
	is easily seen  that $\gamma_{+}$ and $\gamma_{-}$ consist of regular points for sufficiently small $\varepsilon>0$. Thus, by Theorem \ref{th.BT}, there are two global locally injective continuous curves $\Gamma_{+}:[0,\infty)\to\mathbb{R}\times U$, $\Gamma_{+}([0,\infty))\subset\mf{F}^{-1}(0)$, $\Gamma_{-}:(-\infty,0]\to\mathbb{R}\times U$, $\Gamma_{+}((-\infty,0])\subset\mf{F}^{-1}(0)$, 	that extend $\gamma_{+}$ and $\gamma_{-}$, respectively, and satisfy one of the alternatives (a) and (b). By Lemma \ref{levii.5}, $\Gamma_{+}([0,\infty))\subset\mathscr{C}^{-}_{+}$, $\Gamma_{-}((-\infty,0])\subset \mathscr{C}^{-}_{-}$.
	Since $\mathscr{C}^{-}_{+}\cap\mathscr{C}^{-}_{-}=\emptyset$, the curves $\Gamma_{\pm}$ cannot form a loop. 	 Thus, the  alternative (a) cannot happen. Therefore, $\lim_{t\uparrow\infty}\|\Gamma_{+}(t)\|_{U}=\infty$ and $ \lim_{t\downarrow-\infty}\|\Gamma_{-}(t)\|_{U}=\infty$.
	This entails that $\mathscr{C}^{-}_{+}$ and $\mathscr{C}^{-}_{-}$  are unbounded. The proof is complete.
\end{proof}

As a consequence of the analysis carried out in Sections \ref{SDODP} and \ref{SNS}, the global structure of the component
$\mathscr{C}$ of solutions of \eqref{vii.1} emanating from $(0,0)$ looks like shows
Figure \ref{Fig7.5}. Essentially, it consist of a bounded component of positive solutions $(\l,u)$ with $\l>0$ forming a loop around $(0,0)$ plus two unbounded branches of negative solutions: one for $\l>0$ and another for $\l<0$. It should be emphasized that, thanks to the results of Section \ref{SGG}, the set $\mathfrak{F}^{-1}(0)$ is an analytic graph.
\par
To conclude this section, it should be noted that the loop of positive solutions, $\mathscr{C}^+$, does not satisfies any of the alternatives (i)--(iii) of Theorem \ref{T6.6.5}. This fact shows the relevance that $\chi$ is odd for the validity of these unilateral theorems.

\section{A quasilinear problem of mixed type}\label{SQPMT}

\noindent In this section, $\Omega$ stands for a bounded subdomain of $\mathbb{R}^{N}$ of class $\mathcal{C}^{2}$ such that $\partial\Omega=\Gamma_{0}\uplus\Gamma_{1}$, where $\Gamma_{0}$ and $\Gamma_{1}$ are two open and closed subsets of $\partial\Omega$. Thus, both are of class $\mc{C}^2$. Our main goal is analyzing the existence of regular positive solutions for the multidimensional quasilinear boundary value problem
\begin{equation}
	\label{6.6.71}
	\left\{\begin{array}{ll}
		-\text{div}\left(\frac{\nabla u}{\sqrt{1+|\nabla u|^{2}}}\right)=\lambda a(x) u +g(x,u)u & \text{ in } \Omega, \\
		\mathcal{B}u=0 & \text{ on } \partial\Omega,
	\end{array}
	\right.
\end{equation}
where $a\in\mathcal{C}(\overline{\Omega},\R)$ changes sign in $\O$, $g\in\mathcal{C}^1 (\overline{\Omega}\times\mathbb{R})$ satisfies $g(\cdot,0)=0$, and the boundary operator $\mathcal{B}:\mathcal{C}(\Gamma_{0})\otimes\mathcal{C}(\Gamma_{1})\to\mathbb{R}$ is defined by
\begin{equation*}
	\mathcal{B}u:=\left\{
	\begin{array}{ll}
		u & \text{ on } \Gamma_{0} \\
		\partial_{\mathbf{n}}u+\beta(x)u & \text{ on } \Gamma_{1}
	\end{array}\right.
\end{equation*}
where $\mathbf{n}$ is the exterior normal vector field to $\O$ on $\partial\Omega$, and $\beta\in\mathcal{C}(\Gamma_{1})$. Thus, $\mc{B}$ is a non-classical mixed boundary operator on
$\p\O$; non-classical because $\b$ can change sign on $\G_1$. When $\G_1=\emptyset$, then
$\mc{B}$ coincides with the Dirichlet boundary operator, $\mc{D}$. When, $\G_0=\emptyset$ and
$\b=0$, $\mc{B}$ equals the Neumann boundary operator, $\mc{N}$.
\par
In this section, we will assume that $(-\Delta-\l_0 a(x),\mc{B},\O)$ satisfies the maximum principle for some $\l_0\in\R$, i.e., thanks to \cite[Th. 7.10]{LG13},
\begin{equation}
	\label{6.6.72}
	\s_1[-\D-\l_0 a(x),\mc{B},\O]>0,
\end{equation}
where we are denoting by $\s_1[-\D-\l a,\mc{B},\O]$ the principal eigenvalue of $-\D-\l a$ in $\O$ subject to the boundary operator $\mc{B}$ on $\p\O$. The condition \eqref{6.6.72} holds with $\l_0=0$ if, for example, $\Gamma_{0}=\emptyset$ and $\b \gneq 0$, or $\Gamma_{0}\neq\emptyset$ and $\beta\geq 0$, as in both cases, the constant function $h=1$ provides us with a positive strict supersolution of $(-\Delta,\mc{B},\O)$ as discussed in \cite[Ch. 7]{LG13}.
\par
For every $p>N$, we can define a nonlinear operator $\mathfrak{F}: \mathbb{R}\times W^{2,p}_{\mc{B}}(\Omega) \longrightarrow L^{p}(\Omega)$ through
\begin{equation*}
	\mathfrak{F}(\lambda,u)=-\text{div}\left(\frac{\nabla u}{\sqrt{1+|\nabla u|^{2}}}\right)-\lambda a(x) u -g(x,u)u.
\end{equation*}
Then, $\mathfrak{F}\in\mathcal{C}^{1}(\mathbb{R}\times W^{2,p}_{\mc{B}}(\Omega),L^{p}(\Omega))$ and
\begin{equation*}
	\mathfrak{L}(\lambda)u=D_{u}\mathfrak{F}(\lambda,0)u=-\Delta u-\lambda a(x)u \quad \hbox{for all}\;\;
	u\in W^{2,p}_{\mc{B}}(\Omega).
\end{equation*}
Since
$$
\mu(\l,\g)\equiv \s_1[-\D-\l a(x)+\g,\mc{B},\O]= \s_1[-\D-\l a(x),\mc{B},\O]+\g,
$$
it is apparent that, for sufficiently large $\g \in\R$,  $\mu(\l,\g)>0$. Thus,
$\mathfrak{L}(\lambda)+\gamma J$ lies in $GL(W^{2,p}_{\mc{B}}(\Omega),L^{p}(\Omega))$,
where $J: W^{2,p}_{\mc{B}}(\Omega)\to L^{p}(\Omega)$ stands for the canonical embedding, which is a linear compact operator. Consequently, $\mathfrak{L}(\lambda)=(\mathfrak{L}(\lambda)+\gamma J)-\gamma$ can be expressed as the sum of a compact operator plus an isomorphism. In particular,
$\mf{L}(\l)\in \Phi_0(W^{2,p}_{\mc{B}}(\O),L^p(\O))$. Therefore, $\mathfrak{L}$ is an analytic curve of Fredholm operators of index zero. 
By \eqref{6.6.72},
$$
\mf{L}(\l_0)=-\D-\l_0 a \in GL(W^{2,p}_{\mc{B}}(\Omega),L^{p}(\Omega)).
$$
Thus, thanks to Theorems 4.4.1 and 4.4.3 of \cite{LG01},  $\Sigma(\mf{L})$ is a discrete subset of
$\R$. Moreover, $\Sigma(\mf{L})$ consists of algebraic eigenvalues. Note that $\l \in\Sigma(\mf{L})$ if and only if the linear problem
\begin{equation}
	\label{6.6.73}
	\left\{
	\begin{array}{ll}
		-\Delta u = \lambda a(x)u & \text{ in }\;\; \Omega, \\
		\mc{B}u=0 & \text{ on } \;\;\partial\Omega,
	\end{array}
	\right.
\end{equation}
admits some solution $u\neq 0$ in $W^{2,p}_\mc{B}(\O)$. As $a(x)$ changes of sign in $\O$, it follows from
\cite[Th. 9.4]{LG13} that \eqref{6.6.73} has two (principal) eigenvalues, $\l_-<\l_0<\l_+$, associated with each of them the problem \eqref{6.6.73} possesses a (principal) positive eigenfunction, $\v_\pm \gg 0$, unique up to a positive multiplicative constant. By $\v_\pm \gg 0$ it is meant that $\v_\pm(x)>0$ for all $x\in\O\cup \G_1$, and that $\p_\mathbf{n}\v_\pm(x)<0$ if $x\in \G_0$. Moreover, also based on \cite[Th. 9.4]{LG13}, it becomes apparent that $\l_\pm$ are algebraically simple and that $\mathfrak{L}_{1}(\lambda_{\pm})(\varphi_{\pm})\notin R[\mathfrak{L}(\lambda_{\pm})]$. 
Thus, according to \eqref{ii.3}, $\chi[\mathfrak{L},\lambda_{\pm}]=1$. Therefore,
when, in addition, $g$ is assumed to be of class $\mc{C}^2$, by the Crandall--Rabinowitz theorem \cite{CR},
there exist $\varepsilon>0$ and two $\mathcal{C}^{1}$-functions
\begin{equation*}
	\mu_{\pm}:(-\varepsilon,\varepsilon)\to\mathbb{R}, \quad \psi_{\pm}:(-\varepsilon,\varepsilon)\to Z\equiv \Big\{u\in W^{2,p}_{\mc{B}}(\Omega):\;\int_{\Omega}u\varphi_{\pm}=0\Big\}
\end{equation*}
such that $\mu_\pm(0)=\l_\pm$, $\psi_\pm(0)=0$, and, in a neighborhood of $(\l_\pm,0)$,  the solution set $\mathfrak{F}^{-1}(0)$ consists
of the trivial curve $(\l,0)$ plus the bifurcated $\mc{C}^1$-curve $(\l_\pm(s),u_\pm(s))\equiv (\mu_{\pm}(s),s\varphi_{\pm}+\psi_{\pm}(s))$, $s\sim 0$. At least for sufficiently small $s>0$, the solutions $(\l_\pm(s),u_\pm(s))$ provide us
with positive solutions of \eqref{6.6.71}.
Actually, those are the unique small positive solutions of
\eqref{6.6.71} as no other eigenvalue of \eqref{6.6.73} admits a positive eigenfunction (see, e.g.,
\cite[Ch. 6]{LG13}).
\par
Nevertheless, even when $g$ is simply of class $\mc{C}^1$ regularity, by Theorem
\ref{T6.2.1}, there are two components, $\mf{C}_\pm^+$, of non-trivial solutions of
\eqref{6.6.71} emanating from $(\l,0)$ at $\l_\pm$. These components consist of the  smooth curves $(\l_\pm(s),u_\pm(s))$ for sufficiently small  $s>0$ if, in addition,
$g$ is $\mc{C}^2$. Based on the next result,
a direct adaptation of the argument of \cite[Sec. 6.5]{LG01} shows that $\mf{C}_\pm^+$ actually provides us with the component of positive solutions of \eqref{6.6.71} emanating from $(\l,0)$ at $\l_\pm$.

\begin{lemma}
	Suppose that $u\gneq 0$ is a solution of \eqref{6.6.71} in $W^{2,p}_\mc{B}(\O)$. Then, $u\gg 0$ in the sense
	that $u(x)>0$ for all $x\in\O\cup \G_1$ and $\p_\mathbf{n}u(x)<0$ for all $x\in\G_0$.
\end{lemma}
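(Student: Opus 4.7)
The strategy is to reduce the quasilinear equation to a linear uniformly elliptic equation with bounded coefficients, to which the classical strong maximum principle and Hopf boundary point lemma can be applied.

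First, since $p>N$, the Sobolev embedding $W^{2,p}(\O)\hookrightarrow \mc{C}^{1,\alpha}(\overline{\O})$ for some $\alpha\in(0,1)$ gives $u\in \mc{C}^{1,\alpha}(\overline{\O})$, so $\nabla u$ is continuous and bounded on $\overline{\O}$. Next, expanding the divergence yields
\begin{equation*}
-\mathrm{div}\!\left(\frac{\nabla u}{\sqrt{1+|\nabla u|^2}}\right)=-\frac{1}{\sqrt{1+|\nabla u|^2}}\sum_{i,j=1}^{N}\left(\delta_{ij}-\frac{\partial_i u\,\partial_j u}{1+|\nabla u|^2}\right)\partial_{ij} u,
\end{equation*}
so equation \eqref{6.6.71} can be rewritten, a.e. in $\O$, in non-divergence form as
\begin{equation*}
-\sum_{i,j=1}^{N}a_{ij}(\nabla u(x))\,\partial_{ij} u+c(x)\,u=0,
\end{equation*}
with
\begin{equation*}
a_{ij}(p):=\delta_{ij}-\frac{p_ip_j}{1+|p|^2},\qquad c(x):=-\sqrt{1+|\nabla u(x)|^2}\,[\lambda a(x)+g(x,u(x))].
\end{equation*}
The eigenvalues of the matrix $(a_{ij}(p))$ lie in $[(1+|p|^2)^{-1},1]$; since $\|\nabla u\|_{L^\infty(\O)}<\infty$, the coefficients $a_{ij}(\nabla u(x))$ are continuous on $\overline{\O}$ and satisfy a uniform ellipticity bound. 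Moreover $c\in L^\infty(\O)$.

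Splitting $c=c^+-c^-$ with $c^\pm\geq 0$, and setting $\tilde L v:=-\sum_{i,j}a_{ij}(\nabla u(x))\partial_{ij} v+c^+(x)v$, the equation for $u$ becomes
\begin{equation*}
\tilde L u=c^-(x)\,u\geq 0\qquad\text{a.e. in }\O,
\end{equation*}
since $u\geq 0$. Hence $u$ is a non-negative $W^{2,p}$-supersolution of $\tilde L v=0$, where $\tilde L$ is a linear uniformly elliptic operator with bounded coefficients and non-negative zeroth-order term $c^+$. The strong maximum principle of Gilbarg--Trudinger (e.g.\ \cite[Th.~9.6]{LG13} type statement, or the classical Hopf--Oleinik version) then forces the alternative $u\equiv 0$ or $u>0$ in $\O$. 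Since $u\gneq 0$, we conclude $u(x)>0$ for all $x\in\O$.

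Finally, to control the boundary, we invoke the Hopf boundary point lemma (which applies since $\p\O\in\mc{C}^2$ satisfies the interior sphere condition, $u\in\mc{C}^{1,\alpha}(\overline{\O})$, and $\tilde L$ has the required structure). On $\G_0$, where $u=0$, any $x_0\in\G_0$ is a boundary minimum of $u$, so $\p_{\mathbf{n}}u(x_0)<0$. On $\G_1$, suppose by contradiction that $u(x_0)=0$ for some $x_0\in\G_1$; then Hopf yields $\p_{\mathbf{n}}u(x_0)<0$, while the boundary condition $\mathcal{B}u=0$ at $x_0$ gives $\p_{\mathbf{n}}u(x_0)=-\beta(x_0)u(x_0)=0$, a contradiction. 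Therefore $u(x)>0$ on $\G_1$, and altogether $u\gg 0$.

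The only genuinely delicate point is ensuring that the reduction to a linear elliptic operator has coefficients good enough for the classical maximum-principle machinery; this is precisely what the Sobolev embedding $W^{2,p}\hookrightarrow \mc{C}^{1,\alpha}$ (guaranteed by the hypothesis $p>N$) provides, by making $\nabla u$ bounded so that $a_{ij}(\nabla u(\cdot))$ is uniformly elliptic and $c(\cdot)$ is bounded. Everything else is a routine application of the strong maximum principle and the Hopf lemma.
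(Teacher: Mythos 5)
Your proof is correct, and it reaches the same conclusion by a genuinely different route from the paper. Both arguments begin with the same reduction: use $p>N$ so that $u\in\mc{C}^{1,\alpha}(\bar\O)$, then rewrite the mean-curvature equation in non-divergence form as a linear, uniformly elliptic equation with frozen coefficients $a_{ij}(\nabla u(x))$. From that point on, the paper treats $u$ as an eigenfunction: it observes that $u$ satisfies $(\mc{L}(u,\nabla u)-\l a-g(\cdot,u))u=0$ with $\mc{B}u=0$, invokes the uniqueness of the principal eigenvalue among those admitting a one-signed eigenfunction to conclude $\s_1=0$, and then obtains $u\gg 0$ from the Krein--Rutman theorem as developed in \cite[Ch.~7]{LG13}. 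You instead split the zeroth-order coefficient as $c=c^{+}-c^{-}$ so that $\tilde{L}u=c^{-}u\geq 0$ with a nonnegative zeroth-order term, and apply the strong maximum principle directly for interior positivity and the Hopf boundary-point lemma (together with the form of $\mc{B}$) for the boundary assertions. Your route is the more elementary and self-contained one; it avoids any appeal to principal spectral theory and uses only the classical maximum-principle toolkit for strong $W^{2,p}$ solutions (e.g.\ Gilbarg--Trudinger Theorems 9.6 and the Hopf lemma, valid under the interior-sphere condition guaranteed by $\p\O\in\mc{C}^{2}$). The paper's route is shorter once the Krein--Rutman/principal-eigenvalue machinery of \cite{LG13} is accepted as given, and it is the natural phrasing in a context where that machinery is already central to the surrounding analysis. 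Both proofs are sound.

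One small remark for completeness: after the $c=c^{+}-c^{-}$ decomposition, the Hopf lemma on $\G_{1}$ is applied to the operator $M:=\sum a_{ij}\partial_{ij}-c^{+}$, whose zeroth-order coefficient $-c^{+}$ is nonpositive, with $Mu\leq 0$, $u>0$ in $\O$, and $u(x_{0})=0\leq 0$; these are exactly the hypotheses under which the sign condition on the zeroth-order coefficient is needed, so your argument is airtight as written.
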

\begin{proof}
	By the Sobolev embedding theorem, $u\in\mc{C}^1(\bar\O)$. Thus, \eqref{6.6.71} can be written equivalently
	in the form
	$$
	-(1+|\nabla u|^2)^{-\frac{1}{2}}\D u+(1+|\nabla u|^2)^{-\frac{3}{2}} \sum_{i,j=1}^N\frac{\p u}{\p x_i} \frac{\p u}{\p x_j}    \frac{\p^2 u}{\p x_i\p x_j}  =\l a(x)u+g(x,u)u,
	$$
	where the second order differential operator
	$$
	\mc{L}(u,\nabla u)w:= -(1+|\nabla u|^2)^{-\frac{1}{2}}\D w+(1+|\nabla u|^2)^{-\frac{3}{2}} \sum_{i,j=1}^N\frac{\p u}{\p x_i} \frac{\p u}{\p x_j}    \frac{\p^2 w}{\p x_i\p x_j}
	$$
	is uniformly elliptic in $\O$ (see, e.g., Gilbarg--Trudinger \cite[p. 261]{GT}). Thus, since $u\gneq 0$ and \eqref{6.6.71} can be equivalently expressed in the form
	$$
	\left( \mc{L}(u,\nabla u) -\l a(x)-g(x,u)\right) u=0,
	$$
	by the uniqueness of the principal eigenvalue $\s_1[\mc{L}(u,\nabla u) -\l a-g(\cdot,u),\mc{B},\O]=0$ 
	and $u\gneq 0$ must be a principal eigenfunction associated to $0$. Finally, adapting the arguments of \cite[Ch. 7]{LG13}, it follows from the Krein--Rutman theorem that $u\gg 0$.
\end{proof}

Therefore, by Theorem \ref{T6.6.5}, each of the components $\mf{C}_\pm^+$ satisfies some of the following alternatives:
\begin{enumerate}
	\item[{\rm (i)}] $\mf{C}_\pm^+$ is not compact in $\R\times W^{2,p}_\mc{B}(\O)$.
	\item[{\rm (ii)}] There exists $\l_{\pm,1}\neq \l_\pm$ such that $(\l_{\pm,1},0)\in \mf{C}_\pm^+$.
	\item[{\rm (iii)}]  There exist $\l\in\R$ and $z\in Z\setminus\{0\}$ such that $(\l,z)\in \mf{C}_\pm^+$.
\end{enumerate}
Since $z\neq 0$ and  $\v_\pm\gg 0$, it follows from $\int_\O z\v_\pm dx =0$ that $z$ changes sign
in $\O$ and hence $(\l,z)\notin \mf{C}_\pm^+$. Therefore, either (i), or (ii) occurs and the next result holds.

\begin{theorem}
	Either $\mf{C}_+^+=\mf{C}_-^+$, or $\mf{C}_+^+\cap \mf{C}_-^+=\emptyset$ and, in such case, there exist two sequences $(\l_{\pm,n},u_{\pm,n})\in \mf{C}_\pm^+$, $n\geq 1$, such that either
	$\lim_{n\to\infty}|\l_{\pm,n}|=\infty$, or $\lim_{n\to \infty}\|u_{\pm,n}\|_{W^{2,p}_\mc{B}(\O)}=\infty$.
\end{theorem}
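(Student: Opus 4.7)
My plan is to apply the unilateral global alternative (Theorem \ref{T6.6.5}) to each $\mf{C}_\pm^+$, as set up in the excerpt. Alternative (iii) has already been excluded by positivity, so for each $\mf{C}_\pm^+$ either (i) non-compactness in $\R\times W^{2,p}_\mc{B}(\O)$, or (ii) the existence of some $\l_{\pm,1}\neq\l_\pm$ with $(\l_{\pm,1},0)\in\mf{C}_\pm^+$, must hold.

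The first step is the dichotomy. I would identify $\mf{C}_+^+$ and $\mf{C}_-^+$ as connected components of the closed set
\[
\mathfrak{D}:=\{(\l,u)\in\mf{F}^{-1}(0):\;u\gg 0\}\cup\{(\l_+,0),(\l_-,0)\}.
\]
The inclusion $\mf{C}_\pm^+\subset\mathfrak{D}$ has already been established in the excerpt, and the maximality of $\mf{C}_\pm^+$ inside $\mathfrak{D}$ follows from the local uniqueness of the positive bifurcating branch at each $(\l_\pm,0)$, itself a consequence of $\chi[\mf{L},\l_\pm]=1$ and already recalled in the excerpt. Since two connected components of $\mathfrak{D}$ are either equal or disjoint, this yields the required dichotomy.

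Next, in the disjoint case I would rule out alternative (ii). Suppose $(\l_{\pm,1},0)\in\mf{C}_\pm^+$ with $\l_{\pm,1}\neq\l_\pm$. Then $\l_{\pm,1}$ is the limit of a sequence of parameters $\l_n$ along which strongly positive solutions $u_n$ collapse to $0$ in $W^{2,p}_\mc{B}(\O)$. Normalizing $\psi_n:=u_n/\|u_n\|_{W^{2,p}_\mc{B}(\O)}$ and exploiting the compactness furnished by the parametrix (in the spirit of the proof of Proposition \ref{P6.6.1}), I extract a limit $\psi_\infty$ with $\|\psi_\infty\|=1$, $\psi_\infty\geq 0$, and $\mf{L}(\l_{\pm,1})\psi_\infty=0$; the strong maximum principle together with the Hopf lemma (adapted to the mixed boundary operator $\mc{B}$) then upgrades this to $\psi_\infty\gg 0$, so that $\l_{\pm,1}$ is a principal eigenvalue of \eqref{6.6.73} and hence must equal $\l_\mp$. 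But then $(\l_\mp,0)\in\mf{C}_\pm^+\cap\mf{C}_\mp^+$, contradicting disjointness through the dichotomy. So alternative (i) holds for each component. Since $\mf{F}$ is proper on bounded and closed subsets of $\R\times W^{2,p}_\mc{B}(\O)$, non-compactness is equivalent to unboundedness, yielding a sequence $(\l_{\pm,n},u_{\pm,n})\in\mf{C}_\pm^+$ with $|\l_{\pm,n}|+\|u_{\pm,n}\|_{W^{2,p}_\mc{B}(\O)}\to\infty$, so that, along subsequences, either $|\l_{\pm,n}|\to\infty$ or $\|u_{\pm,n}\|_{W^{2,p}_\mc{B}(\O)}\to\infty$.

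The main technical obstacle is the identification of $\mf{C}_\pm^+$ as components of $\mathfrak{D}$ in the first step. As defined in Theorem \ref{P6.6.4}, $\mathscr{C}_+^+$ and $\mathscr{C}_-^+$ are components of the \emph{different} punctured sets $\mc{S}\setminus\{(\l_+,0)\}$ and $\mc{S}\setminus\{(\l_-,0)\}$, so their maximality relative to the common set $\mathfrak{D}$ must be verified directly: any connected subset of $\mathfrak{D}$ strictly containing $\mf{C}_\pm^+$ would, upon removing $(\l_\pm,0)$, fail to remain connected, but the local uniqueness of the positive branch at each trivial-solution bifurcation point rules this out.
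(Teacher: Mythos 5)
Your proof is correct and follows the same route as the paper: invoke the unilateral alternative of Theorem \ref{T6.6.5}, discard option (iii) by positivity, identify option (ii) with $\mf{C}_+^+=\mf{C}_-^+$ via the uniqueness of principal eigenvalues, and fall back on non-compactness plus properness to extract the divergent sequences. The paper leaves the step ``(ii) $\Rightarrow$ $\l_{\pm,1}=\l_\mp$'' implicit with the remark that this occurs ``naturally''; you have made it explicit through the normalization/compactness argument producing a non-negative eigenfunction and then the Krein--Rutman/Hopf upgrade to a principal one, which is exactly the justification the paper is silently relying on. Your worry about maximality in $\mathfrak{D}$ is a harmless over-caution: in Section~\ref{SQPMT} the paper (re)defines $\mf{C}_\pm^+$ directly as components of the set of non-trivial solutions $\mc{S}$ emanating from $(\l_\pm,0)$ and then checks they stay inside the positive cone, so the dichotomy of equal-or-disjoint is immediate; but your alternative bookkeeping with $\mathfrak{D}$ reaches the same conclusion.
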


Naturally,  $\mf{C}_+^+=\mf{C}_-^+$ occurs when the alternative (ii) holds. Therefore, when
(ii) fails, both components of positive solutions must satisfy the alternative (i). Naturally, the precise global behavior of the components $\mf{C}_\pm^+$ depends on the particular choice of the map $g(x,u)$ in the setting of the problem \eqref{6.6.71}, as well as on the nature of the boundary conditions.
The following result provides us with a sufficient condition so that $\mf{C}_+^+\cap \mf{C}_-^+=\emptyset$
under Dirichlet boundary conditions.

\begin{theorem}
	Suppose $\mc{B}=\mc{D}$ and $g(x,u)\leq 0$ for all $x\in\O$ and $u\geq 0$. Then, $u=0$ is the unique non-negative solution, in $W^{2,p}_0(\Omega)$, of \eqref{6.6.71} for $\l=0$. Thus, $\mf{C}_+^+\cap \mf{C}_-^+=\emptyset$.
\end{theorem}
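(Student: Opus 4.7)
The plan is to split the theorem into two steps: first establish the uniqueness of non-negative solutions at $\l=0$ via a one-line energy identity, then use connectedness of the unilateral components together with this uniqueness to force a contradiction if $\mf{C}_+^+=\mf{C}_-^+$.

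For the uniqueness at $\l=0$, I would take any $u\in W^{2,p}_{0}(\O)$ with $u\geq 0$ satisfying \eqref{6.6.71} with $\l=0$, multiply the equation by $u$, and integrate over $\O$. Using the Dirichlet condition $u=0$ on $\p\O$, integration by parts on the mean curvature term yields
\begin{equation*}
\int_{\O}\frac{|\nabla u|^{2}}{\sqrt{1+|\nabla u|^{2}}}\,dx=\int_{\O}g(x,u)u^{2}\,dx.
\end{equation*}
The left-hand side is non-negative, while the right-hand side is non-positive because $g(x,u(x))\le 0$ wherever $u(x)\ge 0$. Hence both sides vanish, which forces $\nabla u=0$ almost everywhere; the connectedness of $\O$ and the vanishing trace on $\p\O$ then give $u\equiv 0$.

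For the second assertion, I would argue by contradiction, assuming $\mf{C}_+^+=\mf{C}_-^+$. Since the Dirichlet Laplacian satisfies $\s_{1}[-\D,\mc{D},\O]>0$, condition \eqref{6.6.72} holds with $\l_{0}=0$, so $\mf{L}(0)=-\D\in GL(W^{2,p}_{0}(\O),L^{p}(\O))$ and $0\notin\Sigma(\mf{L})$; moreover, the two principal eigenvalues of \eqref{6.6.73} provided by \cite[Th. 9.4]{LG13} satisfy $\l_{-}<0<\l_{+}$. Both $(\l_{-},0)$ and $(\l_{+},0)$ lie in the common component, and the $\l$-projection $\mc{P}_{\l}:\mf{C}_{+}^{+}\to\R$ is continuous, so its image is a connected subset of $\R$ containing $\{\l_{-},\l_{+}\}$, and therefore contains $0$. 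This yields some $u\in W^{2,p}_{0}(\O)$ with $(0,u)\in\mf{C}_{+}^{+}$. Along $\mf{C}_+^+$ the solutions are non-negative (by construction of the unilateral component from the strongly positive local branch at $\l_{+}$, combined with the preceding lemma preventing the solutions from leaving the positive cone without first passing through $0$), so $u\geq 0$. Applying the uniqueness result of the previous paragraph gives $u=0$, so $(0,0)\in\mf{C}_{+}^{+}\subset\mc{S}$; but $(\l,0)\in\mc{S}$ only if $\l\in\Sigma(\mf{L})$, contradicting $0\notin\Sigma(\mf{L})$.

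The only delicate point I anticipate is the justification that the non-negativity of $u$ is preserved along the whole of $\mf{C}_{\pm}^{+}$, not merely on the local branch issued from $(\l_{\pm},0)$. This must be handled exactly as outlined in the paragraph preceding the theorem: the strongly-positive lemma guarantees that non-negative solutions of \eqref{6.6.71} cannot touch $\p(W^{2,p}_{0}(\O))_{+}$ except by reducing to $u\equiv 0$, so the maximal positive continuation coincides with the unilateral component, and hence every $(\l,u)\in\mf{C}_{\pm}^{+}$ has $u\ge 0$. Once this is in hand, the rest of the argument is essentially formal.
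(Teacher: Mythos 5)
Your proof is correct and follows the same two-step strategy as the paper: the energy identity from multiplying by $u$ and integrating by parts gives uniqueness at $\lambda=0$, and then $\lambda_-<0<\lambda_+$ forces disjointness. Your explicit connectedness argument (the $\lambda$-projection of a common component would have to hit $0$, yielding a non-negative solution there, which must then be trivial and hence outside $\mc{S}$ since $0\notin\Sigma(\mf{L})$) is exactly the content the paper compresses into its terse ``Consequently.''
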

\begin{proof}
	Let $u\in W^{2,p}_0(\Omega)$ be a positive solution of \eqref{6.6.71} with $\lambda=0$. Then, multiplying the differential equation by $u$ and integrating by parts in $\O$, we obtain that
	\begin{align*}
		\int_{\Omega}\frac{|\nabla u|^{2}}{\sqrt{1+|\nabla u|^{2}}} \, dx & = -\int_{\Omega}\mathrm{div} \left(\frac{\nabla u}{\sqrt{1+|\nabla u|^{2}}}\right)u \, dx\\ & =
		\lambda \int_{\Omega}a(x)u^2 \, dx+\int_{\Omega}g(x,u)u^{2}(x)\, dx\leq 0.
	\end{align*}
	Thus, $\nabla u=0$ in $\O$ and therefore, $u=0$ in $\O$, because $\O$ is connected.
	\par
	On the other hand, since $\s_1[-\D,\mc{D},\O]>0$, the condition  \eqref{6.6.72} holds with
	$\l_0=0$. Hence, $\l_-<0<\l_+$. Consequently, since \eqref{6.6.71} cannot
	admit any positive solution at $\l=0$, $\mf{C}_+^+\cap \mf{C}_-^+=\emptyset$.
	This ends the proof.
\end{proof}

When $\mc{B}=\mc{N}$, $\l=0$ is always a principal eigenvalue of \eqref{6.6.73}. Indeed, any positive constant satisfies \eqref{6.6.73} if $\l=0$. In such case, based on a result of Brown and Lin \cite{BL}, $\l_-=0$ and $\l_+>0$ if $\int_\O a(x)\,dx<0$, while $\l_-<0$ and $\l_+=0$ if $\int_\O a(x)\,dx>0$. When, $\int_\O a(x)\,dx =0$, then $\l=0$ is the unique eigenvalue of \eqref{6.6.73} (see
the discussion on page 312 of \cite{LG13}). Even in the simplest one-dimensional prototype models, the classical solutions of \eqref{6.6.71} can develop singularities for certain values of the parameter $\l$. Consider, for instance, the simplest one dimensional problem
\begin{equation}
	\label{6.6.75}
	\left\{\begin{array}{l}
		-\left(\frac{u'}{\sqrt{1+(u')^{2}}}\right)'=\lambda a(x) u \quad \text{ in } (0,1), \\[2ex]
		u'(0)=u'(1)=0, \end{array} \right.
\end{equation}
with $\int_0^1a(x)\,dx<0$. Then, $\l_-=0$ and $\l_+>0$. Moreover,
$\{(r,0) :\; r\in\R\}\subset \mf{C}_-^+$. In particular, $\mf{C}_-^+$ is unbounded in $L^p(0,1)$.
This particular model fits within the abstract setting of L\'{o}pez-G\'{o}mez and Omari \cite{LGO}, with $p=q=2$, if there exists $z\in (0,1)$ such that $a(x)>0$ for all $x\in (0,z)$ and $a(x)<0$ for all $x\in (z,1)$. Subsequently, we will assume that $a(x)$ satisfies this condition. Then, thanks to \cite[Th. 1.1]{LGO}, for sufficiently small $\l>0$, any positive solution of \eqref{6.6.75} is singular if
\begin{equation}
	\label{6.6.76}
	\left( \int_x^z a(t)\,dt\right)^{-\frac{1}{2}}\in L^1(0,z)\cap L^1(z,1).
\end{equation}
Therefore, if $a(x)$ satisfies  \eqref{6.6.76}, then there exists
$\o>0$ such that $\mc{P}_\l (\mf{C}_+^+)\subset [\o,\infty)$, where $\mc{P}_\l$ stands for the $\l$-projection operator $\mc{P}_\l(\l,u)=\l$. In particular, $\mf{C}_+^+\cap \mf{C}_-^+=\emptyset$.
\par
By having a glance at \eqref{6.6.76} it becomes apparent that it fails whenever the function $a$ is differentiable at the nodal point $z$, whereas it  occurs when $a(x)$ is discontinuous at $z$, like in the special case when it equals a positive constant, $A >0$, in $[z-\e,z)$ and a negative constant, $B<0$, in $(z, z+\d]$, for some $\e, \d>0$. Thus, there is a huge contrast in the nature of the positive solutions of \eqref{6.6.75} according to the integrability properties of $a(x)$ near the node $z$. Figure 1 of L\'{o}pez-G\'{o}mez and Omari \cite{LGO} shows how, under condition  \eqref{6.6.76}, the solutions along
$\mf{C}_+^+$ can develop singularities when $\l$ approximates zero, as their gradients develop
singularities at the node $z$ when they become sufficiently large. Naturally, at these critical values,
$\|u'\|_{L^p}$ becomes unbounded, though $\|u\|_{L^p}$ always stay bounded. Actually, thanks to L\'{o}pez-G\'{o}mez and Omari \cite[Th. 7.2]{LGO}, any positive bounded variation solution of \eqref{6.6.75} must be regular if  the integrability condition \eqref{6.6.76} fails. Therefore, in this case, the solutions along $\mf{C}_\pm^+$ cannot develop singularities. Characterizing whether, or not, the positive solutions of \eqref{6.6.71} can develop singularities in a multidimensional context seems  extremely challenging and remains an open problem.

\end{document}